\pgfplotsset{compat=1.14}
    \newtheorem{definition}{Definition}[section]
    \newtheorem{lemma}[definition]{Lemma}
    \newtheorem{theorem}[definition]{Theorem}
    \newtheorem{proposition}[definition]{Proposition}
    \newtheorem{corollary}[definition]{Corollary}
    \newtheorem{conjecture}[definition]{Conjecture}
    \theoremstyle{remark}
    \newtheorem{example}[definition]{Example}
    \newtheorem{remark}[definition]{Remark}
    \def\thm@space@setup{\thm@preskip=0.5cm   \thm@postskip=0.5cm}
\newcommand{\N}{\mathbb{N}}
\newcommand{\Z}{\mathbb{Z}}
\newcommand{\R}{\mathbb{R}}
\newcommand{\C}{\mathbb{C}}
\newcommand{\Hq}{\mathbb{H}}
\newcommand{\Sph}{\mathbb{S}}
\newcommand{\Proj}{\mathbb{P}}
\newcommand{\Octo}{\mathbb{O}}
\newcommand{\Tor}{\mathbb{T}}
\newcommand{\E}{\mathrm{e}}
\newcommand{\I}{\mathrm{i}}
\newcommand{\J}{\mathrm{j}}
\newcommand{\esper}{\mathbb{E}}
\newcommand{\proba}{\mathbb{P}}
\newcommand{\sph}{\mathbb{RS}}
\newcommand{\sym}{\mathfrak{S}}
\newcommand{\SO}{\mathrm{SO}}
\newcommand{\SU}{\mathrm{SU}}
\newcommand{\SP}{\mathrm{USp}}
\newcommand{\GL}{\mathrm{GL}}
\newcommand{\Endom}{\mathrm{End}}
\newcommand{\Unit}{\mathrm{U}}
\newcommand{\Spin}{\mathrm{Spin}}
\newcommand{\Span}{\mathrm{Span}}
\newcommand{\leb}{\mathscr{L}}
\newcommand{\meas}{\mathscr{M}}
\newcommand{\gbullet}{\mathfrak{G}_\bullet}
\newcommand{\card}{\mathrm{card}}
\newcommand{\vol}{\mathrm{vol}}
\newcommand{\diag}{\mathrm{diag}}
\newcommand{\eps}{\varepsilon}
\newcommand{\sinc}{\mathrm{sinc}}
\newcommand{\tr}{\mathrm{tr}}
\newcommand{\ch}{\mathrm{ch}}
\newcommand{\zon}{\mathrm{zon}}
\newcommand{\rank}{\mathrm{rank}}
\newcommand{\glie}{\mathfrak{g}}
\newcommand{\tlie}{\mathfrak{t}}
\newcommand{\hatG}{\widehat{G}}
\newcommand{\cryst}{\mathfrak{C}}
\newcommand{\tang}{\mathrm{T}}
\newcommand{\tildeJ}{\widetilde{J}}
\newcommand{\lle}{\left[\!\left[} 
\newcommand{\rre}{\right]\!\right]} 
\newcommand{\GEOM}{\Gamma_{\mathrm{geom}}}
\newcommand{\GRID}{\Gamma_{\mathrm{grid}}}
\newcommand{\DD}[1]{\,d\hspace{-0.3mm}{#1}}
\newcommand{\scal}[2]{\left\langle #1\vphantom{#2}\,\right |\left.#2 \vphantom{#1}\right\rangle}
\newcommand{\comment}[1]{}
\setlist[enumerate]{itemsep=10pt,topsep=10pt}
\setlist[itemize]{itemsep=5pt,topsep=5pt}
\renewcommand{\arraystretch}{1.3}
\title[Spectrum of a random geometric graph on a compact Lie group]{Asymptotic representation theory and the spectrum of a random geometric graph on a compact Lie group}
\author{Pierre-Loïc Méliot}
\date{\today}
\begin{document}
\begin{abstract}
Let $G$ be a compact Lie group, $N\geq 1$ and $L>0$. The random geometric graph on $G$ is the random graph $\GEOM(N,L)$ whose vertices are $N$ random points $g_1,\ldots,g_N$ chosen under the Haar measure of $G$, and whose edges are the pairs $\{g_i,g_j\}$ with $d(g_i,g_j)\leq L$, $d$ being the distance associated to the standard Riemannian structure on $G$. In this paper, we describe the asymptotic behavior of the spectrum of the adjacency matrix of $\GEOM(N,L)$, when $N$ goes to infinity.
\begin{enumerate}
    \item If $L$ is fixed and $N \to + \infty$ (Gaussian regime), then the largest eigenvalues of $\GEOM(N,L)$ converge after an appropriate renormalisation towards certain explicit linear combinations of values of Bessel functions. 
    \item If $L = O(N^{-\frac{1}{\dim G}})$ and $N \to +\infty$ (Poissonian regime), then the geometric graph $\GEOM(N,L)$ converges in the local Benjamini--Schramm sense, which implies the weak convergence in probability of the spectral measure of $\GEOM(N,L)$. 
\end{enumerate}
In both situations, the representation theory of the group $G$ provides us with informations on the limit of the spectrum, and conversely, the computation of this limiting spectrum involves many classical tools from representation theory: Weyl's character formula and the weight lattice in the Gaussian regime, and a degeneration of these objects in the Poissonian regime. The representation theoretic approach allows one to understand precisely how the degeneration from the Gaussian to the Poissonian regime occurs, and the article is written so as to highlight this degeneration phenomenon. In the Poissonian regime, this approach leads us to an algebraic conjecture on certain functionals of the irreducible representations of $G$.
\end{abstract}
\maketitle

\hrule
\tableofcontents

\hrule
\bigskip
\bigskip

\section{Random geometric graphs on compact Lie groups}
In this paper, $\Z$, $\R$, $\C=\R \oplus \I \R$ and $\Hq = \R \oplus \I\R \oplus \J\R \oplus \mathrm{k}\R = \C \oplus \J \C$ denote respectively the set of integers, the field of real numbers, the field of complex numbers, and the division algebra of quaternionic numbers. 
\medskip

\subsection{Spectrum of large random graphs}
We call \emph{graph} a pair $\Gamma=(V,E)$ with $V$ finite set, and $E$ finite subset of the set of pairs $\{v,w\}$ with $v\neq w$ in $V$. In particular, the \emph{random} graphs that we shall consider in this paper will always be unoriented and simple, that is without loop or multiple edge. Our computations will also involve (deterministic) oriented or labeled graphs, possibly with loops or with multiple edges, but this will be recalled each time by using in particular the terminologies of circuits and reduced circuits (Section \ref{sec:ART}). The \emph{size} of a graph is the cardinality $N=|V|$ of its vertex set. 

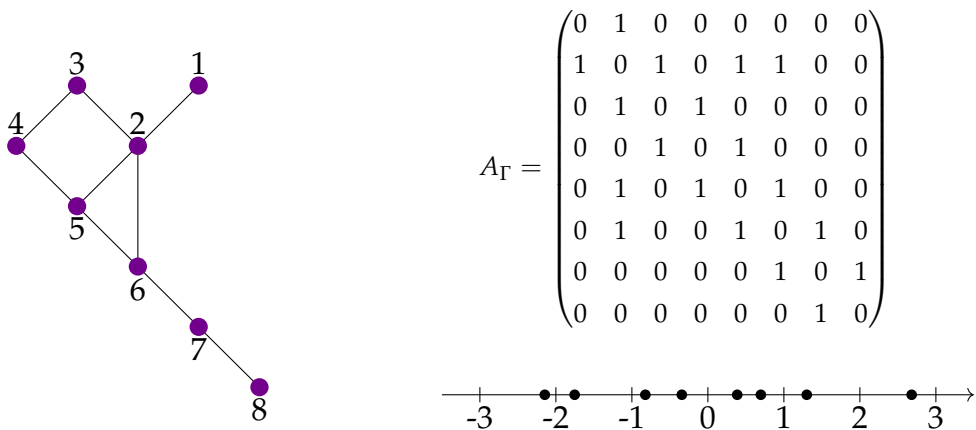
\begin{figure}[ht]
\begin{center}
\begin{tikzpicture}[scale=1]
\draw (-2.9,-2.4) -- (-3.7,-1.6) -- (-4.5,-0.8) -- (-4.5,0.8) -- (-5.3,0) -- (-4.5,-0.8); 
\draw (-5.3,0) -- (-6.1,0.8) -- (-5.3,1.6) -- (-4.5,0.8) -- (-3.7,1.6);
\foreach \x in {(-5.3,0),(-6.1,0.8),(-5.3,1.6),(-4.5,-0.8),(-4.5,0.8),(-3.7,1.6),(-2.9,-2.4),(-3.7,-1.6)}
\fill [color=red!45!blue] \x circle [radius=1.2mm];
\draw (-3.7,1.9) node {$1$};
\draw (-4.5,1.1) node {$2$};
\draw (-5.3,1.9) node {$3$};
\draw (-6.1,1.1) node {$4$};
\draw (-5.3,-0.3) node {$5$};
\draw (-4.5,-1.1) node {$6$};
\draw (-3.7,-1.9) node {$7$};
\draw (-2.9,-2.7) node {$8$};
\draw (2.7,0.5) node {\footnotesize $A_\Gamma=\begin{pmatrix}
0 & 1 & 0 & 0 & 0 & 0 & 0 & 0 \\
1 & 0 & 1 & 0 & 1 & 1 & 0 & 0 \\
0 & 1 & 0 & 1 & 0 & 0 & 0 & 0 \\
0 & 0 & 1 & 0 & 1 & 0 & 0 & 0 \\
0 & 1 & 0 & 1 & 0 & 1 & 0 & 0 \\
0 & 1 & 0 & 0 & 1 & 0 & 1 & 0 \\
0 & 0 & 0 & 0 & 0 & 1 & 0 & 1 \\
0 & 0 & 0 & 0 & 0 & 0 & 1 & 0 
\end{pmatrix}$};
\begin{scope}[shift={(3,-2.5)}]
\draw [->] (-3.5,0) -- (3.5,0);
\foreach \x in {2.681,1.301,0.697,0.386,-0.343,-0.823,-1.753,-2.146}
\fill (\x,0) circle (2pt);
\foreach \x in {-3,-2,-1,0,1,2,3}
{\draw (\x,0.1) -- (\x,-0.1);
\draw (\x,-0.3) node {\x};}
\end{scope}
\end{tikzpicture}
\end{center}
\caption{A (simple, unoriented) graph, its adjacency matrix and its spectrum.\label{fig:spectrum}}
\end{figure}

The \emph{adjacency matrix} of a graph $\Gamma$ of size $N$ is the matrix $A_\Gamma$ of size $N \times N$, with rows and columns labeled by the vertices of $\Gamma$, and with coefficients
$$A_\Gamma(v,w) = \begin{cases}
    1 &\text{if }\{v,w\} \in E,\\
    0 & \text{otherwise}.
\end{cases}$$
In particular, the diagonal coefficients of the adjacency matrix of a simple graph are all equal to zero.
The adjacency matrix of a graph $\Gamma$ being a real symmetric matrix, its \emph{spectrum} $\mathrm{Spec}(\Gamma)$ consists of $N$ real eigenvalues $c_1 \geq c_2 \geq \cdots \geq c_N$ (see Figure \ref{fig:spectrum}). The knowledge of the spectrum yields many informations on the geometry of the graph: mean and maximal number of neighbors of a vertex; chromatic number; number of edges, triangles, spanning trees; expansion properties, Cheeger constant; \emph{etc.} We refer to \cite{Chung97,GR01} for an introduction to this algebraic graph theory.
The purpose of this paper is to study the spectrum of a class of random graphs drawn on certain Riemannian manifolds $X$, by using the representation theory of the isometry group of $X$. The simplest example of random graphs that one can think of is when each possible edge $\{i,j\}$ between points of $V=\lle 1,N\rre = \{1,2,\ldots,N\}$ is kept at random, according to a Bernoulli law of parameter $p$, independently for each pair. One obtains the Erdös--Rényi random graphs (\cite{ER59}), and if $p$ is not too small (\emph{e.g.} larger than $N^{-1/3}$), the eigenvalue distribution $\frac{1}{N}\sum_{i=1}^N \delta_{c_i}$ of $\Gamma_{\mathrm{ER}}(N,p)$ admits after appropriate renormalisation a deterministic continuous limit, which is the Wigner semicircle law $\frac{1}{2\pi}\sqrt{4-x^2}\,1_{-2<x<2}\,\DD{x}$; see Figure \ref{fig:erdosrenyi}. We refer to the recent papers \cite{EKYY13,EKYY12} for a detailed study of this model, including results on the spacing of eigenvalues and on the edge of the spectrum.

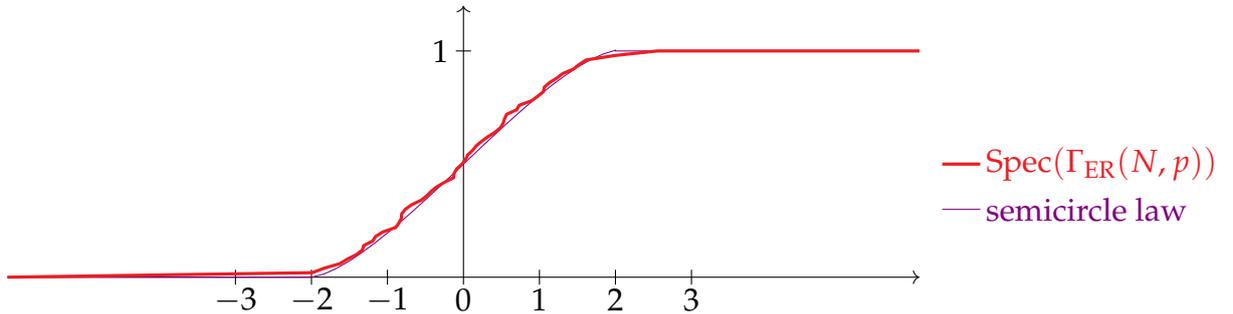
\begin{figure}[ht]
\begin{center}      
\begin{tikzpicture}[xscale=1,yscale=3]
\draw [->] (0,-0.033) -- (0,1.2);
\draw (-3,-0.033) -- (-3,0.033);
\draw (-2,-0.033) -- (-2,0.033);
\draw (-1,-0.033) -- (-1,0.033);
\draw (1,-0.033) -- (1,0.033);
\draw (2,-0.033) -- (2,0.033);
\draw (3,-0.033) -- (3,0.033);
\draw [->] (-6,0) -- (6,0);
\draw (-0.1,1) -- (0.1,1);
\draw (-0.3,1) node {$1$};
\draw (-3,-0.1) node {$-3$};
\draw (-2,-0.1) node {$-2$};
\draw (-1,-0.1) node {$-1$};
\draw (0,-0.1) node {$0$};
\draw (1,-0.1) node {$1$};
\draw (2,-0.1) node {$2$};
\draw (3,-0.1) node {$3$};
\draw [violet,domain=-2:2] (-6,0) -- (-2,0) -- plot ({\x}, {0.5 + \x*sqrt(4-\x*\x)/(4*3.14) + asin(\x/2)/180}) -- (2,1) -- (6,1);
\begin{scope}[xscale=0.5,yscale=1]
\draw [Red, very thick] (-12,0) -- (-3.9777,0.02) -- (-3.6641,0.04) -- (-3.2494,0.06) -- (-3.0544,0.08) -- (-2.8162,0.1) -- (-2.6615,0.12) -- (-2.6247,0.14) -- (-2.3871,0.16) -- (-2.3131,0.18) -- (-2.1209,0.2) -- (-1.7797,0.22) -- (-1.6983,0.24) -- (-1.6354,0.26) -- (-1.6262,0.28) -- (-1.5310,0.3) -- (-1.3654,0.32) -- (-1.1155,0.34) -- (-0.96336,0.36) -- (-0.84541,0.38) -- (-0.69802,0.4) -- (-0.49199,0.42) -- (-0.26059,0.44) -- (-0.24276,0.46) -- (-0.17149,0.48) -- (-0.037806,0.5) -- (0.053687,0.52) -- (0.10896,0.54) -- (0.23763,0.56) -- (0.33236,0.58) -- (0.47438,0.6) -- (0.63487,0.62) -- (0.82310,0.64) -- (0.96932,0.66) -- (1.0434,0.68) -- (1.0814,0.7) -- (1.1374,0.72) -- (1.4004,0.74) -- (1.4677,0.76) -- (1.7928,0.78) -- (1.9564,0.8) -- (2.1038,0.82) -- (2.1299,0.84) -- (2.2638,0.86) -- (2.4576,0.88) -- (2.6106,0.9) -- (2.8944,0.92) -- (3.0423,0.94) -- (3.2256,0.96) -- (3.9846,0.98) -- (5.1057,1) -- (12,1);
\end{scope}
\draw [Red, very thick] (6.8,0.5) -- (6.3,0.5);
\draw [Red] (8.4,0.5) node {$\mathrm{Spec}(\Gamma_{\mathrm{ER}}(N,p))$};
\draw [violet] (6.8,0.3) -- (6.3,0.3);
\draw [violet] (8.2,0.3) node {semicircle law};
\end{tikzpicture}
\caption{The cumulative distribution function of a renormalisation of the spectrum of a random Erdös--Rényi graph of parameter $p=0.1$, with $N=50$ vertices; and the limiting semicircle distribution.\label{fig:erdosrenyi}}
\end{center}
\end{figure}

\subsection{Random geometric graphs} A more complicated model consists in geometric graphs, that can be defined on any measured metric space. Let $(X,d)$ be a metric space, and $m$ be a Borel probability measure on $X$. Given a positive real number $L>0$, the \emph{random geometric graph} with $N$ points and level $L$ is the random graph $\Gamma=(V,E)$ with \vspace{2mm}
\begin{itemize}
    \item the $N$ vertices $v_1,\ldots,v_N$ of $V$ chosen randomly in $X$ according to the probability measure $m^{\otimes N}$ on $X^N$.\vspace{2mm}
    \item an edge between $v_i$ and $v_j$ if and only if $d(v_i,v_j)\leq L$.\vspace{2mm}
\end{itemize}
For the Euclidian case, when $X=\R^p$, $d(x,y)=\|x-y\|$ is the Euclidian distance and $m$ is a measure on $\R^p$, we refer to the monograph \cite{Pen03}, where most of the classical questions on random graphs (subgraph counts, threshold for connectivity, existence of a giant connected component, \emph{etc.}) are answered. In this setting, the spectrum of the adjacency matrix has been studied in \cite{BEJJ06}, see also \cite{Bor08,DGK16} for the case where $\R^p$ is replaced by the torus $\Tor^p = (\R/\Z)^p$. In \cite{DGK16}, the distance between points of the torus is the Euclidean distance, whereas in \cite{Bor08} this is the $\ell^\infty$-distance (which does not correspond to a Riemannian structure). The goal of this article is to extend this work to a general setting of compact Riemannian manifolds that satisfy a certain symmetry property. An important point is that in the regime where $L$ is fixed and $N$ goes to infinity, the asymptotics of the spectrum are \emph{discrete} instead of \emph{continuous}. We only get a continuous limiting distribution in the thermodynamic limit where $L = L_N \to 0$ is chosen so that the mean number of neighbors of a given vertex is a $O(1)$. 

\begin{example}
In Figure \ref{fig:stereo}, we have drawn in stereographic projection a random geometric graph on the real sphere $\sph^2$, with $m$ equal to Lebesgue's spherical measure, $N=100$ points, and $L=\frac{\pi}{8}$ (one eighth of the diameter of the space).
\end{example}
\medskip

\begin{figure}[ht]
\begin{center}
\vspace{2.2cm}        
\begin{tikzpicture}[scale=0.55]
\useasboundingbox (-3.5,-3.5) rectangle (3.5,3.5);
\draw[color=blue,dashed] (0,0) circle [radius=2];
\draw (-0.42017,0.48707) arc (59.793:61.882:10.532);
\draw (-0.12249,0.21036) arc (46.693:47.486:29.390);
\draw (-0.46113,0.80236) arc (-174.00:-171.19:6.4791);
\draw (-0.42017,0.48707) arc (167.64:172.14:4.2876);
\draw (-0.42017,0.48707) -- (-0.18172,0.21405);
\draw (0.079583,-4.8841) arc (-55.168:-24.497:3.4358);
\draw (-0.47528,-3.3536) arc (-167.32:-152.82:6.4488);
\draw (-0.0029384,-4.8829) arc (-91.650:-89.991:2.8522);
\draw (-0.36045,-5.2298) arc (-56.060:-47.623:3.8044);
\draw (-1.8125,-1.1759) arc (174.30:185.71:2.5694);
\draw (-2.4592,-1.0427) arc (-107.54:-95.725:3.2084);
\draw (-0.46113,0.80236) arc (110.06:116.98:2.6629);
\draw (0.093436,-1.0592) arc (-125.36:-124.03:3.1682);
\draw (-0.061524,-0.81852) arc (-148.89:-145.55:4.9137);
\draw (-0.19663,-1.0193) arc (-101.25:-94.390:2.4457);
\draw (-0.039316,-1.5607) arc (-16.728:-12.926:7.8209);
\draw (0.093436,-1.0592) arc (-72.500:-67.640:2.4708);
\draw (-0.18928,2.9204) arc (142.03:151.40:3.2285);
\draw (-0.18928,2.9204) arc (10.470:15.601:18.232);
\draw (0.40202,4.1072) arc (146.53:160.51:5.4486);
\draw (0.17033,2.0960) arc (18.502:28.633:5.0939);
\draw (-0.75589,-1.2081) arc (-50.999:-47.551:6.5103);
\draw (-0.74502,-0.79631) arc (-184.60:-178.42:3.8227);
\draw (-0.55744,4.5106) arc (99.479:137.61:2.7149);
\draw (-2.1156,3.6629) arc (130.72:167.29:2.6337);
\draw (-2.1156,3.6629) arc (121.39:161.33:2.5886);
\draw (-2.1156,3.6629) arc (-170.38:-161.55:7.4266);
\draw (-1.0361,0.23286) arc (-118.52:-116.06:8.8386);
\draw (-1.0361,0.23286) arc (-167.34:-157.52:2.6710);
\draw (-1.0361,0.23286) arc (113.29:119.78:4.1119);
\draw (-2.4592,-1.0427) arc (-147.34:-122.41:2.1144);
\draw (-2.9438,-1.5807) arc (-105.28:-85.426:3.2957);
\draw (-0.74502,-0.79631) arc (-122.51:-114.79:2.4205);
\draw (-0.45901,-0.95256) arc (-75.138:-67.591:3.1870);
\draw (-0.45901,-0.95256) arc (-107.45:-101.10:2.4456);
\draw (-0.43825,-0.74365) arc (173.44:175.21:6.7712);
\draw (1.0841,1.3211) arc (135.32:136.39:21.907);
\draw (1.6378,1.6953) arc (121.73:126.38:8.2293);
\draw (1.0841,1.3211) arc (102.49:109.78:3.2787);
\draw (-4.9947,-4.3753) arc (-54.189:-50.715:38.434);
\draw (-2.9438,-1.5807) arc (-200.18:-176.71:3.4186);
\draw (-0.12249,0.21036) arc (97.564:99.853:9.0345);
\draw (0.17747,0.21499) arc (89.977:91.791:9.4805);
\draw (-0.12249,0.21036) arc (86.262:86.601:10.051);
\draw (-0.47851,2.4798) arc (127.04:141.99:2.2884);
\draw (-0.90315,2.0623) arc (-175.01:-168.40:6.2951);
\draw (-0.90315,2.0623) arc (169.41:180.78:3.5792);
\draw (-0.90315,2.0623) arc (54.581:70.065:3.9183);
\draw (3.6273,2.0998) arc (109.08:115.18:13.497);
\draw (2.2958,1.5583) arc (71.622:79.865:2.6557);
\draw (2.2958,1.5583) arc (70.899:85.579:2.6304);
\draw (0.028208,1.9359) arc (36.500:49.437:3.2995);
\draw (0.17033,2.0960) arc (50.506:68.280:2.4401);
\draw (0.98678,-0.63643) arc (-59.757:-48.329:2.5691);
\draw (0.98559,-1.0642) arc (-4.4402:4.1200:2.8656);
\draw (1.9258,1.6523) arc (38.832:66.554:2.0579);
\draw (1.6378,1.6953) arc (31.579:52.061:2.0929);
\draw (1.1416,2.2499) arc (33.351:38.798:2.3660);
\draw (1.9375,2.9193) arc (125.47:134.66:6.4933);
\draw (0.52680,5.7291) arc (127.66:149.01:4.4027);
\draw (0.52680,5.7291) -- (0.40202,4.1072);
\draw (2.3911,4.9227) arc (48.248:84.969:3.2243);
\draw (0.40202,4.1072) arc (56.774:77.622:2.8762);
\draw (3.5488,-1.9515) arc (-66.291:-39.828:3.1742);
\draw (3.9841,-4.2666) arc (-4.9064:26.203:4.3924);
\draw (4.6138,-3.5561) arc (26.031:41.115:7.3369);
\draw (3.9315,-3.5301) arc (2.5701:24.689:4.2341);
\draw (2.5469,-2.9317) arc (-61.782:-29.475:2.5190);
\draw (-0.061524,-0.81852) arc (-36.016:-31.517:5.4154);
\draw (-0.79844,1.3442) arc (84.185:87.889:2.5647);
\draw (-2.9665,2.2465) arc (80.634:83.480:5.1468);
\draw (-2.9665,2.2465) arc (-138.65:-132.53:12.019);
\draw (-1.8380,2.5526) arc (92.402:117.95:2.6441);
\draw (-3.2197,2.2818) arc (115.76:160.62:2.8331);
\draw (-3.2439,0.53165) arc (83.509:85.290:45.823);
\draw (-4.6609,0.67060) arc (-171.73:-161.25:2.8979);
\draw (-1.8380,2.5526) arc (86.748:115.43:2.8422);
\draw (-8.4505,-1.6103) arc (-157.18:-100.15:4.6356);
\draw (-8.4505,-1.6103) arc (105.52:119.71:54.932);
\draw (-7.1739,4.5398) arc (-232.78:-150.67:4.7819);
\draw (0.84018,0.34542) arc (-35.698:-30.792:5.0183);
\draw (0.84018,0.34542) arc (102.29:103.03:15.213);
\draw (1.8230,-0.86146) arc (-75.286:-73.810:3.1110);
\draw (1.4075,-1.1508) arc (-65.254:-50.280:2.2350);
\draw (1.2437,-3.4885) arc (-83.625:-50.103:2.4572);
\draw (1.2437,-3.4885) arc (13.150:14.277:21.221);
\draw (-0.0029384,-4.8829) arc (-57.930:-25.665:3.3657);
\draw (0.63312,-2.5303) arc (-151.44:-143.55:8.2631);
\draw (0.63433,-2.6091) arc (-149.53:-141.03:7.2194);
\draw (4.6138,-3.5561) arc (-23.943:19.489:3.3510);
\draw (7.0663,-2.9549) arc (42.614:60.309:9.7927);
\draw (3.9315,-3.5301) arc (-42.820:7.5842:3.0208);
\draw (7.6505,-2.1114) arc (68.343:72.935:38.901);
\draw (-2.0688,1.3303) arc (122.22:142.91:2.2549);
\draw (-2.4592,-1.0427) arc (134.36:141.62:5.7174);
\draw (-2.1335,-0.24066) arc (148.15:167.66:2.5546);
\draw (2.3911,4.9227) arc (94.994:129.59:3.6149);
\draw (1.4075,-1.1508) arc (-61.830:-48.467:2.1756);
\draw (1.8230,-0.86146) arc (33.815:43.740:3.8956);
\draw (0.26647,1.7584) arc (49.728:56.927:2.3658);
\draw (0.26647,1.7584) arc (13.977:17.815:5.2415);
\draw (2.2180,0.027106) arc (79.153:81.499:10.045);
\draw (1.0757,0.70464) arc (34.850:45.945:2.1992);
\draw (0.80011,1.0284) arc (30.822:35.993:2.3466);
\draw (0.98559,-1.0642) arc (-104.75:-98.451:3.9217);
\draw (3.9841,-4.2666) arc (-49.912:-33.197:3.2657);
\draw (3.9841,-4.2666) arc (-89.269:-44.627:4.4100);
\draw (3.9841,-4.2666) arc (-0.38469:8.5405:4.7446);
\draw (2.5469,-2.9317) arc (-134.64:-131.14:32.156);
\draw (3.9841,-4.2666) arc (-131.36:-126.14:33.445);
\draw (3.9841,-4.2666) arc (-88.677:-30.429:4.3691);
\draw (4.6138,-3.5561) arc (-89.939:-62.514:5.3261);
\draw (3.9315,-3.5301) arc (-95.988:-88.381:5.1469);
\draw (2.5469,-2.9317) arc (-116.50:-97.123:6.4150);
\draw (6.3640,-6.1767) arc (27.684:39.791:14.942);
\draw (4.6138,-3.5561) arc (-85.019:-44.096:4.8098);
\draw (1.9258,1.6523) arc (78.299:84.724:2.5986);
\draw (1.9258,1.6523) arc (-11.442:10.384:3.3463);
\draw (3.9315,-3.5301) arc (-97.125:-62.081:5.2929);
\draw (6.3640,-6.1767) arc (-32.580:7.9839:4.7564);
\draw (7.0663,-2.9549) arc (-41.523:-27.889:4.3221);
\draw (0.17033,2.0960) arc (136.49:140.34:3.1915);
\draw (2.5469,-2.9317) arc (-119.32:-107.43:7.2872);
\draw (6.3640,-6.1767) arc (40.536:44.635:50.268);
\draw (3.9315,-3.5301) arc (-93.366:-44.873:4.8462);
\draw (1.4009,-0.33605) arc (-50.130:-37.086:2.6279);
\draw (1.8126,0.096203) arc (41.866:54.574:2.5794);
\draw (6.1486,6.4384) arc (95.526:128.41:7.1572);
\draw (13.682,11.506) arc (117.83:130.03:42.705);
\draw (-7.1739,4.5398) arc (80.050:178.91:11.769);
\draw (1.0091,2.4317) arc (101.41:122.21:2.5027);
\draw (1.1448,-3.0831) arc (-99.573:-68.094:2.5995);
\draw (-0.19051,-2.2061) arc (-14.802:-11.564:11.732);
\draw (-0.19051,-2.2061) arc (-122.71:-100.25:2.2722);
\draw (-0.19051,-2.2061) arc (-127.17:-104.90:2.3768);
\draw (-0.19051,-2.2061) arc (-109.01:-84.954:2.0726);
\draw (1.6378,1.6953) arc (27.405:53.571:2.1386);
\draw (-0.061524,-0.81852) arc (-144.94:-140.40:4.4889);
\draw (-0.19663,-1.0193) arc (-107.32:-98.933:2.4610);
\draw (-0.039316,-1.5607) arc (-25.656:-19.971:5.0269);
\draw (0.15399,-1.1011) arc (-52.150:-48.407:2.7160);
\draw (-0.74502,-0.79631) arc (-82.647:-77.873:3.7364);
\draw (-0.19663,-1.0193) arc (-35.095:-32.773:5.9715);
\draw (-0.061524,-0.81852) arc (-119.24:-112.11:3.1399);
\draw (-0.43825,-0.74365) arc (-105.10:-97.379:2.8511);
\draw (0.63312,-2.5303) arc (-50.188:-29.652:2.2995);
\draw (0.63433,-2.6091) arc (-47.187:-25.946:2.3901);
\draw (0.66680,-2.3111) arc (-58.996:-41.495:2.1050);
\draw (0.63312,-2.5303) arc (-141.53:-132.91:5.0089);
\draw (0.63433,-2.6091) arc (-137.50:-128.26:4.3277);
\draw (0.66680,-2.3111) arc (-151.08:-145.40:9.1616);
\draw (-0.19663,-1.0193) arc (-91.947:-80.634:2.4759);
\draw (-0.43825,-0.74365) arc (-142.32:-135.22:2.9604);
\draw (-2.6653,0.78249) arc (108.53:118.34:3.6866);
\draw (-0.47528,-3.3536) arc (-169.31:-156.37:7.1019);
\draw (-0.47528,-3.3536) arc (-179.86:-173.13:16.008);
\draw (1.3869,0.47658) arc (49.208:58.325:2.4274);
\draw (-0.47927,0.15571) arc (112.84:114.97:6.4539);
\draw (-0.47927,0.15571) arc (136.04:139.32:4.6947);
\draw (-0.18172,0.21405) arc (100.06:102.13:8.4098);
\draw (-0.69830,0.058617) arc (-160.36:-158.60:3.5197);
\draw (-0.69830,0.058617) arc (140.24:145.07:3.9099);
\draw (1.9375,2.9193) arc (-19.101:-6.4123:9.2946);
\draw (-0.66026,-0.043023) arc (122.77:125.15:6.9081);
\draw (1.9375,2.9193) arc (110.01:125.41:3.9123);
\draw (-0.36045,-5.2298) arc (-49.240:-42.479:4.2245);
\draw (0.17747,0.21499) arc (89.049:91.249:9.3567);
\draw (6.3640,-6.1767) arc (-44.786:9.6649:4.6601);
\draw (0.68352,1.2052) arc (105.91:114.78:3.0461);
\draw (0.63433,-2.6091) arc (0.63538:1.1239:9.2592);
\draw (0.63312,-2.5303) arc (-9.9857:-7.4872:5.0848);
\draw (-3.2439,0.53165) arc (100.41:111.99:6.6746);
\draw (0.63433,-2.6091) arc (-7.7174:-4.7172:5.7254);
\fill (-0.42017,0.48707) circle [radius=0.7mm];
\fill (0.079583,-4.8841) circle [radius=0.7mm];
\fill (-1.8125,-1.1759) circle [radius=0.7mm];
\fill (-0.75554,0.67422) circle [radius=0.7mm];
\fill (0.093436,-1.0592) circle [radius=0.7mm];
\fill (-0.18928,2.9204) circle [radius=0.7mm];
\fill (-0.75589,-1.2081) circle [radius=0.7mm];
\fill (-2.1156,3.6629) circle [radius=0.7mm];
\fill (-1.0361,0.23286) circle [radius=0.7mm];
\fill (-1.8124,-1.6867) circle [radius=0.7mm];
\fill (-0.45901,-0.95256) circle [radius=0.7mm];
\fill (1.0841,1.3211) circle [radius=0.7mm];
\fill (-3.1479,-2.9558) circle [radius=0.7mm];
\fill (-0.12249,0.21036) circle [radius=0.7mm];
\fill (-0.90315,2.0623) circle [radius=0.7mm];
\fill (2.2958,1.5583) circle [radius=0.7mm];
\fill (-0.47851,2.4798) circle [radius=0.7mm];
\fill (0.98678,-0.63643) circle [radius=0.7mm];
\fill (1.1416,2.2499) circle [radius=0.7mm];
\fill (0.52680,5.7291) circle [radius=0.7mm];
\fill (-0.55744,4.5106) circle [radius=0.7mm];
\fill (3.5488,-1.9515) circle [radius=0.7mm];
\fill (0.17470,-0.46521) circle [radius=0.7mm];
\fill (-0.79844,1.3442) circle [radius=0.7mm];
\fill (-2.9665,2.2465) circle [radius=0.7mm];
\fill (-4.6609,0.67060) circle [radius=0.7mm];
\fill (-3.2197,2.2818) circle [radius=0.7mm];
\fill (-8.4505,-1.6103) circle [radius=0.7mm];
\fill (0.84018,0.34542) circle [radius=0.7mm];
\fill (1.9002,-0.84012) circle [radius=0.7mm];
\fill (1.2437,-3.4885) circle [radius=0.7mm];
\fill (4.7102,-1.0782) circle [radius=0.7mm];
\fill (-2.0688,1.3303) circle [radius=0.7mm];
\fill (-2.4592,-1.0427) circle [radius=0.7mm];
\fill (0.40202,4.1072) circle [radius=0.7mm];
\fill (3.6273,2.0998) circle [radius=0.7mm];
\fill (-4.9947,-4.3753) circle [radius=0.7mm];
\fill (1.8230,-0.86146) circle [radius=0.7mm];
\fill (0.26647,1.7584) circle [radius=0.7mm];
\fill (2.2180,0.027106) circle [radius=0.7mm];
\fill (0.80011,1.0284) circle [radius=0.7mm];
\fill (1.4075,-1.1508) circle [radius=0.7mm];
\fill (3.9841,-4.2666) circle [radius=0.7mm];
\fill (4.6138,-3.5561) circle [radius=0.7mm];
\fill (1.9258,1.6523) circle [radius=0.7mm];
\fill (7.0663,-2.9549) circle [radius=0.7mm];
\fill (0.028208,1.9359) circle [radius=0.7mm];
\fill (3.9315,-3.5301) circle [radius=0.7mm];
\fill (1.8126,0.096203) circle [radius=0.7mm];
\fill (6.1486,6.4384) circle [radius=0.7mm];
\fill (1.4009,-0.33605) circle [radius=0.7mm];
\fill (-0.22968,-0.20894) circle [radius=0.7mm];
\fill (-20.974,-6.8276) circle [radius=0.7mm];
\fill (0.17033,2.0960) circle [radius=0.7mm];
\fill (2.5469,-2.9317) circle [radius=0.7mm];
\fill (-0.46113,0.80236) circle [radius=0.7mm];
\fill (-0.19051,-2.2061) circle [radius=0.7mm];
\fill (1.6378,1.6953) circle [radius=0.7mm];
\fill (-0.96381,1.3556) circle [radius=0.7mm];
\fill (0.15399,-1.1011) circle [radius=0.7mm];
\fill (-0.74502,-0.79631) circle [radius=0.7mm];
\fill (-0.061524,-0.81852) circle [radius=0.7mm];
\fill (1.1592,-1.9015) circle [radius=0.7mm];
\fill (1.1448,-3.0831) circle [radius=0.7mm];
\fill (-1.8380,2.5526) circle [radius=0.7mm];
\fill (-0.19663,-1.0193) circle [radius=0.7mm];
\fill (-2.6653,0.78249) circle [radius=0.7mm];
\fill (-0.47528,-3.3536) circle [radius=0.7mm];
\fill (1.3869,0.47658) circle [radius=0.7mm];
\fill (-0.039316,-1.5607) circle [radius=0.7mm];
\fill (-0.47927,0.15571) circle [radius=0.7mm];
\fill (-7.1739,4.5398) circle [radius=0.7mm];
\fill (-0.69830,0.058617) circle [radius=0.7mm];
\fill (2.3911,4.9227) circle [radius=0.7mm];
\fill (-2.9438,-1.5807) circle [radius=0.7mm];
\fill (1.0757,0.70464) circle [radius=0.7mm];
\fill (-0.66026,-0.043023) circle [radius=0.7mm];
\fill (1.0091,2.4317) circle [radius=0.7mm];
\fill (0.64964,0.30261) circle [radius=0.7mm];
\fill (-0.0029384,-4.8829) circle [radius=0.7mm];
\fill (0.17747,0.21499) circle [radius=0.7mm];
\fill (-0.89804,-0.20316) circle [radius=0.7mm];
\fill (6.3640,-6.1767) circle [radius=0.7mm];
\fill (0.68352,1.2052) circle [radius=0.7mm];
\fill (0.63312,-2.5303) circle [radius=0.7mm];
\fill (7.6505,-2.1114) circle [radius=0.7mm];
\fill (-3.2439,0.53165) circle [radius=0.7mm];
\fill (13.682,11.506) circle [radius=0.7mm];
\fill (-2.1335,-0.24066) circle [radius=0.7mm];
\fill (0.98559,-1.0642) circle [radius=0.7mm];
\fill (-0.36045,-5.2298) circle [radius=0.7mm];
\fill (0.63433,-2.6091) circle [radius=0.7mm];
\fill (-4.5373,0.15585) circle [radius=0.7mm];
\fill (0.29042,-0.98774) circle [radius=0.7mm];
\fill (1.9375,2.9193) circle [radius=0.7mm];
\fill (-1.4528,0.024769) circle [radius=0.7mm];
\fill (-0.43825,-0.74365) circle [radius=0.7mm];
\fill (0.24191,1.0415) circle [radius=0.7mm];
\fill (-0.18172,0.21405) circle [radius=0.7mm];
\fill (0.66680,-2.3111) circle [radius=0.7mm];
\end{tikzpicture}

\vspace{1cm}
\caption{A random geometric graph on $\sph^2$, with $N=100$ and $L=\frac{\pi}{8}$ (the blue circle is the equator).\label{fig:stereo}}
\end{center}
\end{figure}
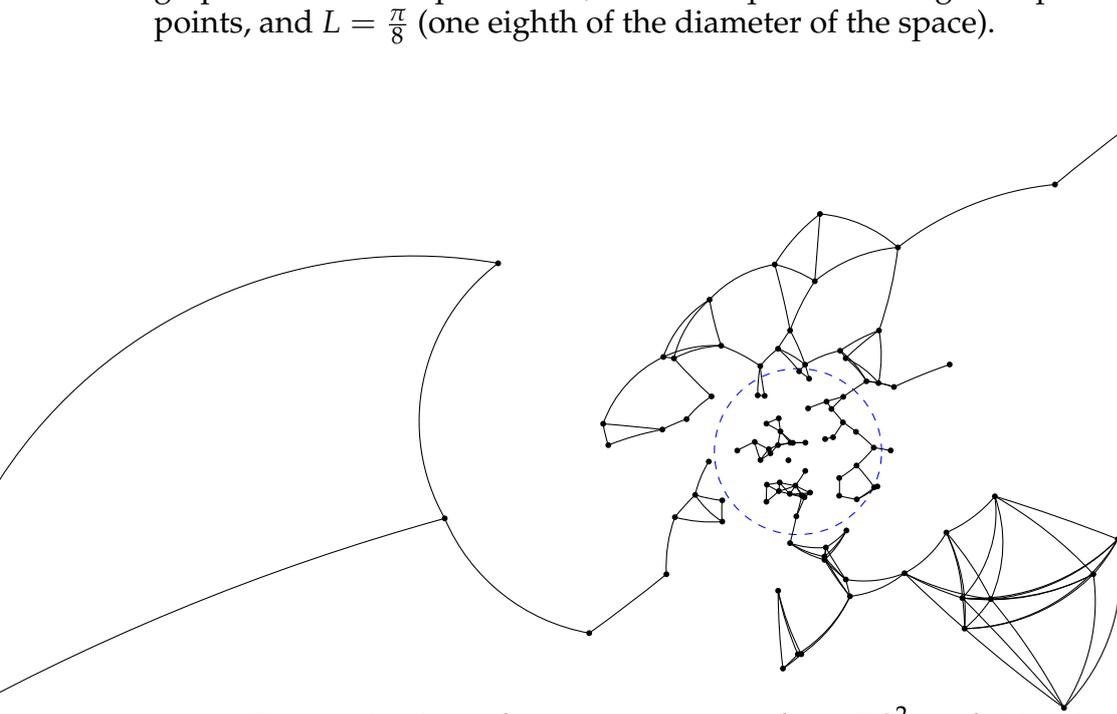

\subsection{Compact symmetric spaces}
Let us now explain which measured metric spaces $(X,d,m)$ will be allowed in this paper. We want $(X,d)$ to be a \emph{Riemannian manifold} (\emph{cf.} \cite{Jost11}), that is a smooth manifold $X$ endowed with scalar products $\scal{\cdot}{\cdot}_{\tang_xX}$ on each tangent space $\tang_xX$, these scalar products varying smoothly with $x$. The Riemannian structure allows one to measure the distance between two points:
$$d(x,y) = \inf_{\substack{\gamma : [0,1] \to X \text{ smooth path} \\ \gamma(0)=x,\,\,\gamma(1)=y}} \left(\int_{0}^1 \sqrt{\scal{\gamma'(t)}{\gamma'(t)}_{\tang_{\gamma(t)}X}}\DD{t}\right).$$
A geodesic on a Riemannian manifold $X$ is a (smooth) path that minimises locally the distances; it is the solution of an order $2$ differential equation, the Euler--Lagrange equation (\cite[Lemma 1.4.4]{Jost11}). If $X$ is a compact Riemannian manifold, then for any $x \in X$ and any vector $v$ of norm $1$ in $\tang_xX$, there is a unique geodesic $\gamma_{x,v} : \R \to X$ with $\gamma_{x,v}(0)=x$, $\gamma_{x,v}'(0)=v$ and $\|\gamma_{x,v}'(t)\|_{\tang_{\gamma_{x,v}(t)}X}=1$ for any $t \in \R$.\medskip

The computation of the spectrum of a random geometric graph on a Riemannian manifold $X$ relies on the harmonic analysis of this space. If $X$ has some symmetry properties, then this harmonic analysis turns into algebraic combinatorics, which allow exact calculations. Thus, in the sequel, we shall restrict ourselves to the more convenient setting of compact \emph{symmetric} Riemannian manifolds. A compact Riemannian manifold $X$ is called a (globally) \emph{symmetric space} if, for any $x\in X$, there exists a (unique) involutive isometry $s_x \in \mathrm{Isom}(X)$ that reverses the geodesics, that is to say that it sends $\gamma_{x,v}(t)$ to $\gamma_{x,v}(-t)$ for any $v \in \mathbb{S}(\tang_xX)$. Intuitively, this means that the geodesics meeting at a point $x \in X$ are arranged in a nice symmetric way around their starting point, see Figure \ref{fig:symmetric}. 
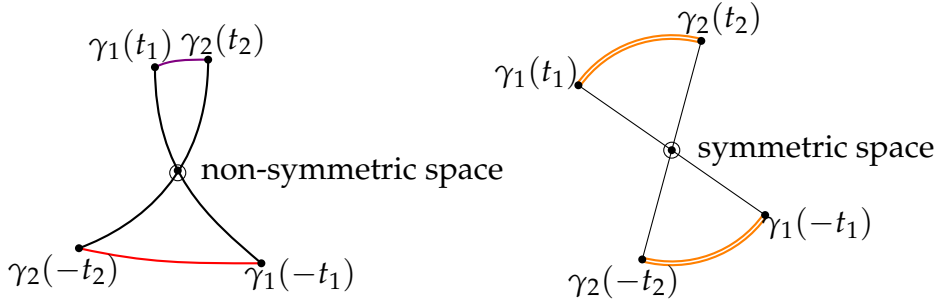
\begin{figure}[ht]
\begin{center}
\begin{tikzpicture}[scale=1]
\draw [thick,red] (1.1,-1.2) .. controls (0.1,-1.2) and (-0.4,-1.2) .. (-1.3,-1); 
\draw [thick,violet] (-0.3,1.4) .. controls (-0.1,1.5) and (0.1,1.5) .. (0.4,1.5);
\foreach \x in {(0,0.03),(-1.3,-1),(1.1,-1.2),(0.4,1.5),(-0.3,1.4)}
\fill \x circle (1.5pt);
\draw (0,0) circle (3pt);
\draw [thick] (-1.3,-1) .. controls (-0.5,-0.7) and (0.4,0) .. (0.4,1.5);
\draw [thick] (1.1,-1.2) .. controls (0.4,-0.6) and (-0.3,0) .. (-0.3,1.4);
\draw (2.3,0) node {non-symmetric space};
\draw (0.6,1.75) node {$\gamma_2(t_2)$};
\draw (110:1.8) node {$\gamma_1(t_1)$};
\draw (320:2.1) node {$\gamma_1(-t_1)$};
\draw (-1.5,-1.3) node {$\gamma_2(-t_2)$};
\begin{scope}[shift={(6.5,0.3)}]
\draw [thick,red!50!yellow,double] (75:1.5) arc (75:145:1.5);
\draw [thick,red!50!yellow,double] (255:1.5) arc (255:325:1.5);
\foreach \x in {(0,0),(75:1.5),(255:1.5),(145:1.5),(325:1.5)}
\fill \x circle (1.5pt);
\draw (0,0) circle (3pt);
\draw (75:1.5) -- (255:1.5);
\draw (145:1.5) -- (325:1.5);
\draw (70:1.85) node {$\gamma_2(t_2)$};
\draw (150:2.05) node {$\gamma_1(t_1)$};
\draw (333:2.15) node {$\gamma_1(-t_1)$};
\draw (250:1.85) node {$\gamma_2(-t_2)$};
\draw (1.9,0) node {symmetric space};
\end{scope}
\end{tikzpicture}
\end{center}
\caption{Geodesics on a symmetric space. \label{fig:symmetric}} 
\end{figure}
There is a complete classification of the (compact) symmetric spaces due to Cartan, see \cite[Chapter X]{Hel78}. To simplify a bit the discussion, we shall assume $X$ to be simply connected. In the general case, a connected but non simply connected compact symmetric space $X$ admits a universal cover $\widetilde{X}$ which is still a compact symmetric space, and whose covering map $\pi : \widetilde{X} \to X$ has finite degree. This allows one to transfer most results and techniques from $\widetilde{X}$ to $X$. Later, we shall for instance explain how to deal with the case of the special orthogonal groups $\SO(n\geq 3)$, which are symmetric spaces with fundamental group $\pi_1(\SO(n))=\Z/2\Z$, and which are covered by the simply connected spin groups $\Spin(n)$.\medskip

If $X$ is a simply connected compact symmetric space, then it is isometric to a unique product $X_1\times X_2\times \cdots \times X_r$ of such spaces, with each $X_i$ that cannot be split further. The $X_i$'s are called \emph{irreducible} or \emph{simple} simply connected compact symmetric spaces. Then, the classification of \emph{simple and simply connected compact symmetric spaces} (in short, ssccss) is the following:
\begin{enumerate}
\item either $X=G$ is one of the classical simple and simply connected compact Lie groups, associated to the root systems of type $A_{n\geq 1}$, $B_{n \geq 2}$, $C_{n \geq 3}$, $D_{n \geq 4}$, $\mathrm{G}_2$, $\mathrm{F}_4$, $\mathrm{E}_6$, $\mathrm{E}_7$ or $\mathrm{E}_8$. In this case, $(\mathrm{Isom}(X))^0 = G \times G$.
\item or, $X=G/K$, with $G$ simple and simply connected compact Lie group, and $K$ closed subgroup with $G^{\theta,0} \subset K \subset G^\theta$, where $G^\theta$ denotes the set of fixed points of an involutive automorphism $\theta : G\to G$, and $G^{\theta,0}$ is the connected component of the neutral element in $G^\theta$. In this case, $(\mathrm{Isom}(X))^0=G$.
\end{enumerate}
We call a ssccss of group type or of non-group type according to the aforementioned classification. The Riemannian structure on each ssccss $X=G$ or $X=G/K$ is unique up to a scalar multiple, and we shall explain in a moment how to construct it. Moreover, this Riemannian structure on $X$ yields a natural volume form $d\omega$ with finite mass.  After renormalisation, this volume form produces a probability measure $m= d\omega/(\int_X d\omega)$ on $X$ that is invariant by the group of isometries of $X$. Therefore, every ssccss is naturally endowed with a distance $d$ and a probability measure $m$.\medskip

Our objective is to study random geometric graphs in the general setting of ssccss. The harmonic analysis of the two types (group and non-group) is in theory quite similar, and in each case there is an explicit description of the spherical functions of the space (see the works of Helgason \cite{Hel70,Hel78,Hel84}). However, the manipulation of the spherical functions in the non-group case (which includes Grassmannian manifolds and Lagrangian Grassmannian manifolds) is in practice more difficult. \emph{Therefore, in this paper, we shall in many cases restrict our study to the group type, hence to the classical sscc Lie groups. We shall only treat the non-group type when the results extend almost immediately to this case.} More precisely, the non-group type ssccss will appear in the following sections:
\begin{itemize}
    \item Section \ref{subsec:gaussnongroup}: when studying the Gaussian regime in the symmetric spaces with rank one, the irreducible characters are replaced by the zonal spherical functions, which are in this case the orthogonal Laguerre or Jacobi polynomials, hence explicit and easy to manipulate.
    \item Section \ref{sec:poisson}: the Benjamini--Schramm local convergence holds for all the symmetric spaces, and the argument is exactly the same in the group and non-group case.
\end{itemize}
\medskip

\subsection{Compact Lie groups and normalisation of the Riemannian structure}\label{subsec:normalisation}
In the following we fix a simple simply connected compact (in short sscc) Lie group $G$. Given a compact Lie group $G$, the tangent space $\glie=\tang_{e_G}G$ at the neutral element $e_G$ is endowed with a structure of Lie algebra; we denote $[X,Y] = (\mathrm{ad}\,X)(Y)$. The opposite of the Killing form
\begin{equation}
\scal{X}{Y}_{\glie} = - \,\tr(\mathrm{ad}\,X \circ \mathrm{ad}\,Y).\label{eq:normalisation}
\end{equation}
is a symmetric and positive-definite bilinear form on $\glie$ which is invariant by the adjoint action of $G$ on $\glie$. We transport this scalar product to any tangent space $\tang_gG$ by the rule
$$\scal{V}{W}_{\tang_gG} = \scal{d_gL_{g^{-1}} (V)}{d_gL_{g^{-1}}(W)}_{\tang_{e_G}G}$$
where $L_{g^{-1}} : G \to G$ is the multiplication on the left by $g^{-1}$, and $d_gL_{g^{-1}}$ is the differential of this map at $g$. By construction, the Riemannian structure thus obtained is $G$-invariant on the left, and it is also $G$-invariant on the right since $\scal{\cdot}{\cdot}_\glie$ is $\mathrm{Ad}(G)$-invariant. \emph{In the sequel, the Riemannian structure on a classical sscc Lie group will always be the one associated to the bilinear form of Equation \eqref{eq:normalisation}}. The corresponding balls $B_{(x,L)} = \{y \in G\,|\,d(x,y)\leq L\}$ for the geodesic distance $d$ will be described in Section \ref{subsec:distancesliegroup}. We shall recall in a moment that almost all the sscc Lie groups are classical groups of matrices over the real, the complex or the quaternionic numbers. The Killing form writes then as 
$\tr(\mathrm{ad}\,X\circ \mathrm{ad}\,Y) = c\,\mathrm{Re}(\tr(XY)),$
with $c=n-2$ when $\glie=\mathfrak{so}(n)$; $c=2n$ when $\glie=\mathfrak{su}(n)$; and $c=4n+4$ when $\glie=\mathfrak{sp}(n)$ (the real part of the trace is only needed in this last case).

\begin{example}
Consider the group of special unitary matrices
$$\SU(2)=\{M \in \mathrm{M}(2,\C)\,\,|\,\,MM^* = M^*M = I_2,\,\,\det M = 1\}.$$ 
It is a sscc Lie group with real dimension $3$, and it is diffeomorphic to the unit sphere $\mathbb{S}^3$. Its Lie algebra is the space of anti-hermitian matrices $\mathfrak{su}(2) = \{M \in \mathrm{M}(2,\C)\,\,|\,\,M^*=-M,\,\,\tr(M)=0\}$, and the opposite Killing form is equal to $\scal{M}{N}_{\mathfrak{su}(2)} = -4\,\tr(MN)=4\,\tr(M^*N)$. For the corresponding Riemannian structure, the distance in $\SU(2)$ between two unitary matrices $U_1$ and $U_2$ is
$$d(U_1,U_2) = 2\sqrt{2}\,|\theta(U_1U_2^{-1})|,$$
where $\E^{\pm \I\theta(M)}$ are the two eigenvalues of a unitary matrix $M \in \SU(2)$, with $\theta(M) \in [0,\pi]$. Indeed, $d(U_1,U_2)=d(U_1U_2^{-1},I_2) = d(\diag(\E^{\I \theta(U_1U_2^{-1})},\E^{-\I \theta(U_1U_2^{-1})}),I_2)$, and a geodesic connecting $I_2$ to the diagonal matrix $\diag(\E^{\I \theta},\E^{-\I\theta})$ is
$$\gamma : t\in [0,1]\mapsto \exp \begin{pmatrix}
t\I \theta & 0 \\ 0 & -t\I\theta
\end{pmatrix},$$
which has constant speed $\|\gamma'(t)\|=2\sqrt{2}\,\theta$. In particular, the diameter of $\SU(2)$ with this normalisation is $2\sqrt{2}\,\pi$.
\end{example}\medskip

If $X=G/K$ is a ssccss of non-group type, we denote $\pi_X : G \to X$ the canonical projection, and $o=\pi_X(e_G)=K$. The tangent space $\tang_oX$ identifies through $\tang_{e_G}\pi_X$ with the Killing orthogonal complement $\mathfrak{p}$ of the Lie subalgebra $\mathfrak{k}$ of $K$ in $\glie$. The restriction of the scalar product from Equation \eqref{eq:normalisation} to $\mathfrak{p}$ can be transported to any tangent space $\tang_xX$ by using the action of $G$:
$$\scal{V}{W}_{\tang_xX} = \scal{d_xA_{g^{-1}} (V)}{d_xA_{g^{-1}}(W)}_{\tang_{o}X},$$
where $x = g\cdot o$ and $A_g : X \to X$ is the action of $g$ on $X$. By construction, the Riemannian structure thus obtained makes $G$ act on $X$ by isometries. \emph{In the sequel, the Riemannian structure on a ssccss of non-group type will be the one obtained by this construction. However, in the specific case of ssccss of rank one (Section \ref{subsec:gaussnongroup}), we shall multiply this Riemannian metric by a multiplicative constant so as to fit the classical definitions}. The following example explains why this modification is natural.

\begin{example}
Suppose that $X=\mathbb{C}\mathbb{P}^n=\SU(n+1)/\mathrm{S}(\Unit(n)\times\Unit(1))$ is the complex projective space. If $Z=(z_0,\ldots,z_n)$ belongs to $\C^{n+1}\setminus \{0\}$, we denote $[Z]=[z_0,z_1,\ldots,z_n]$ the corresponding line in $\C\mathbb{P}^n$. The reference point in $X$ is $o=[0,\ldots,0,1]$. The standard Riemannian metric on $\mathbb{C}\mathbb{P}^n$ is the Fubini--Study metric, defined by
$$\scal{V}{W}_{[Z]} = \frac{\|Z\|^2\,(V,W)-(V,Z)\,(Z,W)}{\|Z\|^4},$$
where $\|Z\|^2=\sum_{i=0}^n |z_i|^2$ and $(Y,Z)$ is the real scalar product on $\C^n$ corresponding to this norm. In this formula, a vector $V \in \C^{n+1}$ is sent to the element of $\tang_{[Z]}\C\mathbb{P}^{n}$ which is the derivative at $t=0$ of the smooth curve $([Z+tV]=[z_0+tv_0,\ldots,z_n+tv_n])_{t \in \R}$; the kernel of the linear map $\C^{n+1}\to \tang_{[Z]}\C\mathbb{P}^{n}$ is the line $[Z]$. In particular, if $[Z]=o$, then we have the identification $\C^n = \tang_o\C\mathbb{P}^n$, and the scalar product on $\C^n$ inherited from the Fubini--Study metric is simply $\scal{V}{V}=\sum_{i=0}^{n-1}|V_i|^2$. Now, the Riemannian structure obtained by $\SU(n+1)$-transport of the restriction to $\mathfrak{p} = \tang_o\C\mathbb{P}^n$ of the opposite Killing form is a scalar multiple of this metric. Indeed, we have
$$ \mathfrak{p} = \left\{ \begin{pmatrix}
0 & \cdots & 0 & z_0 \\
\vdots & & \vdots & \vdots \\
0 & \cdots & 0 &z_{n-1}\\
-\overline{z_0}&\cdots &-\overline{z_{n-1}} & 0
\end{pmatrix},\,\,\,(z_0,\ldots,z_{n-1})\in\C^n \right\},$$
and the tangent map $\tang_{I_{n+1}}\pi_{\C\mathbb{P}^n}$ sends the skew-Hermitian matrix $M(z_0,\ldots,z_{n-1})$ to the vector $(z_0,\ldots,z_{n-1})$ in $\C^n = \tang_o\C\mathbb{P}^n$. As the Killing form of $\SU(n+1)$ is $(A,B)\mapsto (2n+2)\,\tr (AB)$, we conclude that the scalar product on $\C^n$ given by the structure of symmetric space is $\scal{V}{V}=(4n+4)\sum_{i=0}^{n-1}|V_i|^2$, hence $(4n+4)$ times the "standard" scalar product.
\end{example}
\medskip

To conclude this section, let us detail a bit more the classification of sscc Lie groups. They are:
\begin{itemize}
    \item the special unitary groups $\SU(n)$ with $n \geq 2$: 
    $$\SU(n) = \{M \in \mathrm{M}(n,\C)\,\,|\,\, M^*M=MM^*=I_n,\,\,\det M = 1\};$$
    \item the compact symplectic groups $\SP(n)$ with $n \geq 2$: 
    $$\SP(n) = \{M \in \mathrm{M}(n,\Hq)\,\,|\,\, M^\dagger M=MM^\dagger=I_n\},$$
    where $(M^\dagger)_{ij} = \overline{M_{ji}}$, the conjugate of a quaternionic number $a + \I b + \J c + \mathrm{k} d$ being  $a - \I b - \J c - \mathrm{k} d$;
    \item the spin groups $\Spin(n)$ with $n \geq 7$, which are double covers of the special orthogonal groups
    $$\SO(n) = \{M \in \mathrm{M}(n,\R)\,\,|\,\, M^tM=MM^t=I_n,\,\,\det M = 1\},$$
    and which are simply connected (whereas $\pi_1(\SO(n))=\Z/2\Z$ for any $n \geq 3$).
\end{itemize}
There are also $5$ exceptional cases which are associated to the root systems $\mathrm{G}_2$, $\mathrm{F}_4$, $\mathrm{E}_6$, $\mathrm{E}_7$ and $\mathrm{E}_8$, and which all related to the geometry of the algebra of octonions (see \cite{Baez02}). For instance, consider the exceptional Jordan algebra $A(3,\Octo)$ (the so-called Albert algebra), which is the algebra of real dimension $27$ that consists in Hermitian $3\times 3$ octonionic matrices, endowed with the Jordan product
$$A\circ B = \frac{AB+BA}{2}.$$
One can show that the automorphism group of this algebra is a simply connected simple compact Lie group of real dimension $52$, associated to the root system $\mathrm{F}_4$. As the exceptional Lie groups do not possess adequate systems of (matrix) coordinates, it is quite difficult to express distances on them. Thus, in these cases, our theoretical results will remain mainly abstract. On the other hand, for the "classical" sscc groups 
$$ \SU(n \geq 2),\,\,\SP(n \geq 2),\,\,\Spin(n \geq 7),$$
all our results will be explicit; see the appendix (Section \ref{sec:appendix}) for explanations and computations on these groups. Note that one can extend many of our results to a slightly more general setting, with reductive connected Lie groups instead of sscc Lie groups. The case of the special orthogonal groups $\SO(n)$, which are not simply connected, is for instance explained in Remark \ref{remark:fromspintoso}.
\medskip

\subsection{Main results and outline of the paper}
When studying the random geometric graphs $\Gamma=\GEOM(N,L)$ on a compact Riemannian manifold $X$, there are two interesting asymptotic regimes which one can consider:
\begin{enumerate}
    \item the \emph{Gaussian} regime, where $L$ is fixed but $N$ goes to infinity; in this setting the adjacency matrix $A_\Gamma$ is dense.

    \item the \emph{Poissonian} regime, where $L=L_N$ decreases to zero in such a way that each vertex of $\Gamma$ has a $O(1)$ number of vertices; in this setting the adjacency matrix $A_\Gamma$ is sparse.
\end{enumerate}

\paragraph{\textbf{Gaussian regime}} The adjacency matrix $A_\Gamma$ can be considered as a finite-dimensional (random) approximation of the operator of convolution by the kernel $h(x,y) = 1_{d(x,y)\leq L}$. In particular, a result due to Giné and Koltchinskii \cite{GK00} relates the asymptotics of the spectrum of $A_\Gamma$ to the eigenvalues of the operator of convolution by $h$ (Section \ref{subsec:ginekoltchinskii}). Suppose that $X=G$ is a ssccss of group type. By using the representation theory of compact Lie groups, one can compute these eigenvalues, which drive the highest frequencies of the random geometric graph, that is the asymptotic behavior of the largest eigenvalues of $A_{\GEOM(N,L)}$. In Sections \ref{subsec:convolution} and \ref{subsec:weightlattice}, we present the arguments from representation theory that show that there is one limiting eigenvalue $c_\lambda$ of 
$$\frac{1}{N}\,\mathrm{Spec}(A_{\GEOM(N,L)}) = \left(\frac{c_1(N)}{N} \geq \frac{c_2(N)}{N} \geq \cdots \geq \frac{c_N(N)}{N} \right)$$ for each dominant weight $\lambda$ of the group $G$. This eigenvalue $c_\lambda$ has a multiplicity related to the dimension of the corresponding irreducible representation of $G$. In Section \ref{sec:gaussian}, we complete this theoretical result by an explicit calculation of $c_\lambda$ (Theorem \ref{thm:gaussianlimit}). Thus, each limiting eigenvalue $c_\lambda$ is given by a finite linear combination of values of Bessel functions of the first kind $J_\beta$, see Section \ref{subsec:gaussianlimit}. In Section \ref{subsec:spectralgap}, we deduce from this result an estimate of the spectral radius and of the spectral gap of the matrix $A_{\GEOM(N,L)}$ when $L$ is fixed and $N$ goes to infinity. If instead of a group $G$ we consider a ssccss of non-group type $G/K$, the same techniques apply in theory, but with the irreducible representations replaced by the spherical representations of the pair $(G,K)$, and the irreducible characters by the zonal spherical functions. These functions can be cumbersome to deal with in the general case, but if $G/K$ has rank one (meaning that there are no totally geodesic flat submanifold of dimension strictly larger than $1$), then they are simply the Laguerre or Jacobi polynomials, and the computations can be explicitly performed; we explain this in Section \ref{subsec:gaussnongroup}. \medskip

\paragraph{\textbf{Poissonian regime}} We consider again a general ssccss $X$. The connection distance $L_N$ is normalised as follows:
$$L_N = \left(\frac{\ell}{N}\right)^{\frac{1}{\dim X}},$$
with $\ell>0$ fixed. Then, the number of neighbors of any vertex $v_i$ of $\GEOM(N,L_N)$ follows a binomial law 
$$\mathcal{B}\!\left(N-1\,,\,p_{L_N}=\frac{\vol(B(v_i,L_N))}{\vol(X)}\right),$$
where $\vol(X)$ is the volume of the symmetric space $X$ for the volume form associated to the Riemannian structure given by Equation \eqref{eq:normalisation}, and $\vol(B(v_i,L_N))$ is the volume of the ball in $X$ with center $v_i$ and radius $L_N$. As $N$ goes to infinity, this volume behaves like the volume of a Euclidean ball with the same dimension, which is
$$\frac{\pi^{\frac{\dim X}{2}}}{\Gamma(1+\frac{\dim X}{2})}\,(L_N)^{\dim X} = c(\dim X)\,\frac{\ell}{N},\quad \text{with } c(\dim X) = \frac{\pi^{\frac{\dim X}{2}}}{\Gamma(1+\frac{\dim X}{2})} $$
Therefore, in the limit $N \to \infty$, the number of neighbors of any vertex $v_i$ of $\GEOM(N,L_N)$ has a law close to a Poisson law of parameter $\frac{c(\dim X)}{\vol(X)}\,\ell$; in particular it is a $O(1)$. More generally, for any $n \geq 1$ and any fixed vertex $v_i$, one can show that the subgraph of $\GEOM(N,L_N)$ which consists in vertices at distance smaller than $n$ from $v_i$ has a limit in law in the set of rooted finite graphs. This is the \emph{convergence in the local Benjamini--Schramm sense} \cite{BS01}, and the limit only depends on the dimension $\dim X$ and on the parameter $\ell/\vol(X)$; see Sections \ref{subsec:benjaminischramm}--\ref{subsec:infernal}, and in particular our Theorem \ref{thm:poisson_BS}. We prove this result by developing a general theory relating the convergence of pointed metric spaces to the local Benjamini--Schramm convergence of the random geometric graphs drawn on such spaces; see Theorem \ref{thm:convergencespacesandgraphs}.
\medskip

It is then known from \cite{BL10,ATV11,BLS11,Bor16} that under appropriate assumptions, the local convergence of random graphs implies the convergence in law of the spectral measures
$$\nu_N = \frac{1}{N}\sum_{i=1}^N \delta_{c_i(N)}$$
of the graphs $\GEOM(N,L_N)$ towards a limiting probability measure $\mu$. We check the conditions to apply this result in Section \ref{subsec:poissonlimit}; one has in particular to verify that two random roots in $\GEOM(N,L_N)$ give rise to two \emph{independent} local limits, and this is a consequence of the structure of group or homogeneous (symmetric) space. We also prove that the limiting measure $\mu$ of the spectral measures $\nu_N$ is determined by its moments, and that we have convergence in probability of the moments. \medskip

\paragraph{\textbf{From random graphs to a conjecture in representation theory}}
Moving on from there, one can try to obtain more information on the limiting distribution $\mu$. For instance, one expects it to be compactly supported, but this result does not follow from the abstract link between local Benjamini--Schramm convergence and convergence of the spectral measure. The crude upper bounds proving that $\mu$ is determined by its moments also do not imply the compactness of the support. This leads one to try to improve these bounds, and to develop techniques that enable one to compute all the moments $M_s=\int_\R x^s\,\mu(\!\DD{x})$. In the sequel, we focus on the case where $X=G$ is a sscc Lie group.
\begin{enumerate}
    \item In Section \ref{subsec:circuit}, we start by giving a circuit expansion of the expected moments, which is a combinatorial expansion of $\esper[\int_{\R} x^s \,\nu_N(\!\DD{x})]$ involving certain labeled graphs. This expansion implies that each moment $M_s$ is a polynomial with degree $s-1$ in the parameter $\ell$ (see Theorem \ref{thm:circuitexpansion}).
    \item Since the graph limit in the local sense does not depend on the group $G$ and only depends on $\dim G$ and $\ell/\vol(G)$, the same is true for the limiting spectral measure $\mu$, and therefore one can replace $G$ by a simpler group, namely, the torus $\Tor^{\dim G}$. We plan to pursue this approach in a forthcoming paper; even with this simplification, it is not easy to obtain good bounds on the moments $M_s$, as it amounts to count (reduced) circuits with certain weights (see Remark \ref{remark:lastone}). 
\end{enumerate}
Aside from the search for good upper bounds on the moments $M_s$, there is actually an interest in keeping the base model $G$ instead of the flat model $\Tor^{\dim G}$. It turns out that the Poissonian regime of random geometric graphs, which we approach in Section \ref{sec:poisson} with the geometric notion of local Benjamini--Schramm convergence, can also be studied with representation theoretic tools (Section \ref{sec:ART}). In this setting, the computation of the moments sheds a different light on the degeneration from the Gaussian to the Poissonian regime, and it eventually leads to an algebraic conjecture which we state below, and which concerns certain joint integrals of characters of $G$. Let us explain briefly how one is led to it:
\begin{enumerate}
    \item The formulas in the Gaussian regime (Section \ref{sec:gaussian}) rely mainly on the \emph{Weyl formula} for the characters of the irreducible representations of $G$. When going from the Gaussian to the Poissonian regime and trying to compute the first moments $M_s$ in the model $G$ (specifically, for $s \leq 5$), the Weyl formula degenerates into a product of partial derivatives, and the \emph{sums over dominant weights} become integrals over Weyl chambers and products thereof; see Section \ref{subsec:onepoint}. This is a typical result from asymptotic representation theory, and as far as we know this explicit degeneration has not been pointed at previously in a study of random objects associated to groups.
    \item The previous degeneration concerns the terms of the circuit expansion of a moment $M_s$ which corresponds to a reduced circuit with one vertex. When $s \in \{6,7\}$, one starts to see contributions from reduced circuits with two vertices, and their asymptotics is related to asymptotic formulas for the \emph{Littlewood--Richardson coefficients} associated to large dominant weights. In the general case, these asymptotic formulas come from the Kashiwara--Lusztig theory of crystal bases and the Berenstein--Zelevinsky theory of string polytopes; they involve positive measures with piecewise polynomial densities, against which one integrates partial derivatives of Bessel functions in order to compute the contributions of the reduced circuits on two vertices; see Section \ref{subsec:twopoints}. A similar kind of degeneration has been observed when studying Brownian motions in Weyl chambers, see \cite{BBO05}. 
    \item Starting with $s \geq 8$, the circuit expansion of $M_s$ involves some reduced circuits with more than $3$ vertices. These contributions are limits of certain series whose terms involve graph functionals of the irreducible characters of $G$ (Section \ref{subsec:graphfunctionals}). If we suppose that the limiting process happens in the same way as for $2$-vertices reduced circuits, then we obtain the following conjecture. Suppose that $G$ is a sscc Lie group and that $S=(V(S),E(S))$ is a finite graph, possibly with multiple edges or loops and with an arbitrary orientation $a\to b$ of each edge $\{a,b\} \in E(S)$. We associate to each edge $e \in E(S)$ a dominant weight $\lambda_e$, which parametrises an irreducible finite-dimensional representation of $G$; see Section \ref{subsec:weightlattice} for a reminder on this theory. The \emph{graph functional} associated to $G$, $S$ and to this choice of dominant weights is:
\begin{equation}
    \mathrm{GF}_S((\lambda_e)_{e \in E(S)}) = \int_{G^{k}}\left( \prod_{(a \to b) \in E(S)} \ch^{\lambda_e}(g_a(g_b)^{-1})\right)\DD{g_1}\,\cdots \DD{g_{k}}, \label{eq:graphfunctional}
\end{equation}
where $V(S)=\{1,2,\ldots,k\}$, $\DD{g}$ is the Haar measure on $G$, and $\ch^{\lambda_e}$ is the character of the irreducible representation $V^{\lambda_e}$ with highest weight $\lambda_e$. When $S$ has one or two vertices and several edges or loops, one recovers classical quantities such as the dimensions $\dim V^\lambda$ or the Littlewood--Richardson coefficients $c_{\nu}^{\lambda,\mu}$. The graph functionals defined by Equation \eqref{eq:graphfunctional} are generalisations of these quantities, and thus it is natural to try to compute them. Our study of random geometric graphs in the Poissonian regime led us to the following conjecture, which seems important:
\end{enumerate}
\begin{conjecture}\label{conj:graphfunctional}
Fix a sscc Lie group $G$ and a connected graph $S$ as above, with $k$ vertices and $r$ edges. We denote $\Z\Omega$ the weight lattice of $G$, see Section \ref{subsec:weightlattice}. There exists a sublattice $A_S \subset (\Z\Omega)^r$ with maximal rank $rd$ and such that:
\begin{itemize}
     \item If the integrality condition $(\lambda_e)_{e \in E(S)} \in A_S$ is not satisfied, then $\mathrm{GF}_S((\lambda_e)_{e \in E(S)})$ vanishes.
     \item If the integrality condition $(\lambda_e)_{e \in E(S)} \in A_S$ is satisfied, then $\mathrm{GF}_S((\lambda_e)_{e \in E(S)})$ equals the number of integer points in a polytope $\mathscr{P}((\lambda_e)_{e \in E(S)})$ whose generic dimension is 
     $$\dim \mathscr{P}((\lambda_e)_{e \in E(S)}) = lr-(2l+d)(k-1),$$ 
with $d = \rank(G)$ and $l=\frac{\dim (G) - \rank(G)}{2}$. Here by generic we mean that the dimension of the polytope is equal to the right-hand side as soon as the dominant weights $\lambda_e$ are in the interior of the Weyl chamber. The equations that determine the polytope $\mathscr{P}((\lambda_e)_{e \in E(S)})$ are affine functions of the weights, and $\mathscr{P}((\lambda_e)_{e \in E(S)})$ is a part of the string cone $\mathscr{S}\!\mathscr{C}(G^r)$ of the sscc Lie group $G^r$.
 \end{itemize} 
\end{conjecture}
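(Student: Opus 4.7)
The plan is to translate $\mathrm{GF}_S((\lambda_e))$ into the dimension of a space of $G$-invariants in a tensor product assembled by the graph $S$, and then to apply the string polytope machinery of Kashiwara--Lusztig and Berenstein--Zelevinsky--Littelmann. First, I would choose a spanning tree $T \subset S$ rooted at vertex $1$ and use left-invariance of the Haar measure to set $g_1 = e_G$. The change of variables $h_e = g_{a(e)}(g_{b(e)})^{-1}$ for $e \in T$ leaves an integral over the $k-1$ independent Haar variables $h_e$, in which each tree edge contributes a factor $\ch^{\lambda_e}(h_e)$ and each non-tree edge $e' \notin T$ contributes $\ch^{\lambda_{e'}}(w_{e'})$, where $w_{e'}$ is the word in the $h_e^{\pm 1}$ read off from the unique cycle of $T \cup \{e'\}$. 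Expanding each character as $\tr \rho^{\lambda_e}$ and applying Peter--Weyl / Schur orthogonality to integrate out each $h_e$, the result is the dimension of a space of $G$-invariants in a tensor product of the $V^{\lambda_{e'}}$ and their duals, the contraction pattern being prescribed by $S$. In the basic cases this recovers $\dim V^\lambda$ (one vertex with a loop) and $\dim(V^{\lambda_1} \otimes \cdots \otimes V^{\lambda_r})^G$ (two vertices with $r$ parallel edges, i.e.\ the Littlewood--Richardson setting), and their common generalization for arbitrary connected $S$.

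The lattice $A_S$ arises from central-character constraints at each non-gauged vertex: the signed sum of the weights of the representations meeting the vertex must lie in the root lattice of $G$, else the corresponding invariant space is zero and $\mathrm{GF}_S$ vanishes. Collecting these $k-1$ conditions cuts out a full-rank sublattice $A_S \subset (\Z\Omega)^r$. For the polytope count on $A_S$, I would invoke the Kashiwara crystal basis together with the string cone of Berenstein--Zelevinsky and Littelmann: for a fixed reduced expression of the longest Weyl element $w_0$, each $V^{\lambda_e}$ carries a basis indexed by integer points in an $l$-dimensional slice of $\mathscr{S}\!\mathscr{C}(G)$ cut out by affine inequalities depending linearly on $\lambda_e$. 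The tensor product rule for crystals translates the invariance at each vertex into further affine inequalities, and gluing the local polytopes along the edges of $S$ (that is, identifying the crystal coordinates of each $V^{\lambda_e}$ at its two endpoints) should produce $\mathscr{P}((\lambda_e))$ as a slice of $\mathscr{S}\!\mathscr{C}(G^r)$. The generic dimension then follows from a parameter count: $r$ edges contribute $l$ crystal coordinates each, and $k-1$ vertex constraints each impose codimension $\dim G = 2l + d$, giving $\dim \mathscr{P} = lr - (2l+d)(k-1)$.

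The main obstacle is the explicit polytopal gluing when $S$ has several independent cycles. For $k \leq 2$, $\mathscr{P}$ is classical (Gelfand--Tsetlin polytopes, Knutson--Tao hives, Berenstein--Zelevinsky), but for larger graphs one must simultaneously intersect several string cones along shared crystal parameters, and it is not automatic that the intersection remains a single convex polytope, cut out by affine inequalities in the $(\lambda_e)$ and of the predicted generic dimension. I expect that this will require either an extension of the hive model to higher-valence vertices, leveraging the cluster-algebraic structure on double Bruhat cells that underlies the Berenstein--Zelevinsky parametrizations, or a symplectic-reduction viewpoint on a product of affine Grassmannians indexed by the edges of $S$. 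A secondary difficulty is matching the ambient lattice of $\mathscr{P}$ with the sublattice $A_S$ produced by the central-character conditions, so that the integer-point count in $\mathscr{P}$ really coincides with $\mathrm{GF}_S$ on $A_S$.
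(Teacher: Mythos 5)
This is stated in the paper as a \emph{conjecture}; the paper offers only the dimension-counting heuristic of Section~\ref{subsec:graphfunctionals} (which your parameter count $lr$ minus $(2l+d)(k-1)$ reproduces accurately) and never proves it. So there is no paper proof to compare against, and your text is correctly read as a plan of attack. That said, the plan contains a substantive error early on, and the error is serious enough to undermine the whole strategy --- and in fact to refute the conjecture as stated.

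The error is the claim that, after choosing a spanning tree and integrating out the $h_e$ by Schur orthogonality, ``the result is the dimension of a space of $G$-invariants in a tensor product of the $V^{\lambda_{e'}}$ and their duals.'' This is true for $k=1$ (where $\mathrm{GF}_S=\prod_e\dim V^{\lambda_e}$) and for $k=2$ (where it is $\dim(\bigotimes_e V^{\lambda_e})^G$, the multi--Littlewood--Richardson coefficient), but it is false as soon as $k\geq 3$ and the graph has cycles passing through three or more vertices. In that case the contraction pattern you describe produces honest \emph{traces}, not invariant projections, and these traces introduce factors $1/d_\lambda$. Concretely, after gauge-fixing and a change of variables one has, for any $\alpha,\beta,\gamma\in\hatG$,
\begin{equation*}
\int_{G^2}\ch^\alpha(u)\,\ch^\beta(v)\,\ch^\gamma(uv)\DD{u}\DD{v}
=\frac{\delta_{\alpha,\beta}\,\delta_{\gamma,\alpha^*}}{d_\alpha},
\end{equation*}
which is a rational number, not an integer, whenever $d_\alpha>1$. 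Applied to the reduced circuit of Figure~\ref{fig:smallproblem} (three vertices, five edges, underlying graph $S$ with double edges $\{1,3\}$ and $\{1,4\}$ and a single edge $\{3,4\}$), one computes, after gauge-fixing $g_1=e_G$ and setting $u=g_3^{-1}$, $v=g_4^{-1}$,
\begin{equation*}
\mathrm{GF}_S((\lambda_e))
=\sum_{\alpha,\beta\in\hatG}c^{\lambda_1,\lambda_2}_{\alpha}\,c^{\lambda_4,\lambda_5}_{\beta}\,
\frac{\delta_{\alpha,\beta}\,\delta_{\lambda_3,\alpha^*}}{d_{\lambda_3}}
=\frac{c^{\lambda_1,\lambda_2}_{\lambda_3^*}\,c^{\lambda_4,\lambda_5}_{\lambda_3^*}}{d_{\lambda_3}},
\end{equation*}
where $\lambda_3$ is the weight assigned to the single edge. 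For $G=\SU(2)$ with $\lambda_1=\lambda_2=\lambda_4=\lambda_5=\omega$ (strictly dominant) and $\lambda_3=2\omega$ (strictly dominant), this gives $\mathrm{GF}_S=\tfrac{1}{3}$, and more generally $\mathrm{GF}_S(N\omega,N\omega,2N\omega,N\omega,N\omega)=\tfrac{1}{2N+1}\neq 0$ for every $N\geq 1$. This contradicts both branches of the conjecture: the value is nonzero, yet not a lattice-point count (not even an integer), and it in particular refutes the vanishing prediction of Remark~\ref{remark:vanishing}, which asserts that the conjectured polytope is empty for this very graph. So before trying to construct $A_S$ and $\mathscr{P}$, you should drop the ``dimension of invariants'' interpretation for $k\geq 3$: the correct object is a tensor-network contraction that mixes invariant projectors with traces, and any polytope statement would have to account for the resulting denominators $1/d_\lambda$.

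The other ingredients of your plan --- the central-character lattice $A_S$ from the $k-1$ independent vertex constraints, the use of Berenstein--Zelevinsky/Littelmann string cones for the two-vertex pieces, and the observation that the gluing step for several independent cycles is the main obstacle --- are sound as far as they go, and they do reproduce the scaling $\mathrm{GF}_S(t\lambda_\bullet)=O\bigl(t^{lr-(2l+d)(k-1)}\bigr)$ that motivated the conjecture. But the example above shows that this asymptotic statement does not upgrade to an exact lattice-point formula, so either the conjecture must be restricted to a class of graphs not including all $2$-connected reductions with $k\geq 3$, or the conclusion must be weakened (for example, to a statement about the piecewise-rational rather than piecewise-polynomial structure of $\mathrm{GF}_S$).
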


This conjecture implies some vanishing results which do not seem trivial at all; see Remark \ref{remark:vanishing}. We probably would never have obtained this conjecture without examining this concrete problem of computation of the moments $M_s$; it is a typical example of the interplay between random objects considered on spaces which admit a group of symmetry, and the asymptotic representation theory of these groups. Note that the conjecture is interesting in itself, but not at all for the original problem stated at the beginning of the paragraph (computing bounds on $M_s$), which is more of a combinatorial nature and which we do not intend to solve here (it is then required to consider the flat model $\Tor^{\dim G}$). Our last Section \ref{sec:ART} is devoted to the presentation of this conjecture, following the arguments that we have briefly exposed above. We also found it essential to explain how the degeneration from the Gaussian to the Poissonian regime of geometric graphs can be followed in representation theoretic terms, with degenerations of the Weyl formula, of sums over dominant weights and of Littlewood--Richardson coefficients; these results will certainly be interesting for specialists of asymptotic representation theory. A reader with a probabilistic background might not be familiar with the arguments from representation theory. He will find in this case:
\begin{itemize}
     \item in Section \ref{sec:representationtheory}, a reminder of the classical Cartan--Weyl representation theory of sscc Lie groups; we also use this section to fix notations.

     \item an appendix (Section \ref{sec:appendix}) with a list of conventions and results (choice of the maximal tori, description of the root systems and of the weight lattices, computation of the volumes of the groups, \emph{etc.}); it allows one to apply concretely our results to the classical sscc Lie groups ($\SU(n)$, $\SP(n)$, $\SO(n)$ and $\Spin(n)$).

     \item a second appendix (Section \ref{sec:crystal}) with an explanation of the theory of crystals of representations and string polytopes; one can skip these explanations if one is not interested in the algebraic details that leads to Conjecture \ref{conj:graphfunctional}.  
 \end{itemize}
The reader with a more advanced knowledge of these algebraic results can safely skip these sections.

\subsection*{Acknowledgments}
I would like to address my thanks to the organizers of the seminar on kernel random matrices at University Paris-Sud (Orsay), who introduced me to the problem of the spectrum of random geometric graphs. In particular, I am much indebted to Édouard Maurel-Segala, who explained to me the somewhat easier case of geometric graphs on $\ell^\infty$-tori. I also thank Reda Chhaibi for several discussions that we had on the subject, and for his precious comments. I learned about the link between Benjamini--Schramm convergence and the convergence of spectral measures from talks given in the Workgroup on random matrices and graphs at the Institut Henri Poincaré (MEGA), and I am very thankful to its organizers. A significant progress on this project was made during a conference in Les Diablerets (Switzerland) in January 2017, and I would like to thank the organizers of this conference for their invitation. Finally, I am indebted to an anonymous referee for many constructive remarks on a previous version of this paper, which allowed to improve a lot the presentation of our results.
\bigskip

\section{Ingredients from representation theory}\label{sec:representationtheory}
In this section, $G$ is a fixed sscc Lie group, and $L>0$ is a fixed level. The uniform probability measure $m$ on this space (Haar measure) will be denoted $\!\DD{g}$ or $\!\DD{x}$. By combining a result of Giné and Koltchinskii and the representation theory of compact groups, we relate the spectrum of the random adjacency matrix $A(N,L)$ of $\GEOM(N,L)$ to the spectrum of an integral operator on $\leb^2(G,\!\DD{g})$. This integral operator will be explicitly diagonalised in Section \ref{sec:gaussian}. The present section will also allow us to introduce many ingredients from representation theory that we shall use throughout the paper.\medskip

\subsection{The Giné--Koltchinskii law of large numbers}\label{subsec:ginekoltchinskii}
We denote $\leb^2(G,\!\DD{g})$ the set of complex-valued measurable and square-integrable functions on $G$. Let $h(x,y)$ be a real symmetric function on $G$, such that $\iint_{G^2} (h(x,y))^2 \DD{x}\DD{y} < +\infty$. The convolution by $h$ induces a integral operator $T_h$ on $\leb^2(G,\!\DD{g})$:
\begin{align*}
T_h : \leb^2(G,\!\DD{g}) & \to\leb^2(G,\!\DD{g}) \\
f &\mapsto \left(T_h(f) : x \mapsto \int_G h(x,y)\,f(y)\DD{y}\right).
\end{align*}
This operator is auto-adjoint and of Hilbert--Schmidt class: given any (countable) orthonormal basis $(e_i)_{i \in I}$ of $\leb^2(G,\!\DD{g})$, 
$ (\|T_h\|_{\mathrm{HS}})^2 = \sum_{i \in I} (\|T_h(e_i)\|_{\leb^2(G)})^2 =  (\|h\|_{\leb^2(G^2)})^2$.
Therefore, $T_h$ is a compact operator, and it admits a discrete real spectrum, which we label by integers:
$$\mathrm{Spec}(T_h) = (c_{-1}\leq c_{-2}\leq \cdots \leq 0 \leq \cdots \leq c_2 \leq c_1 \leq c_0),$$
with $\lim_{|k| \to \infty} c_k= 0$ (here, we add an infinity of zeroes to the sequence $(c_k)_{k \in \Z}$ if needed, for instance when $T_h$ is of finite rank). The Hilbert--Schmidt class ensures that $\sum_{k\in \Z} (c_k)^2 < +\infty$. Now, a general result due to Giné and Kolchinskii (see \cite{GK00}) ensures that one can approximate the operator $T_h$ by the random matrices
$$T_{h}(N) = \frac{1}{N}\,((1-\delta_{ij})\, h(v_i,v_j))_{1 \leq i,j \leq N} ,$$
where the $v_i$'s are independent random variables chosen according to the Haar measure $\!\DD{g}$ on $G$. The spectrum of $T_h(N)$ is a random set
$$\mathrm{Spec}(T_h(N)) = (c_{-1}(N) \leq c_{-2}(N) \leq \cdots \leq 0 \leq \cdots \leq c_2(N) \leq c_1(N) \leq c_0(N)),$$
which approximates $\mathrm{Spec}(T_h)$ in the following sense:

\begin{theorem}[Giné--Koltchinskii, Theorem 3.1 in \cite{GK00}]\label{thm:GKlawoflargenumbers}
Under the previous assumptions,
$$\delta\left(\mathrm{Spec}(T_h(N)),\mathrm{Spec}(T_h)\right) = \sqrt{\sum_{k \in \Z} (c_k(N) - c_k)^2} \to 0\quad\text{almost surely}.$$
\end{theorem}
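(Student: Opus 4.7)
The plan is to approximate $T_h$ by finite-rank operators and chain estimates using the Hoffman--Wielandt inequality in Hilbert--Schmidt norm. The spectral theorem applied to the compact self-adjoint operator $T_h$ furnishes an orthonormal basis $(\phi_k)_{k \in \Z}$ of $\leb^2(G,\DD{g})$ with $T_h\phi_k = c_k\phi_k$, and
$$h(x,y) = \sum_{k \in \Z} c_k\,\phi_k(x)\,\phi_k(y)$$
in $\leb^2(G \times G)$. For each integer $K\geq 0$, set $h_K(x,y) = \sum_{|k|\leq K} c_k\,\phi_k(x)\,\phi_k(y)$; this symmetric kernel has rank at most $2K+1$ and satisfies $\|h - h_K\|_{\leb^2(G^2)}^2 = \sum_{|k|>K} c_k^2 \to 0$ as $K\to\infty$.

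The two easier estimates are obtained from the Hoffman--Wielandt inequality: for self-adjoint Hilbert--Schmidt operators $A,B$ on a common Hilbert space (extending the finite matrices by zero on an orthogonal complement of $\C^N \hookrightarrow \leb^2(G,\DD{g})$), the ordered real spectra satisfy $\delta(\mathrm{Spec}(A),\mathrm{Spec}(B))\leq \|A-B\|_{\mathrm{HS}}$. Applied to $T_h$ and $T_{h_K}$, this gives $\delta(\mathrm{Spec}(T_h),\mathrm{Spec}(T_{h_K})) \leq \|h-h_K\|_{\leb^2(G^2)}$. Applied to the random operators,
$$\delta(\mathrm{Spec}(T_h(N)),\mathrm{Spec}(T_{h_K}(N)))^2 \leq \|T_h(N)-T_{h_K}(N)\|_{\mathrm{HS}}^2 = \frac{1}{N^2}\sum_{i\neq j}(h-h_K)^2(v_i,v_j),$$
which, by the strong law of large numbers for $V$-statistics (valid since $(h-h_K)^2 \in \leb^1(G^2)$), converges almost surely to $\|h-h_K\|_{\leb^2(G^2)}^2$.

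The heart of the argument is the remaining comparison $\delta(\mathrm{Spec}(T_{h_K}(N)),\mathrm{Spec}(T_{h_K})) \to 0$ almost surely, for \emph{fixed} $K$. Let $\Phi$ denote the $N\times(2K+1)$ random matrix with entries $\Phi_{i,k}=\phi_k(v_i)$ and $C=\mathrm{diag}(c_k)_{|k|\leq K}$; then $\frac{1}{N}(h_K(v_i,v_j))_{i,j} = \frac{1}{N}\Phi C\Phi^T$ has the same nonzero eigenvalues as the $(2K+1)\times(2K+1)$ matrix $\frac{1}{N}C\,\Phi^T\Phi$. Applying the strong law of large numbers entrywise, $\frac{1}{N}\Phi^T\Phi \to I_{2K+1}$ almost surely, because the $\phi_k$ form an orthonormal system; the continuity of the spectrum in a matrix of fixed size therefore yields almost sure convergence of the nonzero eigenvalues of $\frac{1}{N}(h_K(v_i,v_j))_{i,j}$ to the $c_k$, $|k|\leq K$. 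The diagonal correction built into $T_{h_K}(N)$ contributes $-\frac{1}{N}\,\mathrm{diag}(h_K(v_i,v_i))$, whose squared Hilbert--Schmidt norm $\frac{1}{N^2}\sum_i h_K(v_i,v_i)^2$ is $O(1/N)$ almost surely, since $h_K$ is a finite linear combination of products of smooth eigenfunctions, hence bounded.

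To conclude, fix $\varepsilon>0$ and choose $K$ with $\|h-h_K\|_{\leb^2(G^2)} < \varepsilon$; combining the three bounds through the triangle inequality for $\delta$ gives $\limsup_{N\to\infty} \delta(\mathrm{Spec}(T_h(N)),\mathrm{Spec}(T_h)) \leq 2\varepsilon$ almost surely, and intersecting over rational $\varepsilon$ upgrades this to almost sure convergence to zero. The main obstacle is the fixed-$K$ step, where one needs simultaneous almost sure control of all $(2K+1)^2$ entries of the Gram-type matrix $\frac{1}{N}\Phi^T\Phi$, as well as handling of the diagonal subtraction; a secondary technical issue is that for purely $\leb^2$ kernels the pointwise values $h(x,x)$ are not well defined, but this does not arise in the application to random geometric graphs, where $h(x,y)=\mathbf{1}_{d(x,y)\leq L}$ is bounded.
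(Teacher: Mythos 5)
The paper does not prove this statement: it is quoted verbatim from Giné--Koltchinskii \cite{GK00} (their Theorem 3.1), so there is no "paper's own proof" to compare against. Your reconstruction is correct and, as far as I recall, follows the same strategy as the cited reference: truncate $h$ to a finite-rank kernel $h_K$, control $\delta(\mathrm{Spec}(T_h),\mathrm{Spec}(T_{h_K}))$ and $\delta(\mathrm{Spec}(T_h(N)),\mathrm{Spec}(T_{h_K}(N)))$ by the Hoffman--Wielandt inequality in Hilbert--Schmidt norm (applicable here because the ordering in the statement coincides with sorting by value), let the second bound converge almost surely to $\|h-h_K\|_{\leb^2(G^2)}^2$ by Hoeffding's strong law for U-statistics (the kernel $(h-h_K)^2$ is integrable), and settle the fixed-$K$ piece by writing the off-diagonal kernel matrix as $\frac{1}{N}\Phi C\Phi^T$ minus a diagonal and passing to the $(2K+1)\times(2K+1)$ matrix $\frac{1}{N}C\Phi^T\Phi\to C$. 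Chaining the three estimates over a countable family of $K$'s completes the argument.

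One remark on the issue you flag at the end: the diagonal correction does \emph{not} in fact require $h$ bounded. From $h_K(v,v)=\sum_{|k|\le K}c_k\phi_k(v)^2$ one always has $\esper\,|h_K(v,v)|\le (2K+1)\max_{|k|\le K}|c_k|<\infty$, and the Marcinkiewicz--Zygmund strong law with exponent $p=\frac{1}{2}$ then gives $\frac{1}{N^2}\sum_{i=1}^N h_K(v_i,v_i)^2\to 0$ almost surely, because $\esper\bigl[\,|h_K(v,v)^2|^{1/2}\bigr]=\esper|h_K(v,v)|<\infty$. So the theorem does hold at the full generality $h\in\leb^2(G^2)$ claimed in the statement, not just for bounded kernels; the boundedness of $h=\mathbf{1}_{d(\cdot,\cdot)\le L}$ in the application is convenient but not needed for this step.
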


\noindent This result yields readily the asymptotics of the spectrum of $A(N,L)$ when $L$ is fixed and $N$ goes to infinity. Indeed, 
$$\frac{A(N,L)}{N} = T_h(N) \quad \text{with }h(x,y)=1_{d(x,y) \leq L}.$$
Now, notice that the operator $T_h$ is in fact an operator of convolution by a function of \emph{one} variable:  for $f \in \leb^2(G,\!\DD{g})$,
\begin{align*}
(T_h(f))(g) & = \int_G 1_{d(g,u)\leq L}\,f(u)\DD{u} = \int_G 1_{d(gu^{-1},e_G)\leq L}\,f(u) \DD{u} = (Z_L*f)(g),
\end{align*}
where $Z_L(g) = 1_{d(g,e_G)\leq L}$. Here we used the invariance of the distance $d$ by the action of the group $G$. Hence, $T_h$ is an operator of convolution on $\leb^2(G,\!\DD{g})$ by a function in $\leb^2(G,\!\DD{g})$ which is invariant by conjugation.
The next paragraphs explain how to use the representation theory of $G$ in order to compute the eigenvalues of such a convolution operator (and therefore, the asymptotics of $\mathrm{Spec}(\GEOM(N,L))$ in the regime where $L$ is fixed and $N \to +\infty$).
\medskip

\subsection{Convolution on a compact Lie group}\label{subsec:convolution}
Let $G$ be a compact topological group endowed with its Haar measure $\!\DD{g}$. We denote $\hatG$ the set of classes of isomorphism of irreducible finite-dimensional complex representations of $G$; it is always countable, and for any element $\lambda \in \hatG$ corresponding to a representation $(V^\lambda,\rho^\lambda : G \to \GL(V^\lambda))$, one can find an Hermitian scalar product $\scal{\cdot}{\cdot}_{V^\lambda}$ on $V^\lambda$ which is invariant by $G$. This scalar product induces an adjunction $u \mapsto u^*$ on $\Endom(V^\lambda)$, and we then endow $\Endom(V^\lambda)$ with the scalar product
$\scal{u}{v}_{\Endom(V^\lambda)} = d_\lambda\,\tr(u^*v),$
where $d_\lambda$ is the complex dimension of $V^\lambda$. The basic theorem which allows to understand convolution in $\leb^2(G,\!\DD{g})$ is:

\begin{theorem}[Peter--Weyl, 1927]\label{thm:peterweyl}
For $\lambda \in \hatG$ and $f \in \leb^2(G,\!\DD{g})$, denote 
$$\widehat{f}(\lambda) = \left(\int_G f(g)\,\rho^\lambda(g)\DD{g}\right) \in \Endom(V^\lambda)$$
the  Fourier transform of $f$. The map $f\mapsto \widehat{f}$ from $\leb^2(G)$ to 
$\leb^2(\hatG) = \bigoplus_{\lambda \in \hatG}^\perp \Endom(V^\lambda)$
is an isometry of Hilbert spaces and an isomorphism of algebras (with $\leb^2(G,\!\DD{g})$ endowed with the convolution product).
\end{theorem}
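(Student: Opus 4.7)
The plan is to decompose the Peter--Weyl theorem into three independent pieces: (i) verify that $f \mapsto \widehat{f}$ is well-defined on $\leb^2(G,\DD{g})$ and that it turns convolution into composition of endomorphisms; (ii) establish the Schur orthogonality relations for matrix coefficients of irreducibles, which imply the isometry property on the algebraic span of these matrix coefficients; (iii) show that this algebraic span is dense in $\leb^2(G,\DD{g})$. Pieces (ii) and (iii) combined promote $f \mapsto \widehat{f}$ to an isometric isomorphism of Hilbert spaces, and piece (i) then promotes it to an isomorphism of algebras by continuity.

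For piece (i), the Cauchy--Schwarz inequality applied entrywise to $\int_G f(g)\,\rho^\lambda(g)\DD{g}$ shows that $\widehat{f}(\lambda) \in \Endom(V^\lambda)$ is well-defined for every $f \in \leb^2(G,\DD{g})$. The algebra morphism property is a direct Fubini computation: writing $(f*h)(g) = \int_G f(u)\,h(u^{-1}g)\DD{u}$, using the multiplicativity identity $\rho^\lambda(uv)=\rho^\lambda(u)\,\rho^\lambda(v)$, and changing variables from $g$ to $v=u^{-1}g$ in the inner integral by left-invariance of the Haar measure, one obtains $\widehat{f*h}(\lambda)=\widehat{f}(\lambda)\,\widehat{h}(\lambda)$ on the dense subspace of bounded functions, and then on $\leb^2(G,\DD{g})$ by continuity once (ii) and (iii) are in place.

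Piece (ii) is the heart of the isometry. Fix $\lambda,\mu \in \hatG$ and any linear map $T: V^\mu \to V^\lambda$; the averaged operator $\widetilde{T}=\int_G \rho^\lambda(g)\,T\,\rho^\mu(g^{-1})\DD{g}$ intertwines $\rho^\mu$ and $\rho^\lambda$, so by Schur's lemma it vanishes when $\lambda\neq\mu$ and equals $\frac{\tr(T)}{d_\lambda}\,\mathrm{Id}_{V^\lambda}$ when $\lambda=\mu$ (the scalar being identified by taking the trace). Unpacking this identity coefficient by coefficient in $\scal{\cdot}{\cdot}_{V^\lambda}$-orthonormal bases yields the classical Schur orthogonality relations between matrix coefficients. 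These translate precisely into the statement that $f \mapsto \widehat{f}$ is an isometry on each finite-dimensional space $\mathrm{Vect}(\rho^\lambda_{ij},\,1\leq i,j\leq d_\lambda)$: the weighting $d_\lambda$ in the target norm $\scal{u}{v}_{\Endom(V^\lambda)}=d_\lambda\,\tr(u^*v)$ is exactly what compensates the factor $\frac{1}{d_\lambda}$ appearing in Schur orthogonality. Distinct isotypic components are pairwise orthogonal, so $f \mapsto \widehat{f}$ extends to an isometry on the $\leb^2$-closure of the sum of all matrix coefficient subspaces.

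The main obstacle is piece (iii), the density of the span of matrix coefficients in $\leb^2(G,\DD{g})$. The standard approach is to invoke spectral theory of compact self-adjoint operators. Choose an approximate identity $(\phi_n)_{n \geq 1}$ of continuous, non-negative, symmetric (i.e.\ $\phi_n(g^{-1})=\overline{\phi_n(g)}$) functions of total mass one, concentrating at $e_G$. Each convolution operator $C_{\phi_n} : f \mapsto \phi_n * f$ on $\leb^2(G,\DD{g})$ is self-adjoint and Hilbert--Schmidt, because its kernel $(x,y)\mapsto \phi_n(xy^{-1})$ is continuous on the compact $G\times G$; moreover it commutes with the right regular action of $G$. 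The spectral theorem for compact self-adjoint operators then decomposes $\leb^2(G,\DD{g})$ as a Hilbert sum of finite-dimensional eigenspaces of $C_{\phi_n}$ (plus its kernel), each of which is an invariant subspace of the right regular representation. Complete reducibility on finite-dimensional unitary representations splits each such eigenspace into a finite sum of irreducible subrepresentations, each of which is by definition a matrix coefficient subspace for some $\lambda\in\hatG$. Hence the range of $C_{\phi_n}$ is contained in the algebraic span of matrix coefficients, and since $\phi_n * f \to f$ in $\leb^2(G,\DD{g})$ as $n \to \infty$ by standard properties of approximate identities, that span is dense. Combined with the Schur orthogonality from piece (ii), this shows that $f\mapsto\widehat{f}$ is an isometric isomorphism onto $\bigoplus_{\lambda\in\hatG}^{\perp}\Endom(V^\lambda)$, and piece (i) extends the algebra morphism property from the dense subspace of finite sums of matrix coefficients to all of $\leb^2(G,\DD{g})$ by continuity.
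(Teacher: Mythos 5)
The paper itself gives no proof of Peter--Weyl; it states the theorem and refers the reader to \cite[Chapter 4]{Bump13}, so there is nothing in the paper to compare your argument against. Your three-stage plan --- (i) a Fubini computation showing the Fourier transform intertwines convolution with composition of endomorphisms, (ii) Schur orthogonality deduced from Schur's lemma applied to the averaged intertwiner $\widetilde{T}=\int_G\rho^\lambda(g)\,T\,\rho^\mu(g^{-1})\DD{g}$, giving the isometry on the span of matrix coefficients, and (iii) density of that span via spectral theory of the self-adjoint Hilbert--Schmidt convolution operators $C_{\phi_n}$ built from an approximate identity, combined with complete reducibility --- is exactly the standard textbook proof, and essentially what appears in the cited reference. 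It is sound. Two details you glide over and should make explicit in a full write-up. First, in piece (iii) the finite-dimensional eigenspaces of $C_{\phi_n}$ for non-zero eigenvalues are a priori only subspaces of $\leb^2(G,\DD{g})$, but to conclude that each irreducible $G$-invariant piece inside them is a span of matrix coefficients you need its elements to be \emph{continuous} (one evaluates at $e_G$ to produce the matrix coefficients); this is automatic because an eigenfunction $f$ of $C_{\phi_n}$ with eigenvalue $\kappa\neq 0$ satisfies $f=\kappa^{-1}\,\phi_n*f$, and $\phi_n*f$ is continuous since $\phi_n$ is. Second, with the paper's convention $\widehat{f}(\lambda)=\int_G f(g)\,\rho^\lambda(g)\DD{g}$ (no inverse or adjoint inside the integral), Schur orthogonality actually shows that $\widehat{\rho^\lambda_{ij}}$ is supported on $\Endom(V^{\lambda^*})$ rather than on $\Endom(V^\lambda)$, $\lambda^*$ being the contragredient; since $\lambda\mapsto\lambda^*$ is a dimension-preserving bijection of $\hatG$, this is an inessential relabelling of the target and the isometry and algebra-isomorphism conclusions are unaffected, but the bookkeeping in your piece (ii) is genuinely cleaner if one adopts the more common convention $\widehat{f}(\lambda)=\int_G f(g)\,\rho^\lambda(g^{-1})\DD{g}$, under which $\widehat{\rho^\lambda_{ij}}$ lands in $\Endom(V^\lambda)$ itself.
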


We refer to \cite[Chapter 4]{Bump13} for a proof of this important result. It implies that the eigenspaces for the convolution on the left by $Z_L$ correspond \emph{via} the Fourier transform to subspaces of the endomorphism spaces $\Endom(V^\lambda)$, that are eigenspaces for the multiplication on the left by $\widehat{Z_L}(\lambda)$. Moreover, as $Z_L$ is invariant by conjugation, the convolution on the left by $Z_L$ is the same as the convolution on the right by $Z_L$. In the Fourier world, this means that each endomorphism $\widehat{Z_L}(\lambda)$ is in the center of $\Endom(V^\lambda)$, hence a scalar matrix $c_\lambda \,\mathrm{id}_{V^\lambda}$. Therefore, the eigenspaces for the convolution on the left by $Z_L$ are exactly the spaces $\Endom(V^\lambda)$, and the corresponding eigenvalues are the
$$c_\lambda = \frac{\tr (\widehat{Z_L}(\lambda))}{d_\lambda} = \int_G Z_L(g)\,\chi^\lambda(g)\,dg,$$
where $\chi^\lambda(g) = \frac{\tr (\rho^\lambda(g))}{d_\lambda}$ is the \emph{normalised character} of the irreducible representation $\lambda$. Thus, to summarise:

\begin{proposition}\label{prop:abstractgaussian}
Denote $Z_L(g)=1_{d(g,e_G)\leq L}$ with $G$ sscc Lie group. The eigenvalues of the operator on $\leb^2(G)$ of convolution on the left or on the right by $Z_L$ are in bijection with the irreducible representations $\lambda \in \hatG$. Each eigenvalue $c_\lambda$ has multiplicity $(d_\lambda)^2$ and is given by the formula $c_\lambda = \int_G Z_L(g)\,\chi^\lambda(g)\DD{g}$.
\end{proposition}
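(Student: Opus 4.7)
The plan is to exploit the fact that $Z_L$ is a class function on $G$ and then apply the Peter--Weyl isomorphism (Theorem~\ref{thm:peterweyl}) together with Schur's lemma.

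First I would verify that $Z_L$ is central in the convolution algebra $(\leb^2(G,\!\DD{g}),*)$. By construction the Riemannian metric on $G$ is bi-invariant (Section~\ref{subsec:normalisation}), since the scalar product $\scal{\cdot}{\cdot}_\glie$ is $\mathrm{Ad}(G)$-invariant. Consequently the geodesic distance $d$ is bi-invariant: $d(hgh^{-1},e_G)=d(hg,h)=d(g,e_G)$. This yields $Z_L(hgh^{-1}) = Z_L(g)$, i.e.\ $Z_L$ is a class function, and a direct manipulation (using that $G$ is unimodular) shows this is equivalent to $Z_L * f = f * Z_L$ for every $f \in \leb^2(G,\!\DD{g})$.

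Next I would translate everything through Peter--Weyl. A Fubini computation shows that the Fourier transform intertwines convolution with endomorphism product, so that on each block $\Endom(V^\lambda)$ the operator $f \mapsto Z_L*f$ becomes left multiplication $u \mapsto \widehat{Z_L}(\lambda)\,u$, while $f \mapsto f*Z_L$ becomes right multiplication $u \mapsto u\,\widehat{Z_L}(\lambda)$. The centrality of $Z_L$ then translates into the fact that $\widehat{Z_L}(\lambda)$ commutes with every $\rho^\lambda(h)$: substituting $g \mapsto h^{-1}gh$ in the defining integral (the Haar measure is invariant under inner automorphisms) and using the class property give
\begin{align*}
\rho^\lambda(h)\,\widehat{Z_L}(\lambda)\,\rho^\lambda(h^{-1}) &= \int_G Z_L(g)\,\rho^\lambda(hgh^{-1})\DD{g} \\
&= \int_G Z_L(h^{-1}gh)\,\rho^\lambda(g)\DD{g} = \widehat{Z_L}(\lambda).
\end{align*}
Schur's lemma applied to the irreducible representation $V^\lambda$ then forces $\widehat{Z_L}(\lambda) = c_\lambda\,\mathrm{id}_{V^\lambda}$ for some scalar $c_\lambda \in \C$, which is extracted by taking the trace:
$$c_\lambda = \frac{\tr\widehat{Z_L}(\lambda)}{d_\lambda} = \frac{1}{d_\lambda}\int_G Z_L(g)\,\tr \rho^\lambda(g)\DD{g} = \int_G Z_L(g)\,\chi^\lambda(g)\DD{g}.$$

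Finally, since $\widehat{Z_L}(\lambda)$ is a scalar operator, both left and right multiplication by it on $\Endom(V^\lambda)$ reduce to multiplication by $c_\lambda$; thus the whole block $\Endom(V^\lambda)$ is an eigenspace with eigenvalue $c_\lambda$, contributing multiplicity $(d_\lambda)^2$ to the spectrum. Collecting the contributions over $\lambda \in \hatG$ through the orthogonal decomposition of $\leb^2(G,\!\DD{g})$ provided by Peter--Weyl concludes the proof. There is no real obstacle in this argument; the only point worth underlining is the centrality of $Z_L$, which is what makes Schur's lemma applicable and what is ultimately responsible for the agreement between left and right convolutions.
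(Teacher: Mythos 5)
Your proof is correct and follows essentially the same path as the paper: verify that $Z_L$ is a class function (hence central in the convolution algebra), push everything through the Peter--Weyl isomorphism, show that each block $\widehat{Z_L}(\lambda)$ is a scalar matrix, and extract the eigenvalue by a trace. The only cosmetic difference is that the paper deduces scalarity from $\widehat{Z_L}(\lambda)$ lying in the center of the full matrix algebra $\Endom(V^\lambda)$, whereas you derive the commutation $\rho^\lambda(h)\,\widehat{Z_L}(\lambda)\,\rho^\lambda(h^{-1})=\widehat{Z_L}(\lambda)$ and invoke Schur's lemma; both are standard equivalent one-line arguments.
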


\noindent The next paragraph will allow us to identify the set $\hatG$, and to compute the dimensions $d_\lambda$. Proposition \ref{prop:abstractgaussian} extends readily to the case of symmetric spaces $X=G/K$, see Section \ref{subsec:gaussnongroup}.

\begin{remark}
In the following, we denote $\ch^\lambda$ the non-normalised character $\tr\, \rho^\lambda$. A direct consequence of the Peter--Weyl theorem \ref{thm:peterweyl} is that the collection of non-normalised characters $(\ch^\lambda)_{\lambda \in \hatG}$ forms an orthonormal basis of $\leb^2(G)^G$, the space of square-integrable and conjugacy-invariant functions on $G$. Moreover, one has the convolution rule $\ch^\lambda * \ch^\mu = \frac{\delta_{\lambda,\mu}}{d_\lambda}\,\ch^\lambda$.
\end{remark}
\medskip

\subsection{Weight lattice and combinatorics of the highest frequencies}\label{subsec:weightlattice}
When $G$ is a (semi)simple simply connected compact Lie group, the set $\hatG$ is classically described by Weyl's highest weight theorem, see for instance \cite[Theorems 3.2.5 and 3.2.6]{GW09}. Let $T$ be a maximal torus in $G$, and $\Z\Omega$ be the \emph{lattice of weights}, a weight of $G$ being a character $\omega : T \to \mathrm{U}(1) = \{z \in \C\,\,|\,\,|z|=1\}$ such that there exists a unitary representation $(V,\rho)$ of $G$ with
$$V_\omega = \{v \in V\,\,|\,\,\forall t \in T,\,\,(\rho(t))(v) = \omega(t)\,v\} \neq \{0\}.$$
The weights form a free module over $\Z$ for the operation of pointwise product. A standard convention is to denote additively the composition law in $\Z\Omega$, and to write evaluations of weights as $t \mapsto \E^{\omega}(t)$ (instead of $\omega(t)$).
Let $\glie_\C$ be the complexification of the Lie algebra $\glie$ of $G$, and $\tlie_{\C}$ the complexification of the Lie algebra $\tlie$ of $T$. The map $\omega \mapsto \tang_{e_G}(\E^\omega)$ allows one to see the weights as elements of $\tlie_{\C}^*$. The dual of the Killing form restricted to $\R\Omega=\R \otimes_{\Z} \Z\Omega$ is positive-definite. Hence, one has a natural scalar product $\scal{\cdot}{\cdot}$ on the lattice of weights, which can be shown to be $W$-invariant, where $W=\mathrm{Norm}(T)/T$ is the Weyl group. We decompose the roots of $G$ (non-zero weights of the adjoint representation of $G$ on $\glie_{\C}$) in two disjoint sets $\Phi_+$ and $\Phi_-=-\Phi_+$ of positive and negative roots; then, 
$$\Z\Omega = \left\{\omega \in \R\Omega\,\,|\,\,\forall \alpha \in \Phi_+,\,\,2\,\frac{\scal{\alpha}{\omega}}{\scal{\alpha}{\alpha}}\in \Z \right\},$$
and on the other hand, the positive roots determine a cone 
$$C =\{\omega \in \R\Omega\,\,|\,\,\forall \alpha \in \Phi_+,\,\,\scal{\alpha}{\omega} \geq 0\}$$ 
in $\R\Omega$ which is a fundamental domain for the action of the Weyl group (the Weyl chamber). The intersection of the two aforementioned sets is then in bijection with $\hatG$:

\begin{theorem}[Weyl, 1925]
An irreducible unitary representation $(V,\rho)$ of $G$ admits a unique highest weight $\lambda$, which is maximal with respect to the partial order on weights induced by the cone $C$. This highest weight has multiplicity one and enables one to reconstruct the irreducible representation $(V,\rho)$. Moreover, $\lambda$ is an arbitrary dominant weight in $C\cap \Z\Omega$, so
$$\hatG = C \cap \Z\Omega.$$
The dimension of the representation $V^\lambda$ with highest weight $\lambda$ is given by the formula
$$d_\lambda = \frac{\prod_{\alpha \in \Phi_+} \scal{\alpha}{\rho+\lambda}}{\prod_{\alpha \in \Phi_+} \scal{\alpha}{\rho}},\quad\text{with }\rho = \frac{1}{2}\sum_{\alpha \in \Phi_+}\alpha.$$
\end{theorem}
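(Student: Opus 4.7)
The plan is to pass to the complexified Lie algebra $\glie_\C$ via the Weyl unitary trick, which gives an equivalence between finite-dimensional complex representations of $G$ and of $\glie_\C$. I would fix the Cartan decomposition $\glie_\C = \tlie_\C \oplus \bigoplus_{\alpha \in \Phi} \glie_\alpha$ with one-dimensional root spaces, and for each $\alpha \in \Phi_+$ an $\mathfrak{sl}_2$-triple $(E_\alpha, F_\alpha, H_\alpha)$. Any finite-dimensional $\glie_\C$-representation $V$ then decomposes into weight spaces $V = \bigoplus_\mu V_\mu$ under the action of $\tlie_\C$, with $E_\alpha \cdot V_\mu \subset V_{\mu+\alpha}$ and $F_\alpha \cdot V_\mu \subset V_{\mu-\alpha}$.

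For existence and uniqueness of the highest weight of an irreducible $V^\lambda$, I would pick a weight $\lambda$ maximal for the partial order $\mu' \leq \mu$ iff $\mu - \mu' \in \sum_{\alpha \in \Phi_+}\R_{\geq 0}\,\alpha$, and any non-zero $v_\lambda \in V_\lambda$. Maximality forces $E_\alpha \cdot v_\lambda = 0$ for all $\alpha \in \Phi_+$, so by the Poincaré--Birkhoff--Witt theorem the cyclic subspace generated by $v_\lambda$ under the negative root vectors is a $\glie_\C$-submodule; by irreducibility it equals $V^\lambda$, and since each $F_\alpha$ strictly lowers weights, $V_\lambda$ is one-dimensional and $\lambda$ is the unique maximal weight. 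That $\lambda$ is dominant, i.e.\ $2\scal{\alpha}{\lambda}/\scal{\alpha}{\alpha} \in \N$ for every $\alpha \in \Phi_+$, comes by restricting to the $\mathfrak{sl}_2$ spanned by $(E_\alpha, F_\alpha, H_\alpha)$ and applying the classification of its finite-dimensional representations.

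For the converse, that every $\lambda \in C \cap \Z\Omega$ is realised, I would construct the Verma module $M(\lambda) = U(\glie_\C) \otimes_{U(\mathfrak{b})} \C_\lambda$ relative to the Borel subalgebra $\mathfrak{b} = \tlie_\C \oplus \bigoplus_{\alpha \in \Phi_+}\glie_\alpha$, take its unique irreducible quotient $L(\lambda)$, and show that $L(\lambda)$ is finite-dimensional when $\lambda$ is dominant integral. The standard argument is that the vectors $F_{\alpha_i}^{\lambda(H_{\alpha_i})+1}v_\lambda$ for simple $\alpha_i$ generate a proper submodule of $M(\lambda)$ via the Serre relations, so the weight set of $L(\lambda)$ is finite and $W$-stable. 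Alternatively, the Borel--Weil realisation produces $L(\lambda)$ as global sections of a line bundle on the flag variety $G_\C/B$, giving the existence directly at the level of the compact group.

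The dimension formula is the most substantial step. My plan is first to establish the Weyl character formula
$$\ch^\lambda|_T = \frac{\sum_{w \in W}\varepsilon(w)\,\E^{w(\lambda+\rho)}}{\sum_{w \in W}\varepsilon(w)\,\E^{w(\rho)}},$$
either via Harish-Chandra's description of the centre of $U(\glie_\C)$, or by combining the Weyl integration formula with the orthonormality of characters and the antisymmetric properties of the numerator and denominator under $W$. The dimension $d_\lambda = \ch^\lambda(e_G)$ is then extracted as a $0/0$ limit: evaluating along $\E^{t\rho}$ and letting $t\to 0$, each alternating sum factors by Weyl's denominator identity $\sum_w \varepsilon(w)\,\E^{w\rho} = \prod_{\alpha\in\Phi_+}(\E^{\alpha/2}-\E^{-\alpha/2})$, and the quotient becomes $\prod_{\alpha\in\Phi_+}\scal{\alpha}{\lambda+\rho}/\prod_{\alpha\in\Phi_+}\scal{\alpha}{\rho}$. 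The hard part will be the Weyl character formula itself; once it is in hand, both the bijection $\hatG \leftrightarrow C \cap \Z\Omega$ and the dimension formula fall out, and the residual check that weights are exactly those $\omega \in \R\Omega$ with $2\scal{\alpha}{\omega}/\scal{\alpha}{\alpha} \in \Z$ for all $\alpha \in \Phi_+$ simply unpacks the definition of weights as differentials of characters of $T$.
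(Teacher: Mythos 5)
The paper does not actually prove this statement: it is cited as classical, with a pointer to Goodman--Wallach~\cite[Theorems 3.2.5 and 3.2.6]{GW09}, so there is no ``paper's own proof'' to compare against. Your sketch follows the standard textbook route (Humphreys, Bourbaki, Goodman--Wallach) and is, at the level of detail offered, essentially correct: the Weyl unitary trick to pass to $\glie_\C$, the PBW-based argument that a maximal weight vector is cyclic under the lowering operators and hence unique with multiplicity one, the $\mathfrak{sl}_2$-argument for dominance, the Verma module or Borel--Weil construction for existence, and finally the Weyl character formula together with the denominator identity and a $0/0$ limit along $\E^{t\rho}$ for the dimension formula.

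One step is stated imprecisely. You say the vectors $F_{\alpha_i}^{\lambda(H_{\alpha_i})+1}v_\lambda$ ``generate a proper submodule of $M(\lambda)$ via the Serre relations, so the weight set of $L(\lambda)$ is finite and $W$-stable.'' Generating a proper submodule is nearly automatic (any weight vector of weight strictly below $\lambda$ does), and the role of the Serre-type commutation relations is not to guarantee properness but to show that these vectors are \emph{singular} (annihilated by all $E_{\alpha_j}$), which is what makes the quotient well behaved. Finiteness of the weight set of the quotient then requires a separate argument: one applies $\mathfrak{sl}_2$-theory for each simple root to show that the weight multiset of the quotient is invariant under each simple reflection, hence under $W$; combined with the fact that all weights lie in $\lambda - \N\Phi_+$, this forces finiteness. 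As written your sketch collapses these two distinct steps, but the underlying argument you are pointing at is the correct one.
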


\begin{example}
Suppose $G = \SU(3)$. A maximal torus is
$$T = \{\diag(t_1,t_2,t_3)\,\,|\,\,t_1t_2t_3=1,\,\,|t_1|=|t_2|=|t_3|=1\}.$$ 
The lattice of weights $\Z\Omega$ is spanned by the two fundamental weights $\E^{\omega_1}(t)=t_1$ and $\E^{\omega_2}(t)=(t_3)^{-1}$. The positive roots are $\E^{\alpha_1}(t)=t_1(t_2)^{-1}$, $\E^{\alpha_2}(t) = t_2(t_3)^{-1}$ and $\E^{\alpha_1+\alpha_2}(t)=t_1(t_3)^{-1}$. The dominant weights, which label the irreducible representations of $\SU(3)$, are the linear combinations $n_1\omega_1+n_2\omega_2$ with $n_1,n_2 \in \N$; on Figure \ref{fig:latticeweight}, they correspond to the dots that are included in the cone $C$.

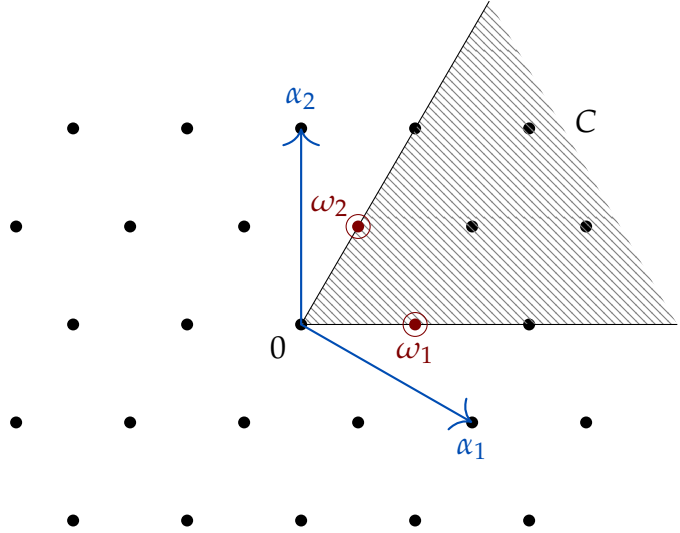
\begin{figure}[ht]
\begin{center}
\begin{tikzpicture}[scale=1.5]
\foreach \x in {(0,0),(2,0),(-1,0),(-2,0),(0,1.733),(1,1.733),(2,1.733),(-1,1.733),(-2,1.733),(0,-1.733),(1,-1.733),(2,-1.733),(-1,-1.733),(-2,-1.733),(2.5,0.866),(2.5,-0.866),(-2.5,0.866),(-2.5,-0.866),(1.5,0.866),(-0.5,0.866),(-1.5,0.866),(0.5,-0.866),(1.5,-0.866),(-0.5,-0.866),(-1.5,-0.866)}
\fill \x circle (1.5pt);
\draw [pattern = north west lines, pattern color=black!50!white] (60:3.3) -- (0,0) -- (3.3,0);
\draw (2.5,1.8) node {$C$};
\draw (-0.2,-0.2) node {$0$};
\draw [{<[scale=2]}-{>[scale=2]},thick,blue!70!green] (0,1.733) -- (0,0) -- (1.5,-0.866);
\draw [blue!70!green] (1.5,-1.1) node {$\alpha_1$};
\draw [blue!70!green] (0,2) node {$\alpha_2$};
\foreach \x in {(1,0),(0.5,0.866)}
{\draw [red!50!black] \x circle (3pt);
\fill [red!50!black] \x circle (1.5pt);}
\draw [red!50!black] (1,-0.25) node {$\omega_1$};
\draw [red!50!black] (0.25,1.05) node {$\omega_2$};
\end{tikzpicture}
\end{center}
\caption{The lattice of weights of the group $\SU(3)$.\label{fig:latticeweight}}
\end{figure}

\noindent The dimension of $V^\lambda$ with $\lambda = n_1\omega_1+n_2\omega_2$ is $d_\lambda=\frac{(n_1+1)(n_2+1)(n_1+n_2+2)}{2}$. For instance, the adjoint representation of $\SU(3)$ on $\mathfrak{sl}(3,\C)$ has highest weight $\lambda=\omega_1+\omega_2$, and dimension $8$. If one replaces the coordinates $(n_1,n_2)$ by the integer partition $\lambda=(\lambda_1\geq \lambda_2 \geq \lambda_3)$ with $\lambda_1 = n_1+n_2$, $\lambda_2=n_2$ and $\lambda_3=0$, one gets the classical formula
$$d_\lambda = \prod_{1\leq i<j\leq 3} \frac{\lambda_i-\lambda_j+j-i}{j-i}$$
which generalises to higher dimensions.
\end{example}

\begin{corollary}\label{cor:pregaussianlimit}
Let $\Gamma = \GEOM(N,L)$ be a random geometric graph of fixed level $L$ on a sscc Lie group $G$, and $A(N,L)$ be its adjacency matrix. In the sense of Theorem \ref{thm:GKlawoflargenumbers}, the limit of $\mathrm{Spec}(\frac{A(N,L)}{N})$ consists of one limiting eigenvalue $c_\lambda$ for each dominant weight $\lambda \in C \cap \Z\Omega$. The multiplicity of $c_\lambda$ is 
$$m_\lambda = \left(\frac{\prod_{\alpha \in \Phi_+} \scal{\alpha}{\rho+\lambda}}{\prod_{\alpha \in \Phi_+} \scal{\alpha}{\rho}}\right)^2,$$
and the value of $c_\lambda = \int_G Z_L(g)\,\chi^\lambda(g)\DD{g}$ will be given in Theorem \ref{thm:gaussianlimit}.
\end{corollary}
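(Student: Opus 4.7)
The statement is essentially an assembly of the three ingredients already set up: the Giné--Koltchinskii law of large numbers, the spectral analysis of left convolution on $\leb^2(G)$, and Weyl's highest weight theorem with the Weyl dimension formula. The plan is therefore to check the hypotheses of each tool and then chain them together.

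First I would identify the indicator kernel $h(x,y) = 1_{d(x,y) \leq L}$ as the kernel governing $A(N,L)$. Since $h$ is symmetric, real-valued, bounded by $1$ and $G$ is compact, one has $h \in \leb^2(G^2,\!\DD{x}\DD{y})$ and $\frac{A(N,L)}{N} = T_h(N)$ with the $v_i$'s being i.i.d.\ Haar-distributed points. Theorem~\ref{thm:GKlawoflargenumbers} then applies and yields
\[
\delta\!\left(\mathrm{Spec}\!\left(\tfrac{A(N,L)}{N}\right),\,\mathrm{Spec}(T_h)\right) \to 0 \quad \text{almost surely,}
\]
so the limiting spectrum is exactly $\mathrm{Spec}(T_h)$ together with multiplicities.

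Next I would invoke Proposition~\ref{prop:abstractgaussian}. Because the distance $d$ is bi-invariant on the sscc Lie group $G$, the kernel $h(x,y) = Z_L(xy^{-1})$ depends only on $xy^{-1}$ and the class function $Z_L$ is itself conjugation-invariant; hence $T_h$ is left convolution by $Z_L$. The proposition then produces one eigenvalue
\[
c_\lambda \;=\; \int_G Z_L(g)\,\chi^\lambda(g)\DD{g}
\]
for each $\lambda \in \hatG$, with multiplicity $\dim \Endom(V^\lambda) = (d_\lambda)^2$. This gives the exact expression for $c_\lambda$ stated in the corollary, provided we can index $\hatG$ concretely.

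Finally, I would apply Weyl's highest weight theorem to identify $\hatG$ with the set of dominant integral weights $C \cap \Z\Omega$, and the Weyl dimension formula
\[
d_\lambda \;=\; \frac{\prod_{\alpha \in \Phi_+} \scal{\alpha}{\rho+\lambda}}{\prod_{\alpha \in \Phi_+} \scal{\alpha}{\rho}}
\]
to rewrite the multiplicity $(d_\lambda)^2$ in the claimed form $m_\lambda$. Combining the three steps yields the corollary.

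The only genuinely technical point is the verification that the Giné--Koltchinskii framework applies to the indicator kernel, which is immediate by compactness of $G$; beyond that, the proof is essentially bookkeeping. The real content---namely the explicit evaluation of $c_\lambda$ as a finite linear combination of Bessel values---is deferred to Theorem~\ref{thm:gaussianlimit} and is not part of what needs to be proved here.
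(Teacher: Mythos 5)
Your proof is correct and matches the paper's approach exactly: the corollary is obtained by chaining Theorem~\ref{thm:GKlawoflargenumbers} (Giné--Koltchinskii), Proposition~\ref{prop:abstractgaussian} (convolution spectrum, using bi-invariance of $d$ to see that $T_h$ is convolution by the central function $Z_L$), and Weyl's highest weight theorem together with the dimension formula. The paper gives no separate argument for this corollary because, as you correctly observe, nothing beyond this assembly is required.
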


In the appendix (Section \ref{sec:appendix}), we give for each classical case (unitary groups, compact symplectic groups, spin groups): \begin{itemize}
\item a maximal torus $T$;
\item the corresponding weight lattice $\Z\Omega$ and the root system $\Phi$;
\item the dimension $d_\lambda$ of an irreducible representation $V^\lambda$ with $\lambda \in \hatG = C \cap \Z\Omega$.
\end{itemize}  
This allows one to make explicit Corollary \ref{cor:pregaussianlimit} and all the forthcoming theorems. In the examples hereafter, we shall focus on the groups $\SU(2)$ and $\SU(3)$. For $\SU(2)$, the weight lattice is drawn in Figure \ref{fig:lineweight}, and it is one-dimensional; many intuitions come from a detailed study of this toy-model.

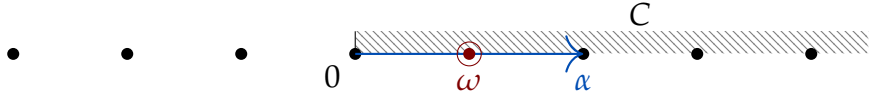
\begin{figure}[ht]
\begin{center}      
\begin{tikzpicture}[scale=1.5]
\draw [white,pattern = north west lines, pattern color=black!50!white] (0,0.2) -- (0,0) -- (4.5,0) -- (4.5,0.2);
\foreach \x in {(0,0),(2,0),(3,0),(-1,0),(-2,0),(-3,0),(4,0)}
\fill \x circle (1.5pt);
\draw (0,0) -- (0,0.2);
\draw (-0.2,-0.2) node {$0$};
\draw (2.5,0.35) node {$C$};
\draw [-{>[scale=2]},thick,blue!70!green] (0,0) -- (2,0);
\draw [blue!70!green] (2,-0.25) node {$\alpha$};
\foreach \x in {(1,0)}
{\draw [red!50!black] \x circle (3pt);
\fill [red!50!black] \x circle (1.5pt);}
\draw [red!50!black] (1,-0.25) node {$\omega$};
\end{tikzpicture}
\caption{The lattice of weights of the group $\SU(2)$. \label{fig:lineweight}}
\end{center}
\end{figure}

\begin{remark}
Corollary \ref{cor:pregaussianlimit} generalises readily to more general compact Lie groups, by replacing the set of dominant weights $C \cap \Z\Omega$ by an adequate sublattice of it. In particular, one can treat without additional work the case of the unitary groups $\mathrm{U}(n)$, which are not simple, since they have a non-trivial center; and the case of the special orthogonal groups $\SO(n)$, which are simple Lie groups but are not simply connected. In the appendix we detail this last case, where $\Z\Omega$ is replaced by an index $2$ sublattice (see Remark \ref{remark:fromspintoso}). Thus, though we shall not mention it again hereafter, every result obtained in the sequel whose statement starts by "Given a sscc Lie group\ldots" also holds \emph{mutatis mutandis} for the non-sscc but classical Lie groups $\mathrm{U}(n)$ and $\SO(n)$.
\end{remark}
\bigskip

\section{Asymptotics of the spectrum in the Gaussian regime}\label{sec:gaussian}

In this section, we compute the limiting eigenvalues $c_\lambda$ introduced in Corollary \ref{cor:pregaussianlimit}. We obtain a formula which involves Bessel functions of the first kind and an alternate sum over elements of the Weyl group, see Theorem \ref{thm:gaussianlimit}. These computations allow one for instance to estimate the spectral radius and the spectral gap of a random geometric graph $\GEOM(N,L)$ with fixed level $L$; see Section \ref{subsec:spectralgap}. On the other hand, we shall see in Section \ref{sec:ART} that the alternate sums involved in the formula for $c_\lambda$ degenerate in the Poissonian regime into certain partial derivatives. Therefore, the calculation of the eigenvalues $c_\lambda$ will be useful for studying both asymptotic regimes (Gaussian and Poissonian). In Section \ref{subsec:gaussnongroup}, we also explain how to extend our results to ssccss of non-group type; the computations become explicit in rank one and they involve Laguerre of Jacobi orthogonal polynomials.\medskip

\subsection{Distances on a compact Lie group}\label{subsec:distancesliegroup}
Since $c_\lambda = \int_G 1_{d(g,e_G) \leq L}\,\chi^\lambda(g)\DD{g}$, we need to explain how to deal with distances on a sscc compact Lie group $G$. We fix as before a maximal torus $T\subset G$, and we denote $\tlie \subset \glie$ the corresponding Lie subalgebra. Every element $g \in G$ is conjugated to an element $t \in T$, which is unique up to action of the Weyl group $W$. Consequently, as $Z_L$ is a function invariant by conjugation, in order to compute the function $Z_L(g)=1_{d(g,e_G)\leq L}$, it suffices to know its values on $T$. Now, the maximal torus is a totally geodesic flat submanifold of $G$, and the exponential map $\exp :\tlie \to T$ is locally isometric from a neighborhood of $0$ to a neighborhood of $e_G$. In all the classical cases, the injectivity radius of the exponential map is at least equal to $\pi$ (this is clear from the description of the maximal tori given in Section \ref{sec:appendix}). This enables one to reduce the calculation of $c_\lambda$ to an integration over a ball in the Euclidean space $\tlie$. Indeed, by Weyl's integration formula (see \cite[Chapters 17 and 22]{Bump13}), since $Z_L$ is invariant by conjugation,
$$
c_\lambda = \int_G 1_{d(g,e_G) \leq L}\,\frac{\mathrm{ch}^\lambda(g)}{d_\lambda}\DD{g} = \frac{1}{d_\lambda\,|W|}\int_T 1_{d(t,e_G)\leq L}\,\mathrm{ch}^\lambda(t)\,|\Delta(t)|^2\DD{t} 
$$
where 
\begin{itemize}
    \item $\!\DD{t}$ is the uniform probability over the torus $T$;
    \item $\Delta(t) = \sum_{w \in W} \eps(w)\,\E^{\rho}(w(t))$;
    \item for any $w \in W$ viewed as an element of $\SO(\R\Omega)$, $\eps(w)=(-1)^{\ell(w)}$ is the determinant of the transformation $w$, or equivalently the parity of the number of reflections with respect to the walls of the Weyl chamber that are needed to write $w$. 
\end{itemize}
Suppose $L< \pi$. Then, the integral can be taken over $\tlie$ instead of $T$:
\begin{equation}
c_\lambda = \frac{1}{d_\lambda\,|W|\,\vol(\tlie/\tlie_\Z)\,(2\pi)^{\dim \tlie}}\int_{\tlie} 1_{\|X\|_\glie \leq L}\, \mathrm{ch}^\lambda(\E^X)\,|\Delta(\E^X)|^2\DD{X}.
\label{eq:formulaclambda}
\end{equation}
Indeed, the probability measure $\!\DD{t}$ corresponds \emph{via} the exponential map to the rescaled Lebesgue measure 
$$\frac{1}{\vol(\tlie / 2\pi \tlie_\Z)} \DD{X} = \frac{1}{\vol(\tlie / \tlie_\Z)\,(2\pi)^{\dim \tlie}} \DD{X},$$ 
where $\!\DD{X}$ is the volume form on $\tlie$ which is associated to the Riemannian structure given by $\scal{\cdot}{\cdot}_\glie$; and $2\pi \tlie_\Z$ is the kernel of the exponential map $\exp : \tlie \to T$, and a lattice with maximal rank in $\tlie$. In the classical cases, the volumes $\vol(\tlie/\tlie_\Z)$ are computed in Section \ref{subsec:volumes}.
\medskip

\subsection{Asymptotics of the largest eigenvalues}\label{subsec:gaussianlimit}
In the sequel we always denote $d= \dim T=\rank\, G$ the \emph{rank} of the group $G$; in geometric terms, it is the dimension of a totally geodesic flat submanifold, and for a compact Lie group this is the dimension of a maximal torus. In the classical cases, we have $\rank(\SU(n)) = n-1$, and $\rank(\mathrm{Spin}(2n)) = \rank(\mathrm{Spin}(2n+1)) =\rank(\SP(n))=n$. If $\lambda$ is the highest weight of an irreducible representation $V^\lambda$, then the restriction of the corresponding character to the torus $T$ is given by Weyl's formula
$$\mathrm{ch}^\lambda(t) = \frac{\sum_{w \in W} \eps(w)\,\E^{\rho+\lambda}(w(t))}{\sum_{w \in W} \eps(w)\,\E^{\rho}(w(t))}.$$
Notice that the denominator in Weyl's character formula is the quantity $\Delta(t)$ previously introduced. Therefore, in Equation \eqref{eq:formulaclambda}, writing $|\Delta(t)|^2 = \Delta(t)\,\overline{\Delta(t)}$ makes $\Delta(t)$ appear in the numerator and the denominator. We can simplify it to get:
$$c_\lambda = \frac{1}{d_\lambda\,|W|\,\vol(\tlie/\tlie_{\Z})\,(2\pi)^d} \int_{X \in \tlie ,\,\|X\|_{\glie} \leq L} \left(\sum_{w_1,w_2 \in W} \eps(w_1)\,\eps(w_2)\,\E^{(w_1(\lambda+\rho)-w_2(\rho))(X)}\right) \DD{X} ,$$
As the measure $\!\DD{X}$ is invariant by $W$, one can gather the terms of the double sum according to the value $w=w_2w_1^{-1}$, and one obtains:
\begin{align*}
c_\lambda =\frac{1}{d_\lambda\,\vol(\tlie/\tlie_{\Z})\,(2\pi)^d} \sum_{w \in W} \eps(w) \int_{X \in \tlie ,\,\|X\|_{\glie} \leq L} \E^{(\lambda+\rho-w(\rho))(X)}\DD{X}.
\end{align*}
Each integral is a value of the Fourier transform of the unit ball in $\R^{d}$, that is a value of a Bessel function of the first kind. Indeed, recall that if $B^d$ is the unit ball in $\R^d$, we have
$$\int_{B^d} \E^{\I \scal{x}{\xi}} dx = \left(\frac{2\pi}{\|\xi\|}\right)^{\!d/2}\,J_{d/2}(\|\xi\|),$$
where $J_\beta$ is the Bessel function of the first kind of index $\beta$, defined by the power series
$$J_\beta(z) = \sum_{m=0}^\infty \frac{(-1)^m}{m!\,\Gamma(m+\beta+1)}\,\left(\frac{z}{2}\right)^{2m+\beta};$$
see Figure \ref{fig:bessel} for the case $\beta=1$.
\begin{figure}[ht]
\begin{center}
\begin{tikzpicture}[scale=1]
\node[right] at (0,0) {\includegraphics[scale=0.5]{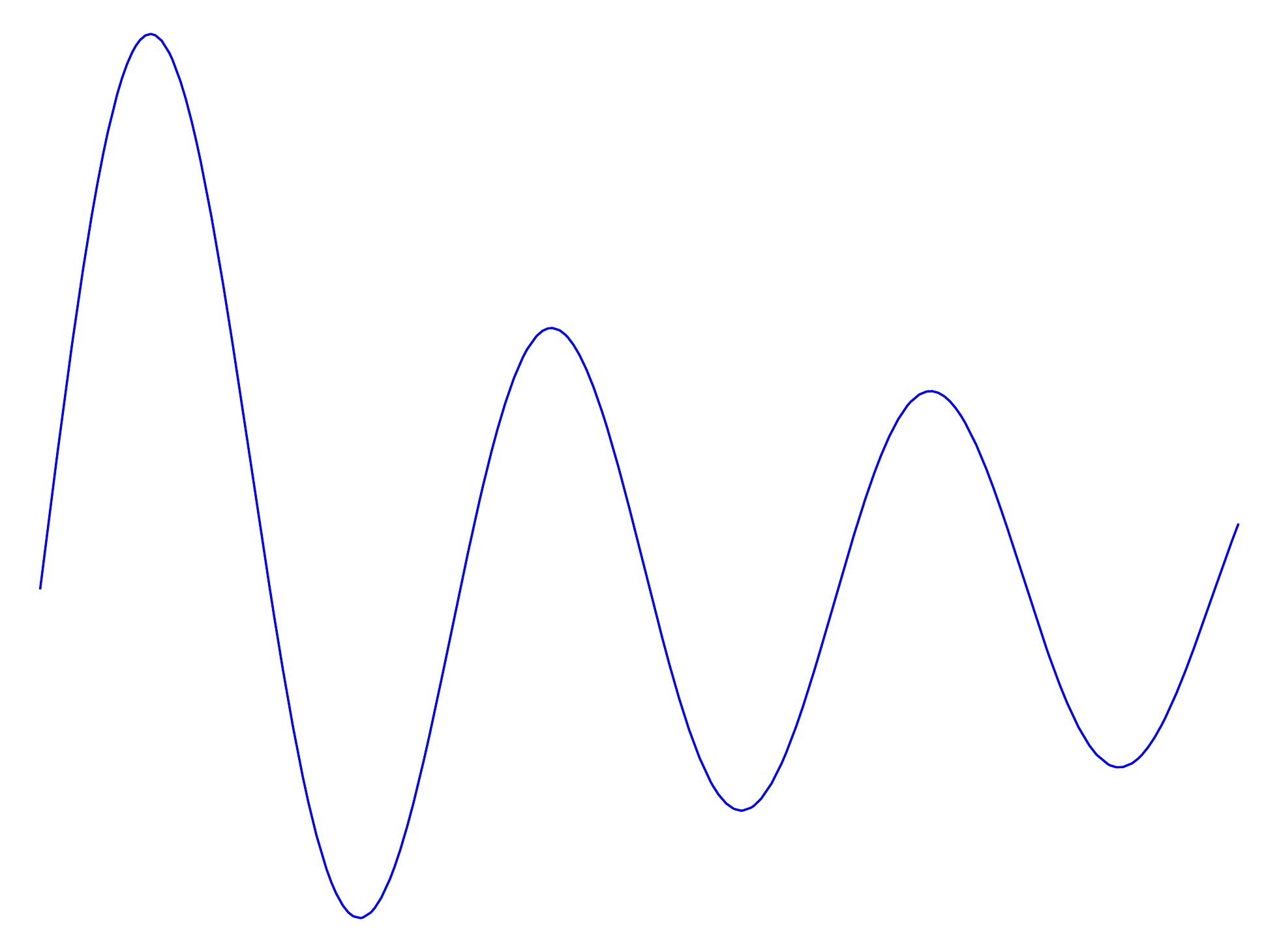}};
\begin{scope}[shift={(0.45,-0.88)}]
\draw [->] (-0.2,0) -- (10,0);
\draw [->] (0,-3) -- (0,5);
\draw (2.34,0) -- (2.34,0.1);
\draw (2.34,-0.2) node {$5$};
\draw (4.68,0) -- (4.68,0.1);
\draw (4.68,-0.2) node {$10$};
\draw (7.02,0) -- (7.02,0.1);
\draw (7.02,-0.2) node {$15$};
\draw (9.36,0) -- (9.36,0.1);
\draw (9.36,-0.2) node {$20$};
\draw (0,-1.49) -- (0.1,-1.49);
\draw (-0.5,-1.49) node {$-0.2$};
\draw (0,1.49) -- (0.1,1.49);
\draw (-0.4,1.49) node {$0.2$};
\draw (0,2.98) -- (0.1,2.98);
\draw (-0.4,2.98) node {$0.4$};
\draw (0,4.47) -- (0.1,4.47);
\draw (-0.4,4.47) node {$0.6$};
\end{scope}
\end{tikzpicture}
\end{center}
\caption{The Bessel function $J_1(x)$.\label{fig:bessel}}
\end{figure}

\noindent Given a weight lattice $\R\Omega$ of a sscc Lie group with rank $d=\rank(G)$, it is convenient to introduce the modified Bessel function 
$$\tildeJ_{\R\Omega}(x) = \frac{J_{d/2}(\|x\|)}{(\|x\|)^{d/2}} = \frac{1}{2^{d/2}}\sum_{m=0}^\infty \frac{(-1)^m}{m!\,\Gamma(m+\frac{d}{2}+1)}\,\left(\frac{\|x\|^2}{4}\right)^{m},$$
which is a $W$-invariant analytic function on $\R\Omega$. Then,
$$\frac{1}{(2\pi)^{d}}\int_{X \in \tlie,\, \|X\|_\glie \leq L} \E^{(\lambda+\rho-w(\rho))(X)}\DD{X} = \left(\frac{L}{\sqrt{2\pi}}\right)^{\!d}\,\tildeJ_{\R\Omega}(L\,(\lambda+\rho-w(\rho))) .$$
In this formula, the modified Bessel function involves the norm $\|\lambda+\rho-w(\rho)\|$, which is the norm of the weight lattice introduced in Section \ref{subsec:weightlattice}, and which is computed in the appendix for the classical cases. We have finally shown:

\begin{theorem}\label{thm:gaussianlimit}
Suppose that the level $L$ is smaller than $\pi$. Let $\lambda$ be a highest weight in $\hatG$. The eigenvalue $c_\lambda$ is given by the following formula:
$$c_\lambda = \frac{1}{d_\lambda\,\vol(\tlie/\tlie_{\Z})}\,\left(\frac{L}{\sqrt{2\pi}}\right)^{\!d} \sum_{w \in W} \eps(w)\,\tildeJ_{\R\Omega}(L\,(\lambda+\rho-w(\rho))),$$
where $d=\rank(G)$ is the dimension of a maximal torus $T \subset G$, and $\tildeJ_{\R\Omega}$ is the modified Bessel function on the weight space $\R\Omega$.
\end{theorem}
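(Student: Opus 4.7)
The plan is to reduce the group integral defining $c_\lambda$ to a Fourier integral over a Euclidean ball in the Lie algebra of a maximal torus, and then to recognise each piece as a value of a Bessel function of the first kind. Using the conjugation invariance of both $Z_L$ and $\chi^\lambda$, I would first apply Weyl's integration formula to rewrite
$$c_\lambda = \frac{1}{d_\lambda\,|W|}\int_T Z_L(t)\,\ch^\lambda(t)\,|\Delta(t)|^2\DD{t}.$$
Since $L<\pi$ is smaller than the injectivity radius of $\exp:\tlie\to T$ in all the classical cases, the change of variables $t=\E^X$ transports this to an integral over the Euclidean ball $\{X\in\tlie:\|X\|_\glie\leq L\}$, picking up the standard factor $1/((2\pi)^d\,\vol(\tlie/\tlie_{\Z}))$ coming from the lattice $2\pi\tlie_\Z = \ker(\exp|_\tlie)$.

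Next I would apply Weyl's character formula $\ch^\lambda(t) = \Delta_\lambda(t)/\Delta(t)$ with $\Delta_\lambda(t)=\sum_{w\in W}\eps(w)\,\E^{w(\lambda+\rho)}(t)$, and write $|\Delta|^2=\Delta\,\overline{\Delta}$. The factor $\Delta$ in the denominator then cancels; since the weights take purely imaginary values on $\tlie$, the remaining factor $\overline{\Delta(\E^X)}$ equals $\sum_{w_2\in W}\eps(w_2)\,\E^{-w_2(\rho)(X)}$. This yields the double sum
$$c_\lambda = \frac{1}{d_\lambda\,|W|\,\vol(\tlie/\tlie_{\Z})\,(2\pi)^d}\sum_{w_1,w_2\in W}\eps(w_1)\,\eps(w_2)\int_{\|X\|_\glie\leq L}\E^{(w_1(\lambda+\rho)-w_2(\rho))(X)}\DD{X}.$$
The $W$-invariance of the integration region and of the Lebesgue measure on $\tlie$ lets me change variables $X\mapsto w_1(X)$ inside each term to strip $w_1$ from $w_1(\lambda+\rho)$; regrouping by $w=w_2w_1^{-1}$ then collapses the double sum into $|W|$ identical copies of a single sum, cancelling the factor $|W|$ in the denominator.

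Finally each remaining integral is the Fourier transform of the characteristic function of a Euclidean ball, evaluated at a purely imaginary linear form determined by $\lambda+\rho-w(\rho)$. Applying the classical identity
$$\int_{B^d} \E^{\I\scal{x}{\xi}}\DD{x} = \left(\frac{2\pi}{\|\xi\|}\right)^{\!d/2} J_{d/2}(\|\xi\|)$$
recalled in the excerpt and rescaling by $L$ gives exactly $L^d\,(2\pi)^{d/2}\,\tildeJ_{\R\Omega}(L(\lambda+\rho-w(\rho)))$ per term; gathering the resulting prefactor $L^d(2\pi)^{d/2}/((2\pi)^d\,\vol(\tlie/\tlie_\Z))$ simplifies to $(L/\sqrt{2\pi})^d/\vol(\tlie/\tlie_\Z)$, which is precisely the formula in the statement. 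The argument contains no real analytic difficulty: the main obstacle is bookkeeping the lattice-volume and $(2\pi)^d$ normalisations, and verifying that the scalar product on $\R\Omega$ implicit in $\tildeJ_{\R\Omega}$ matches the one inherited from $\tlie$ via Killing duality, so that $\|\lambda+\rho-w(\rho)\|$ means the same thing in both sides of the identity.
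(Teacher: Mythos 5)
Your proof follows the paper's argument step for step: Weyl's integration formula, transfer to the Lie algebra $\tlie$ of the maximal torus using $L<\pi$, cancellation of the Weyl denominator via $|\Delta|^2=\Delta\overline{\Delta}$, collapse of the double sum over $W^2$ to a single sum over $w=w_2w_1^{-1}$ by $W$-invariance of the measure, and identification of each resulting ball integral with the Bessel identity $\int_{B^d}\E^{\I\scal{x}{\xi}}\DD{x}=(2\pi/\|\xi\|)^{d/2}J_{d/2}(\|\xi\|)$. The normalisation bookkeeping ($2\pi\tlie_\Z$ as the kernel of $\exp$, the factor $(2\pi)^d\vol(\tlie/\tlie_\Z)$, and the duality identification of the norm on $\R\Omega$ with that on $\tlie$) matches the paper's, so the argument is correct and essentially the paper's own.
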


\begin{example}\label{ex:su2gaussian}
Consider $G=\SU(2)$. Its weight lattice $\Z\Omega$ is spanned by the fundamental weight $\E^\omega(\diag(\E^{\I\theta},\E^{-\I\theta})) = \E^{\I\theta}$. The norm of a weight $k\omega$ is $\frac{|k|}{2\sqrt{2}}$, and on the other hand, $\rho=\omega$, and $W = \sym(2) = \{\pm 1\}$. The volume $\vol(\tlie/\tlie_\Z)$ is $2\sqrt{2}$. Therefore, for $k \geq 1$ and $L <\pi$, 
\begin{align*}
c_k &= \frac{1}{4(k+1)} \left(\frac{L}{\sqrt{\pi}}\right) \left(\tildeJ_{\R\Omega}(kL\omega) - \tildeJ_{\R\Omega}((k+2)L\omega) \right)\\
&=\frac{1}{\pi (k+1)}  \left( \frac{1}{k} \sin \left(\frac{kL}{2\sqrt{2}}\right)  - \frac{1}{(k+2)} \sin \left(\frac{(k+2)L}{2\sqrt{2}}\right)  \right)
\end{align*}
since $J_{\frac{1}{2}}(x)=\sqrt{\frac{2}{\pi x}}\,\sin x$ and $\tildeJ_{\R\Omega}(x\omega) = \frac{4}{x\sqrt{\pi}}\sin (\frac{x}{2\sqrt{2}})$. For $k=0$, the formula specialises to
$$c_0 = \frac{1}{2\pi}\,\left(\frac{L}{\sqrt{2}} - \sin \left(\frac{L}{\sqrt{2}}\right)\right).$$
The multiplicity of the eigenvalue $c_k$ is equal to $(k+1)^2$ for any $k \geq 0$. 
\end{example}

\begin{example}
Suppose $G=\SU(3)$. The formula for $c_\lambda$ with $\lambda=n_1\omega_1+n_2\omega_2$ dominant weight in the Weyl chamber involves $6$ weights close to $\lambda$, namely, all the weights $\lambda+\mu$ with 
$\mu \in \{0, 3\omega_1,3\omega_2,2\omega_2-\omega_1,2\omega_1-\omega_2,2\omega_1+2\omega_2\},$
see Figure \ref{fig:besseldiscretelaplacian}. Thus,
$$c_{n_1,n_2} = \frac{L^2}{6\pi\sqrt{3}(n_1+1)(n_2+1)(n_1+n_2+2)} \left(\sum_{w \in \sym(3)} \eps(w) \,\frac{J_1(L\,\|\lambda+\rho-w(\rho)\|)}{L\,\|\lambda+\rho-w(\rho)\|}\right)$$
and each eigenvalue $c_{n_1,n_2}$ has multiplicity 
$m_{n_1,n_2}=(\frac{(n_1+1)(n_2+1)(n_1+n_2+2)}{2})^2.$
In this formula, the norm of a weight $k_1\omega_1+k_2\omega_2$ is $$\|k_1\omega_1+k_2\omega_2\| = \frac{1}{3}\, \sqrt{(k_1)^2 + k_1k_2 + (k_2)^2} .$$
\end{example}

\begin{figure}[ht]
\begin{center}      
\begin{tikzpicture}[scale=1]
\draw [thick] (3.5,2.6) -- (5,1.733) -- (6.5,2.6) -- (6.5,4.333) -- (5,5.2) -- (3.5,4.333) -- (3.5,2.6);
\foreach \x in {(6.5,2.6),(3.5,2.6),(5,5.2),(6.5,4.333),(3.5,4.333),(5,1.733)}
\fill [white] \x circle (3mm);
\foreach \x in {(0,0),(2,0),(3,0),(4,0),(5,0),(6,0),(7,0),(8,0),(9,0),(10,0),(9.5,0.866),(8.5,0.866),(7.5,0.866),(6.5,0.866),(5.5,0.866),(4.5,0.866),(3.5,0.866),(2.5,0.866),(1.5,0.866),(1,1.733),(2,1.733),(3,1.733),(4,1.733),(6,1.733),(7,1.733),(8,1.733),(9,1.733),(1.5,2.6),(2.5,2.6),(4.5,2.6),(5.5,2.6),(7.5,2.6),(8.5,2.6),(2,3.466),(3,3.466),(4,3.466),(5,3.466),(6,3.466),(7,3.466),(8,3.466),(2.5,4.333),(4.5,4.333),(5.5,4.333),(7.5,4.333),(3,5.2),(4,5.2),(6,5.2),(7,5.2),(3.5,6.066),(4.5,6.066),(5.5,6.066),(6.5,6.066),(4,6.933),(5,6.933),(6,6.933),(4.5,7.8),(5.5,7.8),(5,8.666)}
\fill \x circle (1.5pt);
\draw [pattern = north west lines, pattern color=black!50!white] (60:10.5) -- (0,0) -- (10.5,0);
\draw (10.3,1.2) node {$C$};
\draw (-0.2,-0.2) node {$0$};
\foreach \x in {(1,0),(0.5,0.866)}
{\draw [red!50!black] \x circle (3pt);
\fill [red!50!black] \x circle (1.5pt);}
\draw [red!50!black] (1,-0.3) node {$\omega_1$};
\draw [red!50!black] (0.2,1) node {$\omega_2$};
\foreach \x in {(6.5,2.6),(3.5,2.6),(5,5.2)}
\draw [blue!70!green] \x node {\Large $\boxplus$}; 
\foreach \x in {(6.5,4.333),(3.5,4.333),(5,1.733)}
\draw [blue!70!green] \x node {\Large $\boxminus$}; 
\draw [blue!70!green] (3.1,2.6) node {$\lambda$};
\end{tikzpicture}
\end{center}
\caption{The weights involved in the computation of $c_\lambda$ for $G=\SU(3)$.\label{fig:besseldiscretelaplacian}}
\end{figure}
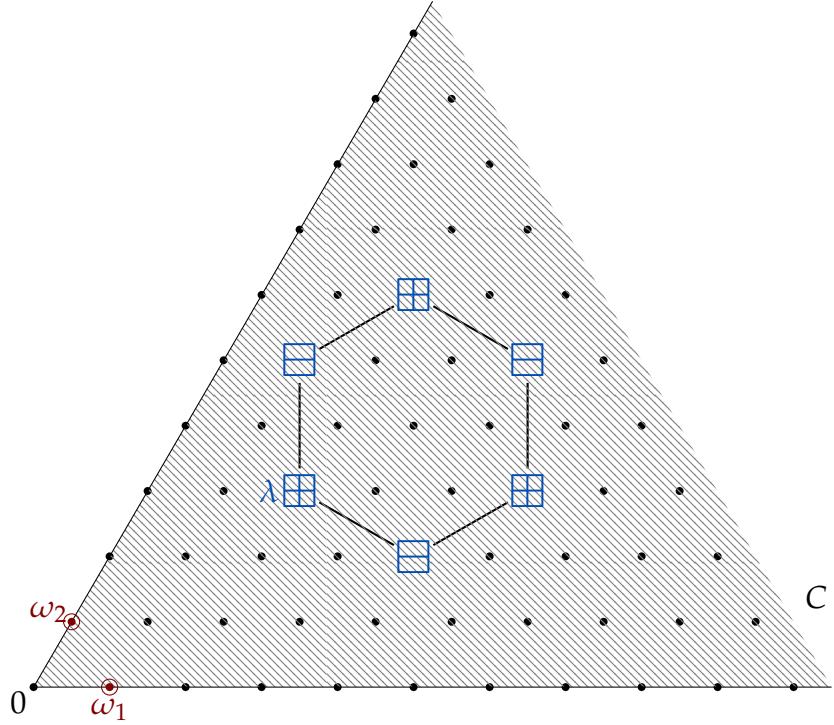

\subsection{Spectral radius and spectral gap}\label{subsec:spectralgap}
One thing that is not entirely clear from Theorem \ref{thm:gaussianlimit} is that the largest eigenvalues $c_\lambda$ \emph{correspond roughly} to the smallest dominant weights $\lambda$ in the Weyl chamber $C$. This is not a perfect correspondence: for instance, when $G=\SU(2)$, the dominant weights $k\omega$ with $k \geq 0$ yields constants $c_k=c_{k\omega}$ whose modules are not strictly decreasing with $k$. However, the two largest eigenvalues in this case are always $c_0$ and $c_1$, see the discussion later in this paragraph. One thing that is always true and easy to prove is that the largest eigenvalue corresponds to the zero weight:
\begin{proposition}
For any $L < \pi$, the eigenvalue $c_\lambda$ with the largest absolute value is obtained when $\lambda = 0$ is the trivial weight. Hence, the spectral radius of the graph $\GEOM(N,L)$ is asymptotically equivalent to
$$ \frac{N}{\vol(\tlie/\tlie_\Z)}\left( \frac{L}{\sqrt{2\pi}}\right)^{\rank(G)} \left(\sum_{w \in W} \eps(w)\,\tildeJ_{\R\Omega}\!\left(L\,(\rho-w(\rho))\right)\right).$$
\end{proposition}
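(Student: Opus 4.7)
The plan is to derive both parts of the proposition from the integral representation $c_\lambda=\int_G Z_L(g)\,\chi^\lambda(g)\DD{g}$ provided by Proposition~\ref{prop:abstractgaussian}, in which $\chi^\lambda$ denotes the \emph{normalised} character $\tr(\rho^\lambda)/d_\lambda$. The pivotal observation is the elementary bound $|\chi^\lambda(g)|\leq \chi^\lambda(e_G)=1$ valid for every $g\in G$: the eigenvalues of the unitary matrix $\rho^\lambda(g)$ lie on the unit circle, so the triangle inequality gives $|\tr\rho^\lambda(g)|\leq d_\lambda$. Combining this with the non-negativity of the indicator $Z_L$ and the fact that the trivial representation satisfies $\chi^0\equiv 1$ and $d_0=1$ yields
\[
|c_\lambda|\;\leq\;\int_G Z_L(g)\,|\chi^\lambda(g)|\DD{g}\;\leq\;\int_G Z_L(g)\DD{g}\;=\;c_0
\]
for every $\lambda\in\hatG$, so the eigenvalue of $T_h$ with largest modulus is attained at the trivial weight $\lambda=0$.

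Next I would transfer this comparison to the finite spectrum of $\GEOM(N,L)$ by invoking the Giné--Koltchinskii law of large numbers (Theorem~\ref{thm:GKlawoflargenumbers}). Its $\ell^2$-conclusion implies almost sure pointwise convergence of each individual reordered eigenvalue of $\frac{1}{N}A(N,L)=T_h(N)$ to the corresponding reordered eigenvalue of $T_h$; in particular, the largest positive and the smallest negative eigenvalues of $T_h(N)$ converge almost surely to their counterparts in $\mathrm{Spec}(T_h)$. Combined with the bound $|c_\lambda|\leq c_0$ established above, this gives the almost sure asymptotic equivalence of the spectral radius of $A(N,L)$ with $N\,c_0$.

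The explicit expression in the statement is then obtained by a direct substitution $\lambda=0$ in the formula of Theorem~\ref{thm:gaussianlimit}. With $d_0=1$ and $\lambda+\rho-w(\rho)$ simplifying to $\rho-w(\rho)$, the prefactor collapses to $\vol(\tlie/\tlie_\Z)^{-1}\bigl(L/\sqrt{2\pi}\bigr)^{\rank(G)}$ and the Weyl sum becomes $\sum_{w\in W}\eps(w)\,\tildeJ_{\R\Omega}(L(\rho-w(\rho)))$, matching the displayed asymptotic after multiplication by $N$.

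The only delicate point is certifying that the Giné--Koltchinskii reordering pairs the extreme eigenvalues of $T_h(N)$ with those of $T_h$; this is not a serious obstacle, since the spectrum of $T_h$ is discrete with only finitely many eigenvalues above any positive threshold (by compactness of $T_h$), so the $\ell^2$-convergence forces pointwise convergence at the extremes. The genuine content of the proposition is thus entirely contained in the trivial-looking but essential character bound $|\chi^\lambda|\leq 1$ and in the non-negativity of $Z_L$.
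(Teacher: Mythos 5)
Your argument is correct and follows the same route as the paper: both rest on the non-negativity of $Z_L$ and the elementary bound $|\chi^\lambda(g)|\leq 1$ to deduce $|c_\lambda|\leq c_0$, and then substitute $\lambda=0$ into Theorem~\ref{thm:gaussianlimit}. You spell out the transfer from $\mathrm{Spec}(T_h)$ to the finite spectrum via Giné--Koltchinskii more carefully than the paper does, but the core argument is identical.
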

\begin{proof}
The eigenvalue $c_\lambda$ is given by the integral $c_\lambda = \int_G Z_{L}(g)\,\chi^\lambda(g) \DD{g}$, with $Z_L(g)$ non-negative function, and $\chi^\lambda$ renormalised character that has always its module smaller than $1$. The maximum value is obtained when $\chi^\lambda(g)=1$ for every $g$, that is for the trivial representation of $G$.
\end{proof}
\medskip

\begin{example}
When $G=\SU(2)$, it is easy to prove that the two largest eigenvalues $c_k$ are always 
\begin{align*}
c_0 &= \frac{1}{2\pi}\,\frac{L}{\sqrt{2}}\left(1 - \sinc\!\left( \frac{L}{\sqrt{2}} \right)\right) ;\\
c_1 &= \frac{1}{2\pi}\,\frac{L}{2\sqrt{2}}\left(\sinc\!\left(\frac{L}{2\sqrt{2}}\right)-\sinc\!\left(\frac{3L}{2\sqrt{2}}\right)\right),
\end{align*}
where $\sinc(x) = \frac{\sin x}{x}$. Indeed, $L$ being fixed, the eigenvalue $c_k$ is proportional to the function
$$g_L = k\mapsto \frac{\sinc(k\ell)-\sinc((k+2)\ell)}{(k+1)\ell}$$
with $\ell = \frac{L}{2\sqrt{2}}$. For any value of $L \in (0,\pi)$, the function $g_L$ looks like the one of Figure \ref{fig:spectralgap}, and the two values of $g_L$ at $k=0$ and $k=1$ always fall on the first decreasing section of the curve. Hence, they yield the asymptotic spectral gap
$$\Delta_N=c_0(N)-c_1(N) \simeq \frac{N L }{2\sqrt{2}\pi} \left(1 - \sinc\!\left(\frac{L}{\sqrt{2}}\right) - \frac{1}{2}\left(\sinc\!\left(\frac{L}{2\sqrt{2}}\right)-\sinc\!\left(\frac{3L}{2\sqrt{2}}\right)\right)\right) $$
of a random geometric graph $\GEOM(N,L)$ on $\SU(2)$, with $L$ fixed and $N$ going to infinity. In general, a level $L$ being fixed, the map $\lambda \mapsto c_\lambda$ is proportional to
$$g_L(\lambda)=\frac{1}{\prod_{\alpha \in \Phi_+} \scal{L(\lambda + \rho)}{\alpha}}\,\sum_{w \in W} \eps(w)\,\tildeJ_{\R\Omega}\left(L(\lambda+\rho-w(\rho))\right).$$ 
In the definition of $g_L$, the alternate sum of modified Bessel functions is a discretisation of the partial derivative $((\prod_{\alpha \in \Phi_+} \partial_\alpha)\tildeJ_{\beta})(Lx)$; see Section \ref{subsec:onepoint}, where this argument will be made rigorous for the Poisson regime. The discrete partial derivative $g_L$ can be extended to the whole Weyl chamber $C$, and it is then an oscillating function that goes to $0$ as the norm of its parameter grows to infinity, in a fashion very similar to what happens for $\SU(2)$. 
We have drawn in Figure \ref{fig:beautifulGL} a function $g_L$ for $L>0$ and the group $G=\SU(3)$; here the oscillations are very small and barely visible. In this case, it is clear that the non-zero weights that yield the largest eigenvalues are $\omega_1$ and $\omega_2$, which correspond to the fundamental representations of the group.
\end{example}
\vspace*{-5mm}
\begin{figure}[ht]
\begin{center}      
\begin{tikzpicture}[scale=0.8]
\begin{axis}[xmin=-0.6,xmax=21,ymax=0.7,ymin=-0.2,
  axis lines=center,xscale=1.5,yscale=1.5]
\addplot [domain=0.01:20,thick,violet,smooth] {(sin(x*31.82)/(x*0.555) - sin((x+2)*31.82)/((x+2)*0.555) )/((x+1)*0.555)};
\fill [Red] (0,0.348) circle (0.7mm);
\draw [Red] (0,0.348) circle (1.4mm);
\fill [Red] (1,0.315) circle (0.7mm);
\draw [Red] (1,0.315) circle (1.4mm);
\end{axis}
\end{tikzpicture}
\caption{The function $g_L(x)$ for the group $G=\SU(2)$, with $L=\frac{\pi}{2}$; and the two largest integer values.\label{fig:spectralgap}}
\end{center}
\end{figure}
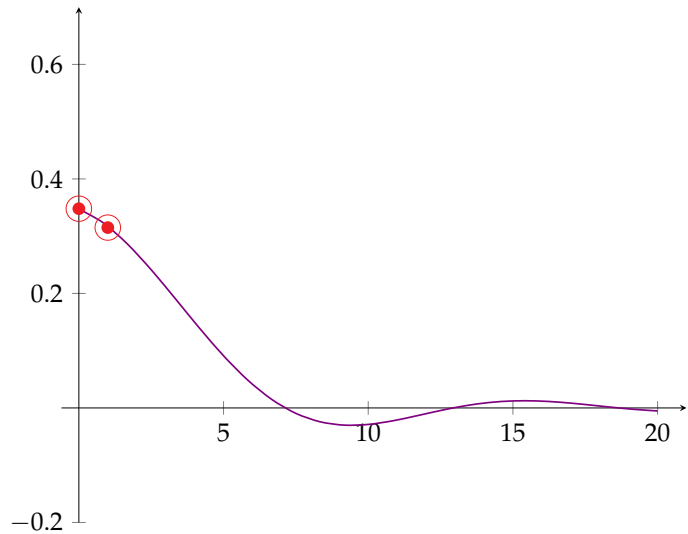
\begin{figure}[ht]
\includegraphics[scale=0.8]{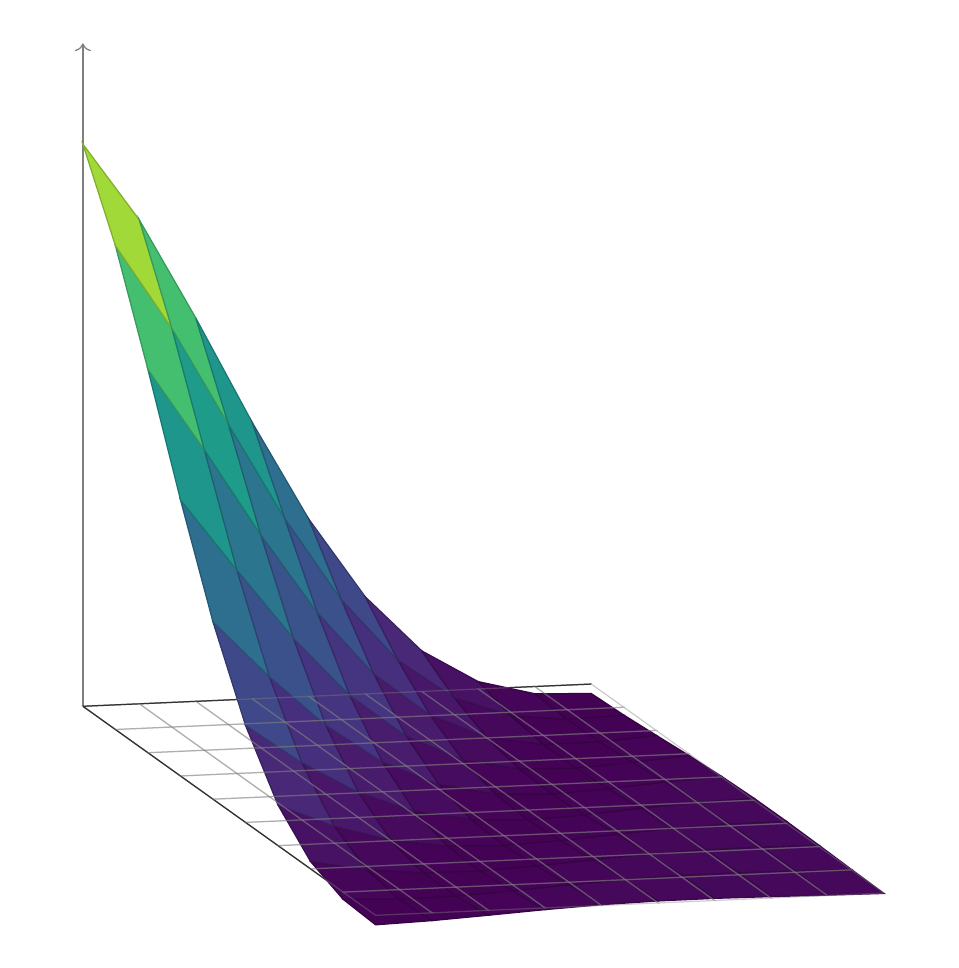}
\caption{A function $g_L$ for the group $G=\SU(3)$.\label{fig:beautifulGL}}
\end{figure}
\medskip

The only thing that might prevent one of the fundamental representations of the group to provide the second largest eigenvalue is if $L$ is too large, forcing the points of the lattice $L (\Z\Omega)$ that are neighbors of the weight $0$ to be at the bottom of the first oscillation of $g_L$. This forbids us to state a universal theorem for the spectral gap, though we are also unable to provide a counterexample. In practice, a level $L$ being fixed, one can use the asymptotic behavior of the Bessel functions to get rid of the points of the lattice $L (\Z\Omega)$ that are too far from $0$, and then there is only a finite number of values of $g_L$ to examine in order to determine the spectral gap. Thus, in most cases, the spectral gap of $\GEOM(N,L)$ is asymptotically equivalent to
$$\frac{N}{\vol(\tlie/\tlie_\Z)}\left( \frac{L}{\sqrt{2\pi}}\right)^{\rank(G)} \left(\sum_{w \in W} \eps(w)\,\left(\tildeJ_{\R\Omega}(L\,(\rho-w(\rho)))-\frac{1}{\dim \omega_1}\,\tildeJ_{\R\Omega}(L\,(\omega_1+\rho-w(\rho)))\right)\right),$$
where $\omega_1$ is the fundamental weight:
\begin{itemize}
    \item that corresponds in the classical cases to the geometric representation of the group of matrices,
    \item and that maximises $g_L$ most of the time.
 \end{itemize}  
\medskip

\subsection{Extension to symmetric spaces with rank one}\label{subsec:gaussnongroup}
In this paragraph, we explain how to adapt the arguments of the two previous sections to the case of a ssccss $X=G/K$ of non-group type. Roughly speaking, all the theoretical arguments from Section \ref{sec:representationtheory} adapt readily by replacing the irreducible representations of $G$ by the spherical representations of the pair $(G,K)$; on the other hand, the concrete computations from this section can be performed without too much additional work if the space $G/K$ has rank one, because in this case the zonal spherical functions are polynomials of one parameter.
\medskip

\paragraph{\textbf{Spherical representations and zonal functions}}
In the sequel, we fix a compact symmetric space $X=G/K$, and we denote $\!\DD{x}$ the unique $G$-invariant probability measure on $X$, which is the image of the Haar measure by the canonical projection from $G$ to $X$. We call \emph{spherical} an irreducible representation $V^\lambda$ of $G$ which admits a non-zero $K$-fixed vector, so $$(V^\lambda)^K=\{v \in V^\lambda\,|\,\forall k \in K,\,\,(\rho^\lambda(k))(v) =v\}$$ is not reduced to $\{0\}$. It can be shown that $(V^\lambda)^K$ has then dimension $1$, and also that the subset $\hatG^K \subset \hatG$ of spherical representations is the intersection of the Weyl chamber $C$ with a sublattice of the lattice of weights $\Z\Omega$; this is the Cartan--Helgason theorem, see \cite[Chapter V, Theorem 4.1]{Hel84}, as well as \cite{Sug62} and \cite[Section 12.3]{GW09}. Later, we shall only be interested in the case of compact symmetric spaces with rank one, in which case this sublattice has also rank one and will be explicitly described by Proposition \ref{prop:sphericalrepresentationsrankone}. Given a spherical representation $V^\lambda$ with $\lambda \in \hatG^K$, we fix a spherical vector $e^\lambda$ in $(V^\lambda)^K$ with $\scal{e^\lambda}{e^\lambda}_{V^\lambda}=1$. The vector $e^\lambda$ is unique up to multiplication by a complex number with modulus $1$. The (normalised) \emph{zonal spherical function} on $X$ associated to the spherical representation $V^\lambda$ is
 $$\zon^\lambda(g) = \scal{e^\lambda}{(\rho^\lambda(g))(e^\lambda)}_{V^\lambda} ; $$
 this function is bi-$K$-invariant on $G$, and it yields a $K$-invariant function on $X$. We have
 $$\int_G |\zon^\lambda(g)|^2\DD{g} = \int_X |\zon^\lambda(x)|^2\DD{x} = \frac{1}{d_\lambda}.$$
 The spherical transform of a bi-$K$-invariant function is defined for $f$ bi-$K$-invariant and $\lambda \in \hatG^K$ by
 $$f^\mathrm{sph}(\lambda) = \scal{e^\lambda}{(\widehat{f}(\lambda))(e^\lambda)}_{V^\lambda} = \int_G f(g)\,\zon^\lambda(g)\DD{g}.$$
We endow the space $\leb^2(\hatG^K,d_\bullet)=\bigoplus_{\lambda \in \hatG^K}^\perp\C$ with the coordinatewise product and with the Hilbert structure coming from the scalar product
$$\scal{a}{b} =\sum_{\lambda \in \hatG^K} d_\lambda \scal{a(\lambda)}{b(\lambda)},$$
where on the right-hand side we have the usual scalar product on $\C$.
The analogue of Theorem \ref{thm:peterweyl} in this setting is:
\begin{theorem}[Cartan]\label{thm:cartanpeterweyl}
The map $f\mapsto f^{\mathrm{sph}}$ from $\leb^2(K \backslash G / K,\!\DD{g})$ to $\leb^2(\hatG^K,d_\bullet)$
 is an isometry of Hilbert spaces and an isomorphism of commutative algebras. Moreover, if $c \in \leb^2(K\backslash G/K,\!\DD{g})$, then the convolution on the right
 \begin{align*}
 R_c : \leb^2(G/K,\!\DD{g}) &\to \leb^2(G/K,\!\DD{g})\\
        f &\mapsto f*c
 \end{align*}
 is a Hilbert--Schmidt operator; its eigenvalues $c^\mathrm{sph}(\lambda)$ are in correspondence with the spherical weights $\lambda \in \hatG^K$, each $c^\mathrm{sph}(\lambda)$ having multiplicity $d_\lambda$.
\end{theorem}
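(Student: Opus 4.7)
The plan is to derive this theorem as a corollary of the Peter--Weyl theorem \ref{thm:peterweyl} by identifying, on the Fourier side, the subspaces of $\leb^2(G,\!\DD{g})$ corresponding to bi-$K$-invariance and right-$K$-invariance. First, I would write $f \in \leb^2(K\backslash G/K,\!\DD{g})$ as an element of $\leb^2(G,\!\DD{g})$ and study $\widehat{f}(\lambda) \in \Endom(V^\lambda)$ for each $\lambda \in \hatG$. The defining invariance $f(k_1 g k_2) = f(g)$ translates under the Fourier transform into $\rho^\lambda(k_1)\,\widehat{f}(\lambda)\,\rho^\lambda(k_2) = \widehat{f}(\lambda)$ for all $k_1,k_2 \in K$. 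Since $(V^\lambda)^K$ has dimension $0$ or $1$ (for $\lambda \in \hatG^K$), a Schur-type argument on left and right separately forces $\widehat{f}(\lambda)$ to vanish unless $\lambda \in \hatG^K$, and in that case $\widehat{f}(\lambda)$ is a rank-one operator supported on the line $\C e^\lambda$, that is, $\widehat{f}(\lambda) = \alpha_\lambda(f)\,|e^\lambda\rangle\langle e^\lambda|$ for some scalar $\alpha_\lambda(f)$.

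Next, I would identify this scalar with the spherical transform by pairing against $e^\lambda$:
\[
\alpha_\lambda(f) = \scal{e^\lambda}{\widehat{f}(\lambda)(e^\lambda)}_{V^\lambda} = \int_G f(g)\,\scal{e^\lambda}{\rho^\lambda(g)(e^\lambda)}_{V^\lambda}\DD{g} = f^\mathrm{sph}(\lambda).
\]
With the normalisation $\scal{u}{v}_{\Endom(V^\lambda)} = d_\lambda \tr(u^*v)$, the squared norm of the rank-one operator $f^\mathrm{sph}(\lambda)\,|e^\lambda\rangle\langle e^\lambda|$ is $d_\lambda\,|f^\mathrm{sph}(\lambda)|^2$. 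Summing over $\lambda$ and invoking the Peter--Weyl isometry, I obtain
\[
\|f\|_{\leb^2(G)}^2 = \sum_{\lambda \in \hatG^K} d_\lambda\,|f^\mathrm{sph}(\lambda)|^2 = \|f^\mathrm{sph}\|_{\leb^2(\hatG^K,d_\bullet)}^2,
\]
which gives the isometry statement. Since the Peter--Weyl isomorphism turns convolution into composition in $\Endom(V^\lambda)$, and since $(|e^\lambda\rangle\langle e^\lambda|)^2 = |e^\lambda\rangle\langle e^\lambda|$, the spherical transform is multiplicative: $(f*g)^\mathrm{sph}(\lambda) = f^\mathrm{sph}(\lambda)\,g^\mathrm{sph}(\lambda)$. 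Commutativity of the convolution algebra $\leb^2(K\backslash G/K)$ follows automatically (it is classically equivalent to the fact that the multiplicities of $(V^\lambda)^K$ are at most one).

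For the spectral statement on $R_c$, I would decompose $\leb^2(G/K,\!\DD{g})$ as the subspace of right-$K$-invariant functions in $\leb^2(G)$. Under Peter--Weyl, right-$K$-invariance of $f$ is equivalent to $\widehat{f}(\lambda)\,\rho^\lambda(k) = \widehat{f}(\lambda)$ for all $k\in K$, i.e.\ the image of $\widehat{f}(\lambda)^*$ lies in $(V^\lambda)^K$; equivalently, $\widehat{f}(\lambda)$ factors as an arbitrary vector of $V^\lambda$ tensored with $\langle e^\lambda|$. This gives a canonical decomposition
\[
\leb^2(G/K,\!\DD{g}) \;\simeq\; \bigoplus_{\lambda \in \hatG^K}^\perp V^\lambda \otimes \C \langle e^\lambda|,
\]
so each spherical $\lambda$ contributes a $d_\lambda$-dimensional block. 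On each such block, convolution on the right by $c$ corresponds to right-multiplication by $\widehat{c}(\lambda) = c^\mathrm{sph}(\lambda)\,|e^\lambda\rangle\langle e^\lambda|$, which acts as the scalar $c^\mathrm{sph}(\lambda)$ (since $\langle e^\lambda | e^\lambda\rangle = 1$). Hence $c^\mathrm{sph}(\lambda)$ is an eigenvalue of $R_c$ of multiplicity exactly $d_\lambda$.

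Finally, the Hilbert--Schmidt property follows from $\|R_c\|_\mathrm{HS}^2 = \sum_{\lambda \in \hatG^K} d_\lambda\,|c^\mathrm{sph}(\lambda)|^2 = \|c\|_{\leb^2(G)}^2 < +\infty$ using the isometry established above. The main technical point to be careful about is the combination of left/right $K$-invariances with the Peter--Weyl normalisation of the scalar product on $\Endom(V^\lambda)$: the factor $d_\lambda$ in the definition of $\scal{\cdot}{\cdot}_{\Endom(V^\lambda)}$ is precisely what compensates the rank-one nature of $\widehat{f}(\lambda)$ for bi-$K$-invariant $f$, and what produces the weight $d_\bullet$ in the target Hilbert space, as well as the multiplicities $d_\lambda$ in the spectral decomposition of $R_c$.
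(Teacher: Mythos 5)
Your proof is correct and complete; note that the paper does not actually prove this theorem but merely cites Helgason \cite[Chapter V]{Hel84} and the finite Gelfand pair analogue in \cite[Chapter 4]{CSST08}, and your argument is a faithful self-contained reconstruction of exactly the route those references take. The key steps --- showing bi-$K$-invariance forces $\widehat{f}(\lambda)$ to be the scalar $f^{\mathrm{sph}}(\lambda)$ times the rank-one projection $|e^\lambda\rangle\langle e^\lambda|$ (vanishing for $\lambda \notin \hatG^K$), computing the norm $d_\lambda |f^{\mathrm{sph}}(\lambda)|^2$ under the Peter--Weyl normalisation, and identifying $\leb^2(G/K)$ with $\bigoplus^\perp_{\lambda\in\hatG^K} V^\lambda \otimes \C\langle e^\lambda|$ so that right convolution by $c$ acts as the scalar $c^{\mathrm{sph}}(\lambda)$ on a block of dimension $d_\lambda$ --- are all correctly carried out, and your remark that the factor $d_\lambda$ in the $\Endom(V^\lambda)$ scalar product is precisely what produces both the weight $d_\bullet$ and the multiplicities is the right thing to highlight.
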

\noindent A reformulation of the first part of this theorem is that the zonal spherical functions form an orthogonal basis of $\leb^2(K \backslash G / K,\!\DD{g})$, with the convolution rule $\zon^\lambda * \zon^\mu = \frac{\delta_{\lambda,\mu}}{d_\lambda}\,\zon^\lambda$. We refer to \cite[Chapter V]{Hel84} for a proof of this result and a study of the spherical functions of a compact symmetric space; an analogous treatment for finite Gelfand pairs is provided by \cite[Chapter 4]{CSST08}, and the whole discussion from \emph{loc.~cit.}~adapts readily to compact symmetric spaces by replacing the finite sums by integrals against Haar measures. Now, in the setting of random geometric graphs with fixed level $L$ on a symmetric space of non-group type, the Giné--Koltchinskii theorem still applies, so the limit in the sense of Theorem \ref{thm:GKlawoflargenumbers} of the spectrum of $\frac{A(N,L)}{N}$ is the spectrum of the integral operator
\begin{align*}
T_h : \leb^2(X,\!\DD{x}) &\to \leb^2(X,\!\DD{x}) \\
f &\mapsto \left(T_h(f) : x \mapsto \int_X h(x,y)\,f(y)\DD{y}\right)
\end{align*}
with $h(x,y)=1_{d(x,y) \leq L}$. This operator writes as the right-convolution $R_c$ with $c(g)=Z_L(g)=1_{d(gK,K)\leq L}$ bi-$K$-invariant function on $G$. Consequently, the analogue in the setting of ssccss of non-group type of Proposition \ref{prop:abstractgaussian} is:

\begin{proposition}\label{prop:abstractgaussiannongauss}
Denote $Z_L(g)=1_{d(gK,K)\leq L}$ with $X=G/K$ ssccss of non-group type. The eigenvalues of the operator on $\leb^2(X)$ of convolution on the right by $Z_L$ are in bijection with the spherical representations $\lambda \in \hatG^K$. Each eigenvalue $c_\lambda$ has multiplicity $d_\lambda$ and is given by the formula $c_\lambda = \int_G Z_L(g)\,\zon^\lambda(g)\DD{g}$.
\end{proposition}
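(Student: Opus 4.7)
The plan is to run the argument in parallel with the group case (Proposition \ref{prop:abstractgaussian}), with the Peter--Weyl decomposition replaced by its Cartan--Peter--Weyl counterpart, Theorem \ref{thm:cartanpeterweyl}. First I would check that $Z_L$ is bi-$K$-invariant: the $G$-invariance of the Riemannian distance on $X=G/K$ yields $d(k_1 g k_2\cdot o,\,o)=d(g\cdot o,\,o)$ for all $k_1,k_2\in K$, so that $Z_L\in\leb^2(K\backslash G/K,\DD{g})$ and Theorem \ref{thm:cartanpeterweyl} is applicable. In particular $R_{Z_L}$ is Hilbert--Schmidt, and it is self-adjoint because the kernel $(x,y)\mapsto 1_{d(x,y)\leq L}$ on $X\times X$ is real and symmetric; hence $R_{Z_L}$ is compact with discrete real spectrum.

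The next step is to diagonalise $R_{Z_L}$ through the left action of $G$ on $\leb^2(G/K)$. A routine change of variables shows that right convolution by any bi-$K$-invariant kernel commutes with every left translation $L_h$, $h\in G$, so $R_{Z_L}$ is $G$-equivariant. The Cartan--Peter--Weyl theorem then yields an orthogonal splitting
\[
\leb^2(G/K)=\bigoplus_{\lambda\in\hatG^K}^{\perp} H_\lambda,
\]
where each $H_\lambda$ is $G$-isomorphic to the spherical representation $V^\lambda$ and occurs with multiplicity one: this is precisely the Cartan--Helgason statement that $\dim (V^\lambda)^K=1$ for $\lambda\in\hatG^K$. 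Schur's lemma then forces $R_{Z_L}$ to act on each $H_\lambda$ as a scalar $c_\lambda\,\mathrm{id}_{H_\lambda}$, giving one eigenvalue per spherical weight, with multiplicity $\dim V^\lambda=d_\lambda$.

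It remains to identify the scalar $c_\lambda$. Since $\zon^\lambda$ is bi-$K$-invariant and lies in the $\lambda$-isotypic component, it belongs to $H_\lambda$, and therefore $\zon^\lambda * Z_L = c_\lambda\,\zon^\lambda$. Evaluating at $e_G$, using $\zon^\lambda(e_G)=1$ together with the symmetric-space identity $\zon^\lambda(g^{-1})=\zon^\lambda(g)$ (a consequence of $KgK=Kg^{-1}K$ under the Cartan involution, which also ensures that $\zon^\lambda$ is real-valued), the convolution integral collapses to
\[
c_\lambda=\int_G Z_L(g)\,\zon^\lambda(g)\DD{g},
\]
which is precisely the spherical transform $Z_L^{\mathrm{sph}}(\lambda)$ appearing in Theorem \ref{thm:cartanpeterweyl}. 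The only external input needed is the Cartan--Helgason multiplicity-one statement, which has already been cited in the paragraph preceding the proposition; everything else is a direct transcription of the proof of Proposition \ref{prop:abstractgaussian} from the group setting to the symmetric-space setting, so the main (mild) obstacle is really just bookkeeping of conventions for zonal spherical functions versus characters.
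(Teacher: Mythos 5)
Your argument is correct and matches the paper's: once $Z_L$ is checked to be bi-$K$-invariant, the proposition is exactly the specialisation of Theorem~\ref{thm:cartanpeterweyl} to $c=Z_L$, and the eigenvalue is the spherical transform $c_\lambda = Z_L^{\mathrm{sph}}(\lambda) = \int_G Z_L(g)\,\zon^\lambda(g)\DD{g}$. Your explicit rerun of the $G$-equivariance and Schur's lemma argument on the multiplicity-free decomposition of $\leb^2(G/K)$ simply unfolds what the cited Theorem~\ref{thm:cartanpeterweyl} already asserts, so it is the same route spelled out in more detail rather than a genuinely different one.
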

\medskip

\paragraph{\textbf{Symmetric spaces with rank one}}
The abstract result from Proposition \ref{prop:abstractgaussiannongauss} still holds if $X$ is connected but not necessarily simply connected, so in the sequel of this subsection we remove this assumption. Then, the zonal integrals from the previous proposition can be computed when $X$ has rank one, which is equivalent to one of the following assertions:
\begin{itemize}
    \item $X$ is a compact symmetric space with rank one, meaning that $X$ does not contain a totally geodesic flat submanifold with dimension at least $2$;
    \item $X$ is a $2$-point homogeneous compact connected Riemannian manifold: if $x_1,y_1,x_2,y_2 \in X$ satisfy $d(x_1,y_1)=d(x_2,y_2)$, then there exists an isometry $i : X \to X$ such that $i(x_1)=x_2$ and $i(y_1)=y_2$;
    \item $X$ is one of the following spaces: the real spheres $\R\Sph^{n \geq 1} = \SO(n+1)/\SO(n)$; the real, complex and quaternionic projective spaces 
    \begin{align*}
    &\R\Proj^{n\geq 2} = \SO(n+1)/\mathrm{O}(n);\\
    &\C\Proj^{n \geq 2}=\SU(n+1)/\mathrm{U}(n);\\
    &\Hq\Proj^{n \geq 2} = \SP(n+1)/(\SP(n)\times \SP(1));
    \end{align*}
     and the exceptional octonionic projective plane $\Octo\Proj^2 = \mathrm{F}_4 / \Spin(9)$.
\end{itemize}
We refer to \cite[Chapter 8]{Wolf67} for a proof of the equivalence between the two first assertions; the classification and the harmonic analysis of these spaces can be found in \cite{Grin83,AH10} and \cite[Chapter 3]{Vol09}. All these spaces are simply connected but the one-dimensional sphere $\R\Sph^1=\Tor$ and the real projective spaces $\R\Proj^{n\geq 2}$, which are twofold-covered by the real spheres $\R\Sph^n$. In the following, we endow $\R\Sph^{n}$ (respectively, $\mathbb{K}\Proj^n$ with $\mathbb{K} \in \{\R,\C,\Hq\}$) with its usual Euclidean coordinates $(x_1,x_2,\ldots,x_{n+1})$ with $\sum_{i=1}^{n+1}(x_i)^2 = 1$ (respectively, with its usual homogeneous coordinates $[x_1:x_2:\cdots:x_{n+1}]$). For the exceptional space $\Octo\Proj^2$, we have to be careful because of the non-associativity of the product of octonions, but there is a dense affine chart of $\Octo\Proj^2$ whose points $[\theta_1:\theta_2:1]$ are labeled by pairs $(\theta_1,\theta_2)$ of octonions, such that one can manipulate these homogeneous coordinates in exactly the same way as for the other projective spaces. In this affine chart, we set $\theta_3=1$; in general, a set of homogeneous coordinates $[\theta_1:\theta_2:\theta_3]$ is allowed for a point in $\Octo\Proj^2$ if the algebra spanned by the three octonions $\theta_1,\theta_2,\theta_3$ is associative, see \cite{John76,Adam96,Baez02} for details.\medskip

With these choices of coordinates, the group $K$ stabilises the base point $b=(0,0,\ldots,1)$ or $b=[0:0:\cdots:1]$, and the distance from $x=(x_1,x_2,\ldots,x_{n+1})$ or $x=[x_1:x_2:\cdots:x_{n+1}]$ to the base point is given by
$$d(x,b) = \begin{cases}
    \arccos (x_{n+1}) &\text{in the case of spheres},\\
    \arccos \left(\frac{|x_{n+1}|}{\sqrt{|x_1|^2+|x_2|^2+\cdots+|x_{n+1}|^2}}\right) &\text{in the case of projective spaces}.
\end{cases}$$
This formula differs by a multiplicative constant from the canonical Riemannian metric on the symmetric space $G/K$ which we detailed in Section \ref{subsec:normalisation}. On the other hand, with the same choice of coordinates, a bi-$K$-invariant function on the group $G$, which is a $K$-invariant function on the symmetric space $X$, is also a function of this single parameter $$
x= x_{n+1}\quad \text{or} \quad s=\frac{|x_{n+1}|^2}{|x_1|^2+\cdots+|x_{n+1}|^2}.$$
Consequently, the integrals from Proposition \ref{prop:abstractgaussiannongauss} are integrals over one single real parameter in $[-1,1]$ (real spheres) or in $[0,1]$ (projective spaces), whose distribution under the Haar measure is:
\vspace{2mm}
$$\renewcommand{\arraystretch}{2}
\begin{tabular}{|c|c|c|}
\hline $X$ & spherical coordinate & law of the spherical coordinate \\
\hline\hline $\R\Sph^n$ & $x=x_{n+1}$ & $\frac{\Gamma(\frac{n+1}{2})}{\Gamma(\frac{1}{2})\,\Gamma(\frac{n}{2})}\,(1-x^2)^{\frac{n}{2}-1}\,1_{x \in [-1,1]} \DD{x}$ \\
\hline $\R\Proj^n$ &$s = \frac{|x_{n+1}|^2}{|x_1|^2+\cdots+|x_{n+1}|^2}$ & $\frac{\Gamma(\frac{n+1}{2})}{\Gamma(\frac{1}{2})\,\Gamma(\frac{n}{2})}\,s^{-\frac{1}{2}}\,(1-s)^{\frac{n}{2}-1}\,1_{s\in[0,1]}\DD{s}\quad (\beta^{(\frac{1}{2},\frac{n}{2})})$ \\
\hline $\C\Proj^n$ &$s = \frac{|x_{n+1}|^2}{|x_1|^2+\cdots+|x_{n+1}|^2}$ & $\qquad n\,(1-s)^{n-1}\,1_{s \in [0,1]}\DD{s}\qquad\quad\,\,\, (\beta^{(1,n)})$  \\
\hline $\Hq\Proj^n$ &$s = \frac{|x_{n+1}|^2}{|x_1|^2+\cdots+|x_{n+1}|^2}$ & $2n(2n+1)\,s(1-s)^{2n-1}\,1_{s \in [0,1]}\DD{s}\quad (\beta^{(2,2n)})$ \\
\hline $\Octo\Proj^2$ &$s = \frac{|\theta_3|^2}{|\theta_1|^2+|\theta_2|^2+|\theta_3|^2}$ & $\quad \,\,1320\,s^3(1-s)^7\,1_{s \in [0,1]}\DD{s}\qquad\quad (\beta^{(4,8)})$ \\
\hline
\end{tabular}$$
\vspace{2mm}

\noindent In the case of a projective space $\mathbb{K}\Proj^n$, one obtains a $\beta$-distribution
$$\beta^{(a,b)}(\!\DD{s}) = \frac{\Gamma(a+b)}{\Gamma(a)\,\Gamma(b)}\,s^{a-1}\,(1-s)^{b-1}\DD{s}$$
with parameters $a$ and $b$ which depends on the field $\mathbb{K}$ of the projective space and on the rank $n$. The distribution of the spherical coordinate $x$ on the real sphere $\R\Sph^n$ will be denoted $\theta^n(\!\DD{x})$; its image by the map $x \mapsto x^2$ is the distribution $\beta^{(\frac{1}{2},\frac{n}{2})}$.\bigskip

By the remark stated just after the Cartan analogue \ref{thm:cartanpeterweyl} of the Peter--Weyl theorem, the zonal spherical functions $\zon^\lambda$ with $\lambda \in \hatG^K$ form an orthogonal basis of the space of $K$-invariant functions on $X$, with the normalisation condition $\zon^\lambda(e_G)=1$. On the other hand, the orthogonal polynomials with respect to the distribution $\theta^n(\!\DD{s})$ or $\beta^{a,b}(\!\DD{x})$ form an orthogonal basis of the space of functions of the parameter $x$ or $s$ which are square-integrable. Since the bi-$K$-invariant functions are functions of this single parameter, the two orthogonal bases correspond, and the following proposition describes these functions and the associated spherical representations.

\begin{proposition}\label{prop:sphericalrepresentationsrankone}
Consider a symmetric space $X=G/K$ with rank one. There is in each case a dominant weight $\omega_0 \in C \cap \Z\Omega$ such that $\hatG^K = \N \omega_0$:
\vspace{2mm}
\renewcommand{\arraystretch}{1.5}
$$\begin{tabular}{|c|c|c|}
\hline $X$ & $V^{\omega_0}$ & $d_{k\omega_0}$ \\
\hline \hline $\R\Sph^{n}$& geometric representation on $\C^{n+1}$& $\frac{2k+n-1}{k+n-1}\binom{k+n-1}{n-1}$\\
\hline $\R\Proj^{n}$ & $\mathfrak{so}^\perp(n+1,\C)\subset \mathfrak{sl}(n+1,\C)$ & $\frac{4k+n-1}{2k+n-1}\binom{2k+n-1}{n-1}$ \\
\hline $\C\Proj^{n}$  & adjoint representation $\mathfrak{sl}(n+1,\C)$ & $\frac{2k+n}{n} \binom{k+n-1}{n-1}^2$\\
\hline $\Hq\Proj^{n}$ & $\mathfrak{sp}^\perp(2n+2,\C) \subset \mathfrak{sl}(2n+2,\C)$ & $\frac{2k+2n+1}{(2n+1)(k+1)}\,\binom{k+2n}{2n}\binom{k+2n-1}{2n-1}$ \\
\hline $\Octo\Proj^{2}$ & tracefree part of the $27$-dimensional Albert algebra $\mathrm{A}(3,\mathbb{O})$ & $\frac{2k+11}{385}\binom{k+7}{4}\binom{k+10}{10}$\\
\hline
\end{tabular}$$ 
\vspace{2mm}

\noindent The corresponding zonal spherical functions are the \emph{Legendre polynomials} in the case of real spheres
$$P^{n,k}(x) = \frac{(-1)^k\,\Gamma(\frac{n}{2})}{2^k\,\Gamma(\frac{n}{2}+k)}\,\frac{1}{(1-x^2)^{\frac{n}{2}-1}}\,\frac{d^k}{\!\DD{x}^k}(1-x^2)^{\frac{n}{2}+k-1},\quad k\geq 0,$$
and the \emph{Jacobi polynomials}
$$J^{(a,b),k}(s)=\frac{(-1)^k\,\Gamma(b)}{\Gamma(b+k)}\,\frac{1}{s^{a-1}(1-s)^{b-1}}\,\frac{d^k}{\!\DD{s}^k}(s^{a+k-1}(1-s)^{b+k-1}),\quad k\geq 0,$$
both formulas being instances of Rodrigues' formula for orthogonal polynomials.
\end{proposition}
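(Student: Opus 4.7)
The plan splits into four parts. First, for $G/K$ of rank one, the Cartan--Helgason theorem referenced in the excerpt ensures that the set of spherical weights $\hatG^K$ is the intersection of the Weyl chamber with a sublattice of $\Z\Omega$ which, by the rank-one hypothesis, has rank one: hence $\hatG^K = \N\omega_0$ for a unique minimal dominant spherical weight $\omega_0$. I would then identify $\omega_0$ case by case by exhibiting one concrete spherical representation of minimal highest weight: for $\R\Sph^n$, the defining representation of $\SO(n+1)$ on $\C^{n+1}$ has $e_{n+1}$ fixed by $\SO(n)$; for $\R\Proj^n$, the representation on traceless symmetric $(n+1)\times(n+1)$ matrices contains the $\mathrm{O}(n)$-fixed element $\diag(1,\ldots,1,-n)$; for $\C\Proj^n$, the adjoint representation $\mathfrak{sl}(n+1,\C)$ contains the $\mathrm{S}(\Unit(n)\times\Unit(1))$-fixed diagonal matrix $\diag(1,\ldots,1,-n)$; for $\Hq\Proj^n$, one uses the analogous construction in $\mathfrak{sp}^\perp(2n+2,\C)$; and for $\Octo\Proj^2$, one picks a $\Spin(9)$-fixed traceless idempotent in the Albert algebra $\mathrm{A}(3,\Octo)$. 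In each case one also checks that the corresponding highest weight sits on the extremal ray of the spherical sub-monoid, so that it generates $\hatG^K$.

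The dimension formulas then follow directly from Weyl's dimension formula
$$d_\lambda = \frac{\prod_{\alpha \in \Phi_+}\scal{\alpha}{\rho+\lambda}}{\prod_{\alpha \in \Phi_+}\scal{\alpha}{\rho}}$$
applied with $\lambda = k\omega_0$, using the description of $\Phi_+$ and $\rho$ collected in Section \ref{sec:appendix}. The tabulated closed forms are obtained by standard arithmetic simplification.

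For the identification of the zonal spherical functions, the strategy proceeds in three steps. Since $\zon^{k\omega_0}$ is bi-$K$-invariant, it descends to a function on $X$ depending only on the single spherical coordinate $x \in [-1,1]$ (spheres) or $s \in [0,1]$ (projective spaces). I would then show that this function is a polynomial of degree exactly $k$ in that parameter: polynomiality of degree at most $k$ comes from realising $V^{k\omega_0}$ as a summand of $(V^{\omega_0})^{\otimes k}$, so that any matrix coefficient of $V^{k\omega_0}$ is a polynomial of degree at most $k$ in the matrix coefficients of $V^{\omega_0}$, hence (after restriction to $K$-orbits) a polynomial of degree at most $k$ in the spherical coordinate; equality of the degree to $k$ follows by induction, using that $\zon^{k\omega_0}$ is orthogonal to all $\zon^{j\omega_0}$ for $j<k$ by Theorem \ref{thm:cartanpeterweyl}. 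This orthogonality, combined with the explicit law of the spherical coordinate computed in the preceding table, forces the family $(\zon^{k\omega_0})_{k \geq 0}$ to coincide, up to scalar, with the orthogonal polynomials for the measure $\theta^n$ (spheres) or $\beta^{(a,b)}$ (projective spaces) — namely, the Legendre polynomials $P^{n,k}$ or the Jacobi polynomials $J^{(a,b),k}$. The scalar is finally pinned down by the normalisation $\zon^{k\omega_0}(e_G)=1$, which translates to $P^{n,k}(1) = 1$ and $J^{(a,b),k}(1) = 1$; these equalities for the Rodrigues formulas are verified by direct induction on $k$.

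The principal obstacle in this plan is the polynomial-degree assertion, namely the concrete embedding of $V^{k\omega_0}$ into a space of polynomials on $\mathbb{K}^{n+1}$ and the verification that the spherical matrix coefficient achieves degree exactly $k$. For $\R\Sph^n$ this is the classical theory of spherical harmonics of degree $k$; for the projective spaces one has to invoke the harmonic polynomial decomposition of $\leb^2(\mathbb{K}^{n+1})$ with $\mathbb{K}\in\{\R,\C,\Hq\}$, carefully matching bidegrees with the weight $k\omega_0$; and the octonionic case requires the additional input of Freudenthal's construction of $\Octo\Proj^2$. This case-by-case bookkeeping is the only place where purely abstract arguments do not suffice and where one must appeal to the fine structure of each individual symmetric space.
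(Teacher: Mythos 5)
Your proposal is correct and reconstructs the argument that the paper itself delegates to its references: the paper only cites the Cartan--Helgason theorem (\cite[Chapter V, Theorem 4.1]{Hel84}) for the identification of $\hatG^K$, and \cite[Chapter 2]{AH10}, \cite{Vol09} for the zonal spherical functions, so your sketch supplies what those sources contain. Your three-step identification (polynomiality and degree bound via the summand $V^{k\omega_0} \hookrightarrow (V^{\omega_0})^{\otimes k}$, orthogonality from Theorem \ref{thm:cartanpeterweyl}, and normalisation at the base point) is the standard route. The one place that needs slightly more care than your phrasing suggests is the assertion that the bi-$K$-invariant matrix coefficients of $(V^{\omega_0})^{\otimes k}$ have degree at most $k$ \emph{in the spherical coordinate}: for the projective cases, the coordinate $s$ is already quadratic in the homogeneous coordinates while $V^{\omega_0}$ is itself quadratic in the defining representation of $G$, so the two notions of ``degree'' must be matched by a small invariant-theoretic computation in each case --- this is exactly the bookkeeping you flag as the main obstacle, and it is where the references cited by the paper do genuine work.
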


The first part of the proposition is an immediate application of the Cartan--Helgason theorem which identifies the spherical dominant weights; the second part is treated in \cite[Chapter 2]{AH10} in the case of spheres, and in \cite{Vol09} for the other spaces.
\medskip

\paragraph{\textbf{Asymptotics of the largest eigenvalues}}
By combining Proposition \ref{prop:abstractgaussiannongauss} and the explicit formula for zonal spherical functions from Proposition \ref{prop:sphericalrepresentationsrankone}, we can now compute the limiting eigenvalues of $\frac{A(N,L)}{N}$. Let us for instance treat the case of projective spaces. We have for $k \geq 1$:
\begin{align*}
c_{k\omega_0} &= \int_0^1 1_{\arccos(\sqrt{s})\leq L}\,J^{(a,b),k}(s)\,\beta^{(a,b)}(\!\DD{s}) \\
&= \int_{\cos^2 L}^1 \frac{(-1)^k\,\Gamma(a+b)}{\Gamma(a)\,\Gamma(b+k)}\,\frac{d^k}{\!\DD{s}^k}(s^{a+k-1}(1-s)^{b+k-1})  \DD{s} \\
&= \frac{(-1)^{k-1}\,\Gamma(a+b)}{\Gamma(a)\,\Gamma(b+k)}\,\left.\frac{d^{k-1}}{\!\DD{s}^{k-1}}\right|_{s=\cos^2 L}(s^{a+k-1}(1-s)^{b+k-1})\\
&=\frac{\Gamma(a+b)}{\Gamma(a)\,\Gamma(b+1)} \left. \left(s^a(1-s)^b\,J^{(a+1,b+1),k-1}(s)\right)\right|_{s=\cos^2 L}\\
& = \frac{\Gamma(a+b)}{\Gamma(a)\,\Gamma(b+1)} \,(\cos L)^{2a} (\sin L)^{2b}\,J^{(a+1,b+1),k-1}(\cos^2 L).
\end{align*}
For $k=0$, the spherical representation is the trivial one and $c_0$ is the mean of the function $1_{d(x,b)\leq L}$, hence the normalised volume in $X$ of this ball. The computations are analogous in the case of real spheres, with Laguerre polynomials instead of Jacobi polynomials. We conclude:
\begin{theorem}
Let $X$ be a sscc with rank one, and $L$ a level in $(0,\frac{\pi}{2})$. In the sense of Theorem \ref{thm:GKlawoflargenumbers}, the limit of $\mathrm{Spec}(\frac{A(N,L)}{N})$ consists of one eigenvalue $c_k$ for each $k \geq 0$, the multiplicity of $c_k$ being the dimension $d_{k\omega_0}$ computed in Proposition \ref{prop:sphericalrepresentationsrankone}. The eigenvalues $c_k$ are provided by the following table:

\renewcommand{\arraystretch}{2}
$$\begin{tabular}{|c|c|c|}
\hline $X$ & eigenvalue $c_0$ & eigenvalue $c_{k \geq 1}$  \\
\hline\hline $\R\mathbb{S}^n$ & $\frac{\int_0^L \sin^{n-1} \theta\,d\theta}{\int_0^\pi \sin^{n-1} \theta\,d\theta}$ & $\frac{(\sin L)^n}{ n  \,\int_0^\pi \sin^{n-1} \theta\,d\theta}\,P^{n+2,k-1}(\cos L)$ \\
\hline $\R\mathbb{P}^n$ & $\frac{\int_0^L \sin^{n-1} \theta\,d\theta}{\int_0^{\frac{\pi}{2}} \sin^{n-1} \theta\,d\theta}$ & $\frac{\Gamma(\frac{n+1}{2})}{\Gamma(\frac{1}{2})\,\Gamma(\frac{n+2}{2})}\,\cos L\,(\sin L)^n\,J^{(\frac{3}{2},\frac{n}{2}+1),k-1}(\cos^2 L)$ \\
\hline $\C\mathbb{P}^n$ & $(\sin L)^{2n}$ & $ (\cos L)^2\,(\sin L)^{2n}\,J^{(2,n+1),k-1}(\cos^2 L)$ \\
\hline $\Hq\mathbb{P}^n$ & $(\sin L)^{4n}\,(1+2n\,\cos^2 L)$ & $(2n+1)\,(\cos L)^4\,(\sin L)^{4n}\, J^{(3,2n+1),k-1}(\cos^2 L)$ \\
\hline $\mathbb{O}\mathbb{P}^2$ & $(\sin L)^{16}\,(\substack{1+8\cos^2 L \qquad\qquad\!\\+ 36 \cos^4 L + 120 \cos^6 L})$ & $165\,(\cos L)^{8}\,(\sin L)^{16}\,J^{(5,9),k-1}(\cos^2 L)$ \\
\hline
\end{tabular}$$
\end{theorem}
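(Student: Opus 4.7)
The plan is to combine Theorem \ref{thm:GKlawoflargenumbers} (Giné--Koltchinskii) with Proposition \ref{prop:abstractgaussiannongauss} and the rank-one explicit data of Proposition \ref{prop:sphericalrepresentationsrankone}. By the first two results, the limiting eigenvalues of $\frac{A(N,L)}{N}$ are in bijection with $\lambda\in\hatG^K$ and are given by $c_\lambda=\int_G Z_L(g)\,\zon^\lambda(g)\DD{g}$, with multiplicities $d_\lambda$. For a rank-one symmetric space, Proposition \ref{prop:sphericalrepresentationsrankone} identifies $\hatG^K=\N\omega_0$ and expresses $\zon^{k\omega_0}$ as a Legendre polynomial (for spheres) or a Jacobi polynomial (for projective spaces). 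Since $Z_L$ and $\zon^{k\omega_0}$ depend only on the spherical coordinate $x$ or $s$ (base-point distance), the integral over $G$ collapses to a one-dimensional integral against the corresponding distribution $\theta^n$ or $\beta^{(a,b)}$ tabulated in the text.

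For each projective space $\mathbb{K}\Proj^n$ the condition $d(x,b)\leq L$ reads $s\geq \cos^2 L$, and the integral to evaluate is
\[
c_{k\omega_0}=\frac{\Gamma(a+b)}{\Gamma(a)\Gamma(b+k)}\,(-1)^k\!\int_{\cos^2 L}^{1}\frac{d^k}{\DD{s}^k}\bigl(s^{a+k-1}(1-s)^{b+k-1}\bigr)\DD{s},
\]
obtained by substituting Rodrigues' formula into the defining integral. The antiderivative at $s=1$ vanishes (for $k\geq 1$) because of the factor $(1-s)^{b+k-1}$; the antiderivative at $s=\cos^2 L$ involves $\frac{d^{k-1}}{\DD{s}^{k-1}}(s^{a+k-1}(1-s)^{b+k-1})$, which, after pulling out the boundary weight $s^a(1-s)^b$, is exactly $s^a(1-s)^b\,J^{(a+1,b+1),k-1}(s)$ times a Gamma-function constant (this is the standard shift identity for the Rodrigues representation of Jacobi polynomials). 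Substituting $s=\cos^2 L$ and $1-s=\sin^2 L$ gives the formulas in the table, after reading off $(a,b)$ from the tabulated $\beta$-parameters: $(a,b)=(\tfrac{1}{2},\tfrac{n}{2})$ for $\R\Proj^n$, $(1,n)$ for $\C\Proj^n$, $(2,2n)$ for $\Hq\Proj^n$, $(4,8)$ for $\Octo\Proj^2$. The sphere case $\R\Sph^n$ is treated identically, with $x\geq \cos L$, the density $(1-x^2)^{n/2-1}$, and the Legendre Rodrigues formula, the one integration by parts yielding the weight $(\sin L)^n$ together with $P^{n+2,k-1}(\cos L)$.

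For $k=0$, the spherical function $\zon^0$ is identically $1$, so $c_0$ is simply the probability that a Haar-distributed point on $X$ lies in the ball of radius $L$ around the base point. For projective spaces this is $\int_{\cos^2 L}^1\beta^{(a,b)}(\DD{s})$, which can be expanded in powers of $\cos^2 L$ by a direct application of the binomial theorem to $(1-s)^{b-1}$ when $b$ is an integer (the cases $\C\Proj^n$, $\Hq\Proj^n$, $\Octo\Proj^2$, explaining the polynomial in $\cos^2 L$ appearing in the table). For the sphere and real projective cases, the density is $(1-x^2)^{n/2-1}$ or $s^{-1/2}(1-s)^{n/2-1}$ and the result is most naturally written as the ratio of incomplete to total $\int_0^\cdot \sin^{n-1}\theta\,\DD\theta$ integrals, which matches the table after the change of variables $s=\cos^2\theta$ or $x=\cos\theta$.

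The only nontrivial point is bookkeeping: the Rodrigues shift identity and the normalisation constants $\Gamma(a+b)/\Gamma(a)\Gamma(b)$ must be carefully tracked, and one must check that the condition $L<\pi/2$ (for projective spaces) or $L<\pi$ (for spheres) is exactly what is needed so that the spherical coordinate $s$ or $x$ ranges over a single interval cut out by the ball condition, avoiding any contribution from a second connected component of the preimage of $[0,L]$ under $d(\cdot,b)$. The rest is a mechanical verification, case by case, that substituting the tabulated $(a,b)$ into the general formula reproduces the five entries of the table.
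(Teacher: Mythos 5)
Your proposal is correct and takes essentially the same route as the paper: reduce to the one-dimensional spherical-coordinate integral via Proposition \ref{prop:abstractgaussiannongauss} and Proposition \ref{prop:sphericalrepresentationsrankone}, substitute Rodrigues' formula, integrate once by parts (killing the $s=1$ boundary term), and recognise the remaining $(k-1)$-th derivative as the shifted-parameter Jacobi (resp.\ Legendre) polynomial times the boundary weight $s^a(1-s)^b$ (resp.\ $(1-x^2)^{n/2}$). The bookkeeping of the $\Gamma$-constants and the verification that the ball condition cuts out a single interval are exactly the only delicate points, and you handle both correctly (your remark that $L<\pi$ would suffice for $\R\Sph^n$ is accurate even though the theorem states the uniform hypothesis $L<\pi/2$).
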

\vspace{2mm}

In particular, one can as in Section \ref{subsec:spectralgap} use these formulas in order to compute the asymptotic spectral radius and spectral gap of $\GEOM(N,L)$ in these cases.

\begin{example}
Let us treat the example from Figure \ref{fig:stereo}. The spherical representations for the real sphere $\R\Sph^2$ have dimension $2k+1$, $k \geq 0$; the corresponding zonal spherical functions are the classical Legendre polynomials $P^{2,k}(x)=\frac{1}{2^k\,k!}\,\frac{d^k}{\!\DD{x}^k}(x^2-1)^k$. The limiting eigenvalues of the rescaled adjacency matrix $\frac{A(N,L)}{N}$ of a random geometric graph with level $L$ on the sphere are
\begin{align*}
c_0 &= \frac{1-\cos L}{2}=\frac{\sin^2 L}{4}\,\frac{2}{1+\cos L};\\
c_k &= \frac{\sin^2 L}{4}\,P^{4,k-1}(\cos L),\quad\text{with }P^{4,k-1}(x) = \frac{1}{2^{k-1}\,k!}\,\frac{1}{x^2-1}\,\frac{d^{k-1}}{\!\DD{x}^{k-1}}(x^2-1)^k.
\end{align*}
Thus, up to the multiplicative factor $\frac{\sin^2 L}{4}$, all the limiting eigenvalues of the random geometric graph of level $L$ can be obtained by looking at the values at $x = \cos L$ of the family of functions
$$ \left\{f(x)=\frac{2}{1+x}\right\}\sqcup \{P^{4,k}(x),\,\,k\geq 0\}$$
see Figure \ref{fig:eigenvaluesrealsphere}.

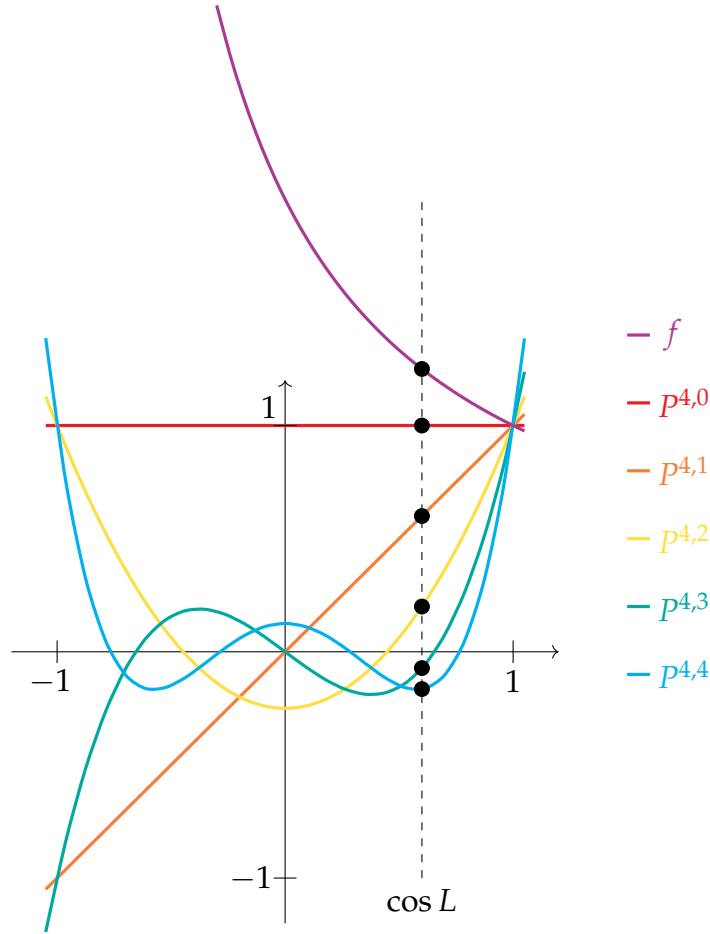
\begin{figure}[ht]
\begin{center}      
\begin{tikzpicture}[scale=3]
    \draw[->] (-1.2,0) -- (1.2,0);
    \draw[->] (0,-1.2) -- (0,1.2);
    \draw[line width=1.2pt,domain=-1.05:1.05,smooth,color = Red] plot (\x,1); 
    \draw[line width=1.2pt,domain=-1.05:1.05,smooth,color = Orange] plot (\x,\x); 
    \draw[line width=1.2pt,domain=-1.05:1.05,smooth,color = Goldenrod] plot (\x,{5/4*\x*\x - 1/4}); 
    \draw[line width=1.2pt,domain=-1.05:1.05,smooth,color = Emerald] plot (\x,{7/4*\x*\x*\x - 3/4*\x}); 
    \draw[line width=1.2pt,domain=-1.05:1.05,smooth,color = ProcessBlue] plot (\x,{21/8*\x*\x*\x*\x - 7/4*\x*\x + 1/8}); 
    \draw[line width=1.2pt,domain=-0.3:1.05,smooth,color = Mulberry] plot (\x,{2/(1+\x)});
    \draw (-0.05,1) -- (0.05,1) ;
    \draw (-0.05,-1) -- (0.05,-1) ;
    \draw (1,-0.05) -- (1,0.05) ;
    \draw (-1,-0.05) -- (-1,0.05) ;
    \draw[line width=1.2pt,color = Mulberry] (1.5,1.4) -- (1.6,1.4);
    \draw (1.7,1.4) node {\textcolor{Mulberry}{$f$}};
    \draw[line width=1.2pt,color = Red] (1.5,1.1) -- (1.6,1.1);
    \draw (1.75,1.1) node {\textcolor{Red}{$P^{4,0}$}};
    \draw[line width=1.2pt,color = Orange] (1.5,0.8) -- (1.6,0.8);
    \draw (1.75,0.8) node {\textcolor{Orange}{$P^{4,1}$}};
    \draw[line width=1.2pt,color = Goldenrod] (1.5,0.5) -- (1.6,0.5);
    \draw (1.75,0.5) node {\textcolor{Goldenrod}{$P^{4,2}$}};
    \draw[line width=1.2pt,color = Emerald] (1.5,0.2) -- (1.6,0.2);
    \draw (1.75,0.2) node {\textcolor{Emerald}{$P^{4,3}$}};
    \draw[line width=1.2pt,color = ProcessBlue] (1.5,-0.1) -- (1.6,-0.1);
    \draw (1.75,-0.1) node {\textcolor{ProcessBlue}{$P^{4,4}$}};
    \draw (-0.07,1.08) node {$1$};
    \draw (-0.15,-1) node {$-1$};
    \draw (1,-0.13) node {$1$};
    \draw (-1.03,-0.13) node {$-1$};
    \draw [dashed] (0.6,-1) -- (0.6,2);
    \draw (0.6,-1.1) node {$\cos L$};
    \foreach \x in {1.25,1,0.6,0.2,-0.072,-0.1648}
    \fill (0.6,\x) circle (0.35mm); 
\end{tikzpicture}
\caption{The limiting eigenvalues of a random geometric graph on the $2$-dimensional real sphere in the Gaussian regime.\label{fig:eigenvaluesrealsphere}}
\end{center}
\end{figure}
\end{example}

\section{Asymptotics of the graph and of its spectrum in the Poissonian regime}\label{sec:poisson}

In this section, we fix a ssccss $X$ (of group or non-group type), and we are interested in the asymptotic behavior of the spectrum of $\GEOM(N,L_N)$ when $N$ goes to infinity and $L_N$ goes to $0$ in the following prescribed way:
\begin{equation}
    L_N = \left( \frac{\ell}{N} \right)^{\frac{1}{\dim X}},\label{eq:poissonnorm}
\end{equation}
with $\ell >0$ fixed. As explained in the introduction, with this normalisation of $L_N$, the expected number of neighbors of a fixed vertex of $\GEOM(N,L_N)$ (for instance $v_1$) is asymptotic to
$$ \frac{c(\dim X)}{\vol(X)}\,\ell,$$
where $\vol(X)$ is the volume of the space $X$, and $c(\dim X)$ is the volume of a Euclidean unit ball in $\R^{\dim X}$. When $X=G$ is a Lie group, its volume $\vol(G)$ is computed in Section \ref{subsec:volumes}; for the other cases, we refer to \cite{AY97}. We now set 
$$\nu_N = \frac{1}{N} \sum_{i=1}^N \delta_{c_i(N)},$$
where $c_1(N) \geq c_2(N) \geq \cdots \geq c_N(N)$ are the eigenvalues of the adjacency matrix of $\GEOM(N,L_N)$. For each $N$, $\nu_N$ is a random element of $\meas^1(\R)$, the set of Borel probability measures on the real line. 
The remainder of this article focuses on studying the asymptotic behavior of the random spectral measures $\nu_N$. We shall in particular prove that there exists a probability measure $\mu \in \meas^1(\R)$ which depends only on $\ell$, $\vol(X)$ and $\dim X$, and such that
\begin{equation}
    \nu_N \rightharpoonup_{N \to +\infty} \mu,\label{eq:spectralconvergence}
\end{equation}
where $\rightharpoonup$ denotes the convergence in law; see Theorem \ref{thm:poissonlimit}. In Equation \eqref{eq:spectralconvergence}, the convergence occurs in probability; this makes sense since $\meas^1(\R)$ is a polish space for the topology of weak convergence, so in particular it is metrisable; see \cite[Chapter 1]{Bil99}. \medskip

\noindent There are at least two possible approaches in order to prove the convergence in law \eqref{eq:spectralconvergence}:
\begin{itemize}
    \item \textbf{Local Benjamini--Schramm convergence of the graphs} (this section). The notion of local convergence of graphs has been introduced formally in \cite{BS01} and \cite[Section 2]{AS04}; the idea appeared in several previous works for specific examples, see for instance \cite{Ald91}. More recently, a connection between this notion of convergence and the weak convergence of the spectral measures of the adjacency matrices has been established. We refer to \cite{BL10,ATV11,BLS11}, and to \cite[Proposition 2.2]{Bor16} for the most general result, which relies on arguments from the theory of von Neumann algebras. We recall briefly this theory in Section \ref{subsec:benjaminischramm}. In the setting of Poissonian random geometric graphs:
    \begin{enumerate}[label=(\roman*)]
        \item We have a random point process $(v_n)_{n \in \N}$ (the random vertices) which takes place on a space which is locally \emph{almost} isometric to an Euclidean vector space.
        \item As $n$ goes to infinity, this random point process has locally \emph{almost} the same statistics as a Poisson point process.
        \item The geometric graph built from this random point process converges then in the local Benjamini--Schramm sense, and this implies the weak convergence of the spectral measures.
    \end{enumerate}
    In the \emph{almost} correspondences listed above, the geometric graphs can be locally modified with a positive probability, so we have to be very careful if we want to prove rigorously the local convergence of our random geometric graphs. To this purpose, we solve a more general problem by giving a sufficient condition for a convergent random point process on a convergent sequence of metric spaces to give rise to a sequence of random graphs which is locally convergent (Theorem \ref{thm:convergencespacesandgraphs}).  In Section \ref{subsec:lipschitz}, we recall the notion of pointed Lipschitz convergence for proper metric spaces, and we present a similar notion of convergence for proper metric spaces endowed with a random point process. In Section \ref{subsec:infernal}, we relate these notions of convergence to the local convergence of random geometric graphs  under a mild regularity hypothesis. Our result implies in particular the Benjamini--Schramm convergence of the Poissonian random geometric graphs on a compact connected symmetric space $X$, the limit being the geometric graph drawn from a Poisson point process on $\R^{\dim X}$ (Theorem \ref{thm:poisson_BS}). This geometric argument combined with the aforementioned result from \cite{Bor16} implies the convergence of the spectral measures $\nu_N$ towards some probability measure $\mu$ (see Theorem \ref{thm:poissonlimit}). 
    For this result, we shall use in addition to the previous argument the fact that in a Poissonian random geometric graph, the neighborhoods of two vertices which are at macroscopic distance are asymptotically independent when $N$ goes to infinity.

    \item \textbf{Method of moments} (Section \ref{sec:ART}). Another more naive approach is to try to compute the moments of the measure $\nu_N$, and to prove that they all converge in probability towards the moments of a measure which is determined by its moments. We shall prove at the end of Section \ref{subsec:poissonlimit} that the limiting measure $\mu$ is indeed determined by its moments. Section \ref{sec:ART} proposes then a combinatorial method in order to compute these limiting moments, and it explains how the computation of these moments is related to the asymptotic representation theory of the Lie group $G$. As detailed in the introduction, we do not solve entirely the problem of the computation of the moments of $\mu$, but this alternative approach leads quite surprisingly to a general conjecture on certain functionals of the irreducible representations of the group.
\end{itemize}

\subsection{Benjamini--Schramm local convergence and continuity of the spectral map}\label{subsec:benjaminischramm}
The notion of local convergence of random graphs concerns random \emph{rooted} graphs. Since all the vertices of a random geometric graph play the same role, looking at rooted graphs $(\Gamma,r)$ instead of simple graphs $\Gamma$ will not be a problem hereafter. We denote $\gbullet$ the set of all \emph{connected locally finite rooted graphs} $(\Gamma,r)=(V,E,r)$:
\begin{itemize}
    \item $\Gamma = (V,E)$ is a simple graph, with $V$ possibly infinite but countable;
    \item $r \in V$ is a distinguished vertex and all the vertices of $\Gamma$ are connected to $v$ by a finite path;
    \item any vertex $v \in V$ has finite degree: $\deg v = |\{w\in V\,\,|\,\,v \sim w\}|<+\infty$.
\end{itemize}
We identify two connected locally finite rooted graphs $(\Gamma_1,r_1)$ and $(\Gamma_2,r_2)$ if there exists a bijective map $\phi : V_1 \to V_2$ such that $\phi(r_1)=r_2$ and such that $(v,w) \in E_1 $ if and only if $ (\phi(v),\phi(w)) \in E_2$. On the other hand, for $n \in \N$, we denote $\gbullet(n)$ the subset of $\gbullet$ that consists in rooted graphs $((V,E),r)$ where all the vertices $v \in V$ satisfy $d_\Gamma(v,r) \leq n.$
Here and in the sequel, the distance $d_\Gamma$ is the graph distance $d_\Gamma(v,w) = \min \{r \geq 0\,\,|\,\,v=v_0 \sim v_1 \sim v_2 \sim \cdots \sim v_r = w\}$. It should not be confused with the geodesic distance $d$ if the vertices of $\Gamma$ are points in a Riemannian manifold $X$.
We have a natural homomorphism of rooted graphs
\begin{align*}
\pi_n : \gbullet &\to \gbullet(n)\\
(\Gamma,r) &\mapsto (\Gamma(r,n),r),
\end{align*}
where $\Gamma(r,n)$ is the subgraph of $\Gamma$ whose vertices are the $v$'s in $ V$ such that $d_\Gamma(r,v)\leq n$, and whose edges are those of $\Gamma$ that connect vertices $v,w$ such that $d_\Gamma(r,v)\leq n$ and $d_\Gamma(r,w)\leq n$. For instance, if $(\Gamma,r)$ is the lattice $\Z^2$ rooted at the origin $r = (0,0)$, then 
\begin{center}
\begin{tikzpicture}[scale=1]
\draw (0,0) node {$\pi_3(\Z^2,(0,0)) = ~$};
\draw (4.8,0) node {$.$};
\draw (1.5,0) -- (4.5,0);
\draw (3,-1.5) -- (3,1.5);
\draw (2.5,-1) -- (2.5,1) -- (3.5,1) -- (3.5,-1) -- (2.5,-1);
\draw (2,-0.5) -- (2,0.5) -- (4,0.5) -- (4,-0.5) -- (2,-0.5);
\fill (3,0) circle (0.8mm);
\end{tikzpicture}
\end{center}
We endow $\gbullet$ with the following distance:
$$d_\bullet((\Gamma,r),(\Gamma',r')) = \frac{1}{1+\min \{n \in \N\,\,|\,\,\pi_n(\Gamma,r) = \pi_n(\Gamma',r')\}}.$$
It is known that $(\gbullet,d_\bullet)$ is a complete separable metric space. Moreover, the topology corresponding to $d_\bullet$ is the projective limit of the discrete topologies on the sets $\gbullet(n)$:
$$ \big((\Gamma_N,r_N) \to_{d_\bullet,\,N\to \infty} (\Gamma,r) \big) \iff \big(\forall n \in \N,\,\,\exists N_n \,\text{ such that }\,\forall N \geq N_n,\,\, \pi_n(\Gamma_N,r_N)  =  \pi_n(\Gamma,r)\big). $$ 
Let us now introduce randomness in this framework. Since $\gbullet$ is a polish space, the space $\meas^1(\gbullet)$ of Borel probability measures on the space of rooted graphs is again a polish space. We say that a sequence of random rooted graphs $(\Gamma_N,r_N)_{N \in \N}$ converges in the \emph{local Benjamini--Schramm sense} towards a random rooted graph $(\Gamma,r)$ if the probability distributions $L_{(\Gamma_N,r_N)}$ and $L_{(\Gamma,r)}$ of these random rooted graphs satisfy
$$L_{(\Gamma_N,r_N)} \rightharpoonup_{N \to \infty} L_{(\Gamma,r)}.$$
We have the following characterisation of the local Benjamini--Schramm convergence:
\begin{proposition}\label{prop:characterisation_BSconvergence}
A sequence of random rooted graphs $(\Gamma_N,r_N)_{N \in \N}$ converges in the local sense if and only if, for any $n \in \N$ and any rooted finite graph $\gamma_n \in \gbullet(n)$,
$$\lim_{N \to + \infty} \proba[\pi_n(\Gamma_N,r_N)=\gamma_n] = \proba[\pi_n(\Gamma,r)=\gamma_n].$$
\end{proposition}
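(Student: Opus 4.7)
The plan is to go back and forth between weak convergence of the laws on $\gbullet$ and pointwise convergence of the probabilities on the natural clopen basis provided by the maps $\pi_n$.

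First I would observe that for each $n \in \N$ and each $\gamma_n \in \gbullet(n)$, the cylinder set $C_{\gamma_n} = \pi_n^{-1}(\{\gamma_n\}) = \{(\Gamma,r) \in \gbullet\,\,|\,\,\pi_n(\Gamma,r)=\gamma_n\}$ is, from the very definition of $d_\bullet$, equal to the closed ball of radius $\frac{1}{n+1}$ around any of its elements, and in particular clopen. Since the topology on $\gbullet$ is the projective limit of the discrete topologies on the countable sets $\gbullet(n)$, these cylinders form a countable basis of clopen sets for $\gbullet$. For the direct implication, assuming $L_{(\Gamma_N,r_N)} \rightharpoonup L_{(\Gamma,r)}$ weakly, each $C_{\gamma_n}$ has empty topological boundary in $\gbullet$, so the Portmanteau theorem \cite[Chapter 1]{Bil99} gives
$$\proba[\pi_n(\Gamma_N,r_N)=\gamma_n] = L_{(\Gamma_N,r_N)}(C_{\gamma_n}) \longrightarrow L_{(\Gamma,r)}(C_{\gamma_n}) = \proba[\pi_n(\Gamma,r)=\gamma_n].$$

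For the converse, I would verify the open-set Portmanteau condition $\liminf_N \mu_N(U) \geq \mu(U)$, writing $\mu_N = L_{(\Gamma_N,r_N)}$ and $\mu = L_{(\Gamma,r)}$. Given $\varepsilon > 0$ and an open $U \subset \gbullet$, since $\gbullet$ is polish the measure $\mu$ is Radon, so I can choose a compact set $K \subset U$ with $\mu(K) > \mu(U) - \varepsilon$. Because the cylinder sets form a basis of the topology, every point of $K$ lies in some $C_{\gamma_n} \subset U$, and by compactness finitely many of them suffice: $K \subset C^{(1)} \cup \cdots \cup C^{(k)} \subset U$. The key algebraic observation is that any finite intersection of cylinder sets is either empty or itself a cylinder set, because two cylinders at levels $n_1 \leq n_2$ are either disjoint or such that the second one is contained in the first. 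Inclusion--exclusion therefore expresses $\mu_N(\bigcup_i C^{(i)})$ as a finite alternating sum of $\mu_N$-masses of cylinder sets (empty intersections contributing $0$), each of which tends to the corresponding $\mu$-mass by hypothesis, so
$$\liminf_{N \to +\infty} \mu_N(U) \;\geq\; \lim_{N \to +\infty} \mu_N\!\left(\bigcup_{i=1}^k C^{(i)}\right) \;=\; \mu\!\left(\bigcup_{i=1}^k C^{(i)}\right) \;\geq\; \mu(K) > \mu(U) - \varepsilon.$$
Letting $\varepsilon \to 0$ yields the desired Portmanteau inequality and hence the weak convergence of laws.

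I do not foresee a genuine obstacle here: the forward direction reduces to the clopenness of the cylinder sets, and the converse to the inner regularity of $\mu$ on the polish space $\gbullet$ combined with the stability of cylinder sets under finite intersections. The only mild point of attention is that the terms appearing in the inclusion--exclusion formula must themselves be cylinder sets, which is what allows one to invoke the hypothesis termwise on a finite sum rather than having to handle a general Borel combination.
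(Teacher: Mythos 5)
Your proof is correct. The forward direction (clopen cylinders, Portmanteau) is the standard argument; for the converse, your route — inner regularity of $\mu$ on the polish space $\gbullet$, a finite cover of a compact $K \subset U$ by cylinders lying inside $U$, then inclusion--exclusion made valid by the fact that cylinders are pairwise nested or disjoint — is a complete and sound argument. It differs from the route the paper gestures at, which is to observe directly that \emph{every} open subset of $\gbullet$ is a finite or countable \emph{disjoint} union of balls $C_{\gamma_n}$; with that decomposition in hand, the open-set Portmanteau inequality is a one-line Fatou estimate, $\liminf_N \mu_N(U) \geq \sum_i \liminf_N \mu_N(C^{(i)}) = \sum_i \mu(C^{(i)}) = \mu(U)$, without invoking tightness or inclusion--exclusion. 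Both proofs rest on the same ultrametric fact (two cylinders are either disjoint or nested); yours deploys it to keep finite intersections inside the class of cylinders, the paper's to prune a countable cover of $U$ down to a disjoint one. Your version costs you an appeal to Ulam's theorem (automatic here, but extra machinery), while the paper's costs a short structural lemma; neither has a gap. A minor stylistic remark: in the forward direction you can avoid Portmanteau entirely by noting that $1_{C_{\gamma_n}}$ is a bounded continuous function precisely because $C_{\gamma_n}$ is clopen, so $\mu_N(C_{\gamma_n}) \to \mu(C_{\gamma_n})$ is immediate from the definition of weak convergence.
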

\noindent This equivalence is stated without proof at the beginning of \cite{BS01}; it is relatively easy to prove once one remarks that any open subset of $\gbullet$ is a finite or countable \emph{disjoint} union of open balls.
\medskip

If $\Gamma$ is a finite graph on $N$ vertices, its spectral measure $\nu_\Gamma$ is $\frac{1}{N}\sum_{i=1}^N \delta_{c_i}$, where the $c_i$'s are the eigenvalues of the symmetric adjacency matrix $A_\Gamma$; thus, $\nu_N = \nu_{\GEOM(N,L_N)}$. This definition can be extended to certain infinite (random) rooted graphs as follows. Given $(\Gamma,r)=(V,E,r) \in \gbullet$, we can consider the adjacency operator
\begin{align*}
A_\Gamma : \ell^2_{\mathrm{c}}(V) &\to \ell^2_{\mathrm{c}}(V) \\
f &\mapsto \left(A_\Gamma f : w \mapsto \sum_{(v,w) \in E} f(v)\right)
\end{align*}
where $\ell^2_{\mathrm{c}}(V)$ is the space of finitely supported functions on $V$, which is dense in $\ell^2(V)$. This operator is self-adjoint, and it admits at least one self-adjoint extension to $\ell^2(V)$. If $\Gamma$ has a uniformly bounded degree, then the self-adjoint extension is unique and it is a continuous linear operator $\ell^2(V) \to \ell^2(V)$. However, in general, there might be several different self-adjoint extensions of $A_\Gamma$, and these extensions can be unbounded operators. We say that the graph $\Gamma$ or its adjacency operator $A_\Gamma$ is \emph{essentially self-adjoint} if the self-adjoint extension $A_\Gamma : \ell^2(V) \to \ell^2(V)$ is unique. In this case, given a root $r$ of $\Gamma$, we define the spectral measure $\mu_{(\Gamma,r)}$ of the rooted graph by the following formula:
$$\forall z \in \C_+,\,\,\,\scal{1_r}{(A_\Gamma-z)^{-1}(1_r)}_{\ell^2(V)} = \int_{\R} \frac{1}{x-z}\,\mu_{(\Gamma,r)}(\!\DD{x}),$$
where $\C_+$ denotes the upper half-plane. The measure $\mu_{(\Gamma,r)}$ is a Borel probability measure in $\meas^1(\R)$, and its existence and unicity is obtained by using Herglotz's representation theorem of holomorphic functions on the upper half-plane, and the standard properties of the resolvent of a self-adjoint (possibly unbounded) linear operator. We refer to \cite{Sch12} for the spectral theory of unbounded operators. 
\begin{remark}
 In this general setting, we cannot \emph{a priori} use the moments $M_{s\geq 1}=\scal{1_r}{(A_\Gamma)^{s}(1_r)}$ in order to define $\mu_{(\Gamma,r)}$. Indeed, without additional assumptions, these quantities might correspond to several different probability measures; see however the end of Section \ref{subsec:poissonlimit}.
\end{remark} 

Given a distribution $L \in \meas^1_{\mathrm{ess}}(\gbullet)\subset \meas^1(\gbullet)$ supported by essentially self-adjoint rooted graphs, we can finally define its spectral measure $\mu_L$ by $\mu_L=\esper_L[\mu_{(\Gamma,r)}]$. The process of taking the expectation of a random probability measure is what one expects: for any bounded continuous function $f$ on $\R$, $\mu_L(f) = \esper_L[\mu_{(\Gamma,r)}(f)]$. This formula defines a positive linear functional $\mu_L$ on $\mathscr{C}_\mathrm{b}(\R)$, and by \cite[Chapter IX, \S2, Theorem 2.3]{Lang93} this functional is uniquely determined by a Borel probability measure $\mu_L$ in $\meas^1(\R)$. We have thus defined a spectral map
\begin{align*}
\meas^1_{\mathrm{ess}}(\gbullet) &\to \meas^1(\R)\\
L &\mapsto \mu_L.
\end{align*}
This construction extends the notion of spectral measure of a finite graph. Indeed, given a finite graph $\Gamma$, let us denote $U(\Gamma)$ the \emph{uniformly pointed graph} constructed from $\Gamma$: it is the random connected finite graph $(\Gamma,r)$ with $r$ uniformly chosen among the vertices of $\Gamma$, and where we only keep the connected component of the root $r$. This measure is supported by connected finite graphs, which are of course essentially self-adjoint. An easy computation shows then that the measure $\mu_{U(\Gamma)}$ defined above is simply equal to $\nu_\Gamma$; see \emph{e.g.} \cite[beginning of Section 2.3]{Bor16}.
\begin{remark}
Above and also in the sequel, the spectral measures of finite graphs are denoted by the letter $\nu$, whereas the expected spectral measures of random essentially self-adjoint rooted graphs are denoted by the letter $\mu$. The reader should pay attention to the fact that in the first case the spectral measure $\nu$ can be random if the graph is random, whereas the notation $\mu$ is always used for deterministic measures.
\end{remark}

As far as we know, it is unknown whether the spectral map $\mu$ is continuous on the whole space $\meas^1_{\mathrm{ess}}(\gbullet)$ of essentially self-adjoint random rooted graphs, but the restriction to a smaller subspace is known to be continuous. A distribution $L\in \meas^1(\gbullet)$ is said \emph{unimodular} if, for any positive measurable function $f:\mathfrak{G}_{\bullet\bullet} \to \R_+$ on the set of locally finite bi-rooted graphs $(\Gamma,r_1,r_2)$, one has
$$\esper_L\!\left[\sum_{v \in V(\Gamma,r)} f(\Gamma,r,v)\right] = \esper_L\!\left[\sum_{v \in V(\Gamma,r)} f(\Gamma,v,r)\right].$$
Here, $\mathfrak{G}_{\bullet\bullet}$ is endowed with the smallest topology which makes the two projections $(\Gamma,r_1,r_2)\to (\Gamma,r_1)$ and $(\Gamma,r_1,r_2)\to (\Gamma,r_2)$ continuous towards $\gbullet$. For any finite graph $\Gamma$, $U(\Gamma)$ is unimodular, and conversely, a unimodular distribution $L$ of random rooted graphs which is supported by connected finite graphs is necessarily a mixture of uniformly pointed graphs $U(\Gamma)$.  It was shown by Benjamini and Schramm that the unimodular distributions form a closed subset $\meas^1_{\mathrm{uni}}(\gbullet) \subset \meas^1(\gbullet)$ for the local convergence; see \cite[Section 3.2]{BS01} or \cite[Lemma 2.1]{Bor16}.
\begin{theorem}\label{thm:bordenave}
    If $(\Gamma,r)$ is a random rooted graph chosen according to a unimodular distribution $L$, then $\Gamma$ is $L$-almost surely essentially self-adjoint. In other words, $\meas^1_{\mathrm{uni}}(\gbullet) \subset \meas^1_\mathrm{ess}(\gbullet)$. Then, the restriction of the spectral map $\mu$ to $\meas^1_{\mathrm{uni}}(\gbullet)$ is continuous with respect to the Benjamini--Schramm local convergence and to the weak convergence of measures.
\end{theorem}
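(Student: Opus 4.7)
The plan splits along the two claims. For the first claim (almost sure essential self-adjointness under unimodularity), the strategy is to use the defect-index criterion: the symmetric operator $A_\Gamma$ on $\ell^2_{\mathrm{c}}(V)$ is essentially self-adjoint if and only if $\ker(A_\Gamma^* \mp \I\,I) = \{0\}$. The key tool is the mass-transport principle: for a unimodular distribution $L$, the functional $\tau(T) = \esper_L[\scal{1_r}{T\,1_r}]$, defined on equivariant bounded kernels on the random graph, satisfies the tracial identity $\tau(ST) = \tau(TS)$. One then shows that the von Neumann algebra generated by the equivariant bounded functional calculus of $A_\Gamma$ is finite, with $\tau$ as a faithful normal trace. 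Within such an algebra the adjacency operator admits a unique affiliated self-adjoint extension, which forces the deficiency spaces to be almost surely trivial.

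For the second claim (continuity of the spectral map on $\meas^1_{\mathrm{uni}}(\gbullet)$), I would reduce the weak convergence $\mu_{L_N} \rightharpoonup \mu_L$ to the pointwise convergence, for every $z \in \C_+$, of the Stieltjes transforms
\[ g_L(z) := \esper_L[\phi_z(\Gamma,r)], \qquad \phi_z(\Gamma,r) := \scal{1_r}{(A_\Gamma - z\,I)^{-1}\,1_r}, \]
via the Stieltjes--Perron inversion formula. It would then suffice to prove that $\phi_z$ is bounded and almost surely continuous on $(\gbullet, d_\bullet)$ under both $L_N$ and $L$, and to conclude by Portmanteau's theorem. Boundedness $|\phi_z(\Gamma,r)| \leq 1/|\mathrm{Im}\, z|$ is immediate from the spectral theorem applied to the unique self-adjoint extension supplied by the first claim.

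To establish continuity of $\phi_z$, I would introduce the truncated resolvents $\phi_z^{(k)}(\Gamma,r) := \scal{1_r}{(A_{\Gamma(r,k)} - z\,I)^{-1}\,1_r}$. These depend only on the $k$-ball $\pi_k(\Gamma,r)$ and are therefore automatically continuous in $d_\bullet$. A Schur-complement argument (or equivalently an iteration of the second resolvent identity) shows that $\phi_z^{(k)}(\Gamma,r) \to \phi_z(\Gamma,r)$ pointwise, the error at step $k$ being controlled by the $\ell^2$-norm of the resolvent applied to the indicator of the boundary sphere $\{v : d_\Gamma(v,r)=k\}$. This tail decays thanks to essential self-adjointness combined with the tracial estimate coming from unimodularity, so $\phi_z$ is a pointwise limit of continuous functions on the full-measure set of essentially self-adjoint rooted graphs; together with the uniform bound, this gives the required convergence of Stieltjes transforms.

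The main obstacle is the first claim: handling potentially unbounded adjacency operators and showing that the unimodular structure alone, without any uniform bound on the vertex degrees, forces essential self-adjointness. The von Neumann algebra framework and the tracial property coming from the mass-transport principle are essential here; a direct combinatorial argument via walk counts from the root seems unavailable in full generality, because without a degree bound the formal series $\sum_{n \geq 0} \|A_\Gamma^n\,1_r\|\,t^n/n!$ has no \emph{a priori} radius of convergence, and Nelson's analytic vector criterion cannot be applied naively.
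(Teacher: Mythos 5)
The paper does not give its own proof of this statement; it simply cites \cite[Proposition 2.2]{Bor16}, so there is no in-house argument to compare against. Your first claim follows the standard route (mass-transport principle gives a trace on a von Neumann algebra of equivariant operators, the closure of $A_\Gamma$ is affiliated to a finite von Neumann algebra, and a symmetric operator affiliated to a finite von Neumann algebra is automatically self-adjoint). This is sketchy --- in particular you do not say what ``equivariant bounded kernel'' means for a random rooted graph with trivial automorphism group, nor why the relevant algebra is finite rather than merely tracial --- but it is the right idea and matches the literature.

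The second claim, however, has a genuine gap. You want to apply Portmanteau's theorem to the Stieltjes-transform observable $\phi_z$, but Portmanteau requires $\phi_z$ to be bounded \emph{and continuous} (or at least $L$-a.e.\ continuous), and you only establish that $\phi_z$ is a bounded \emph{pointwise} limit of the continuous truncated observables $\phi_z^{(k)}$. A bounded pointwise limit of continuous functions need not be continuous, so the logical step from ``$\phi_z^{(k)} \to \phi_z$ pointwise and $\phi_z$ bounded'' to ``$\esper_{L_N}[\phi_z] \to \esper_L[\phi_z]$'' does not go through. What is actually needed is the usual $\varepsilon/3$ argument: $\esper_{L_N}[\phi_z^{(k)}] \to \esper_L[\phi_z^{(k)}]$ for each fixed $k$ is fine (continuity of $\phi_z^{(k)}$), and $\esper_L[\phi_z - \phi_z^{(k)}] \to 0$ as $k\to\infty$ is fine (dominated convergence), but you also need $\esper_{L_N}[\phi_z - \phi_z^{(k)}] \to 0$ as $k\to\infty$ \emph{uniformly in $N$}. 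This uniform tail estimate is the crux of Bordenave's proof, and your one-line appeal to ``the tracial estimate coming from unimodularity'' does not substantiate it. Note in particular that the resolvent-boundary expansion you invoke, namely $\phi_z - \phi_z^{(k)} = -\scal{1_r}{(A^{(k)}-z)^{-1}B^{(k)}(A-z)^{-1}1_r}$ where $B^{(k)} = A - A^{(k)}$ lives on the $k$-sphere, carries an uncontrolled factor $\|B^{(k)}\|$: without a degree bound the boundary coupling can be arbitrarily large, so the decay of the resolvent mass on boundary spheres does not by itself close the argument. Making the mass-transport principle deliver a quantitative, $N$-uniform bound on $\esper_{L_N}[\phi_z - \phi_z^{(k)}]$ is precisely where the real work lies, and it is missing from your sketch.
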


These facts are proven in \cite[Proposition 2.2]{Bor16}. They imply in particular that if $(\Gamma_N)_{N \in \N}$ is a sequence of random graphs such that $U(\Gamma_N) \to L$ for some $L $ in $\meas^1(\gbullet)$ (and in fact in $\meas_\mathrm{uni}^1(\gbullet)$), then we have $\esper[\nu_{\Gamma_N}] \rightharpoonup_{N \to \infty} \mu_L$. Then, there is a simple criterion which allows one to get rid of the expectation and to obtain the convergence in probability of the spectral measures:
\begin{proposition}\label{prop:bordenave2}
Given a finite graph $\Gamma$, we denote $U^2(\Gamma)$ the law in $\meas^1(\mathfrak{G}_{\bullet\bullet})$ of a random bi-rooted graph $(\Gamma,r_1,r_2)$ with $(r_1,r_2)$ chosen uniformly among the pairs of vertices of $\Gamma$. Let $(\Gamma_N)_{N \in \N}$ be a sequence of random finite graphs such that $U^2(\Gamma_N) \to L \otimes L$, with $L \in \meas^1(\gbullet)$. Note that this implies in particular the local convergence $U(\Gamma_N) \to L$. Then, $\nu_{\Gamma_N} \rightharpoonup_{N \to \infty} \mu_L$ in probability.
\end{proposition}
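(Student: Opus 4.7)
The plan is, first, to reduce the statement to a variance estimate, and second, to prove the variance estimate by testing the assumed bi-rooted convergence on a suitable product functional. Projecting $U^2(\Gamma_N) \to L \otimes L$ on either of the two marginals yields $U(\Gamma_N) \to L$ in $\meas^1(\gbullet)$, so by Theorem \ref{thm:bordenave} one already has $\esper[\nu_{\Gamma_N}] \rightharpoonup \mu_L$. It therefore suffices to show that, for a countable separating family of bounded continuous $f : \R \to \R$, the random real variable $\nu_{\Gamma_N}(f)$ concentrates around its expectation, i.e. $\var(\nu_{\Gamma_N}(f)) \to 0$; combined with the weak convergence of the means, this yields the convergence in probability in the metrisable space $\meas^1(\R)$.

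The second moment is expressible in terms of $U^2$ thanks to the identity
$$\nu_\Gamma(f) \,=\, \frac{\tr f(A_\Gamma)}{N} \,=\, \esper_{r \sim U(\Gamma)}\!\left[\mu_{(\Gamma,r)}(f)\right],$$
which, squared, becomes
$$\big(\nu_\Gamma(f)\big)^2 \,=\, \esper_{(r_1,r_2)\sim U^2(\Gamma)}\!\left[F_f(\Gamma,r_1,r_2)\right], \qquad F_f(\Gamma,r_1,r_2) := \mu_{(\Gamma,r_1)}(f)\,\mu_{(\Gamma,r_2)}(f).$$
Taking the expectation over $\Gamma_N$ and testing the hypothesis against $F_f$ should give
$$\esper\!\left[\big(\nu_{\Gamma_N}(f)\big)^2\right] \,\longrightarrow\, (L\otimes L)(F_f) \,=\, \esper_L\!\left[\mu_{(\Gamma,r)}(f)\right]^{2} \,=\, \mu_L(f)^2,$$
the factorisation of the limit being precisely the product structure of $L \otimes L$. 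Combined with $\esper[\nu_{\Gamma_N}(f)] \to \mu_L(f)$, this gives $\var(\nu_{\Gamma_N}(f)) \to 0$ as required.

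The main obstacle lies in justifying the passage to the limit in the integral of $F_f$ against $\esper[U^2(\Gamma_N)]$, since $F_f$ is not obviously a continuous bounded functional on $\mathfrak{G}_{\bullet\bullet}$ for arbitrary bounded continuous $f$. I would first test on polynomial functions $f(x)=x^s$: then $\mu_{(\Gamma,r)}(x^s) = A_\Gamma^s(r,r)$ counts closed walks of length $s$ based at $r$ and depends only on the $s$-neighbourhood $\pi_s(\Gamma,r)$, so $F_{x^s}$ is continuous on $\mathfrak{G}_{\bullet\bullet}$. Its possible unboundedness at high-degree vertices is controlled by the same truncation argument that underlies the continuity of the spectral map in Theorem \ref{thm:bordenave}; in the Poissonian setting of Section \ref{subsec:poissonlimit}, the Poisson tails of the vertex degrees readily provide the required uniform integrability. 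The transition from polynomial moments to general bounded continuous $f$ then uses the uniform tightness of the expected spectral measures $\esper[\nu_{\Gamma_N}]$, together with the fact, to be established in the next subsection, that $\mu_L$ is determined by its moments.
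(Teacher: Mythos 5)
Your reduction to a variance estimate and the identification of the bi-rooted functional $F_f$ is the right skeleton, and you have correctly diagnosed the dichotomy: for bounded continuous $f$ the functional $F_f$ is bounded on $\mathfrak{G}_{\bullet\bullet}$ but \emph{not} continuous (the spectral measure $\mu_{(\Gamma,r)}$ of an infinite graph is not determined by any finite neighbourhood of $r$), while for $f(x)=x^s$ it is continuous but unbounded. Where the argument breaks down is in the patches you propose. Uniform integrability of $F_{x^s}$ via Poisson degree tails, and the determination of $\mu_L$ by its moments, are features of the Poissonian geometric model of Section~\ref{subsec:poissonlimit}, not of an arbitrary sequence satisfying $U^2(\Gamma_N)\to L\otimes L$. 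Proposition~\ref{prop:bordenave2} is a general statement, and your moment-counting argument, made rigorous, would prove the more specific Proposition~\ref{prop:superpoissonlimit} (which the paper does establish via exactly this route, using the extra structure of the Poisson Boolean model), but not the general result you were asked about.

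The fix for the general statement is to avoid feeding the spectral functional into the weak convergence $\esper[U^2(\Gamma_N)]\to L\otimes L$ altogether. Apply that hypothesis instead to the genuinely bounded continuous test functions $h_{n,\gamma}\otimes h_{n,\gamma}$, where $h_{n,\gamma}(\Gamma,r)=1_{\pi_n(\Gamma,r)=\gamma}$ for $\gamma\in\gbullet(n)$. This gives $\esper\bigl[U(\Gamma_N)(h_{n,\gamma})^2\bigr]\to L(h_{n,\gamma})^2$, and together with the one-rooted convergence $\esper\bigl[U(\Gamma_N)(h_{n,\gamma})\bigr]\to L(h_{n,\gamma})$ you obtain $\var\bigl(U(\Gamma_N)(h_{n,\gamma})\bigr)\to 0$, hence $U(\Gamma_N)(h_{n,\gamma})\to L(h_{n,\gamma})$ in probability. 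Since the countable family $\{h_{n,\gamma}\}_{n,\gamma}$ metrises the topology on $\gbullet$ (Proposition~\ref{prop:characterisation_BSconvergence}), this upgrades to the convergence of \emph{random measures} $U(\Gamma_N)\to L$ in probability inside $\meas^1(\gbullet)$. Each $U(\Gamma_N)$ lives in $\meas^1_{\mathrm{uni}}(\gbullet)$, and so does $L$ as a limit in this closed subset. One then simply applies the continuous mapping theorem for convergence in probability with the spectral map of Theorem~\ref{thm:bordenave}, obtaining $\nu_{\Gamma_N}=\mu_{U(\Gamma_N)}\rightharpoonup\mu_L$ in probability, without ever needing the functional $F_f$ to be simultaneously continuous and bounded. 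This is the content of the cited arguments in \cite{BL10} and \cite{BLS11}.
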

\begin{proof}
See \cite[Theorem 1]{BL10} or \cite[Corollary 12]{BLS11}; the arguments in \emph{loc.~cit.}~are used in a less general setting, but they hold without modification.
\end{proof}
\medskip
 

\subsection{Pointed Lipschitz and random pointed Lipschitz convergence}\label{subsec:lipschitz}
In the remainder of this section, we fix a ssccss $X$, a parameter $\ell>0$, we consider the sequence of random geometric graphs $\Gamma_N = \GEOM(N,L_N)$, with $L_N$ as in Equation \eqref{eq:poissonnorm}. We denote $U(\Gamma_N)=(\Gamma_N,r_N)$ with $r_N$ uniformly chosen among the vertices of $\Gamma_N$, and where it is understood that we then only look at the connected component of this root $r_N$. We recall that $\nu_N$ is the (random) spectral measure of the random geometric graph $\GEOM(N,L_N)$, and we shall also denote $\mu_N=\esper[\nu_N]$; with the notations previously introduced, $\mu_N=\mu_{U(\GEOM(N,L_N))}$. The discussion of the previous section shows that the convergence in probability $\nu_N \rightharpoonup \mu$ and the deterministic convergence $\mu_N \rightharpoonup \mu$ are quite close results, and that the second (weaker) result is an immediate consequence of:

\begin{theorem}[Local convergence]\label{thm:poisson_BS}~

\begin{enumerate}
    \item The sequence $(\Gamma_N,r_N)_{N \in \N}$ converges in the local Benjamini--Schramm sense towards an infinite random rooted graph $(\Gamma_\infty,r)$. As a consequence, there exists a Borel probability measure $\mu$ on $\R$ such that $\mu_N \rightharpoonup_{N \to \infty} \mu$.
    \item The limit $(\Gamma_\infty,r)$ has the following distribution. We consider a Poisson point process $P$ on $\R^{\dim X}$ with intensity $\frac{\ell}{\vol (X)}\,\mathrm{Leb}$, where $\mathrm{Leb}$ is the standard Lebesgue measure. We take $r=0$ and we connect points of $P \sqcup \{0\}$ when their Euclidean distance is smaller than $1$. Then, $\Gamma_\infty$ has the distribution of the connected component of the root vertex $\{0\}$. In particular, the local limit depends only on $\dim X$ and on the parameter $\frac{\ell}{\vol(X)}$.
\end{enumerate}  

\end{theorem}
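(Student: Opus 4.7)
The strategy is to invoke the general machinery developed earlier in Section \ref{subsec:infernal} (Theorem \ref{thm:convergencespacesandgraphs}): it suffices to check that the pointed metric spaces underlying $\GEOM(N,L_N)$, together with their random vertex sets, converge in the appropriate sense to the Euclidean space $\R^{\dim X}$ equipped with a Poisson point process of intensity $\frac{\ell}{\vol(X)}\,\mathrm{Leb}$. The rescaling $L_N = (\ell/N)^{1/\dim X}$ is built precisely so that ``unit distance after rescaling'' corresponds to the connection threshold of the graph.

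First, since $X$ is a symmetric space, its isometry group acts transitively, and conditionally on the root $r_N$ being picked uniformly we may assume $r_N$ equals a fixed reference point $o \in X$. Rescale the Riemannian metric on $X$ by the factor $L_N^{-1}$. On any fixed geodesic ball of rescaled radius $n$ around $o$, the exponential map $\exp_o$ identifies this ball with a subset of $\tang_o X \simeq \R^{\dim X}$, and the pullback of the rescaled metric agrees with the Euclidean metric up to an error of order $O(L_N^2) = O(N^{-2/\dim X})$ coming from the curvature terms of the Jacobi field expansion. This gives pointed Lipschitz convergence of the rescaled spaces $(X, L_N^{-1} d, o)$ to $(\R^{\dim X},\|\cdot\|,0)$.

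Next, conditionally on $r_N = o$, the remaining $N-1$ vertices $v_2,\ldots,v_N$ are i.i.d.~under the Haar measure $m$. Pushed forward through $\exp_o^{-1}$ and rescaled by $L_N^{-1}$, the empirical point process they define on $\R^{\dim X}$ restricted to the ball of radius $n$ has expected cardinality equivalent to $(N-1)\cdot \frac{c(\dim X)(L_N n)^{\dim X}}{\vol(X)} \to \frac{c(\dim X) n^{\dim X} \ell}{\vol(X)}$, and its void probabilities over any finite family of fixed Borel sets $A_1,\ldots,A_k \subset \R^{\dim X}$ of finite Lebesgue measure converge to those of a Poisson point process on $\R^{\dim X}$ with intensity $\frac{\ell}{\vol(X)}\,\mathrm{Leb}$, by the classical binomial-to-Poisson limit applied to $m$-measures (here using that the Haar volume of $\exp_o(L_N A_i)$ is asymptotic to $L_N^{\dim X} \mathrm{Leb}(A_i)$). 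This is exactly the hypothesis required by Theorem \ref{thm:convergencespacesandgraphs}, which then yields the local Benjamini--Schramm convergence $(\Gamma_N,r_N) \rightharpoonup (\Gamma_\infty,r)$ with the claimed Poisson limit. The convergence $\mu_N \rightharpoonup \mu$ follows from Theorem \ref{thm:bordenave} once one observes that $(\Gamma_\infty,r)$ is unimodular: the underlying Poisson point process on $\R^{\dim X}$ is invariant under translations, so the mass transport principle for the Palm version at $0$ is automatic.

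The main technical obstacle is the continuity of the graph-construction map with respect to perturbations of the metric and the point process: an edge is created precisely when two points lie at distance $\leq L_N$, so configurations where some pair lies at distance exactly $L_N$ must be shown to be negligible. For the limiting Poisson process on $\R^{\dim X}$, pairs at distance exactly $1$ form an almost-surely empty set, and for finite $N$ the distribution of pairwise distances is absolutely continuous, so the boundary event has probability zero. Combined with the uniform convergence of the metric on compact balls, this ensures that for every fixed finite rooted graph $\gamma_n \in \gbullet(n)$ one has $\proba[\pi_n(\Gamma_N,r_N) = \gamma_n] \to \proba[\pi_n(\Gamma_\infty,r) = \gamma_n]$, which is the criterion of Proposition \ref{prop:characterisation_BSconvergence}. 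Handling these measure-zero events carefully is where most of the work in Theorem \ref{thm:convergencespacesandgraphs} is actually invested.
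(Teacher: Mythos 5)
Your proposal follows essentially the same route as the paper: you reduce, by transitivity of the isometry group, to the case of a fixed base point $o$, rescale via $\exp_o$ with $O(L_N^2)$ Jacobi-field error to get pointed Lipschitz convergence of the metric spaces, establish the Poisson limit of the rescaled point process, verify the regularity hypothesis (no pair of points at exactly the threshold distance) for the limiting Poisson model, and then invoke Theorem~\ref{thm:convergencespacesandgraphs}. The only cosmetic differences are that you establish the Poisson convergence via void probabilities rather than the Laplace-transform computation of Proposition~\ref{prop:rescalingliegroup} (to which the paper's proof simply refers), and you justify unimodularity of the limit by translation-invariance of the Poisson process rather than by the closedness of unimodular laws under Benjamini--Schramm limits.
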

\begin{remark}
The arguments used hereafter adapt readily to any connected compact homogeneous Riemannian manifold $X=G/H$; in particular, since we do not use any argument from representation theory in this section, the assumption of simple connectedness on the symmetric spaces is here superfluous.
\end{remark}

Let us give an intuitive explanation of Theorem \ref{thm:poisson_BS}. When looking at the $s$-neighborhood (in the sense of graph distance) of a random root $r_N$ in $\GEOM(N,L_N)$, this $s$-neighborhood only depends on what happens in a small ball of radius $O(sL_N)$ around $r_N$, and this ball is \emph{almost} isometric to its Euclidean counterpart in $\R^{\dim X}$. Then, the restriction of the point process $\{v_1,\ldots,v_N\}$ to the small ball converges towards a Poisson point process, since each $v_i$ has a probability $O(\frac{1}{N})$ to be in the small ball, and since there are $N$ independent random points $v_1,\ldots,v_N$. Theorem \ref{thm:poisson_BS} is therefore a natural result, but let us insist on the fact that its rigorous proof cannot be made short, for the following reason. Since we only have a quasi-isometry between the small ball $B^X$ in $X$ and its tangent projection $B^{\mathrm{Euclidean}}$ in $\R^{\dim X}$:
\begin{quote}
 \emph{The projection in $B^{\mathrm{Euclidean}}$ of a geometric graph with level $L_N$ in $B^X$ \emph{is not} a geometric graph with level $L_N$ in $B^{\mathrm{Euclidean}}$. }
\end{quote}
Therefore, we need to be very careful with the various approximations involved in the previous intuitive explanation. 
To overcome the aforementioned difficulties, we shall see Theorem \ref{thm:poisson_BS} as a particular case of a more general result, which states roughly that if a sequence of pointed metric spaces converges in a suitable way, and if one chooses random points on these spaces in a way that is also convergent, then the corresponding geometric graphs converge under adequate assumptions in the local Benjamini--Schramm sense. The existence of such a result is not really surprising, but we could not find in the literature a set of sufficient conditions for the local convergence of graphs in this setting. The remainder of this subsection is devoted to the introduction of all the required hypotheses. In Section \ref{subsec:infernal}, we shall then show that these hypotheses lead to the aforementioned connection between convergence of metric spaces and convergence of random geometric graphs (Theorem \ref{thm:convergencespacesandgraphs}). We shall also prove in this paragraph and in the next one that all the required hypotheses for Theorem \ref{thm:convergencespacesandgraphs} are fulfilled in the Poissonian regime of random geometric graphs on a ssccss, leading to a proof of Theorem \ref{thm:poisson_BS}. \bigskip

We start by recalling the notion of pointed Lipschitz convergence \cite{Gro07}. Given two compact metric spaces $(X,d)$ and $(X',d')$, we say that they are \emph{Lipschitz equivalent} if there exists an homeomorphism $f : X \to X'$ such that
$$\forall x\neq y \in X,\,\,\,c \leq \frac{d'(f(x),f(y))}{d(x,y)} \leq C,$$
with $c$ and $C$ strictly positive constants. The Lipschitz distance between two compact metric spaces is then defined by
$$d_{\mathrm{L}}((X,d),(X',d')) = \inf_{f : X \to X'\text{ homeomorphism}} \left( |\log \mathrm{dil}(f)| + |\log \mathrm{dil}(f^{-1})|  \right),$$
where $\mathrm{dil}(f)$ denotes the dilation constant of an homeomorphism, defined by
$$\mathrm{dil}(f) = \sup_{x \neq y \in X}\frac{d'(f(x),f(y))}{d(x,y)}.$$
We convene that $d_{\mathrm{L}}((X,d),(X',d'))=+\infty$ if the two spaces $(X,d)$ and $(X',d')$ are not Lipschitz equivalent. Obviously, $d_{\mathrm{L}}((X,d),(X',d'))=0$ if and only if the two spaces $(X,d)$ and $(X',d')$ are isometric, so $d_{\mathrm{L}}$ is well defined on the set $\mathbf{CMS}$ of isometry classes of compact metric spaces. The topology associated to this notion of convergence does not have nice properties (polish space, \emph{etc.}), and it is much finer than the Gromov--Hausdorff topology (\emph{cf.} \cite[Section 3.11]{Gro07} and \cite[Chapter 10]{Pet06}). However, it is adequate in order to compare random point processes on metric spaces, and the corresponding random geometric graphs.\medskip

It is easy to adapt the definition of the Lipschitz distance to \emph{pointed} compact metric spaces: if $(X,x,d)$ and $(X',x',d')$ are two pointed compact metric spaces (compact metric spaces with a distinguished point), we define their pointed Lipschitz distance by
\begin{align*}
 &d_{\mathrm{L},\bullet} ((X,x,d),(X',x',d')) \\
 &= \inf_{f : X \to X'\text{ homeomorphism}} \left( |\log \mathrm{dil}(f)| + |\log \mathrm{dil}(f^{-1})| + d'(f(x),x') + d(x,f^{-1}(x'))\right).
\end{align*}
This metric yields a topology on the set $\mathbf{CMS}_\bullet$ of pointed isometry classes of pointed compact metric spaces. Next, we consider pointed \emph{proper} metric spaces, that is to say metric spaces $(X,d)$ with a distinguished point $x$ and such that every closed ball $B^X(x,R)$ with $R \geq 0$ is compact. Note that these hypotheses imply that every closed ball in $X$ is compact. We denote $\mathbf{PMS}_\bullet$ the set of pointed isometry classes of such spaces. For every $R \geq 0$, we have a natural map
\begin{align*}
\pi_R : \mathbf{PMS}_\bullet &\to\mathbf{CMS}_\bullet \\
(X,x,d) &\mapsto (B^X(x,R),x,d_{|B^X(x,R)}),
\end{align*}
and these maps allow one to endow $\mathbf{PMS}_\bullet$ with the \emph{topology of pointed Lipschitz convergence}: a sequence of pointed proper metric spaces $(X_N,x_N,d_N)_{N \in \N}$ converges to $(X,x,d)$ if and only if $\pi_R((X_N,x_N,d_N)) \to_{d_{\mathrm{L},\bullet}}  \pi_R((X,x,d))$ for any $R \geq 0$. This is the adequate definition that we shall use hereafter for convergence of metric spaces.

\begin{example}[Proposition 3.15 in \cite{Gro07}]
Let $(X,o)$ be a Riemannian manifold with a distinguished point $o$. Let $(t_N)_{N \in \N}$ be a sequence growing to infinity, and $d_N = t_N d$, $d$ being the geodesic distance on $X$. When $N \to \infty$, $(X,o,d_N)_{N \in \N}$ converges in the pointed Lipschitz topology towards $(\tang_oX, 0, d_{\tang_oX})$, where $d_{\tang_oX}$ is the Euclidean distance associated to the scalar product $\scal{\cdot}{\cdot}_{\tang_oX}$. 
\end{example}

Let $(X_N,x_N,d_N)_{N \in \N}$ be a sequence of pointed proper metric spaces, and $(M_N)_{N\in \N}$ be a sequence of random point processes on these spaces. We now want to define a notion of convergence for the whole sequence $(X_N,x_N,d_N,M_N)_{N \in \N}$. Let us first recall briefly the general theory of point processes; see \cite[Chapter 12]{Kal02} for more details. We warn the reader that in the following, all the measures considered will be positive measures; and all the results claimed hold only for proper metric spaces. If $(X,d)$ is a proper metric space, then:
\begin{itemize}
    \item It is locally compact, therefore, the positive \emph{Radon measures} (Borel measures that are locally finite and regular) are exactly the positive linear forms on the space of compactly supported continuous functions $\mathscr{C}_{\mathrm{c}}(X)$ (see again \cite[Chapter IX]{Lang93}).
    \item It is also $\sigma$-compact, hence polish, and in particular a Radon space. Thus, any locally finite measure on $X$ is regular, so the space $\meas_{\mathrm{Radon}}(X)$ of Radon measures on $X$ is simply the space of locally finite positive Borel measures.
\end{itemize} 
In this setting, we endow $\meas_{\mathrm{Radon}}(X)$ with the $*$-weak topology: a sequence of positive Radon measures $(\mu_n)_{n \in \N}$ converges towards a Radon measure $\mu$ if, for any $\phi \in \mathscr{C}_{\mathrm{c}}(X)$, $\mu_n(\phi) \to \mu(\phi)$. This topology is also called the vague topology, and we refer to \cite[Chapter 3]{Bour81} for a detailed study of it. With respect to the $\sigma$-field spanned by the vague topology, for any Borel subset $A \subset X$, the map 
\begin{align*}
\meas_{\mathrm{Radon}}(X) &\to \R_+ \sqcup \{+\infty\} \\
\mu &\mapsto \mu(A)
\end{align*}
is measurable. On the other hand, if $X$ is a proper metric space, then one can show that the vague topology on $\meas_{\mathrm{Radon}}(X)$ makes it a polish space (see \cite[Chapter 3, ex. 1.14.a]{Bour81} for the metrisability, and [\emph{loc.~cit.}, Chapter 3, Proposition 14] for the completeness). A \emph{random point process} is a random element of the measurable subset $\meas_{\mathrm{atomic}}(X) \subset \meas_{\mathrm{Radon}}(X)$ of atomic measures, which are the locally finite sums of Dirac measures
$$\left( M \in \meas_{\mathrm{atomic}}(X)  \right) \iff \left( M = \sum_{i\in I} \delta_{x_i}, \text{ with }\mu(K)<+\infty  \text{ for any compact subset }K\subset X \right).$$
A Poisson point process $P$ associated to an intensity $\mu \in \meas_{\mathrm{Radon}}(X)$ is an example of random point process, with $\esper[P(\phi)] = \mu(\phi)$ for any $\phi \in \mathscr{C}_c(X)$; see again \cite[Chapter 12]{Kal02}. We denote $\mathbf{PMS}_{\bullet,\star}$ the set of pointed proper metric spaces $(X,x,d)$ endowed with a random point process $M$. We identify two such objects $(X,x,d,M)$ and $(X',x',d',M')$ if there exists a bijective isometry $i : X \to X'$ such that $i(x)=x'$ and $i_\star M =_{\mathrm{(law)}} M'$. We say that a sequence $(X_N,x_N,d_N,M_N)_{N \in \N}$ in $\mathbf{PMS}_{\bullet,\star}$ \emph{converges in the random pointed Lipschitz sense} towards an element $(X,x,d,M)$ if, for any $R>0$, there exists an integer $N_0(R)$ such that for $N \geq N_0(R)$, one can find homeomorphisms $f_{N,R} : B^{X_N}(x_N,R) \to B^X(x,R)$ with the following properties:
\begin{enumerate}
    \item We have 
    $$\lim_{N \to \infty} \left( |\log \mathrm{dil}(f_{N,R})| + |\log \mathrm{dil}(f^{-1}_{N,R})| + d'(f_{N,R}(x_N),x) + d(x_N,f_{N,R}^{-1}(x))\right) = 0.$$
    In particular, this implies that $(X_N,x_N,d_N) \to (X,x,d)$ in the pointed Lipschitz topology.
    \item Consider a continuous function $\phi : X \to \R$ which is compactly supported on a ball $B^X(x,R)$. If $(M_N)_{|B^{X_N}(x_N,R)} = \sum_{i \in I_{N,R}} \delta_{x_{N,R,i}}$ and $M_{|B^X(x,R)} = \sum_{i \in I_R} \delta_{x_{R,i}}$, then we have the convergence in law
$$ \sum_{i \in I_{N,R}} \phi(f_{N,R}(x_{N,R,i})) \rightharpoonup_{N \to \infty} \sum_{i \in I_{R}} \phi(x_{R,i}). $$  
\end{enumerate}
Beware that the second condition is weaker than the statement $(f_{N,R})_{\star}((M_N)_{|B^{X_N}(x_N,R)}) \rightharpoonup M_{|B^{X}(x,R)}$ (in law and for the vague topology), because we only allow test functions $\phi$ that vanish on the boundary (and outside) of $B^{X}(x,R)$. 

\begin{proposition}\label{prop:rescalingliegroup}
Let $X$ be a ssccss. We denote $d$ the geodesic distance on $X$ with the normalisation given by Equation \eqref{eq:normalisation}; $t_N = N^{\frac{1}{\dim X}}$ and $d_N = t_N\,d$; $o=e_G$ the neutral element if $X=G$ is a group, and $o=\pi_X(e_G)$ the reference point if $X=G/K$ is not a group. We denote $M_N$ the point process on $X$ obtained by taking $N$ independent points at random according to the Haar measure. As $N$ goes to infinity,
\begin{equation}
    (X,o,d_N,M_N) \to \left(\tang_{o}X,0,d_{\tang_oX},\mathcal{P}\!\left(\frac{1}{\vol(X)}\right)\right),\label{eq:randompointedlipschitz}
\end{equation}
where $d_{\tang_oX}$ is the Euclidean distance on $\tang_oX$ associated to the opposite Killing form or its restriction, and $\mathcal{P}(\lambda)$ is the Poisson point process on $\tang_oX$ whose intensity is $\lambda\,\mathrm{Leb}$, $\mathrm{Leb}$ being the Lebesgue measure associated to the distance $d_{\tang_oX}$. The convergence in Equation \eqref{eq:randompointedlipschitz} is in the random pointed Lipschitz sense.
\end{proposition}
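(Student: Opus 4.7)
My plan is to construct explicit homeomorphisms $f_{N,R}$ using the Riemannian exponential map, check the Lipschitz condition via Gauss's lemma, and then obtain the Poisson convergence through the Laplace functional.

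\textbf{Step 1: Construction of $f_{N,R}$.} Since $X$ is a compact symmetric space, the exponential map at $o$, $\exp_o : \tang_o X \to X$, has positive injectivity radius $r_0 > 0$, so $\exp_o$ restricts to a diffeomorphism from $B^{\tang_o X}(0,r_0)$ onto $B^X(o,r_0)$. For $R > 0$ fixed and $N$ large enough that $R/t_N < r_0$, the ball $B^{X_N}(o,R)$ equals $B^X(o,R/t_N)$ and sits inside the injectivity radius. I define
\[
f_{N,R} : B^{X_N}(o,R) \longrightarrow B^{\tang_o X}(0,R),\quad x \longmapsto t_N\,\exp_o^{-1}(x).
\]
This is a homeomorphism sending $o$ to $0$ exactly, so $d(f_{N,R}(o),0) = d(o,f_{N,R}^{-1}(0))=0$.

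\textbf{Step 2: Control of the dilation.} The key ingredient is the standard asymptotic for the exponential map on a Riemannian manifold: for any compact neighborhood $K$ of $o$ in $\tang_o X$ there exists a constant $C_K$ such that for $v,w$ in $K$,
\[
\bigl|\,d(\exp_o(v),\exp_o(w)) - \|v-w\|\,\bigr| \leq C_K\,\|v-w\|\,\max(\|v\|^2,\|w\|^2).
\]
Applied to $v,w \in B(0,R/t_N)$, this gives, after multiplying by $t_N$,
\[
d_N(x,y) = \|f_{N,R}(x) - f_{N,R}(y)\|\bigl(1 + O(R^2/t_N^2)\bigr)
\]
uniformly in $x,y \in B^{X_N}(o,R)$. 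Hence $|\log\mathrm{dil}(f_{N,R})| + |\log \mathrm{dil}(f_{N,R}^{-1})| = O(R^2/N^{2/\dim X}) \to 0$, which takes care of the first condition of random pointed Lipschitz convergence and, incidentally, proves the rescaled pointed Lipschitz convergence $(X,o,d_N) \to (\tang_o X,0,d_{\tang_o X})$.

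\textbf{Step 3: Poisson limit for the point process.} Fix $R > 0$ and a continuous function $\phi$ compactly supported in $B(0,R)$. I will compute the Laplace transform of
$S_N := \sum_{i=1}^N \phi(f_{N,R}(v_i))\,\mathbf{1}_{v_i \in B^X(o,R/t_N)}$
using the independence of the $v_i$'s and the change of variables $y = t_N \exp_o^{-1}(x)$:
\begin{align*}
\esper[\E^{-uS_N}] &= \left(1 - \esper\!\left[(1-\E^{-u\phi(f_{N,R}(v_1))})\,\mathbf{1}_{v_1 \in B^X(o,R/t_N)}\right]\right)^{\!N}\\
&= \left(1 - \frac{1}{N\,\vol(X)}\int_{B(0,R)} (1-\E^{-u\phi(y)})\,J_N(y)\DD{y}\right)^{\!N},
\end{align*}
where $J_N(y)$ is the Jacobian of $\exp_o$ at $y/t_N$ relative to the flat volume form on $\tang_o X$. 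Since $d_{e_G}\exp_o = \mathrm{id}$, $J_N$ tends to $1$ uniformly on $B(0,R)$ as $N \to \infty$, so the Laplace transform converges to
\[
\exp\!\left(-\frac{1}{\vol(X)}\int_{\tang_o X}(1-\E^{-u\phi(y)})\DD{y}\right),
\]
which is precisely the Laplace transform of $\int \phi\,d\mathcal{P}(1/\vol(X))$. Applying this argument to positive linear combinations $\sum_k u_k\phi_k$ of test functions supported in $B(0,R)$ gives the convergence of the restricted point process to $\mathcal{P}(1/\vol(X))_{|B(0,R)}$ in law, which is the second condition in the definition of random pointed Lipschitz convergence.

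\textbf{Main obstacle.} The genuinely non-trivial point is the uniform control in Step 2: one needs that the exponential map is close to an isometry on \emph{all} balls of radius $R/t_N$, with the error decaying in $N$. This is not a conceptual difficulty on a fixed Riemannian manifold, but it must be stated carefully because the symmetric space structure is used only through the positivity of the injectivity radius and a second-order Taylor expansion of the metric at $o$; the $\mathrm{Ad}(G)$-invariance of the scalar product on $\glie$ (or its restriction to $\mathfrak{p}$) ensures that the constant $C_K$ above is finite.
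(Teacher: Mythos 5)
Your proof is correct and follows essentially the same two-step strategy as the paper: establish the almost-isometric maps $f_{N,R}$, then prove Poisson convergence by computing the Laplace functional of the restricted point process and passing to the limit via the change of variables and uniform Jacobian convergence. The only cosmetic difference is that you explicitly construct $f_{N,R} = t_N\exp_o^{-1}$ and bound its dilation by the quadratic error of the exponential map, whereas the paper simply invokes Gromov's Proposition 3.15 to obtain the quasi-isometries.
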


\begin{proof}
In the sequel, since we shall consider families of distances on $X$, in order to avoid any ambiguity, we shall indicate the distance $d$ of a ball $B(x,r)=B^{(X,d)}(x,r)$ in $X$. We also denote $\mathfrak{x}=\tang_oX$, which is the Lie algebra of $X$ if $X$ is a group, and a subspace of the Lie algebra of $G$ if $X=G/K$ is of non-group type. Fix $R>0$. By the aforementioned result from \cite[Proposition 3.15]{Gro07}, there exist some bijective maps $f_{N,R} : B^{(X,t_N d)}(o,R) \to B^\mathfrak{x}(0,R)$ which are smooth, which send the reference point $o$ to $0$, and such that
$$\lim_{N \to \infty} \big( |\log \mathrm{dil}(f_{N,R})| + |\log \mathrm{dil}(f^{-1}_{N,R})| \big)=0.$$
Let $\phi$ be a bounded measurable function compactly supported on $B^{\mathfrak{x}}(0,R)$. If $P=\mathcal{P}(\frac{1}{\vol(X)}) $ is the Poisson process on $\mathfrak{x}$ with intensity $\frac{1}{\vol(X)}\,\mathrm{Leb}=\frac{\!\DD{t}}{\vol(X)}$, then the Laplace transform of $P(\phi)$ is given by the Campbell formula:
$$\esper[\E^{z\,P(\phi)}] = \exp\left(\frac{1}{\vol(X)}\int_{B^\mathfrak{x}(0,R)} (\E^{z\,\phi(t)}-1)\,\DD{t}\right) .$$
On the other hand, the Laplace transform of $((f_{N,R})_\star(M_N)_{|B^{(X,d_N)}(o,R)})(\phi)$ is
\begin{align*}
\esper\!\left[\exp \left(z\, ((f_{N,R})_\star(M_N)_{|B^{(X,d_N)}(o,R)})(\phi) \right)  \right] 
&= \esper\!\left[ \exp\left(z\sum_{i=1}^N 1_{d_N(o,v_i) \leq R}\,\phi(f_{N,R}(v_i))\right) \right] \\
&= \left(\int_X \exp\big(z\,1_{d_N(o,x) \leq R}\,\phi(f_{N,R}(x))\big) \DD{x}\right)^N\\
&= \left(1+\int_{B^{(X,d_N)}(o,R)} (\E^{z\,\phi(f_{N,R})(x)}-1) \DD{x}\right)^N.
\end{align*}
Since $f_{N,R}$ is a smooth quasi-isometry, by the change of variables formula, the image by the map $f_{N,R}$ of the restriction of the Haar measure to the ball $B^{(X,d_N)}(o,R) = B^{(X,d)}(o,R(t_N)^{-1})$  is a measure
$$ m_{N,R}(t)\,1_{\|t\|_\mathfrak{x} \leq R} \,\frac{1}{N\,\vol (X)}\,\DD{t},$$
where $m_{N,R}(t)$ is a smooth positive function that converges uniformly to $1$ on $B^\mathfrak{x}(0,R)$. Then, the change of variables $t = f_{N,R}(x)$ yields 
\begin{align*}
\esper\!\left[\exp\! \left( z\,((f_{N,R})_\star(M_N)_{|B^{(X,d_N)}(o,R)})(\phi) \right)  \right]
&= \left(1+\frac{1}{N\,\vol(X)}\int_{B^{\mathfrak{x}}(0,R)} \!\!\!m_{N,R}(t)\,(\E^{z\,\phi(t)}-1)\DD{t} \right)^{\!N} \\
&= \exp\!\left(\frac{1}{\vol(X)}\int_{B^{\mathfrak{x}}(0,R)} (\E^{z\,\phi(t)}-1)\DD{t}\right)\,(1+o(1)).
\end{align*}
This ensures the convergence in law of the restricted point processes, hence the convergence in the random pointed Lipschitz sense. Notice that, since the previous convergence holds for any bounded measurable function $\phi$, given a family $(\phi_1,\ldots,\phi_s)$ of bounded measurable functions compactly supported on $B^\mathfrak{x}(0,R)$, we also have
$$\esper\!\left[\exp \left( \sum_{i=1}^s z_i ((f_{N,R})_\star(M_N)_{|B^{(X,d_N)}(o,R)})(\phi_i) \right) \right] = \esper\!\left[\exp\left(\sum_{i=1}^s z_i\,P(\phi_i)\right)\right] (1+o(1)), $$
hence the convergence in law of the whole vector of observables of the random point process $(f_{N,R})_\star(M_N)_{|B^{(X,d_N)}(o,R)}$ towards the vector of observables of the Poisson point process.
\end{proof}
\medskip

\subsection{Convergence of metric spaces and convergence of random geometric graphs}\label{subsec:infernal}
Given a pointed proper metric space $(X,x,d)$ endowed with a random point process $M$, for any $L>0$, we can consider the geometric graph $\GEOM(X,x,d,M,L)$ whose vertices are the points of $\{x\} \cup \{\text{atoms of }M\}$, and whose edges are the pairs of points $(y,z) \in \{x\}\cup \{\text{atoms of }M\}$ such that $d(y,z) \leq L$. Here and in the sequel, we assume that $M$ is a simple random point process (with probability $1$, $M$ does not involve atoms with multiplicity greater than $1$), and that $M(x)=0$ almost surely. This assumption ensures that $\GEOM(X,x,d,M,L)$ is a simple graph without multiple edge. The geometric graph $\GEOM(X,x,d,M,L)$ is naturally rooted at $x$. Since $M$ is almost surely locally finite and $(X,d)$ is proper, it is easy to see that (the connected component of $x$ in) $\GEOM(X,x,d,M,L)$ is almost surely locally finite, hence an element of $\gbullet$. Now, given a sequence $(X_N,x_N,d_N,M_N)_{N \in \N}$ in $\mathbf{PMS}_{\bullet,\star}$ that converges in the random pointed Lipschitz sense towards $(X,x,d,M)$, it is natural to ask whether this ensures the local Benjamini--Schramm convergence of the random rooted graphs $\GEOM(X_N,x_N,d_N,M_N,L)$ towards $\GEOM(X,x,d,M,L)$. Under an additional hypothesis of regularity on the limiting point process $M$, the answer is yes.

\begin{definition}
A family $(X,x,d,M) \in \mathbf{PMS}_{\bullet,\star}$ consisting in a pointed proper metric space and a random point process on it is said \emph{regular} if, for any fixed $L>0$, the increasing map
    \begin{align*}
    \R_+ &\to \gbullet\\
    l &\mapsto \GEOM(X,x,d,M,l)
    \end{align*}
is almost surely continuous at $l=L$ with respect to the local Benjamini--Schramm topology on $\gbullet$.
\end{definition}

\begin{theorem}\label{thm:convergencespacesandgraphs}
Let $(X_N,x_N,d_N,M_N)_{N \in \N}$ be a sequence in $\mathbf{PMS}_{\bullet,\star}$ that converges in the random pointed Lipschitz sense towards $(X,x,d,M)$. We assume that the pointed  proper metric space $(X,x,d)$ and its point process $M$ are regular. Then, for any $L>0$ fixed, the sequence of random graphs $(\GEOM(X_N,x_N,d_N,M_N,L))_{N \in \N}$ converges in the local Benjamini--Schramm sense towards $\GEOM(X,x,d,M,L)$.
\end{theorem}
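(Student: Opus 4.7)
The plan is to verify the characterisation of Benjamini--Schramm convergence given by Proposition \ref{prop:characterisation_BSconvergence}: fix $n \in \N$ and a rooted finite graph $\gamma_n \in \gbullet(n)$, and show
$$\proba\big[\pi_n(\GEOM(X_N,x_N,d_N,M_N,L)) = \gamma_n\big] \to \proba\big[\pi_n(\GEOM(X,x,d,M,L)) = \gamma_n\big].$$
The starting observation is that for any $(Y,y,d_Y,M_Y)\in\mathbf{PMS}_{\bullet,\star}$, the graph $\pi_n(\GEOM(Y,y,d_Y,M_Y,L))$ is a measurable function of $M_Y|_{B^Y(y,nL)}$ only: any vertex at graph-distance at most $n$ from $y$ lies at geometric distance at most $nL$, and any shortest path realising such a graph-distance stays in $B^Y(y,nL)$. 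I therefore fix a radius $R>nL$ once and for all and work with the truncated point processes on balls of radius $R$.

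By the random pointed Lipschitz convergence hypothesis, there exist homeomorphisms $f_{N,R}:B^{X_N}(x_N,R)\to B^X(x,R)$ whose dilation constants $c_N,C_N$ both tend to $1$, and the push-forwards $\tilde{\mu}_N := (f_{N,R})_\star(M_N|_{B^{X_N}(x_N,R)})$ converge in law, as random Radon measures on the open ball $B^X(x,R)$ for the vague topology, to $\tilde{\mu}:=M|_{B^X(x,R)}$. Here one uses that condition (2) in the definition of random pointed Lipschitz convergence gives the convergence of Laplace functionals tested against arbitrary families of compactly supported continuous functions, which is equivalent to convergence in law of random Radon measures on the open ball. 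I then invoke Skorokhod's representation theorem to realise these random measures on a single probability space in such a way that the convergence $\tilde{\mu}_N \to \tilde{\mu}$ is almost sure. Let $\tilde{\Omega}$ be the almost sure event on which, in addition, the regularity property of $(X,x,d,M)$ at level $L$ holds, and the limit measure $\tilde{\mu}$ has no atom on the sphere $\partial B^X(x,nL)$ (the latter being automatic up to replacing $nL$ by a continuity radius $R_0\in(nL,R)$, since the locally finite measure $\tilde{\mu}$ charges only countably many spheres centred at $x$).

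The core of the argument takes place on $\tilde{\Omega}$. Fix an outcome $\omega\in\tilde{\Omega}$. Regularity provides $\epsilon=\epsilon(\omega)>0$ such that $\pi_n(\GEOM(X,x,d,\tilde{M}_\omega,l))=\pi_n(\GEOM(X,x,d,\tilde{M}_\omega,L))$ for every $l\in[L-\epsilon,L+\epsilon]$. Since $\tilde{\mu}_{N,\omega}\to\tilde{\mu}_\omega$ vaguely and $\tilde{\mu}_\omega$ puts no mass on the boundary of $B^X(x,nL)$, for $N$ large enough the atoms of $\tilde{\mu}_{N,\omega}$ and those of $\tilde{\mu}_\omega$ in this ball are in bijection, with corresponding atoms at $d$-distance at most $\epsilon/4$. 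Taking $N$ also so large that $c_N,C_N\in(1-\epsilon/(4L),1+\epsilon/(4L))$, the edge condition $d_N(\,\cdot\,,\,\cdot\,)\leq L$ in $X_N$, once pushed to $X$ via $f_{N,R}$ and then transported to $\tilde{\mu}_\omega$-atoms via the bijection, becomes a condition $d(\,\cdot\,,\,\cdot\,)\leq L'$ with $L'\in[L-\epsilon/2,L+\epsilon/2]$. Hence the edge set of $\GEOM(X_N,x_N,d_N,\tilde{M}_{N,\omega},L)$ restricted to $B^{X_N}(x_N,nL)$ sits between those of $\GEOM(X,x,d,\tilde{M}_\omega,L-\epsilon/2)$ and $\GEOM(X,x,d,\tilde{M}_\omega,L+\epsilon/2)$, which share the same $\pi_n$. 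This forces $\pi_n(\GEOM(X_N,x_N,d_N,\tilde{M}_{N,\omega},L))=\pi_n(\GEOM(X,x,d,\tilde{M}_\omega,L))$ for $N$ large, giving almost sure convergence of the truncated graphs on $\tilde{\Omega}$, hence convergence in law and the claim.

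The main obstacle is the simultaneous management of two independent perturbations: the displacement of atoms (both from the Skorokhod coupling and from the fact that $f_{N,R}$ is not an isometry) and the variation of the effective edge-length threshold induced by the dilation of $f_{N,R}$. The device that makes the argument work is to absorb both into a single perturbation of the level parameter $L$, and then invoke the regularity assumption to conclude that $\pi_n$ is insensitive to such a perturbation. A secondary technical point is to justify that vague convergence of an atomic sequence to a simple atomic measure produces an ``atom-matching'' bijection on continuity sets of the limit, which is a standard but essential tool underlying the step above.
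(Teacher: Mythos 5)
Your argument is correct, and it reaches the conclusion by a genuinely different route from the one the paper follows. The paper's proof discretises the point process through a grid $\Pi_N$, introduces the auxiliary $\GRID$ graphs, encodes the event $\{\pi_n(\GRID(\Pi_N,M_N,L))=\gamma_n\}$ in terms of a vector of observables of $M_N+\delta_{x_N}$ against a finite family of test functions, and then uses the Portmanteau theorem for the open sets $U_C$ to pass to the limit; the key preparatory inputs are Lemmas \ref{lem:separated} and \ref{lem:emptyboundary} (uniform separation of atoms, and uniform smallness of the mass near a sphere). You instead upgrade condition (2) of the definition of random pointed Lipschitz convergence to full convergence in law of the random Radon measures for the vague topology on the open ball (this requires the Kallenberg-type theorem that convergence in distribution of $\xi_N(\phi)$ for every single $\phi\in\mathscr{C}_c$ implies vague convergence in distribution of the random measures; you should cite it, since it is a real input), then invoke Skorokhod's representation theorem to pass to an almost surely convergent coupling, and conclude by a deterministic "atom-matching + squeeze in the level parameter $L$" argument on a full-measure event. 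The shared key idea is exactly the one you highlight at the end: both displacements of atoms and dilation of $f_{N,R}$ are absorbed into a perturbation of $L$, and then the regularity assumption collapses the uncertainty; in the paper this is Lemma \ref{lem:discretisationgraphs} and the surrounding machinery. What the paper's route buys is staying entirely within convergence of observables (no Skorokhod, no atom-matching lemma) at the price of a heavy combinatorial discretisation; what your route buys is conceptual transparency, at the price of importing classical but nontrivial machinery (Polishness of $\meas_{\mathrm{Radon}}$ of the open ball, Skorokhod representation, the characterisation of vague convergence by single-function laws). Two small points to tighten: first, the regularity property is a property of the law of $M$, so you should state that it is the regularity of the $\pi_n$-truncation on a ball of radius strictly greater than $nL$ that survives the Skorokhod coupling (which it does, since this truncation is a measurable function of the restricted process and the coupling preserves its law); second, the atom-matching step uses implicitly that the $\tilde{\mu}_N$ are simple — which holds because the $M_N$ are assumed simple and $f_{N,R}$ is a bijection — and this deserves a sentence.
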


Note that the regularity assumption is equivalent to the fact that almost surely, no pair of points $(y,z)$ of $\{x\}\sqcup \{\text{atoms of }M\}$ are exactly at distance $L$ for a fixed positive real number $L$. Let us first see why this general result implies Theorem \ref{thm:poisson_BS}:
\begin{proof}[Proof of Theorem \ref{thm:poisson_BS}]
With $t_N = N^{\frac{1}{\dim X}}$, we already know that $(X,r_N,t_N d,\,M_{N})_{N \in \N}$ converges in $\mathbf{PMS}_{\bullet,\star}$ towards 
$$\left(\R^{\dim X},0,d_{\mathrm{Euclidean}},\delta_0 + \mathcal{P}\!\left(\frac{1}{\vol(X)}\right)\right).$$
The differences between this statement and Proposition \ref{prop:rescalingliegroup} are the following:
\begin{itemize}
     \item We have replaced the reference point $o$ by a random atom $r_N$ of $M_N$, which is added to $M_{N-1}$.
     \item We place ourselves on $(\R^{\dim X},0,d_{\mathrm{Euclidean}})$ instead of $(\tang_oX,0,d_{\tang_oX})$.
 \end{itemize} 
 However, the second modification amounts to an isometry between $\mathfrak{x}=\tang_oX$ and $\R^{\dim X}$, whereas the first point is clearly solved by using the transitive action of the compact Lie group $G$ associated to $X$. Now, the limiting space is obviously regular with respect to random geometric graphs, because given $L > 0$, there is almost surely no atom of the Poisson point process exactly at distance $L$ from another atom, or at distance $L$ from $0$. Therefore, we have the local convergence
 $$\GEOM(X,r_N,t_N d,M_N,\ell^{\frac{1}{\dim X}}) \to \GEOM\left(\R^{\dim X},0,d_{\mathrm{Euclidean}},\delta_0+\mathcal{P}\!\left(\frac{1}{\vol(X)}\right),\ell^{\frac{1}{\dim X}}\right).$$
 By scaling, the left-hand side is also $\GEOM(X,r_N,d,M_N,L_N)=(\Gamma_N,r_N)$, whereas the right-hand side has the same law as $\GEOM\left(\R^{\dim X},0,d_{\mathrm{Euclidean}},\delta_0+\mathcal{P}\!\left(\frac{\ell}{\vol(X)}\right),1\right)$.
\end{proof}
\medskip

We now turn to the proof of Theorem \ref{thm:convergencespacesandgraphs}, which we split in several lemmas. A first consequence of the regularity assumption is that the random increasing map
$r \mapsto M(B^X(x,r))$
is almost surely continuous at $r=R$, for any fixed radius $R$. Indeed, this amounts to the almost sure continuity of the map
 $l \mapsto \card (\pi_1(\GEOM(X,x,d,M,l)))$
at $l=R$. A generalisation of this property will be stated in Lemma \ref{lem:emptyboundary}. A less trivial consequence of the assumptions of Theorem \ref{thm:convergencespacesandgraphs} is the following:

\begin{lemma}\label{lem:separated}
Suppose that the atoms of $M+\delta_x$ are simple, and consider $(X_N,x_N,d_N,M_N)_{N \in \N}$ sequence in $\mathbf{PMS}_{\bullet,\star}$ that converges in the random pointed Lipschitz sense to $(X,x,d,M)$. Then, each of the random point processes $M_N+\delta_{x_N}$ and $M+\delta_x$ is uniformly separated: for any $R> 0$ and any $\eps>0$, there exists $\eta>0$ and an integer $N_0=N_0$ such that 
\begin{align*}
\forall N \geq N_0,\,\,\,\proba[\text{$M_N+\delta_{x_N}$ has two atoms in $B^{X_N}(x_N,R)$ at distance smaller than $\eta$}] &\leq \eps;\\
\proba[\text{$M+\delta_{x}$ has two atoms in $B^{X}(x,R)$ at distance smaller than $\eta$}] &\leq \eps.
\end{align*}
\end{lemma}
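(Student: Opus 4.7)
My plan is to first prove the estimate for the limit process $M+\delta_x$ using only local finiteness and the hypothesis of simplicity, and then transfer it to the prelimit processes $M_N+\delta_{x_N}$ through the quasi-isometries furnished by the random pointed Lipschitz convergence.

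For the limit process, $M$ is almost surely a locally finite measure on the proper space $X$, so $(M+\delta_x)(B^X(x,R))$ is an a.s.\ finite integer-valued random variable $N_R$. Pick $K$ large enough so that $\proba[N_R > K]<\eps/2$. Conditionally on $\{N_R\leq K\}$, the simplicity assumption ensures that the minimum pairwise distance $\delta_R$ among the atoms in $B^X(x,R)$ is a.s.\ strictly positive. Monotone convergence then gives an $\eta>0$ with $\proba[\{\delta_R\leq \eta\}\cap \{N_R\leq K\}]<\eps/2$, so the total probability that two atoms of $M+\delta_x$ in $B^X(x,R)$ lie within distance $\eta$ is at most $\eps$.

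To transfer the bound to $M_N+\delta_{x_N}$, I would work with the maps $f_{N,R'}:B^{X_N}(x_N,R')\to B^X(x,R')$ for some fixed $R'>R$ provided by the definition of random pointed Lipschitz convergence. Since $\mathrm{dil}(f_{N,R'})$ and $\mathrm{dil}(f_{N,R'}^{-1})$ both tend to $1$ and $f_{N,R'}(x_N)\to x$, for $N$ large any pair of atoms of $M_N+\delta_{x_N}$ lying in $B^{X_N}(x_N,R)$ at $d_N$-distance $\leq \eta$ is sent to a pair of atoms of $(f_{N,R'})_\star(M_N+\delta_{x_N})$ lying in a slightly larger ball $B^X(x,R'')\subset B^X(x,R')$ at $d$-distance $\leq 2\eta$. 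Hence it suffices to bound the probability of finding two such close atoms in $B^X(x,R'')$ for the pushed-forward process.

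For this, I would cover $B^X(x,R'')$ by finitely many balls $B(z_j,3\eta)$, $1\leq j\leq J$, such that (i) any two points of $B^X(x,R'')$ within distance $2\eta$ lie in a common $B(z_j,3\eta)$, and (ii) the boundaries $\partial B(z_j,3\eta)$ are a.s.\ avoided by $M$ (one eliminates bad radii one by one, using that for each center only countably many radii carry positive mass with positive probability). The defining condition (2) of random pointed Lipschitz convergence, applied to bump functions sandwiching $\mathbf 1_{B(z_j,3\eta)}$, yields convergence in law of the atom counts $(f_{N,R'})_\star(M_N+\delta_{x_N})(B(z_j,3\eta))\rightharpoonup (M+\delta_x)(B(z_j,3\eta))$ for each $j$. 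Shrinking $\eta$ (and refining the cover) one may assume that the limit event $\bigcup_j\{(M+\delta_x)(B(z_j,3\eta))\geq 2\}$ has probability less than $\eps/2$, and then by the convergence of counts (applied through the portmanteau theorem to the continuity sets $\{k\geq 2\}$ of integer-valued distributions) the prelimit probability of the analogous union is at most $\eps$ for $N\geq N_0$, giving the desired uniform separation. The only delicate point is the selection of the covering balls: I must simultaneously enforce the geometric covering property and the a.s.\ null boundary condition that lets me pass from the linear test-function convergence (which, as emphasised in the paper, is strictly weaker than vague convergence) to convergence of the integer counts.
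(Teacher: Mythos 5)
Your proposal is correct in outline but takes a genuinely different route from the paper. You work directly with atom counts in a finite cover of balls, which forces you to first select radii with a.s.\ null boundaries (eliminating a countable exceptional set per center) and then sandwich the indicator of each ball between continuous bump functions before you can invoke the convergence-in-law hypothesis and Portmanteau. The paper avoids both obstacles with a single device: it fixes an $\eta$-dense net $(y_k)_{1\le k\le K}$ in $B^X(x,R)$ and uses the explicit hat functions $\phi_k(t)=(1-d(y_k,t)/(4\eta))_+$, then observes that a pair of atoms within distance $\tfrac{3\eta}{2}$ forces $P(\phi_k)\ge \tfrac{9}{8}$ for some $k$, while conversely $P(\phi_k)\ge\tfrac{9}{8}$ already implies two atoms within distance $8\eta$. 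This fractional threshold $\tfrac{9}{8}$ replaces exact counting: it sandwiches the event ``two nearby atoms'' between two geometric statements, with the test-function event $\{(P(\phi_1),\dots,P(\phi_K))\in\bigcup_k(\R_+)^{k-1}\times[\tfrac{9}{8},\infty)\times(\R_+)^{K-k}\}$ being a closed set to which Portmanteau applies directly, using only the continuous test functions that the definition of random pointed Lipschitz convergence hands you. As you correctly flag at the end, the delicate point in your version is precisely that the hypothesis tests only continuous functions and is strictly weaker than vague convergence; your null-boundary selection and sandwiching are the machinery needed to cross that gap, machinery the paper's $\tfrac{9}{8}$-trick renders unnecessary. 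Both routes reach the conclusion, but the paper's argument is shorter and stays entirely within the class of functionals for which convergence is given. Note also that you should take some care in the step ``the prelimit probability of the analogous union is at most $\eps$'': you need to apply Portmanteau to the \emph{joint} law of the vector of bump-function integrals over the cover (obtainable via Cramér--Wold from the scalar hypothesis), not to the individual counts, since a union of probability estimates obtained separately for each $j$ would require an additional union bound on top of the shrinking.
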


\begin{proof}
In $B^{X}(x,R)$, we fix a finite sequence $(y_k)_{1\leq k \leq K}$ that is $\eta$-dense: $B^{X}(x,R) \subset \bigcup_{k =1}^K B^X(y_k,\eta)$. This is possible since $B^{X}(x,R)$ is compact. We set $\phi_k(t) = (1-\frac{d(y_k,t)}{4\eta})_+$. Note then that if two points $p_1$ and $p_2$ are at distance smaller than $\frac{3\eta}{2}$, then there is at least one $y_k$ such that $d(p_1,y_k) \leq \eta$ and $d(p_2,y_k) \leq \frac{5\eta}{2}$, and therefore such that
$$(\delta_{p_1}+\delta_{p_2})(\phi_k) \geq \frac{3}{4} + \frac{3}{8} = \frac{9}{8}.$$
Conversely, if an atomic measure $P$ satisfies $P(\phi_k) \geq \frac{9}{8}$, then there are at least two atoms $p_1$ and $p_2$ of $P$ in the support of $\phi_k$, and therefore at distance $d(p_1,p_2)\leq 8\eta$. We have thus shown, for any atomic measure $P$:
\begin{align*}
&\left(\text{the minimal distance between atoms of $P_{| B^X(x,R)}$ is less than $\frac{3\eta}{2}$}\right) \\
&\Rightarrow \left(\text{$(P(\phi_1),\ldots,P(\phi_K))$ belongs to the closed set }\bigcup_{k=1}^K (\R_+)^{k-1} \times \left[\frac{9}{8},+\infty\right) \times (\R_+)^{K-k}\right) \\
&\Rightarrow \big(\text{the minimal distance between atoms of $P_{| B^X(x,R+4\eta)}$ is less than $8\eta$}\big). 
\end{align*}
The parameters $R$ and $\eps$ being fixed, the probability that $M+\delta_x$ has two atoms in $B^X(x,R+4\eta)$ at distance smaller than $8\eta$ goes to $0$ as $\eta$ goes to $0$, because $M+\delta_x$ is supposed without multiplicity (we also use the fact that the random point processes that we are studying are assumed to be locally finite). So, one can find $\eta$ such that 
$$\proba\!\left[\text{the minimal distance between atoms of $(M+\delta_x)_{| B^X(x,R+4\eta)}$ is less than $8\eta$}\right] \leq \frac{\eps}{2}.$$
\emph{A fortiori},
$$\proba\left[\text{$((M+\delta_x)(\phi_1),\ldots,(M+\delta_x)(\phi_K))$ belongs to }\bigcup_{k=1}^K (\R_+)^{k-1} \times \left[\frac{9}{8},+\infty\right) \times (\R_+)^{K-k}\right] \leq \frac{\eps}{2}.$$
We introduce the maps $f_{N,R+4\eta}$ which are almost isometries between the balls $B^{X_N}(x_N,R+4\eta)$ and $B^{X}(x,R+4\eta)$. By assumption, $((M_N+\delta_{x_N})(\phi_1 \circ f_{N,R+4\eta}),\ldots,(M_N+\delta_{x_N})(\phi_K \circ f_{N,R+4\eta}))$ converges in law towards $((M+\delta_x)(\phi_1),\ldots,(M+\delta_x)(\phi_K))$, so by Portmanteau theorem,
$$\limsup_{N \to \infty} \left(\proba\!\left[((M_N+\delta_{x_N})(\phi_1 \circ f_{N,R+4\eta}),\ldots)\in \bigcup_{k=1}^K (\R_+)^{k-1} \times \left[\frac{9}{8},+\infty\right) \times (\R_+)^{K-k}\right] \right)\leq \frac{\eps}{2}.$$
Therefore, for $N$ large enough, these probabilities are smaller than $\eps$, and this implies:
$$\proba\!\left[\text{there are two atoms of $(f_{N,R+4\eta})_\star((M_N+\delta_{x_N})_{| B^{X_N}(x_N,R+4\eta)})$ at distance less than $\frac{3\eta}{2}$}\right] \leq \eps.$$
However, for $N$ large enough, $f_{N,R+4\eta}$ modifies the distances by a factor smaller than $\frac{3}{2}$, therefore,
$$\proba\!\left[\text{there are two atoms of $(M_N+\delta_{x_N})_{| B^{X_N}(x_N,R+4\eta)}$ at distance less than $\eta$}\right] \leq \eps.$$
This clearly implies the result.
\end{proof}\medskip

A similar result that we shall use later is a property of uniform continuity of the maps $R \mapsto M(B^{X}(x,R))$ and $R \mapsto M(B^{X_N}(x_N,R))$:
\begin{lemma}\label{lem:emptyboundary}
For any $R>0$ and $\eps>0$, there exists $\eta>0$ and an integer $N_0$ such that 
\begin{align*}
\forall N \geq N_0,\,\,\,\proba[(M_N+\delta_{x_N})(B^{X_N}(x_N,R+\eta)\setminus B^{X_N}(x_N,R-\eta)) \geq 1] &\leq \eps;\\
\proba[(M+\delta_{x})(B^{X}(x,R+\eta)\setminus B^{X}(x,R-\eta)) \geq 1] &\leq \eps.
\end{align*}
\end{lemma}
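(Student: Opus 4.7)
The plan is to prove the two inequalities separately, treating the limit random point process $M$ first (the easier case) and then transferring the bound to $M_N$ via Portmanteau's theorem and a bump function argument, in the same spirit as the proof of Lemma \ref{lem:separated}.

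For the bound on $M$, I would use the regularity of $(X,x,d,M)$ at level $l=R$: with probability one, no atom of $M$ lies at distance exactly $R$ from $x$, since otherwise the edge between $x$ and that atom would first appear in $\GEOM(X,x,d,M,l)$ at $l=R$ and produce a discontinuity of the map $l \mapsto \GEOM(X,x,d,M,l)$ at $l=R$. Consequently, for any $\eta < R$, the random nonnegative integer $Z_\eta = (M+\delta_x)(B^X(x,R+\eta)\setminus B^X(x,R-\eta))$ decreases almost surely to $M(\partial B^X(x,R)) = 0$ as $\eta \downarrow 0$ (the base point $x$ is not in the annulus since $\eta < R$, and is not an atom of $M$ by the standing assumption that $M(x)=0$ almost surely). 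Monotone convergence for indicators yields $\proba[Z_\eta \geq 1] \to 0$, so one can fix $\eta_0 \in (0,R/2)$ with $\proba[Z_{2\eta_0}\geq 1] \leq \eps/2$.

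For the bound on $M_N$, I would build a continuous compactly supported test function $\phi : X \to [0,1]$, a function of the distance to $x$ only, with a trapezoidal profile: $\phi = 1$ on the annulus $B^X(x,R+\tfrac{3\eta_0}{2})\setminus B^X(x,R-\tfrac{3\eta_0}{2})$, and $\phi = 0$ outside $B^X(x,R+2\eta_0)\setminus B^X(x,R-2\eta_0)$. Its support is contained in $B^X(x, R+3\eta_0)$, so the almost isometries $f_{N, R+3\eta_0}$ provided by the pointed Lipschitz convergence are available, and for $N$ large enough their dilations and base point displacements can be made smaller than any prescribed fraction of $\eta_0$. For such $N$ and any $y \in B^{X_N}(x_N,R+\eta_0)\setminus B^{X_N}(x_N,R-\eta_0)$, the image $f_{N,R+3\eta_0}(y)$ lies in the plateau annulus of $\phi$, while $f_{N,R+3\eta_0}(x_N)$ lies at distance much less than $R-2\eta_0$ from $x$ and thus outside the support of $\phi$. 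One obtains the pointwise domination
$$(M_N+\delta_{x_N})(B^{X_N}(x_N,R+\eta_0)\setminus B^{X_N}(x_N,R-\eta_0)) \leq M_N(\phi \circ f_{N,R+3\eta_0}).$$
The random pointed Lipschitz convergence ensures that $M_N(\phi \circ f_{N,R+3\eta_0})$ converges in law to $M(\phi)$, and Portmanteau's theorem applied to the closed set $\{y \geq 1\}$ gives
$$\limsup_{N \to +\infty} \proba[M_N(\phi \circ f_{N,R+3\eta_0}) \geq 1] \leq \proba[M(\phi) \geq 1] \leq \proba[Z_{2\eta_0} \geq 1] \leq \eps/2.$$
Taking $\eta = \eta_0$ and $N_0$ large enough then concludes both inequalities of the lemma.

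The main technical point is the careful bookkeeping of the three scales $\eta_0$, $\tfrac{3\eta_0}{2}$ and $2\eta_0$, so that the distortions induced by $f_{N,R+3\eta_0}$ preserve the inclusion of the inner annulus into the plateau of $\phi$ and keep $f_{N,R+3\eta_0}(x_N)$ away from the support of $\phi$; beyond this, the argument is parallel to Lemma \ref{lem:separated}, with the only conceptual addition being the use of regularity to handle the boundary sphere $\partial B^X(x,R)$ at the level of the limit process.
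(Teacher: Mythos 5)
Your proof is correct and follows exactly the route the paper sketches (``entirely similar to Lemma~\ref{lem:separated}, \ldots adequate test functions''): a trapezoidal bump function supported on an annulus replaces the cones $\phi_k$ of Lemma~\ref{lem:separated}, and the transfer from $M$ to $M_N$ goes through the random pointed Lipschitz hypothesis and Portmanteau's theorem. The detail you make explicit is the role of the regularity hypothesis (a.s.\ no atom of $M$ on the sphere $\partial B^X(x,R)$) in controlling the limit process, which is the precise analogue of the simplicity hypothesis that Lemma~\ref{lem:separated} invokes at the corresponding step.
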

\noindent The proof of this second lemma is entirely similar to the one of Lemma \ref{lem:separated}, and relies on the use of adequate test functions. In the sequel, we fix a rooted finite graph $\gamma_n \in \gbullet(n)$, and $\eps>0$. The symbols $\Gamma_N$ and $\Gamma$ stand for $\GEOM(X_N,x_N,d_N,M_N,L)$ and $\GEOM(X,x,d,M,L)$; in the sequel we shall deal with numerous approximations of these random graphs. Note that if $R \geq nL$, then the structure of $\pi_n(\Gamma,x)$ only depends on the restriction of the point process $M$ to the ball $B^{X}(x,R)$. We fix $R\geq nL+4$, and then $\eta< \min\left(1,\frac{L}{4}\right)$ sufficiently small such that with probability at least $1-\eps$,
\begin{itemize}
\item the random rooted graphs $\pi_n(\GEOM(X,x,d,M,L+c\eta),x) $ with $c \in [-4,4]$ are all the same (regularity condition);
\item the atoms of $(M+\delta_x)_{|B^X(x,R+\eta)}$ are all separated by strictly more than $2\eta$ (Lemma \ref{lem:separated});
\item the cardinality $(M+\delta_x)(B^{X}(x,R-\eta))$ is the same as $(M+\delta_x)(B^{X}(x,R+\eta))$ (Lemma \ref{lem:emptyboundary}).
\end{itemize} 
We denote $A_\eps$ the event corresponding to these three conditions; $\proba[A_\eps] \geq 1-\eps$. If $\eta$ is sufficiently small and $N_0$ is sufficiently large, then for $N \geq N_0$, on an event $A_{N,\eps}$ with probability larger than $1-\eps$, we also have the same two last conditions satisfied by $M_N$ in $X_N$:
\begin{itemize}
\item the atoms of $(M_N+\delta_{x_N})_{|B^{X_N}(x_N,R+\eta)}$ are all separated by strictly more than $2\eta$;
\item the cardinality $(M_N+\delta_{x_N})(B^{X_N}(x_N,R-\eta))$ is the same as $(M_N+\delta_{x_N})(B^{X_N}(x_N,R+\eta))$.
\end{itemize} 
We now proceed to a kind of discretisation of the random geometric graph $\Gamma_N$. We fix a set partition $\Psi= \Psi_{1}\sqcup \Psi_{2}\sqcup \cdots \sqcup \Psi_{\ell}$ of the ball $B^{X}(x,R)$ such that $\mathrm{diam}(\Psi_{l}) \leq \frac{\eta}{2}$ for any $l \in \lle 1,\ell\rre$, and we set $\Pi_{N,l} = f_{N,R+\eta}^{-1}(\Psi_l)$, where $f_{N,R+\eta} : B^{X_N}(x_N,R+\eta)\to B^X(x,R+\eta)$ is almost an isometry. If $N$ is taken large enough, then $f_{N,R+\eta}$ modifies the distance between two points $p_1$ and $p_2$ by a factor $c=c(p_1,p_2)$ with 
$$  \max\left(\frac{2}{3},\frac{R-\eta}{R}\right) \leq c \leq \min\left(\frac{3}{2}, \frac{R+\eta}{R} \right).$$ 
Therefore, $\Pi_N$ is a set partition with
$$ B^{X_N}(x_N,nL) \subset B^{X_N}(x_N,R-\eta) \subset \left(\bigsqcup_{l=1}^\ell \Pi_{N,l} \right) \subset B^{X_N}(x_N,R+\eta)$$
and such that $\mathrm{diam}(\Pi_{N,l}) \leq \eta$ for any $l \in \lle 1,\ell\rre$. If we place ourselves on the event $A_{N,\eps}$, then $(M_N+\delta_{x_N})(\Pi_{N,l}) \leq 1$ for any $l \in \lle 1,\ell\rre$, because otherwise $(M_N+\delta_{x_N})_{|B^{X_N}(x_N,R+\eta)}$ would have two atoms at distance smaller than $\eta$. Hence, we have fixed for any $N \geq N_0$ a grid $\Pi_N$ with arbitrary small size and such that, with very high probability, the atoms of $(M_N + \delta_{x_N})_{|\Pi_N}$ fall into the cases of this grid with at most one atom in each case. In the following, we use the same notation $\Pi_N$ for the set partition $( \Pi_{N,1},\ldots,\Pi_{N,\ell})$ and for the disjoint union of its parts.\bigskip

We call \emph{configuration} associated to the random point process $M_N$ the subset
$$C(\Pi_N,M_N) = \{l \in \lle 1,\ell\rre\,\,|\,\,(M_N+\delta_{x_N})(\Pi_{N,l})\geq 1\}$$ 
of the set $\lle 1,\ell\rre$ of parts of $\Pi_N$ that indicates in which cases of the grid the points of $M_N+\delta_{x_N}$ fall. This configuration is well-defined on the whole probability space $(\Omega,\mathcal{F},\proba)$ on which the random point process $M_N$ is constructed, and it is a measurable function of it. Besides, on the event $A_{N,\eps}$, $C(\Pi_N,M_N) = \{l \in \lle 1,\ell\rre\,\,|\,\,(M_N+\delta_{x_N})(\Pi_{N,l})= 1\}$. We can associate to the discrete configuration the random rooted graph $\GRID(\Pi_N,M_N,L)$
\begin{itemize}
     \item whose vertices are the $l$'s in $C(\Pi_N,M_N)$,
     \item whose edges connect two indices $l$ and $m$ if $d_N(\Pi_{N,l},\Pi_{N,m}) \leq L$,
     \item whose root is the index $l=l(x_N)$ such that $\delta_{x_N}(\Pi_{N,l})=1$, that is to say that $x_N$ falls in $\Pi_{N,l}$. 
 \end{itemize}   
We refer to Figure \ref{fig:grid} for a drawing of the configuration $C(\Pi_N,M_N)$ and of the two random rooted graphs $\GRID(\Pi_N,M_N,L)$ and $\GEOM(X_N,x_N,d_N,M_N,L)$. On this drawing, the space $X_N$ is a part of the plane $\R^2$, the distance comes from the norm $\|\cdot\|_\infty$, the cases of the grid are of size $\eta \times \eta$, and $L=3\eta$.

\begin{figure}[ht]
\begin{center}
\begin{tikzpicture}[scale=1]
\draw (-3,-3) -- (2.5,-3) -- (2.5,2.5) -- (-3,2.5) -- (-3,-3);
\draw (-2.5,-3) -- (-2.5,2.5);
\draw (-2,-3) -- (-2,2.5);
\draw (-1.5,-3) -- (-1.5,2.5);
\draw (-1,-3) -- (-1,2.5);
\draw (-0.5,-3) -- (-0.5,2.5);
\draw (0,-3) -- (0,2.5);
\draw (2,-3) -- (2,2.5);
\draw (1.5,-3) -- (1.5,2.5);
\draw (1,-3) -- (1,2.5);
\draw (0.5,-3) -- (0.5,2.5);
\draw (-3,-2.5) -- (2.5,-2.5);
\draw (-3,-2) -- (2.5,-2);
\draw (-3,-1.5) -- (2.5,-1.5);
\draw (-3,-1) -- (2.5,-1);
\draw (-3,-0.5) -- (2.5,-0.5);
\draw (-3,0) -- (2.5,0);
\draw (-3,2) -- (2.5,2);
\draw (-3,1.5) -- (2.5,1.5);
\draw (-3,1) -- (2.5,1);
\draw (-3,0.5) -- (2.5,0.5);
\foreach \p in {(-0.25,-0.25),(-0.98213,0.44287),(0.84706,2.3578),(2.0369,-2.1604),
(1.6868,0.21227),(0.17989,1.0986),(-1.7677,2.4852),(-0.41979,-1.9),(-2.2473,-2.0997),(0.63321,-0.87695)}
 \draw \p node {$\times$};
\draw (0.3,-0.3) node {$x_N$}; 
\draw (-0.25,-3.5) node {Grid $\Pi_N$ and point process $M_N+\delta_{x_N}$.};

\begin{scope}[shift={(9,0)}]
\draw [gray] (-3,-3) -- (2.5,-3) -- (2.5,2.5) -- (-3,2.5) -- (-3,-3);
\draw [gray] (-2.5,-3) -- (-2.5,2.5);
\draw [gray] (-2,-3) -- (-2,2.5);
\draw [gray] (-1.5,-3) -- (-1.5,2.5);
\draw [gray] (-1,-3) -- (-1,2.5);
\draw [gray] (-0.5,-3) -- (-0.5,2.5);
\draw [gray] (0,-3) -- (0,2.5);
\draw [gray] (2,-3) -- (2,2.5);
\draw [gray] (1.5,-3) -- (1.5,2.5);
\draw [gray] (1,-3) -- (1,2.5);
\draw [gray] (0.5,-3) -- (0.5,2.5);
\draw [gray] (-3,-2.5) -- (2.5,-2.5);
\draw [gray] (-3,-2) -- (2.5,-2);
\draw [gray] (-3,-1.5) -- (2.5,-1.5);
\draw [gray] (-3,-1) -- (2.5,-1);
\draw [gray] (-3,-0.5) -- (2.5,-0.5);
\draw [gray] (-3,0) -- (2.5,0);
\draw [gray] (-3,2) -- (2.5,2);
\draw [gray] (-3,1.5) -- (2.5,1.5);
\draw [gray] (-3,1) -- (2.5,1);
\draw [gray] (-3,0.5) -- (2.5,0.5);
\foreach \p in {(-0.25,-0.25),(-0.75,0.25),(0.75,2.25),(2.25,-2.25),
(1.75,0.25),(0.25,1.25),(-1.75,2.25),(-0.25,-1.75),(-2.25,-2.25),(0.75,-0.75)}
 {\fill[shift={\p},color=red!50!white] (-0.2,-0.2) rectangle (0.2,0.2) ;
 \draw[shift={\p},color=red] (-0.2,-0.2) rectangle (0.2,0.2);}
\draw [color=red] (-0.45,-0.45) -- (-0.05,-0.05);
\draw [color=red] (-0.45,-0.05) -- (-0.05,-0.45);
\draw (-0.25,-3.5) node {Discrete configuration $C(\Pi_N,M_N)$.};
\end{scope}

\begin{scope}[shift={(0,-7)}]
\foreach \p in {(-0.25,-0.25),(-0.75,0.25),(0.75,2.25),(2.25,-2.25),
(1.75,0.25),(0.25,1.25),(-1.75,2.25),(-0.25,-1.75),(-2.25,-2.25),(0.75,-0.75)}
 {\fill \p circle (1.5pt) ;}
 \draw (-0.25,-0.25) circle (4pt);
 \draw (-0.25,-0.25) -- (-0.75,0.25) -- (0.25,1.25) -- (-0.25,-0.25); 
 \draw (0.75,2.25) -- (0.25,1.25) -- (1.75,0.25) -- (0.75,-0.75) -- (-0.25,-0.25);
 \draw (-0.75,0.25) -- (0.75,-0.75) -- (-0.25,-1.75) -- (-0.25,-0.25);
 \draw (0.25,1.25) -- (0.75,-0.75) -- (2.25,-2.25);
 \draw (-0.75,0.25) -- (-0.25,-1.75) -- (-2.25,-2.25) -- (-0.25,-0.25);
 \draw (-0.25,-1.75) .. controls (1,-1) .. (1.75,0.25) -- (-0.25,-0.25);
 \draw (1.75,0.25) -- (0.75,2.25);
 \draw (0.25,1.25) -- (-1.75,2.25) -- (-0.75,0.25) -- (0.75,2.25);
 \draw (-0.25,-3.5) node {Graph $\GRID(\Pi_N,M_N,L)$.};
\end{scope}

\begin{scope}[shift={(9,-7)}]
\foreach \p in {(-0.25,-0.25),(-0.98213,0.44287),(0.84706,2.3578),(2.0369,-2.1604),
(1.6868,0.21227),(0.17989,1.0986),(-1.7677,2.4852),(-0.41979,-1.9),(-2.2473,-2.0997),(0.63321,-0.87695)}
{\fill \p circle (1.5pt) ;}
\draw (-0.25,-0.25) circle (4pt);
\draw (-0.25,-0.25) -- (-0.98213,0.44287);
\draw (-0.25,-0.25) -- (0.17989,1.0986);
\draw (-0.25,-0.25) -- (0.63321,-0.87695);
\draw (-0.98213,0.44287) -- (0.17989,1.0986);
\draw (0.84706,2.3578) -- (0.17989,1.0986);
\draw (2.0369,-2.1604) -- (0.63321,-0.87695);
\draw (1.6868,0.21227) -- (0.63321,-0.87695);
\draw (-0.41979,-1.9) -- (0.63321,-0.87695);
\draw (-0.25,-3.5) node {Graph $\GEOM(X_N,x_N,d_N,M_N,L)$.};
\end{scope}
\end{tikzpicture}
\caption{The grid $\Pi_N$, the discrete configuration $C(\Pi_N,M_N)$ and the two random graphs $\GRID(\Pi_N,M_N,L)$ and $\GEOM(X_N,x_N,d_N,M_N,L)$.\label{fig:grid}}
\end{center}
\end{figure}

\begin{remark}
In all the proofs hereafter, we shall manipulate atoms of the point processes $M_N+\delta_{x_N}$ or $M+\delta_x$, and indices of configurations $C(\Pi_N,M_N)$ or $C(\Psi,M)$; and we shall discuss whether they are connected in a graph $\GEOM$ or $\GRID$. \emph{When discussing the property of being connected, implicitly, we shall only consider the atoms and the indices that are at graph distance smaller than $n$ from the root of the graph.} We ask the reader to keep this convention in mind, which we shall not recall each time and which if omitted might lead to imprecise arguments.
\end{remark}

\begin{lemma}\label{lem:discretisationgraphs}
On the event $A_{N,\eps}$, for any $L>0$, we have a sequence of inclusions 
\begin{align*}
\pi_n(\GEOM(X_N,x_N,d_N,M_N,L),x_N) &\subset \pi_n (\GRID(\Pi_N,M_N,L),l(x_N)) \\
&\subset \pi_n(\GEOM(X_N,x_N,d_N,M_N,L+2\eta),x_N).
\end{align*}
\end{lemma}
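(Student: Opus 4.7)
The argument rests on a single observation: on the event $A_{N,\eps}$, each cell $\Pi_{N,l}$ contains at most one atom of $M_N+\delta_{x_N}$, so there is a canonical bijection $\phi$ between the set of atoms lying in $\bigsqcup_{l=1}^\ell \Pi_{N,l}$ and the configuration $C(\Pi_N,M_N)$, which sends the root $x_N$ to the root $l(x_N)$. Both inclusions will be read off from $\phi$ combined with an elementary pair of distance comparisons between atoms and the cells containing them.

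\textbf{First inclusion.} The trivial inequality $d_N(\Pi_{N,\phi(p)},\Pi_{N,\phi(q)}) \leq d_N(p,q)$ shows that $\phi$ sends every edge of $\GEOM(X_N,x_N,d_N,M_N,L)$ to an edge of $\GRID(\Pi_N,M_N,L)$. Iterating along a path, any vertex at graph distance $\leq n$ from $x_N$ in the geometric graph is mapped to a vertex at graph distance $\leq n$ from $l(x_N)$ in the grid graph, and edges are preserved on the nose. This is exactly the first inclusion.

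\textbf{Second inclusion.} Conversely, since $\mathrm{diam}(\Pi_{N,l}) \leq \eta$ for every $l$, the triangle inequality applied through any pair of witnesses for $d_N(\Pi_{N,l},\Pi_{N,m})$ yields
\[
d_N(\phi^{-1}(l),\phi^{-1}(m)) \leq d_N(\Pi_{N,l},\Pi_{N,m}) + 2\eta
\]
for any $l,m \in C(\Pi_N,M_N)$. Hence $\phi^{-1}$ transports every edge of $\GRID(\Pi_N,M_N,L)$ to an edge of $\GEOM(X_N,x_N,d_N,M_N,L+2\eta)$, and the same path-concatenation argument produces the second inclusion.

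\textbf{Boundary bookkeeping.} The only routine check is that $\phi$ and $\phi^{-1}$ are in fact defined on every vertex appearing in the truncated neighborhoods $\pi_n(\cdots)$, i.e.\ that the corresponding atoms sit inside $\bigsqcup_l \Pi_{N,l}$. Any vertex of $\pi_n(\GEOM(X_N,x_N,d_N,M_N,L+2\eta),x_N)$ lies at metric distance at most $n(L+2\eta)$ from $x_N$, and the choices $R \geq nL + 4$ and $\eta < 1$ made just before the statement keep this strictly inside $B^{X_N}(x_N,R-\eta) \subset \bigsqcup_l \Pi_{N,l}$; the analogous bound for vertices of $\pi_n(\GRID(\Pi_N,M_N,L),l(x_N))$ follows from the same estimates. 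This is the only obstacle I anticipate, and it is dispatched by the explicit buffer already built into the choice of $R$ and $\eta$.
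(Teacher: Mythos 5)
Your core argument is correct and follows the paper's own proof almost exactly: on $A_{N,\eps}$ each non-empty cell of the grid carries exactly one atom, giving a bijection $\phi$ between atoms in $\Pi_N$ and the configuration $C(\Pi_N,M_N)$; the inequality $d_N(\Pi_{N,l},\Pi_{N,m}) \leq d_N(a,b)$ yields the first inclusion, and the triangle inequality through the cells (diameter $\leq \eta$) gives $d_N(a,b)\leq d_N(\Pi_{N,l},\Pi_{N,m})+2\eta$, yielding the second.

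Your \textbf{boundary bookkeeping} paragraph, however, contains a claim that is both false as stated and unnecessary. You assert that every vertex of $\pi_n(\GEOM(X_N,x_N,d_N,M_N,L+2\eta),x_N)$ lies in $B^{X_N}(x_N,R-\eta)$ because $R\geq nL+4$ and $\eta<1$; but $n(L+2\eta)\leq R-\eta$ would require $\eta\leq 4/(2n+1)$, which does not follow from the stated constraints for $n\geq 2$. Fortunately, this check is not needed. The inclusion $\pi_n(\GRID(\Pi_N,M_N,L),l(x_N))\subset\pi_n(\GEOM(\cdots,L+2\eta),x_N)$ is proved by pushing the \emph{grid} graph into the \emph{geometric} graph via $\phi^{-1}$, and $\phi^{-1}$ is automatically defined on every vertex of the grid graph (it is defined on all of $C(\Pi_N,M_N)$). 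One never needs the grid to cover the $n$-neighborhood of $\GEOM(L+2\eta)$; the only domain check that is genuinely required is that vertices of $\pi_n(\GEOM(L),x_N)$ lie inside $\bigsqcup_l\Pi_{N,l}$ so that $\phi$ is defined on them, and this is immediate from $nL<R-\eta$. You should delete the $L+2\eta$ bound and the dependent assertion, keeping only the verification for $\pi_n(\GEOM(L),x_N)$.
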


\begin{proof}
Let $a$ and $b$ be two atoms of $(M_N+\delta_{x_N})_{|\Pi_N}$, and $l$ and $m$ be the indices of the parts $\Pi_{N,l}$ and $\Pi_{N,m}$ such that $a\in \Pi_{N,l}$ and $b \in \Pi_{N,m}$. We say that $l$ and $m$ are the elements of the configuration $C(\Pi_N,M_N)$ associated to $a$ and $b$; this correspondence is well-defined on $A_{N,\eps}$. Now, if $a$ and $b$ are connected in $\GEOM(X_N,x_N,d_N,M_N,L)$, then we have two points of $\Pi_{N,l}$ and $\Pi_{N,m}$ at distance smaller than $L$, so $l$ and $m$ are connected in $\GRID(\Pi_N,M_N,L)$. On the other hand, if $l$ is connected to $m$ in $\GRID(\Pi_N,M_N,L)$, then since $\Pi_{N,l}$ and $\Pi_{N,m}$ have diameter smaller than $\eta$, $a$ and $b$ are connected in $\GEOM(X_N,x_N,d_N,M_N,L+2\eta)$. Therefore, on the event $A_{N,\eps}$, we have indeed the two inclusions stated, being understood that on this event we can identify the atoms of $(M_N+\delta_{x_N})_{|\Pi_N}$ and the integers in $C(\Pi_N,M_N) \subset \lle 1,\ell\rre$. 
\end{proof}

As a consequence of this lemma, if the two discretisations $\pi_n(\GRID(\Pi_N,M_N,L-2\eta),l(x_N))$ and $\pi_n(\GRID(\Pi_N,M_N,L),l(x_N))$ are the same and are equal to $\gamma_n$, then on $A_{N,\eps}$, we also have $\pi_n(\Gamma_N,x_N) = \gamma_n$. This leads to the inequality
\begin{align}
&\proba[\pi_n(\Gamma_N,x_N) = \gamma_n] \nonumber \\
&\geq \proba[A_{N,\eps}\cap (\pi_n(\GRID(\Pi_N,M_N,L-2\eta),l(x_N)) = \pi_n(\GRID(\Pi_N,M_N,L),l(x_N)) = \gamma_n)] - \eps \label{eq:infernalinequality}
\end{align}
for any $N \geq N_0$. From now on, we shall work on $X_N$ with the discretised random graphs $\GRID$, and the next step of the proof of Theorem \ref{thm:convergencespacesandgraphs} consists in relating their distribution to events that can be expressed in terms of observables $(M_N+\delta_{X_N})(\theta_{N,l})$, where the $\theta_{N,l}$'s are compactly supported continuous functions on $X_N$. To construct these functions, we start from functions compactly supported on $B^X(x,R+\eta)$:
\begin{align*}
\phi_0(t) &= \left(1-\frac{d(t,B^X(x,R-\eta))}{2\eta}\right)_{\!+};\\
\forall l \in \lle 1,\ell\rre, \,\,\,\phi_{l}(t) &= \left(1-\frac{d(t,\Psi_l)}{\eta}\right)_{\!+} 
\end{align*}
We then set 
$$\theta_{N,l}(t \in X_N) = \begin{cases}
     \phi_l \circ f_{N,R+\eta}(t) &\text{if } t \in B^{X_N}(x_N,R+\eta),\\
     0 &\text{otherwise}.
\end{cases} $$
Given a configuration $C \subset \lle 1,\ell\rre$, we denote $I_l(C) = (\frac{2}{3},+\infty)$ if $l \in C$, and $I_l(C) = \R$ if $l \in \lle 1,\ell \rre \setminus C$. We set
$$ U_C = \left(\left(\card (C)-\frac{1}{2},\card (C) + \frac{1}{2}\right) \times \prod_{l =1}^\ell I_l(C) \right) \subset \R^{\ell+1}.$$
The set $U_C$ is open in $\R^{\ell + 1}$. On the other hand, for any $L>0$, there is a finite set $\mathfrak{C}(\gamma_n,\Pi_N,L)$ of configurations $C \subset \lle 1,\ell \rre$ such that $\GRID(\Pi_N,M_N,L) = \gamma_n$ if and only if  $C \in \mathfrak{C}(\gamma_n,\Pi_N,L)$. The following lemma relates the event $A_{N,\eps}\cap (\pi_n(\GRID(\Pi_N,M_N,L),l(x_N)) = \gamma_n)$ to the values of the random vector of observables 
$$(M_N+\delta_{x_N})(\theta_N) = \left((M_N+\delta_{x_N})(\theta_{N,0}) , (M_N+\delta_{x_N})(\theta_{N,1}) , \ldots, (M_N+\delta_{x_N})(\theta_{N,\ell})\right)$$
and to its belonging to certain unions of open sets $U_C$.

\begin{lemma}
We then have the following inclusions of events:
\begin{align*}
&\left(A_{N,\eps} \cap \left((M_N+\delta_{x_N})(\theta_N) \in \bigcup_{\substack{C \in \mathfrak{C}(\gamma_n,\Pi_N,L-\eta) \\ \text{and }C \in \mathfrak{C}(\gamma_n,\Pi_N,L+\eta)}} U_C\right)\right) \\
&\subset \big( A_{N,\eps}\cap (\pi_n(\GRID(\Pi_N,M_N,L),l(x_N)) = \gamma_n) \big)  \\
&\subset \left(A_{N,\eps} \cap \left((M_N+\delta_{x_N})(\theta_N) \in \bigcup_{C \in \mathfrak{C}(\gamma_n,\Pi_N,L)} U_C\right)\right).
\end{align*}
\end{lemma}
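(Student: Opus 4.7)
The plan is to extract information about the actual configuration $C_0 := C(\Pi_N, M_N)$ from the vector of observables $(M_N+\delta_{x_N})(\theta_N)$ on the event $A_{N,\eps}$, and to deduce both inclusions from two quantitative estimates. Throughout, I take $N$ large enough that $f_{N,R+\eta}$ is arbitrarily close to an isometry, so the atoms of $M_N + \delta_{x_N}$ in $B^{X_N}(x_N, R+\eta)$ have images pairwise separated by more than $(2-\eps_N)\eta$ in $d$, with $\eps_N \to 0$.

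Two estimates drive the proof. First, on $A_{N,\eps}$ every atom of $M_N + \delta_{x_N}$ that contributes to $\theta_{N,0}$ lies in $B^{X_N}(x_N, R-\eta)$, and its image under $f_{N,R+\eta}$ lies arbitrarily close to $B^X(x, R-\eta)$ where $\phi_0 \equiv 1$; consequently $\bigl|(M_N+\delta_{x_N})(\theta_{N,0}) - |C_0|\bigr| < 1/2$ for $N$ large. Second, I claim that if $l \notin C_0$ then $(M_N + \delta_{x_N})(\theta_{N,l}) < 2/3$. Indeed, any contributing atom $a$ lies in some $\Pi_{N,l'}$ with $l' \neq l$ and has $d(f_{N,R+\eta}(a), \Psi_l) < \eta$; for any two such atoms $a, a'$, the triangle inequality
\[
d(f_{N,R+\eta}(a), \Psi_l) + d(f_{N,R+\eta}(a'), \Psi_l) \geq d(f_{N,R+\eta}(a), f_{N,R+\eta}(a')) - \mathrm{diam}(\Psi_l) > \tfrac{3\eta}{2} - \eps_N\eta
\]
bounds the $\phi_l$-sum of two atoms below $1/2 + \eps_N$, and a packing argument in the ball of radius $3\eta/2$ around any point of $\Psi_l$ shows that at most two atoms can simultaneously satisfy both constraints when $\mathrm{diam}(\Psi_l)$ is small, so the total sum is uniformly below $2/3$ for $N$ large.

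Once both estimates are secured, the two inclusions follow essentially formally. On $A_{N,\eps} \cap (\pi_n(\GRID(\Pi_N, M_N, L), l(x_N)) = \gamma_n)$ I take $C = C_0 \in \mathfrak{C}(\gamma_n, \Pi_N, L)$: the first estimate places the zeroth coordinate in $(|C|-1/2, |C|+1/2)$, and for each $l \in C$ the atom $a \in \Pi_{N,l}$ gives $\phi_l(f_{N,R+\eta}(a)) = 1$, so coordinate $l$ is at least $1 > 2/3$; the vector therefore lies in $U_C$, establishing the right-hand inclusion. Conversely, on $A_{N,\eps}$ with the vector in $U_C$ for some $C \in \mathfrak{C}(\gamma_n, \Pi_N, L-\eta) \cap \mathfrak{C}(\gamma_n, \Pi_N, L+\eta)$, the first estimate forces $|C_0| = |C|$ and the contrapositive of the second forces $C \subset C_0$, so $C_0 = C$. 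Moreover, any pair $(l,m) \in C$ with $d_N(\Pi_{N,l}, \Pi_{N,m}) \in (L-\eta, L+\eta]$ would distinguish the graphs at levels $L-\eta$ and $L+\eta$, which is prohibited; hence $\mathfrak{C}(\gamma_n,\Pi_N,L-\eta) \cap \mathfrak{C}(\gamma_n,\Pi_N,L+\eta) \subset \mathfrak{C}(\gamma_n,\Pi_N,L)$, so $C_0 \in \mathfrak{C}(\gamma_n, \Pi_N, L)$ and $\pi_n(\GRID(\Pi_N, M_N, L), l(x_N)) = \gamma_n$, establishing the left-hand inclusion.

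The main obstacle is the second estimate, and in particular the quantitative control when $k \geq 3$ atoms sit in the $\eta$-neighborhood of $\Psi_l$: the pairwise triangle inequality cleanly gives the bound $1/2$ for $k=2$ but only the loose bound $k/4$ for larger $k$, and one must supplement it by a packing argument combining the pairwise separation $> (2-\eps_N)\eta$ and the upper bound $\eta + \mathrm{diam}(\Psi_l)$ on the distance to $\Psi_l$. Should this margin prove too narrow in any ambient geometry, one can from the outset refine the partition $\Psi$ to have $\mathrm{diam}(\Psi_l) \leq \eta/3$ (or less), which is harmless elsewhere in the argument and enforces that three atoms pairwise separated by more than $2\eta$ cannot all sit within $\eta$ of a single $\Psi_l$.
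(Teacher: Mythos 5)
Your proposal contains a genuine gap in the second quantitative estimate, and it is exactly the pitfall the paper explicitly warns against. You claim that $l \notin C_0 := C(\Pi_N,M_N)$ forces $(M_N+\delta_{x_N})(\theta_{N,l}) < 2/3$, and you argue this by bounding the $\phi_l$-sum of \emph{two} contributing atoms below $1/2+\eps_N$ and then arguing that at most two atoms can contribute. But this says nothing about the contribution of a \emph{single} atom: an atom $a$ sitting in an adjacent cell $\Pi_{N,l'}$ with $l'\neq l$ (so $l\notin C_0$) can have its image $f_{N,R+\eta}(a)$ arbitrarily close to the shared boundary of $\Psi_l$ and $\Psi_{l'}$, giving $\phi_l(f_{N,R+\eta}(a))$ arbitrarily close to $1>2/3$. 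The cells form a partition, so neighbouring $\Psi_l,\Psi_{l'}$ are at distance zero, and $\phi_l$ has support in an $\eta$-neighbourhood of $\Psi_l$ that overlaps neighbouring cells; neither the triangle-inequality nor the packing argument touches the one-atom case. As a result, the observable vector can lie in $U_C$ while $C\neq C(\Pi_N,M_N)$, and your central deduction $C_0=C$ (via $|C_0|=|C|$ plus $C\subset C_0$) fails.

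This is precisely what the paper's proof sidesteps. After establishing a bijection between indices $l\in C$ and atoms (each atom within $\eta/2$ of the corresponding $\Pi_{N,l}$), the proof remarks: ``Beware that this does not imply $C(\Pi_N,M_N) = C_0$ (the two configurations might have occupied cases in the grid $\Pi_N$ that are adjacent but distinct).'' Instead of trying to identify $C$ with the actual configuration, the paper uses the $\eta$-proximity of each atom to its cell $\Pi_{N,l}$ to transfer the edge structure back and forth with an error of order $\eta$ in the distances --- this is the reason \emph{both} thresholds $L-\eta$ and $L+\eta$ appear in $\mathfrak{C}(\gamma_n,\Pi_N,L-\eta)\cap\mathfrak{C}(\gamma_n,\Pi_N,L+\eta)$: they absorb the cell mismatch. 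Your final monotonicity observation ($\mathfrak{C}(\gamma_n,\Pi_N,L-\eta)\cap\mathfrak{C}(\gamma_n,\Pi_N,L+\eta)\subset\mathfrak{C}(\gamma_n,\Pi_N,L)$) is correct but it is not what the $\eta$-slack is there for, and it does not rescue the argument once $C=C_0$ is abandoned. To repair the proof you should replace the attempt at identifying the configuration with the correspondence-and-proximity comparison of graphs that the paper carries out.
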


\begin{proof}
On $A_{N,\eps}$, suppose first that there exists a configuration $C_0$ such that $(M_N+\delta_{x_N})(\theta_N) \in U_{C_0}$, with $C_0 \in \mathfrak{C}(\gamma_n,\Pi_N,L-\eta)  \cap \mathfrak{C}(\gamma_n,\Pi_N,L+\eta)$. For any $l \in C_0$, $(M_N+\delta_{x_N})(\theta_{N,l}) \in (\frac{2}{3},1)$, so
$(f_{N,R+\eta})_\star((M_N + \delta_{x_N})_{|\Pi_N})$ has at least one atom at distance smaller than $\frac{\eta}{3}$ from $\Psi_l$. Since $f_{N,R+\eta}$ cannot modify the distances by a factor larger than $\frac{3}{2}$, this implies that $M_N + \delta_{x_N}$ has at least one atom at distance strictly smaller than $\frac{\eta}{2}$ from $\Pi_{N,l}$, and in fact there is exactly one such atom with this property: otherwise, since $\Pi_{N,l}$ has diameter smaller than $\eta$, we would have two distinct atoms at distance smaller than $2\eta$, and this is not allowed on $A_{N,\eps}$. We thus have an injection from $C_0$ to $\{\text{atoms of }(M_N+\delta_{x_N})_{|\Pi_N}\}$, and this is actually a bijection, because if there were other atoms, then one would have
$$(M_N+\delta_{x_N})(\theta_{N,0}) = (M_N+\delta_{x_N})(\Pi_N) \geq \card (C_0) +1,$$
which contradicts the assumption $(M_N + \delta_{x_N})(\theta_N)\in U_{C_0}$. So, on $A_{N,\eps}$ we have a perfect correspondence $C_0 \leftrightarrow \{\text{atoms of }(M_N+\delta_{x_N})_{|\Pi_N}\}$. Beware that this does not imply $C(\Pi_N,M_N) = C_0$ (the two configurations might have occupied cases in the grid $\Pi_N$ that are adjacent but distinct). Let $l$ and $m$ two indices in $C_0$, $a$ and $b$ the corresponding atoms, and $l'$ and $m'$ the indices in $C(\Pi_N,M_N)$ such that $a \in \Pi_{N,l'}$ and $b \in \Pi_{N,m'}$. If $l$ is connected to $m$ by $\gamma_n$, then $d_N(\Pi_l,\Pi_m) \leq L-\eta$ since $C_0 \in \mathfrak{C}(\gamma_n,\Pi_N,L-\eta)$, so $d_N(a,b) \leq L$ and $d_N(\Pi_{N,l'},\Pi_{N,m'}) \leq L$. Conversely, if $d_N(\Pi_{N,l'},\Pi_{N,m'}) \leq L$, then $d_N(\Pi_l,\Pi_m) \leq L+\eta$, and as $C_0 \in \mathfrak{C}(\gamma_n,\Pi_N,L+\eta)$, this implies that $l$ and $m$ are connected by $\gamma_n$. We conclude that the assumption made at the beginning implies that $\pi_n(\GRID(\Pi_N,M_N,L),l(x_N)) = \gamma_n$, whence the first inclusion of events.\medskip

The second inclusion is much simpler. On $A_{N,\eps}$, if $\pi_n(\GRID(\Pi_N,M_N,L),l(x_N)) = \gamma_n$, then $C(\Pi_N,M_N) \in \mathfrak{C}(\gamma_n,\Pi_N,L)$, and
\begin{equation*}
(M_N+\delta_{x_N})(\theta_N) \in U_{C(\Pi_N,M_N)} \subset \bigcup_{C \in \mathfrak{C}(\gamma_n,\Pi_N,L)} U_C.
\qedhere\end{equation*}
\end{proof}

By adapting the previous lemma to the events that appear in Inequality \eqref{eq:infernalinequality}, we obtain the following:
$$ \liminf_{N \to \infty} (\proba[\pi_n(\Gamma_N,x_N) = \gamma_n])  \geq \liminf_{N \to \infty}\left( \proba\!\left[ (M_N + \delta_{x_N})(\theta_N) \in \bigcup_{\substack{C \in \mathfrak{C}(\gamma_n,\Pi_N,L-3\eta) \\ \text{and }C \in \mathfrak{C}(\gamma_n,\Pi_N,L-\eta) \\ \text{and }C \in \mathfrak{C}(\gamma_n,\Pi_N,L+\eta)}} U_C \right] \right) - \eps .
$$
Since we assume the convergence in the random pointed Lipschitz sense, and since the $U_C$'s are open sets, by the Portmanteau theorem, the right-hand side is larger than the analogue probability involving the limiting point process $M+\delta_x$, so
$$ \liminf_{N \to \infty} (\proba[\pi_n(\Gamma_N,x_N) = \gamma_n])  \geq \liminf_{N \to \infty}\left( \proba\!\left[ (M + \delta_{x})(\phi) \in \bigcup_{\substack{C \in \mathfrak{C}(\gamma_n,\Pi_N,L-3\eta) \\ \text{and }C \in \mathfrak{C}(\gamma_n,\Pi_N,L-\eta) \\ \text{and }C \in \mathfrak{C}(\gamma_n,\Pi_N,L+\eta)}} U_C \right] \right) - \eps .
$$
The following final lemma relates the event on the right-hand side to properties of the discretised random geometric graphs on $(X,x,d,M)$:
\begin{lemma}
We place ourselves on the event $A_{\eps}$ specified before the introduction of the discretised graphs $\GRID$. If $\pi_n(\GRID(\Psi,M,L-4\eta),l(x)) $ and $ \pi_n(\GRID(\Psi,M,L+2\eta),l(x))$ are the same graph and are equal to $\gamma_n$, then $(M+\delta_x)(\phi)$ belongs to 
$$\bigcup_{\substack{C \in \mathfrak{C}(\gamma_n,\Pi_N,L-3\eta) \\ \text{and }C \in \mathfrak{C}(\gamma_n,\Pi_N,L-\eta) \\ \text{and }C \in \mathfrak{C}(\gamma_n,\Pi_N,L+\eta)}} U_C.$$
\end{lemma}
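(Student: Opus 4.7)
The witness I would take is the natural discretised configuration associated to the limiting point process, namely $C := C(\Psi,M) = \{l \in \lle 1,\ell\rre \mid (M+\delta_x)(\Psi_l) \geq 1\}$. The claim then splits into two independent verifications: \emph{(i)} the vector $(M+\delta_x)(\phi)$ lies in $U_C$; \emph{(ii)} $C$ belongs to each of $\mathfrak{C}(\gamma_n,\Pi_N,L-3\eta)$, $\mathfrak{C}(\gamma_n,\Pi_N,L-\eta)$ and $\mathfrak{C}(\gamma_n,\Pi_N,L+\eta)$.

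For \emph{(i)}, I would use the two properties of $A_\eps$ concerning $M$: the atoms of $(M+\delta_x)_{|B^X(x,R+\eta)}$ are pairwise at distance strictly greater than $2\eta$, and $(M+\delta_x)(B^X(x,R-\eta)) = (M+\delta_x)(B^X(x,R+\eta))$. Since each part $\Psi_l$ has diameter at most $\eta/2$, the first property forces $\phi_l$ to see at most one atom of $M+\delta_x$: indeed, if $a \in \Psi_l$ is an atom, then any other atom $b$ satisfies $d(b,\Psi_l) > 2\eta - \eta/2 = 3\eta/2$, so $\phi_l(b) = 0$. Hence $(M+\delta_x)(\phi_l)$ equals $1$ if $l \in C$ (with a unique atom of $M+\delta_x$ in $\Psi_l$) and $0$ otherwise; in particular $(M+\delta_x)(\phi_l) > 2/3$ for every $l \in C$. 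The second property, combined with the facts that $\phi_0 \equiv 1$ on $B^X(x,R-\eta)$ and that $\phi_0$ vanishes outside $B^X(x,R+\eta)$, yields $(M+\delta_x)(\phi_0) = \card(C)$ exactly, which lies in $(\card(C) - 1/2,\,\card(C) + 1/2)$. Thus $(M+\delta_x)(\phi) \in U_C$.

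For \emph{(ii)}, I would invoke the quasi-isometric estimate on $f_{N,R+\eta}$ established just before the present lemma: for $N$ sufficiently large, $f_{N,R+\eta}$ distorts distances in $B^{X_N}(x_N,R+\eta)$ by a multiplicative factor arbitrarily close to $1$, and in particular one may assume $|d_N(a,b) - d(f_{N,R+\eta}(a),f_{N,R+\eta}(b))| \leq \eta$ for every pair of points in this ball. Since $f_{N,R+\eta}(\Pi_{N,l}) = \Psi_l$ by construction of the grid, this implies $|d_N(\Pi_{N,l},\Pi_{N,m}) - d(\Psi_l,\Psi_m)| \leq \eta$ for every pair $(l,m)$. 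The hypothesis gives $\GRID(\Psi,M,L-4\eta) = \GRID(\Psi,M,L+2\eta) = \gamma_n$ on the $n$-neighborhood of the root; by monotonicity, every edge of $\gamma_n$ corresponds to a pair with $d(\Psi_l,\Psi_m) \leq L-4\eta$, and every non-edge to a pair with $d(\Psi_l,\Psi_m) > L+2\eta$. Combined with the distortion bound, this forces $d_N(\Pi_{N,l},\Pi_{N,m}) \leq L-3\eta$ on edges of $\gamma_n$ and $d_N(\Pi_{N,l},\Pi_{N,m}) > L+\eta$ on non-edges. Consequently, for every $L' \in \{L-3\eta,\,L-\eta,\,L+\eta\}$, connecting cells of $C$ at $d_N$-distance at most $L'$ reproduces $\gamma_n$ on the $n$-neighborhood of $l(x_N)$, i.e.\ $C \in \mathfrak{C}(\gamma_n,\Pi_N,L')$.

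The main obstacle is purely bookkeeping: one must check that the choice of constants $L - 4\eta$, $L-3\eta$, $L-\eta$, $L+\eta$, $L+2\eta$ is consistent, the additive distortion $\eta$ coming from $f_{N,R+\eta}$ being precisely absorbed in the gaps between the values appearing in the hypothesis and those appearing in the conclusion. Nothing else is needed beyond the three properties defining $A_\eps$ and the asymptotic near-isometry of $f_{N,R+\eta}$.
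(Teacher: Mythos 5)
Your proof is correct and follows essentially the same route as the paper: pick the witness configuration $C_0 = C(\Psi,M)$, show $(M+\delta_x)(\phi) \in U_{C_0}$ using the two properties of $A_\eps$ that concern $M$, and show $C_0$ lies in the three required sets $\mathfrak{C}(\gamma_n,\Pi_N,L')$ via the near-isometry $f_{N,R+\eta}$. The only cosmetic difference is that you phrase the metric comparison between $\Psi_l$ and $\Pi_{N,l}$ as an additive distortion bound $\leq \eta$, whereas the paper works with multiplicative dilation factors such as $\frac{L-2\eta}{L-3\eta}$; both are consequences of the quasi-isometry estimate stated just before the lemma and lead to the same conclusion.
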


\begin{proof}
We suppose that
$$
\pi_n(\GRID(\Psi,M,L-4\eta),l(x)) = \pi_n(\GRID(\Psi,M,L+2\eta),l(x)) = \gamma_n,
$$
and we are going to prove that $C_0=C(\Psi,M)$ belongs to $\mathfrak{C}(\gamma_n,\Pi_N,L-3\eta)$, to $\mathfrak{C}(\gamma_n,\Pi_N,L-\eta)$ and $\mathfrak{C}(\gamma_n,\Pi_N,L+\eta)$. This will imply the result, since by the same argument as in the proof of the previous lemma, $(M+\delta_x)(\phi) \in U_{C(\Psi,M)}$. Let $l$ and $m$ be two indices of $C_0$ such that $\Pi_{N,l}$ and $\Pi_{l,m}$ are at distance smaller than $L-3\eta$. Then, as $f_{N,R+\eta}$ does not modify the distances by a factor larger than $\frac{L-2\eta}{L-3\eta}$, $\Psi_l$ and $\Psi_m$ are at distance smaller than $L-2\eta$, so $l$ and $m$ are connected in $\gamma_n$. Conversely, if $\Psi_l$ and $\Psi_m$ are at distance smaller than $L-4\eta$, then since $f_{N,R+\eta}^{-1}$ does not modify the distances by a factor larger than $\frac{L-3\eta}{L-4\eta}$, $\Pi_{N,l}$ and $\Pi_{N,m}$ are at distance smaller than $L-3\eta$. We conclude that $C_0 \in \mathfrak{C}(\gamma_n,\Pi_N,L-3\eta)$, and the two other sets of configurations are treated with similar arguments.
\end{proof}

\begin{proof}[Proof of Theorem \ref{thm:convergencespacesandgraphs}] 
The previous lemma ensures that
\begin{align*}
 & \liminf_{N \to \infty} (\proba[\pi_n(\Gamma_N,x_N) = \gamma_n]) \\
& \geq  \proba[A_\eps \cap (\pi_n(\GRID(\Psi,M,L-4\eta),l(x)) = \pi_n(\GRID(\Psi,M,L+2\eta),l(x)) = \gamma_n)]- \eps. 
 \end{align*}
However, we have on $A_\eps$ the inclusion
\begin{align*}
\pi_n(\GEOM(X,x,d,M,L-4\eta),x) &\subset \pi_n (\GRID(\Pi,M,L-4\eta),l(x)) \\
&\subset \pi_n (\GRID(\Pi,M,L+2\eta),l(x))\\
&\subset \pi_n(\GEOM(X,x,d,M,L+4\eta),x),
\end{align*}
for the same reasons as in Lemma \ref{lem:discretisationgraphs}. Since the two bounds given by geometric graphs are the same on $A_\eps$ and are equal to $\pi_n(\GEOM(X,x,d,M,L),x)$, we have thus shown:
$$ \liminf_{N \to \infty} (\proba[\pi_n(\Gamma_N,x_N) = \gamma_n]) \geq \proba[\pi_n(\Gamma,x) = \gamma_n]-2\eps.$$
As this is true for any $\eps>0$, and as both sides are probability measures on $\gbullet(n)$, this proves that there is no mass of the distributions of the graphs $\pi_n(\Gamma_N,x_N)$ that escapes at infinity, and that we have in fact $\lim_{N \to \infty} (\proba[\pi_n(\Gamma_N,x_N) = \gamma_n]) = \proba[\pi_n(\Gamma,x) = \gamma_n]$. This amounts to the local Benjamini--Schramm convergence by Proposition \ref{prop:characterisation_BSconvergence}.
\end{proof}
\medskip

The limiting random graph that appears in Theorem \ref{thm:poisson_BS} is called the (rooted) \emph{Poisson Boolean model} in \cite{MR96}, and it is studied from the point of view of continuous percolation in Chapters 3-5 of \emph{loc.~cit.}, as well as in \cite[Section 9.6]{Pen03}. The most important result is the existence of a critical parameter $\lambda_c(\dim G)>0$ for the Poisson point process $P = \mathcal{P}(\lambda)$, such that the resulting random geometric graph with connection distance $1$ has no unbounded connected component almost surely if $\lambda<\lambda_c(\dim X)$, and has exactly one unbounded connected component if $\lambda> \lambda_c(\dim X)$; see \emph{e.g.} \cite[Theorem 9.19]{Pen03}. As the random geometric graphs $\GEOM(N,L_N)$ on $X$ converge locally towards the Poisson Boolean model, this implies the following result:
\begin{corollary}
Consider a random geometric graph $\GEOM(N,L_N)$ on a ssccss $X$, with as usual $L_N=(\ell/N)^{\frac{1}{\dim X}}$. There exists a critical parameter $\ell_c(\dim X)>0$ such that, if $\ell<\ell_c(\dim X)$, then
$$\lim_{n \to \infty} \left( \limsup_{N \to \infty}\,\proba[\mathrm{diam}(\text{connected component of the vertex $v_1$ in $\GEOM(N,L_N)$}) \geq n] \right)= 0.$$
\end{corollary}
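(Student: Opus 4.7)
The plan is to transfer the subcritical phase of continuum percolation from the limiting Poisson Boolean model (recalled just before the statement) back to the finite random graphs $\GEOM(N,L_N)$, using the local Benjamini--Schramm convergence of Theorem \ref{thm:poisson_BS}. First, since the vertices $v_1,\ldots,v_N$ are i.i.d.\ Haar-distributed, they are exchangeable, so the rooted graph $(\GEOM(N,L_N),v_1)$ has the same distribution as $(\Gamma_N,r_N)$, where $r_N$ is the uniformly random vertex appearing in Theorem \ref{thm:poisson_BS}. Consequently, the probability we want to bound equals $\proba[\mathrm{diam}(C(r_N))\geq n]$, where $C(r_N)$ denotes the connected component of the root in $\Gamma_N$.

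Next I would bound the diameter event by a ball-at-the-root event. By the triangle inequality for the graph metric, $\{\mathrm{diam}(C(r_N))\geq n\}\subset E_n$, where $E_n$ is the event that $C(r_N)$ contains a vertex at graph distance at least $\lceil n/2\rceil$ from the root. The indicator $\mathbf 1_{E_n}$ factors through the projection $\pi_{\lceil n/2\rceil}:\gbullet\to\gbullet(\lceil n/2\rceil)$, hence is locally constant on $\gbullet$ and thus continuous for the Benjamini--Schramm topology (it is constant on open balls of radius $1/(1+\lceil n/2\rceil)$ for $d_\bullet$). Applying Theorem \ref{thm:poisson_BS} and the Portmanteau theorem therefore gives
\[
\lim_{N\to\infty} \proba[E_n^{(N)}] \,=\, \proba\!\left[\,C_\infty(0)\text{ contains a vertex at graph distance }\geq \lceil n/2\rceil\text{ from }0\,\right],
\]
where $C_\infty(0)$ is the connected component of the origin in the Poisson Boolean model $\Gamma_\infty$ on $\R^{\dim X}$ with intensity $\lambda=\ell/\vol(X)$ and connection radius $1$.

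Finally, I would set $\ell_c(\dim X):=\lambda_c(\dim X)\,\vol(X)$, where $\lambda_c(\dim X)>0$ is the critical intensity recalled in the paragraph preceding the statement. For $\ell<\ell_c(\dim X)$ the intensity $\lambda$ is subcritical, so $C_\infty(0)$ has almost surely finitely many vertices, hence a.s.\ finite graph diameter; by $\sigma$-additivity the right-hand side of the above display tends to $0$ as $n\to\infty$. Since $\proba[\mathrm{diam}(C(v_1))\geq n]\leq \proba[E_n^{(N)}]$ for every $N$, taking $\limsup_N$ and then $\lim_n$ yields the claim. The only point requiring a little care is the continuity of $\mathbf 1_{E_n}$ on $\gbullet$, which is what upgrades the Benjamini--Schramm weak convergence into an actual limit (rather than a liminf/limsup estimate); this is immediate from the projective-limit structure of the topology on $\gbullet$. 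No ingredient beyond Theorem \ref{thm:poisson_BS} and the classical subcriticality result for the Poisson Boolean model on $\R^{\dim X}$ enters the argument.
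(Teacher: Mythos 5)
Your proof is correct and spells out precisely the argument the paper intends: the local Benjamini--Schramm convergence of Theorem~\ref{thm:poisson_BS} combined with the almost-sure absence of unbounded clusters in the continuum Boolean model below the critical intensity $\lambda_c(\dim X)$. The reduction of $\{\mathrm{diam}\geq n\}$ to the clopen event $E_n$ factoring through $\pi_{\lceil n/2\rceil}$, the use of exchangeability to replace the uniformly chosen root $r_N$ by $v_1$, and the monotone passage $n\to\infty$ are exactly the details the paper leaves implicit when it writes that the local convergence ``implies the following result.''
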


\subsection{Convergence in probability of the spectral measures}\label{subsec:poissonlimit}
By Theorem \ref{thm:bordenave}, the local convergence $U(\GEOM(N,L_N))=(\Gamma_N,r_N) \to (\Gamma_\infty,r)$ implies the weak convergence of the expected spectral measures $\mu_N=\esper[\nu_N]$ towards a probability measure $\mu$ on $\R$. If we want instead to prove the convergence in probability of the spectral measures $\nu_N$ (without taking the expectation), then taking into account Proposition \ref{prop:bordenave2}, we need to prove the following extension of our Theorem \ref{thm:poisson_BS}:

\begin{proposition}\label{prop:macroscopicindependence}
Let $X$ be a ssccss, $M_N$ the point process on $X$ obtained by taking $N$ independent points $v_1,\ldots,v_N$ according to the Haar measure, and $r_N$ and $r_N'$ two independent random vertices in the set of atoms of $M_N$. We denote as before $t_N = N^{\frac{1}{\dim X}}$ and $d_N = t_N d$, $d$ being the geodesic distance. As $N$ goes to infinity,
\begin{itemize}
     \item the pair of random pointed proper metric spaces $((X,r_N,d_N,M_N),(X,r_N',d_N,M_N))$ converges in the Lipschitz sense towards two independent copies of $(\R^{\dim X},0,d_\mathrm{Euclidean},\delta_0+\mathcal{P}(\frac{1}{\vol(X)}))$;
     \item the pair of random rooted graphs $((\Gamma_N,r_N),(\Gamma_N,r_N'))$ converges in the Benjamini--Schramm sense towards two independent copies of the random graph $(\Gamma_\infty,r)$ from Theorem \ref{thm:poisson_BS}.
 \end{itemize} 
\end{proposition}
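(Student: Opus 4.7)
The plan is to reduce the proposition to a two-root adaptation of Proposition \ref{prop:rescalingliegroup} and Theorem \ref{thm:convergencespacesandgraphs}. Fix $\eps>0$; since $r_N$ and $r_N'$ are independent and Haar-distributed on $X$, the random distance $d(r_N,r_N')$ has no atom at $0$ in the limit, so the event $E_\eps^N = \{d(r_N,r_N')\geq \eps\}$ satisfies $\proba[E_\eps^N] \geq 1-\eta(\eps)$ with $\eta(\eps)\to 0$ as $\eps\to 0$. For any fixed $R>0$ and $N$ large enough, $R(t_N)^{-1} < \eps/2$, so on $E_\eps^N$ the two balls $B^X(r_N,R(t_N)^{-1})$ and $B^X(r_N',R(t_N)^{-1})$ are disjoint, and we may restrict all the convergence arguments to this event.

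For the first point, condition on $(r_N,r_N')$ and on $E_\eps^N$. Using the transitive isometric action of $G$ on $X$, one may transport both balls onto balls around a deterministic pair of base points; the quasi-isometric maps $f_{N,R}$ of the proof of Proposition \ref{prop:rescalingliegroup} are thus available at each root simultaneously. The restrictions of $M_{N-2}$ (the $N-2$ atoms of $M_N$ distinct from $r_N,r_N'$) to the two disjoint balls follow a trinomial law: each of the $N-2$ remaining i.i.d. Haar points falls in the first ball with probability $p_1 = \vol(B_1)/\vol(X)$, in the second with probability $p_2 = \vol(B_2)/\vol(X)$, or elsewhere; both $Np_i$ tend to a finite constant. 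A joint Laplace transform computation, strictly analogous to the one in the proof of Proposition \ref{prop:rescalingliegroup} but carried out on a pair of compactly supported test functions $(\phi_1,\phi_2)$ with disjoint supports in the two balls, gives
\[
\esper\!\left[\exp(z_1 M_N(\phi_1\circ f_{N,R}^{(1)}) + z_2 M_N(\phi_2\circ f_{N,R}^{(2)}))\right]\,\longrightarrow\, \prod_{i=1}^2 \exp\!\left( \frac{1}{\vol(X)}\int_{B^\mathfrak{x}(0,R)} (\E^{z_i\phi_i(t)}-1)\DD{t} \right),
\]
so that the pair of restricted point processes converges jointly towards two independent copies of $\delta_0+\mathcal{P}(1/\vol(X))$ on $\R^{\dim X}$. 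Letting $\eps\to 0$ eliminates the conditioning, which establishes the required pointed Lipschitz convergence of the pair.

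For the second point, one adapts the whole proof of Theorem \ref{thm:convergencespacesandgraphs} by working simultaneously near both roots: pick a grid around each of $r_N$ and $r_N'$, keep track of two discrete configurations $C(\Pi_N^{(1)},M_N)$ and $C(\Pi_N^{(2)},M_N)$, and observe that the separation lemma \ref{lem:separated} and the boundary-emptiness lemma \ref{lem:emptyboundary} hold on each ball on an event of probability at least $1-\eps$, hence jointly on an event of probability at least $1-2\eps$. The Portmanteau step is then applied to the product of open sets $U_{C_1}\times U_{C_2}$ and requires precisely the joint weak convergence of the pair of observable vectors which was established above. Since the limit is a product distribution, this yields the joint Benjamini--Schramm convergence of $((\Gamma_N,r_N),(\Gamma_N,r_N'))$ towards two independent copies of $(\Gamma_\infty,r)$.

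The main obstacle is the subtle dependence, at finite $N$, between the restrictions of $M_N$ to the two disjoint balls, due to the fixed total number of points. This is exactly the content of the classical multinomial-to-Poisson limit: when several events all have probability of order $1/N$, the counts among $N$ independent trials become asymptotically independent Poisson variables. Once this is absorbed into the joint Laplace transform computation, the rest of the argument is a routine two-point variant of the material already developed in Sections \ref{subsec:lipschitz} and \ref{subsec:infernal}.
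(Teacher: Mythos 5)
Your proof is correct and follows essentially the same route as the paper: a joint Laplace-transform computation exploiting that the two roots are typically at macroscopic distance so the small balls around them are disjoint, followed by a bi-pointed version of Theorem \ref{thm:convergencespacesandgraphs}. The paper carries out the separation by directly expanding the double sum over pairs of indices and bounding the diagonal and small-distance contributions by $O(1/N)$ rather than by first conditioning on $E_\eps^N$, but this is only a stylistic difference (and note that one cannot isometrically map the pair of random roots to a \emph{fixed} pair of base points --- one needs per-root maps, as in the paper's $f_{N,R}(g) = h_{N,R}(g(r_N)^{-1})$, which is clearly what you intend).
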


\begin{proof}
In order to lighten a bit the notations, we shall prove the first item of the proposition when $X=G$ is a Lie group; the proof adapts readily to the non-group case by using the transitive action of the isometry group of $X$.
Fix $R>0$, and denote $h_{N,R} : B^{(G,d_N)}(e_G,R) \to B^\glie(0,R)$ a bijective map which is a quasi-isometry (its dilation constant goes to $1$ as $N$ goes to infinity). We then set
$f_{N,R}(g) = h_{N,R}(g\,(r_N)^{-1})$ and $f_{N,R}'(g) = h_{N,R}(g\,(r_N')^{-1})$; these maps are quasi-isometries from $B^{(G,d_N)}(r_N,R)$ and $B^{(G,d_N)}(r_N',R)$ to $B^\glie(0,R)$. We also consider two continuous and compactly supported functions $\phi$ and $\phi'$ on $B^\glie(0,R)$. We have to show that
\begin{align}
&\esper\left[\exp \left(z_1\, ((f_{N,R})_\star(M_{N})_{|B^{(G,d_N)}(r_N,R)})(\phi) + z_2\,((f_{N,R}')_\star(M_{N})_{|B^{(G,d_N)}(r_N',R)})(\phi')\right)\right] \label{eq:bilaplacetransform}\\
&\to_{N \to \infty} \exp\left(z_1\phi(0)+z_2\phi'(0)+ \frac{1}{\vol(G)} \int_{B^{\glie}(0,R)} (\E^{z_1\phi(t)} - 1) + (\E^{z_2\phi'(t)}-1) \DD{t}\right)\nonumber
\end{align}
for any complex numbers $z_1$ and $z_2$; by replacing $\phi$ and $\phi'$ by $z_1\phi$ and $z_2\phi'$, we can take them equal to $1$ in the following. The expectation in Equation \eqref{eq:bilaplacetransform} is
\begin{align}
&\frac{1}{N^2}\int_{G^N}\sum_{i,j=1}^N \exp\left(\sum_{k=1}^n \phi(h_{N,R}(v_k(v_i)^{-1})) + \phi'(h_{N,R}(v_k(v_j)^{-1})) \right) \DD{v_1}\DD{v_2}\,\cdots \DD{v_N} \nonumber\\
&= \frac{1}{N}\int_{G^N} \exp\left(\sum_{k=1}^n \phi(h_{N,R}(v_k(v_1)^{-1})) + \phi'(h_{N,R}(v_k(v_1)^{-1})) \right) \DD{v_1}\DD{v_2}\,\cdots \DD{v_N} \label{eq:bilaplacetransform2}\\
&\quad + \frac{N-1}{N}\int_{G^N} \exp\left(\sum_{k=1}^n \phi(h_{N,R}(v_k(v_1)^{-1})) + \phi'(h_{N,R}(v_k(v_2)^{-1})) \right) \DD{v_1}\DD{v_2}\,\cdots \DD{v_N}\label{eq:bilaplacetransform3}
\end{align}
by using the symmetry of the roles played by the variables $v_1,\ldots,v_N$. Here, we convene that $\phi(h_{N,R}(g))=0$ if $g$ does not belong to $B^{(G,d_N)}(e_G,R)$. 
\begin{itemize}
    \item The first term \eqref{eq:bilaplacetransform2} corresponding to the case where $r_N=r_N'$ yields a contribution which is a $O(\frac{1}{N})$, hence negligeable in the limit $N \to +\infty$. Indeed, it rewrites as
    \begin{align*}
    &\frac{\exp(\phi(0)+\phi'(0))}{N} \int_{G^{N-1}} \exp\left(\sum_{k=2}^n \phi(h_{N,R}(w_k)) + \phi'(h_{N,R}(w_k)) \right)\DD{w_2}\,\cdots \DD{w_N}\\
    &=\frac{\exp(\phi(0)+\phi'(0))}{N} \left(\int_{G} \exp(\phi(h_{N,R}(g)) + \phi'(h_{N,R}(g)))\DD{g}\right)^{N-1}\\
    &= \frac{\exp(\phi(0)+\phi'(0))}{N} \,\exp\left(\frac{1}{\vol(G)}\int_{B^\glie(0,R)} (\E^{\phi(t)+\phi'(t)}-1) \DD{t}\right)\,(1+o(1)),
    \end{align*}
    by using on the third line the same arguments as in the proof of Proposition \ref{prop:rescalingliegroup}.
    \item The second term \eqref{eq:bilaplacetransform3} is asymptotically equivalent to
    $$\exp(\phi(0)+\phi'(0))\int_{G^2} \exp(\phi(h_{N,R}(v_2(v_1)^{-1})) + \phi'(h_{N,R}(v_1(v_2)^{-1}))) \,f_N(v_1(v_2)^{-1}) \DD{v_1}\DD{v_2} $$
    where
    \begin{align*}
    f_N(v_1(v_2)^{-1}) &= \int_{G^{N-2}}\exp\left(\sum_{k=3}^n \phi(h_{N,R}(v_k(v_1)^{-1})) + \phi'(h_{N,R}(v_k(v_2)^{-1}))\right)\DD{v_3}\,\cdots \DD{v_N}\\
    &= \left(\int_G \exp(\phi(h_{N,R}(g) + \phi'(h_{N,R}(gv_1(v_2)^{-1})))\DD{g}\right)^{N-2};
    \end{align*}
    we have only removed the multiplicative factor $\frac{N-1}{N}$. Setting $v = v_1(v_2)^{-1}$, we see that the expectation in Equation \eqref{eq:bilaplacetransform} is equivalent to 
     $$\exp(\phi(0)+\phi'(0))\int_{G} \exp(\phi(h_{N,R}(v^{-1})) + \phi'(h_{N,R}(v)))  \,f_N(v) \DD{v} .$$
\end{itemize}
If $v$ does not belong to $B^{(G,d_N)}(e_G,2R)$, then the triangular inequality shows that we cannot have at the same time $g \in B^{(G,d_N)}(e_G,R)$ and $gv \in  B^{(G,d_N)}(e_G,R)$. Therefore, under the condition $d_N(e_G,v) > 2R$, we have
\begin{align*}
f_N(v) &= \left(1+\int_{B^{(G,d_N)}(e_G,R)} (\exp(\phi(h_{N,R}(g)))-1) + (\exp(\phi'(h_{N,R}(g)))-1)\DD{g} \right)^{N-2} \\
&=\exp\left(\frac{1}{\vol(G)} \int_{B^{\glie}(0,R)} (\E^{\phi(t)} - 1) + (\E^{\phi'(t)}-1) \DD{t}\right) (1+o(1))
\end{align*}
and the multiplicative factor $\exp(\phi(h_{N,R}(v^{-1})) + \phi'(h_{N,R}(v)))$ is equal to $1$ under this condition. On the other hand, the contribution of the $v$'s such that $d_N(e_G,v) \leq 2R$ is asymptotically negligeable, since we are looking at a small ball of volume $O(\frac{1}{N})$. This ends the proof of the first part. \bigskip

For the second part of the proposition, by using test functions as in the proof of Theorem \ref{thm:convergencespacesandgraphs}, one gets the following easy generalisation of this theorem. Suppose given $(X,x_1,x_2,d)$ a bi-pointed proper metric space, and $M$ a random point process on it such that 
\begin{align*}
(\R_+)^2 &\to (\mathfrak{G}_{\bullet})^2 \\
(l_1,l_2) &\mapsto (\GEOM(X,x_1,d,M,l_1),\GEOM(X,x_2,d,M,l_2))
\end{align*}
is almost surely continuous at any fixed pair $(l_1,l_2)$. Then, for any pair of positive parameters $(L_1,L_2)$, the map $(\GEOM(\cdot,L_1) \circ \pi_1,\GEOM(\cdot,L_2) \circ \pi_2)$ is continuous with respect to the Benjamini--Schramm topology at the point $(X,x_1,x_2,d,M)$, where $\pi_1$ and $\pi_2$ are the two projections on $\mathbf{PMS}_{\bullet,\star}$ of the space $\mathbf{PMS}_{\bullet\bullet,\star}$ of bi-pointed proper spaces endowed with a random point process. The second item of the proposition follows now from the first item, by using the aforementioned generalisation of Theorem \ref{thm:convergencespacesandgraphs} with the bi-pointed space $(X,x_1,x_2,d,M)$ given by two disjoint and independent copies of a Poisson point process on $\R^{d}$.
\end{proof}
\bigskip

By applying Proposition \ref{prop:bordenave2}, we finally obtain:
\begin{theorem}\label{thm:poissonlimit}
Fix a ssccss $X$ and $\ell>0$, and consider the random spectral measures $\nu_N$ of the random geometric graphs $\GEOM(N,L_N)$ on $X$, with $L_N = (\ell/N)^{\frac{1}{\dim X}}$. There exists a probability measure $\mu=\mu(\dim X,\frac{\ell}{\vol(X)})$ on $\R$ such that we have the weak convergence in probability 
$$\nu_N \rightharpoonup_{N \to \infty} \mu.$$
\end{theorem}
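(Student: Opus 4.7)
The strategy is to combine the local Benjamini--Schramm convergence $U(\Gamma_N) \to L$ obtained from Theorem \ref{thm:poisson_BS}, where $L$ denotes the law of the Poisson Boolean model $(\Gamma_\infty,r)$, with the criterion provided by Proposition \ref{prop:bordenave2}. Recall that this proposition asserts that if the bi-rooted distributions $U^2(\Gamma_N)$ converge in law to a product $L\otimes L$, then the (random) spectral measures $\nu_N$ converge in probability to the deterministic measure $\mu_L$ associated with $L$ via the spectral map. Taking this for granted, the only thing left to verify is the strengthening from one-rooted to two-rooted local convergence with asymptotic independence of the two roots.

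This strengthening is exactly the content of Proposition \ref{prop:macroscopicindependence}. By the exchangeability of the $N$ Haar-distributed points $v_1,\ldots,v_N$, picking two independent roots uniformly at random from the vertex set of $\GEOM(N,L_N)$ produces the same joint distribution as $((\Gamma_N,r_N),(\Gamma_N,r_N'))$, where $r_N$ and $r_N'$ are two independent atoms of $M_N$. The second item of Proposition \ref{prop:macroscopicindependence} then states precisely that this pair converges in the Benjamini--Schramm sense to two independent copies of $(\Gamma_\infty,r)$, so $U^2(\Gamma_N)\rightharpoonup L\otimes L$. The well-posedness of $\mu_L$ is guaranteed by Theorem \ref{thm:bordenave}: each $U(\Gamma_N)$ is a mixture of uniformly pointed finite graphs, hence unimodular, unimodularity is closed under Benjamini--Schramm limits, so $L\in\meas^1_{\mathrm{uni}}(\gbullet)$ is supported by essentially self-adjoint rooted graphs and lies in the domain of continuity of the spectral map. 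The dependence of $\mu=\mu_L$ only on $\dim X$ and $\ell/\vol(X)$ is inherited from the analogous statement for the limit $(\Gamma_\infty,r)$ in Theorem \ref{thm:poisson_BS}.

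The main (and essentially only) technical obstacle is the asymptotic independence of the two local neighborhoods, whose proof I would carry out as in Proposition \ref{prop:macroscopicindependence}: write the joint Laplace functional of the restrictions of $M_N$ to the two balls around $r_N$ and $r_N'$ as an integral over $G^N$, split it according to whether $r_N=r_N'$ or not (the diagonal case contributes $O(1/N)$ and is discarded), change variables so as to integrate over $v=v_1v_2^{-1}$, and observe that once $v$ is at macroscopic distance from $e_G$ the two test balls around $r_N$ and $r_N'$ are disjoint, so that the integrand factorises and the Campbell formula for two independent Poisson processes appears in the limit. The probability that $v$ lies in a ball of radius $2R/t_N$ is $O(1/N)$, so this exceptional event is negligible. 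All the other steps---Lipschitz convergence of the metric spaces $(X,r_N,d_N)$ to $(\R^{\dim X},0,d_{\mathrm{Euclidean}})$, regularity of the limiting Poisson point process, and the passage from random pointed Lipschitz convergence to Benjamini--Schramm convergence of the induced geometric graphs---have already been established in Proposition \ref{prop:rescalingliegroup} and Theorem \ref{thm:convergencespacesandgraphs}, and are applied here verbatim to the bi-pointed setting.
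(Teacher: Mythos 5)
Your proposal is correct and takes essentially the same route as the paper: the theorem is deduced by applying Proposition \ref{prop:bordenave2} to the bi-rooted Benjamini--Schramm convergence $U^2(\Gamma_N)\rightharpoonup L\otimes L$ established in Proposition \ref{prop:macroscopicindependence}, with Theorem \ref{thm:bordenave} supplying the unimodularity and essential self-adjointness needed for the spectral map to be well defined and continuous. Your outline of the proof of asymptotic independence (discard the diagonal $r_N=r_N'$ as $O(1/N)$, change variables to $v=v_1v_2^{-1}$, factorise when $v$ is at macroscopic distance) matches the paper's argument for that proposition as well.
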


To close this section, let us propose a slight improvement of this convergence result, which does not seem to be implied by the results from \cite{BL10,BLS11,Bor16} that we presented in Section \ref{subsec:benjaminischramm}. 
\begin{proposition}\label{prop:superpoissonlimit}
In the same setting as Theorem \ref{thm:poissonlimit}, the measure $\mu$ has moments of all order, and it is determined by its moments. We have for any $s \geq 1$
$$M_{s,N} = \int_{\R} x^s\,\nu_N(\!\DD{x}) \to_{N\to \infty} M_s = \int_{\R} x^s\,\mu(\!\DD{x}),$$
where the convergence occurs in $\leb^2$ (and therefore also in probability).
\end{proposition}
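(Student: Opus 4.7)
\emph{Combinatorial setup.} We have $M_{s,N}=\int x^s\,\nu_N(\!\DD{x})=\frac{1}{N}\tr(A_N^s)=\frac{1}{N}\sum_{v} C_s^{(N)}(v)$, with $C_s^{(N)}(v):=(A_N^s)_{vv}$ the number of closed walks of length $s$ in $\GEOM(N,L_N)$ based at $v$; this functional factors through the rooted $s$-neighborhood and is therefore well-defined on $\gbullet(s)$. By the exchangeability of the $N$ vertices,
$$\esper[M_{s,N}]=\esper[C_s^{(N)}(v_1)],\qquad \esper[M_{s,N}^{2}]=\tfrac{1}{N}\,\esper[(C_s^{(N)}(v_1))^{2}]+\tfrac{N-1}{N}\,\esper[C_s^{(N)}(v_1)\,C_s^{(N)}(v_2)].$$

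\emph{Uniform moment bound.} I would first show that $\sup_N \esper[(C_s^{(N)}(v_1))^p]<+\infty$ for all $s,p\geq 1$. The $p$-th moment expands as a sum over $p$-tuples of closed length-$s$ walks starting at $v_1$, in which one groups terms by the isomorphism type of the rooted shape traced --- a connected finite rooted graph with some $r$ distinct vertices and some multiset of edges. Each contribution is bounded by
$$C\cdot N^{r-1}\cdot (p_{L_N})^{r-1}\;\leq\; C\cdot \ell^{\,r-1},$$
uniformly in $N$, where $p_{L_N}=\vol(B(e_G,L_N))/\vol(X)=O(\ell/N)$. The key estimate is that $r-1$ edge-constraints (a spanning tree of the shape) already localize the $r-1$ new vertex positions, each to a Haar-volume of order $\ell/N$; the additional non-tree edges only reduce the probability. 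Since the number of shapes is finite for fixed $s,p$, the claim follows.

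\emph{Convergence of moments and variance, and identification of the limit.} The case $p=2$ yields uniform integrability of $C_s^{(N)}(v_1)$ and, by Cauchy--Schwarz, of $C_s^{(N)}(v_1)\,C_s^{(N)}(v_2)$. Combining with the local limits of Theorem \ref{thm:poisson_BS} and Proposition \ref{prop:macroscopicindependence}, applied to the continuous $\gbullet(s)$-functional $C_s$, one obtains
$$\esper[M_{s,N}]\longrightarrow \esper[C_s^{(\infty)}(r)],\qquad \esper[M_{s,N}^2]\longrightarrow \left(\esper[C_s^{(\infty)}(r)]\right)^2,$$
whence $\var(M_{s,N})\to 0$, where $C_s^{(\infty)}(r)$ is the number of closed walks of length $s$ at the root in the limiting Poisson Boolean graph $\Gamma_\infty$. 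To identify the limit as $M_s:=\int x^s\,\mu(\!\DD{x})$, note that $(\Gamma_\infty,r)$ is unimodular as a local limit of the uniformly pointed graphs $U(\GEOM(N,L_N))$; by Theorem \ref{thm:bordenave} the adjacency operator $A_\infty$ is a.s.\ essentially self-adjoint and $\mu=\esper[\mu_{(\Gamma_\infty,r)}]$. The uniform moment bound forces $\esper[C_{2k}^{(\infty)}(r)]<+\infty$, hence $\|A_\infty^k 1_r\|_{\ell^2}^{2}=C_{2k}^{(\infty)}(r)<+\infty$ almost surely, so $1_r\in \mathrm{dom}(A_\infty^s)$ and
$$\int_{\R} x^s\,\mu_{(\Gamma_\infty,r)}(\!\DD{x})=\scal{1_r}{A_\infty^s 1_r}=C_s^{(\infty)}(r)\quad\text{a.s.}$$
Taking expectation gives $M_s=\esper[C_s^{(\infty)}(r)]<+\infty$, matching the limit of $\esper[M_{s,N}]$; together with $\var(M_{s,N})\to 0$, this yields $M_{s,N}\to M_s$ in $\leb^2$.

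\emph{Determinacy.} I would finally verify Carleman's criterion $\sum_{k\geq 1} M_{2k}^{-1/(2k)}=+\infty$. Refining the shape enumeration above, the number of length-$s$ walks on $r$ labeled vertices is at most $r^{s-1}\leq s^{s-1}$, and summing the shape contributions gives $M_s\leq (C(1+\ell))^{s}\,s^{s}$ for some constant $C=C(X,\ell)$. Hence $M_{2k}^{1/(2k)}=O(k)$, $M_{2k}^{-1/(2k)}=\Omega(1/k)$, and the series diverges harmonically, so $\mu$ is determined by its moments. The main obstacle is the uniform shape-by-shape estimate in the second step: one must exploit the $G$-invariance of $d$ (which turns each edge-probability integral into a convolution on $\tlie$, as already used in Proposition \ref{prop:rescalingliegroup}) together with the uniformly Euclidean behaviour of $X$ at the scale $L_N\to 0$. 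Once this uniform bound is in place, the remaining steps follow routinely from the local-limit results already established.
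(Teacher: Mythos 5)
Your proposal is correct and follows essentially the same strategy as the paper's proof: a trace-moment expansion over closed walks, a uniform-in-$N$ moment bound obtained by localizing a spanning tree of the traced shape and using the positive-curvature volume comparison $\vol_X(B(\cdot,L_N))\lesssim L_N^{\dim X}$, convergence of $\esper[M_{s,N}]$ and of the variance via the local limit (Theorem \ref{thm:poisson_BS}) and the bi-rooted independence (Proposition \ref{prop:macroscopicindependence}) together with domination, and Carleman's criterion from the $O(s^s)$ growth of the moments. The only noteworthy (and welcome) refinement over the paper's write-up is that you make the identification $M_s=\esper[\scal{1_r}{A_\infty^s 1_r}]$ explicit through the spectral theorem and local finiteness of $\Gamma_\infty$, whereas the paper uses a slightly different bookkeeping (a direct inductive bound $\widetilde M_{s,N}\leq\prod_{t=0}^{s-2}(t+c)$ for the moments, and a vertex-count Poisson-tail domination for the convergence of the series over $\gbullet(s)$).
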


\begin{proof}
We start by examining the moments $\overline{M}_{s,N}=\int_\R x^s\,\mu_N(\!\DD{x})$ of the expected spectral measures $\mu_N$. By design, the adjacency matrices of our graphs have zeroes on their diagonal, so $M_{1,N}=\overline{M}_{1,N}=0$. Suppose now that $s \geq 2$. We can rewrite $\overline{M}_{s,N}$ as follows:
\begin{align}
\overline{M}_{s,N} &= \frac{1}{N} \,\esper\!\left[\tr\!\left((A_{\GEOM(N,L_N)})^s\right)\right] \nonumber \\
&= \frac{1}{N}\, \esper\!\left[\sum_{\substack{1\leq i_1,\ldots,i_s \leq N \\ \text{no consecutive indices are equal}}} 1_{(i_1,\ldots,i_s) \text{ is a cycle in }\GEOM(N,L_N)}\right],\label{eq:traceexpansionmoments}
\end{align}
and we have the following inequalities:
\begin{align*}
&\sum_{\substack{1\leq i_1,\ldots,i_s \leq N \\ \text{no consecutive indices are equal}}} 1_{(i_1,\ldots,i_s) \text{ is a cycle in }\GEOM(N,L_N)} \\
&\leq \sum_{1\leq i_1\neq i_2 \neq \cdots \neq i_s \leq N} 1_{i_1\leftrightarrow i_2} 1_{i_2\leftrightarrow i_3} \cdots 1_{i_{s-1}\leftrightarrow i_s}\\
&\leq (s-2)\sum_{1\leq i_1 \neq i_2 \neq \cdots \neq i_{s-1} \leq N} 1_{i_1\leftrightarrow i_2} 1_{i_2\leftrightarrow i_3} \cdots 1_{i_{s-2}\leftrightarrow i_{s-1}} + \sum_{\substack{1\leq i_1 \neq i_2 \neq \cdots \neq i_{s-1} \leq N \\ i_s \notin \{i_1,\ldots,i_{s-1}\}}} 1_{i_1\leftrightarrow i_2} 1_{i_2\leftrightarrow i_3} \cdots 1_{i_{s-1}\leftrightarrow i_{s}}.
\end{align*}
Therefore, if $\widetilde{M}_{s,N} = \frac{1}{N}\,\esper[\sum_{1\leq i_1\neq i_2 \neq \cdots \neq i_s \leq N} 1_{i_1\leftrightarrow i_2}  \cdots 1_{i_{s-1}\leftrightarrow i_s}]$, then $\overline{M}_{s,N} \leq \widetilde{M}_{s,N}$, and by using the independence of the vectors $v_i$ and the inequality $\vol_X (B(v_i,L)) \leq \vol_{\R^{\dim X}} (B(0,L))$ which holds since a sscc symmetric space has (constant) positive curvature, we obtain:
$$\widetilde{M}_{s,N} \leq \left((s-2) + \frac{c(\dim X)\,\ell}{\vol(X)}\right)\widetilde{M}_{s-1,N} .$$
By induction and since $\widetilde{M}_{2,N} \leq \frac{c(\dim X)\,\ell}{\vol(X)}$, we conclude that for any $s \geq 2$,
\begin{equation*}
\overline{M}_{s,N} \leq \widetilde{M}_{s,N} \leq \prod_{t=0}^{s-2} \left( t + \frac{c(\dim X)\,\ell}{\vol(X)} \right).
\end{equation*}
By Fatou's lemma, since $\mu_N \rightharpoonup \mu$, the same upper bound holds for the even moments of $\mu$:
$$M_{2s} \leq \liminf_{N \to \infty} \overline{M}_{2s,N} \leq \prod_{t=0}^{2s-2} \left( t + \frac{c(\dim X)\,\ell}{\vol(X)} \right) = O((2s)^{2s}),$$
where the implied constant in the $O(\cdot)$ only depends on the space $X$ and $\ell$. By Carleman's criterion (see for instance \cite[Chapter 30]{Bil95}), $\mu$ is therefore determined by its moments.\bigskip

Fix $s\geq 2$. We want to prove that $\overline{M}_{s,N} = \esper[M_{s,N}] \to M_s$ and $\esper[(M_{s,N})^2] \to (M_s)^2$; this is equivalent to the convergence in $\leb^2$. By Equation \eqref{eq:traceexpansionmoments},
\begin{align*}
\overline{M}_{s,N} &= \esper[\text{number of cycles of length $s$ starting from a random root $r_N \in \{v_1,\ldots,v_N\}$}] \\
&=\sum_{\gamma_s \in \gbullet(s)} \proba[\pi_s(\Gamma_N,r_N) = \gamma_s]\,(\text{number of $s$-cycles in $\gamma_s$ that starts and ends at the root});\\
 M_s &= \sum_{\gamma_s \in \gbullet(s)} \proba[\pi_s(\Gamma,r_\infty) = \gamma_s]\,(\text{number of $s$-cycles in $\gamma_s$ that starts and ends at the root}),
\end{align*}
where $(\Gamma_\infty,r)$ is the rooted Poisson Boolean model appearing as the limit in Theorem \ref{thm:poisson_BS}. The Benjamini--Schramm convergence ensures that each term of the series for $\overline{M}_{s,N}$ converges towards the corresponding term for $M_s$, therefore, to prove that $\overline{M}_{s,N} \to M_s$, we only need a uniform domination on the terms of these series $\overline{M}_{s,N}$. If $\gamma_s$ is an element of $\gbullet(s)$ with $k+1$ vertices, then the number of rooted $s$-cycles in $\gamma_s$ is smaller than $k^{s-1}$. On the other hand, the probability that $\pi_s(\Gamma_N,r_N)$ has $k+1$ vertices is smaller than the probability that at least $k$ vertices $v_i$ fall in $B^{X}(o,s\,L_N)$, that is
\begin{align*}
\proba\!\left[\mathcal{B}\!\left(N-1,\frac{\vol(B^X(o,s\,L_N))}{\vol(X)} \right)\geq k\right] &\leq \proba\!\left[\mathcal{B}\!\left(N,\frac{\ell \,s^{\dim X}\,c(\dim X)}{N\,\vol(X)}\right) \geq k\right] \\
&\leq \sum_{l=k}^\infty \frac{1}{l!}\left(\frac{\ell\,s^{\dim X}\,c(\dim X)}{\vol (X)}\right)^l.
\end{align*}
Therefore, if $t = \frac{\ell\,s^{\dim X}\,c(\dim X)}{\vol (X)}$, then 
\begin{align*}
&\sum_{\substack{\gamma_s \in \gbullet(s)\\ |\gamma_s|=k+1}} \proba[\pi_s(\Gamma_N,r_N) = \gamma_s]\,(\text{number of $s$-cycles in $\gamma_s$ that starts and ends at the root}) \\
&\leq k^{s-1}\,\sum_{l=k}^\infty \frac{t^l}{l!} \leq \frac{k^{s-1}\,t^k\,\E^t}{k!},
\end{align*}
and these bounds are summable with $k$. This shows the desired domination of the terms of the series $\overline{M}_{s,N}$. The proof of the convergence $\esper[(M_{s,N})^2] \to (M_s)^2$ follows the same lines, using this time the asymptotic independence from Proposition \ref{prop:macroscopicindependence} and the identity
$$
\esper[(M_{s,N})^2] = \sum_{\gamma_s,\gamma_{s}' \in \gbullet(s)} \proba[\pi_s(\Gamma_N,r_N) = \gamma_s \text{ and } \pi_s(\Gamma_N,r_N') = \gamma_s']\,C(s,\gamma_s)\,C(s,\gamma_s'),
$$
where $C(s,\gamma)$ is the number of $s$-cycles starting and ending at the root in a finite rooted graph $\gamma$. Thus, $M_{s,N} \to_{\leb^2,N\to \infty} M_s$.
\end{proof}
\begin{remark}
Standard arguments from the theory of convergence of measures show that the convergence in probability of all the moments $M_{s,N} \to M_s$ is stronger than the convergence in probability $\nu_N \rightharpoonup \mu$. In particular, the proof above can be used to bypass the general arguments from Section \ref{subsec:benjaminischramm} that connect the local convergence of graphs to the weak convergence of their spectral measures.
\end{remark}
\bigskip

\section{From Poisson geometric graphs to graph functionals of irreducible characters}\label{sec:ART}
In this last section before the appendices, we focus on the case of a sscc Lie group $G$, and we investigate the connections between:
\begin{itemize}
     \item its representation theory and the formulas obtained in Section \ref{sec:gaussian} for the asymptotics of the Gaussian regime;
     \item the limiting measure $\mu=\mu(\dim G,\frac{\ell}{\vol(G)})$ exhibited in Section \ref{sec:poisson} and that drives the asymptotics of the Poissonian regime.
 \end{itemize} 
An important objective is to obtain more information on the limiting measure $\mu$, thereby answering the following questions:
\begin{itemize}
    \item Is $\mu$ compactly supported? What is the growth rate of the moments $M_s$ of the measure $\mu$?
    \item Does the measure $\mu$ admit atoms, or is it absolutely continuous with respect to the Lebesgue measure?
\end{itemize}
In the proof of Proposition \ref{prop:superpoissonlimit}, one can try to make Equation \eqref{eq:traceexpansionmoments} more precise, and to gather the cycles that one needs to count according to the identities of indices $i_1,\ldots,i_s$ that might occur. This theory leads to a combinatorial \emph{circuit expansion} of the moments $\overline{M}_{s,N}$ and of their limit $M_s$, which we develop in Section \ref{subsec:circuit}. This combinatorial expansion of the moments involves directed graphs endowed with a distinguished traversal, and these circuits can be reduced to yield non-directed graphs possibly with loops and with labels on their edges. For instance, we shall prove that we have an expansion
$$M_5=e^{(\ell)}_{\begin{tikzpicture}[scale=0.5]
\draw (0,0.5) circle (0.5);
\fill (0,0) circle (1mm);
\fill [white] (0,1) circle (2.7mm);
\draw (0,1) circle (2.7mm);
\draw (0,1) node {\tiny $5$};
\end{tikzpicture}} + 5\,e^{(\ell)}_{\begin{tikzpicture}[scale=0.5]
\draw (0,0.5) circle (0.5);
\draw (0,-1) circle (0.5);
\fill (0,0) circle (1mm);
\fill (0,-0.5) circle (1mm);
\fill [white] (0,-1.5) circle (2.7mm);
\fill [white] (0,1) circle (2.7mm);
\draw (0,1) circle (2.7mm);
\draw (0,-1.5) circle (2.7mm);
\draw (0,1) node {\tiny $3$};
\draw (0,-1.5) node {\tiny $2$};
\end{tikzpicture}} + 5\,e^{(\ell)}_{\begin{tikzpicture}[scale=0.5]
\draw (0,0.5) circle (0.5);
\fill (0,0) circle (1mm);
\fill [white] (0,1) circle (2.7mm);
\draw (0,1) circle (2.7mm);
\draw (0,1) node {\tiny $3$};
\end{tikzpicture}},$$
each term of this expansion being a monomial in the parameter $\ell$, and corresponding to the limit of a certain observable of the random geometric graph $\GEOM(N,L_N)$. As explained in the introduction, the actual computation of each of these terms should be performed by using the flat model $\Tor^{\dim G}$, where representation theory is encoded by classical Fourier series. However, if one stays with the non-flat sscc Lie group $G$, then the same computations shed light on several important phenomena from asymptotic representation theory, and this approach allows one to understand clearly the degeneration from the Gaussian to the Poissonian regime (see Sections \ref{subsec:onepoint} and \ref{subsec:twopoints}). Even more importantly, it leads to Conjecture \ref{conj:graphfunctional}, which we detail in Section \ref{subsec:graphfunctionals}. Although we do not see yet how to solve it, we consider it to be one of the main result of our study, which is why we devoted this section to its presentation.\medskip

Before presenting the combinatorial expansion of the moments $M_s$, we should warn the reader of two things:
\begin{enumerate}
    \item We do not plan to compute here explicitly all the moments $M_s$ (or at least to obtain some precise upper bounds on them). The arguments of Section \ref{subsec:circuit} and the replacement of the space $G$ by its flat model mostly reduce these calculations to a combinatorial problem of counting graphs with certain weights, but even with these reductions, these enumerations are by no means easy to perform. We hope to address this problem in a forthcoming work.
    \item Secondly, the phenomena from asymptotic representation theory in Sections \ref{subsec:onepoint}-\ref{subsec:graphfunctionals} quickly rely on certain algebraic arguments which are more advanced than before, namely, the theory of crystals and string polytopes of Lusztig--Kashiwara and Berenstein--Zelevinsky. The precise form of our Conjecture \ref{conj:graphfunctional} also relies on this theory. In order to ease the reading of this section, we shall try to present our arguments without insisting too much on these algebraic prerequisites; they can be found in an appendix at the end of this article (Section \ref{sec:crystal}), which is a short survey of some results regarding the crystals of representations of Lie groups.
\end{enumerate}\medskip

\subsection{Circuit expansion of the expected moments}\label{subsec:circuit} Until the end of Section \ref{sec:ART}, $G$ is a connected compact Lie group endowed with a bi-invariant Riemannian structure, and starting from Subsection \ref{subsec:onepoint} we shall assume it to be simple and simply connected. We consider the Poissonian geometric graph on $G$ with parameters $N$ and $L_N$ given by Equation \eqref{eq:poissonnorm}; in particular, the parameter $\ell$ is fixed from now on, and most of the quantities manipulated hereafter implicitly depend on it (for instance, the expectations $E_{H,T,N}$ defined below). In this paragraph, we give a combinatorial expansion of $\overline{M}_{s,N}=\esper[\nu_N(x^s)] = \mu_N(x^s)$ and of $M_s = \lim_{N \to \infty} M_{s,N}$ in terms of \emph{circuits}; this is the first step towards the calculation of the moments $M_s$ of the limiting measure $\mu$. We assume $s \geq 2$ since $\overline{M}_{1,N}=M_{1}=0$. If $h_N(x,y) = 1_{d(x,y)\leq L_N}$, then
$$
\overline{M}_{s,N} = \frac{1}{N} \sum_{i_1,i_2,\ldots,i_s} \esper\left[h_N(v_{i_1},v_{i_2})\,h_N(v_{i_2},v_{i_3}) \cdots h_N(v_{i_s},v_{i_1})\right],
$$
where the $v_i$'s are independent Haar distributed random variables on $G$, and the sums run over indices $i_j \in \lle 1,N\rre$ such that two consecutive indices $i_j$ and $i_{j+1}$ are never equal (by convention, the index following $i_s$ is $i_1$).
Now, an expectation $E_{i_1,i_2,\ldots,i_s} = \esper[h_N(v_{i_1},v_{i_2})\,h_N(v_{i_2},v_{i_3}) \cdots h_N(v_{i_s},v_{i_1})]$ only depends on the possible equalities of indices. For instance, when computing $\overline{M}_{4,N}$, we have:
\begin{align*}
\overline{M}_{4,N} &= (N-1)(N-2)(N-3)\, \esper\!\left[h_N(v_1,v_2)\,h_N(v_2,v_3)\,h_N(v_3,v_4)\,h_N(v_4,v_1)\right] \\
&\quad + 2\,(N-1)(N-2)\, \esper\!\left[h_N(v_1,v_2)\,h_N(v_2,v_1)\,h_N(v_1,v_3)\,h_N(v_3,v_1)\right] \\
&\quad + (N-1)\, \esper\!\left[h_N(v_1,v_2)\,h_N(v_2,v_1)\,h_N(v_1,v_2)\,h_N(v_2,v_1)\right].
\end{align*}
The first term corresponds to the case where all the indices $i_1,i_2,i_3,i_4$ are distinct; the second term corresponds to the identities $i_1=i_3$ or $i_2=i_4$; and the last term is when $i_1=i_3$ and $i_2=i_4$ simultaneously. We associate to these four cases the \emph{circuits} of Figure \ref{fig:fourcircuit}.

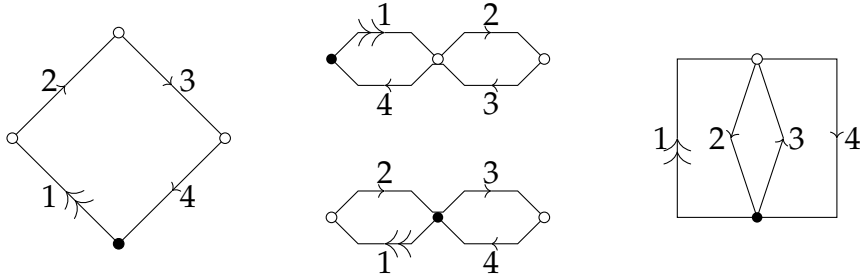
\begin{figure}[ht]
\begin{center}      
\begin{tikzpicture}[scale=0.7]
\draw [-{>[scale=2]>[scale=2]}] (0,0) -- (-1,1);
\draw (-1.3,0.9) node {$1$};
\draw (-1.3,3.1) node {$2$};
\draw (1.3,3.1) node {$3$};
\draw (1.3,0.9) node {$4$};
\draw (-1,1) -- (-2,2); 
\draw [->](-2,2) -- (-1,3);
\draw [->](-1,3) -- (0,4) -- (1,3);
\draw [->] (1,3) -- (2,2) -- (1,1);
\draw (1,1) -- (0,0);
\foreach \x in {(0,0), (-2,2), (0,4), (2,2)}
{\fill [white] \x circle (1mm); \draw \x circle (1mm);}
\fill (0,0) circle (1mm);
\begin{scope}[shift={(6,-0.5)}]
\draw [-{>[scale=2]>[scale=2]}] (0,1) -- (-0.5,0.5) -- (-1,0.5);
\draw [->] (-1,0.5) -- (-1.5,0.5) -- (-2,1) -- (-1.5,1.5) -- (-1,1.5);
\draw [->] (-1,1.5) -- (-0.5,1.5) -- (-0.1,1.1) -- (0.1,1.1) -- (0.5,1.5) -- (1,1.5);
\draw [->] (1,1.5) -- (1.5,1.5) -- (2,1) -- (1.5,0.5) -- (1,0.5);
\draw (1,0.5) -- (0.5,0.5) -- (0,1);
\foreach \x in {(-2,1),(2,1)}
{\fill [white] \x circle (1mm); \draw \x circle (1mm);}
\foreach \x in {(-2,4),(0,1)}
\fill \x circle (1mm);
\draw (-1,0.15) node {$1$};
\draw (-1,1.85) node {$2$};
\draw (1,1.85) node {$3$};
\draw (1,0.15) node {$4$};
\end{scope}
\begin{scope}[shift={(6,0.5)}]
\draw [-{>[scale=2]>[scale=2]}] (-2,3) -- (-1.5,3.5) -- (-1,3.5);
\draw (-1,3.5) -- (-0.5,3.5) -- (0,3);
\draw [->] (0,3) -- (0.5,3.5) -- (1,3.5);
\draw [->] (1,3.5) -- (1.5,3.5) -- (2,3) -- (1.5,2.5) -- (1,2.5);
\draw [->] (1,2.5) -- (0.5,2.5) -- (0.1,2.9) -- (-0.1,2.9) -- (-0.5,2.5) -- (-1,2.5);
\draw (-1,2.5) -- (-1.5,2.5) -- (-2,3);
\foreach \x in {(0,3),(2,3)}
{\fill [white] \x circle (1mm); \draw \x circle (1mm);}
\draw (-1,2.15) node {$4$};
\draw (-1,3.85) node {$1$};
\draw (1,3.85) node {$2$};
\draw (1,2.15) node {$3$};
\end{scope}
\begin{scope}[shift={(12,-1)},scale=1.5]
\draw [-{>[scale=2]>[scale=2]}] (0,1) -- (-1,1) -- (-1,2);
\draw [->] (-1,2) -- (-1,3) -- (0,3) -- (-0.33,2);
\draw [->] (-0.33,2) -- (0,1) -- (0.33,2);
\draw [->] (0.33,2) -- (0,3) -- (1,3) -- (1,2);
\draw (1,2) -- (1,1) -- (0,1);
\fill (0,1) circle (0.66mm);
\fill [white] (0,3) circle (0.66mm); \draw (0,3) circle (0.66mm);
\draw (-1.2,2) node {$1$};
\draw (-0.5,2) node {$2$};
\draw (0.5,2) node {$3$};
\draw (1.2,2) node {$4$};
\end{scope}
\end{tikzpicture}
\caption{The circuits for the calculation of $\overline{M}_{4,N}$.\label{fig:fourcircuit}}
\end{center}
\end{figure}
\vspace*{-3mm}

By circuit, we mean a directed graph $H$, possibly with multiple edges but without loops, endowed with a distinguished traversal $T$ that goes through each directed edge exactly once, and that is cyclic (the starting point is the same as the end point of the traversal). We identify two circuits $(H_1,T_1)$ and $(H_2,T_2)$ if there exists a graph isomorphism $\psi : C_1 \to C_2$ that is compatible with the traversals, that is $\psi(T_1)=T_2$. Given a circuit $(H,T)$ with $s$ edges and $k \leq s$ vertices, we associate to it the expectation of a function of $k$ independent points $v_1,\ldots,v_k$ on $G$:
$$E_{H,T,N} = \esper\!\left[\prod_{(i,j) \in T} h_N(v_i,v_j)\right].$$
Notice that $E_{H,T,N}$ only depends on $H$, and not on the particular traversal $T$, because each directed edge of $H$ appears exactly once in $T$.

\begin{lemma}\label{lem:circuit1}
For any $s \geq 0$, $\overline{M}_{s,N} = \sum_{(H,T)} (N-1)\cdots (N-|H|+1) \,E_{H,T,N}$, where the sum runs over the finite set of circuits with $s$ edges, and $k=|H|$ denotes the number of vertices in $H$.
\end{lemma}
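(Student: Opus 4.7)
The plan is to unpack
$$\overline{M}_{s,N} = \frac{1}{N}\,\esper\!\left[\tr\!\left((A_{\GEOM(N,L_N)})^s\right)\right] = \frac{1}{N}\sum_{(i_1,\ldots,i_s)} \esper[h_N(v_{i_1},v_{i_2}) \cdots h_N(v_{i_s},v_{i_1})],$$
the sum running over tuples with no two consecutive indices equal (which is what ensures that the zeroes on the diagonal of $A$ play no role). I would then group these tuples by the combinatorial pattern of their coincidences: to each tuple one associates the directed multigraph $H$ with vertex set $V = \{i_1,\ldots,i_s\}\subset \lle 1,N\rre$, whose directed edges with multiplicity are the $s$ pairs $(i_j,i_{j+1})$ for $j=1,\ldots,s$ (indices modulo $s$), together with the cyclic traversal $T$ listing these edges in the order prescribed by the tuple, and based at the vertex $i_1$. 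The absence of consecutive equalities translates exactly into the absence of loops in $H$, so $(H,T)$ is a circuit with $s$ edges and some number $k\leq s$ of vertices. Because the $v_i$'s are i.i.d.\ under the Haar measure, the expectation associated to a tuple depends only on the isomorphism class of $(H,T)$, and thus equals $E_{H,T,N}$.

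The key combinatorial step is then to count how many tuples $(i_1,\ldots,i_s)$ fall into a given circuit class $(H,T)$. Any such tuple is determined by an injective labeling $\sigma\colon V(H)\hookrightarrow \lle 1,N\rre$, via $i_j=\sigma(T_{j-1})$ where $T_0,T_1,\ldots,T_s=T_0$ is the sequence of vertices visited by the traversal. Conversely, every injective labeling yields a valid tuple with the prescribed circuit class, and \emph{distinct} labelings yield \emph{distinct} tuples: the only graph automorphism of $H$ preserving the traversal $T$ and its starting vertex is the identity, since every vertex of $V(H)$ appears in $T$ and its position in $T$ uniquely pins down its image. Hence the number of tuples associated to $(H,T)$ is exactly $N(N-1)\cdots(N-k+1)$.

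Combining these two steps yields
$$\overline{M}_{s,N} = \frac{1}{N}\sum_{(H,T)} N(N-1)\cdots(N-k+1)\,E_{H,T,N} = \sum_{(H,T)} (N-1)\cdots(N-k+1)\,E_{H,T,N},$$
the sum running over the (finite) set of isomorphism classes of circuits with $s$ edges — finiteness being automatic, since such a circuit has at most $s$ vertices and exactly $s$ directed edges, so only finitely many combinatorial types can occur.

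The argument is genuinely elementary, and I do not expect a serious obstacle. The only point deserving care is the rigidity statement — that a circuit has no non-trivial traversal-preserving automorphism — but this is immediate from the fact that the traversal visits every vertex in a prescribed order, so specifying the starting vertex already fixes the automorphism. Once this is recorded, the lemma is simply a rearrangement of the trace formula according to circuit classes.
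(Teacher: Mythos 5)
Your proof is correct and follows essentially the same route as the paper's: both group the tuples $(i_1,\ldots,i_s)$ by the circuit they generate and count $N(N-1)\cdots(N-k+1)$ tuples per circuit with $k$ vertices, the only cosmetic difference being that the paper routes this count through the intermediate object of ``sets of identities'' (i.e., set partitions of $\lle 1,s\rre$) whereas you work directly with injective labelings $\sigma\colon V(H)\hookrightarrow\lle 1,N\rre$. Your rigidity remark — that a traversal-preserving automorphism of $(H,T)$ must fix every visited vertex, hence is the identity — is a correct and welcome explicit justification of a step the paper leaves tacit.
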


\begin{proof}
We gather the terms of the sum $\overline{M}_{s,N} = \sum_{i_1,i_2,\ldots,i_s} * $\, according to the identities between the indices $i_1,\ldots,i_s$. Given a set of identities $I=\{i_j=i_k\}$, one can associate to it a circuit with $s$ edges by starting from the $s$-gon
\begin{center}
\begin{tikzpicture}[scale=1.8]
\draw (0:1) -- (30:1) -- (60:1) -- (90:1) -- (120:1) -- (150:1) -- (180:1) -- (210:1) -- (240:1) -- (270:1) -- (300:1) -- (330:1) -- (0:1);
\draw [-{>[scale=2]>[scale=2]}] (270:1) -- (240:1);
\foreach \x in {(0:1),(30:1),(60:1),(90:1),(120:1),(150:1),(180:1),(210:1),(240:1),(270:1),(300:1),(330:1)}
{ \fill [white] \x circle (0.5mm) ; \draw \x circle (0.5mm) ;}
\fill (270:1) circle (0.5mm);
\draw (270:1.18) node {$i_1$};
\draw (240:1.18) node {$i_2$};
\draw (210:1.18) node {$i_3$};
\draw (180:1.18) node {$i_4$};
\draw (150:1.18) node {$i_5$};
\draw (120:1.18) node {$i_6$};
\draw (90:1.18) node {$i_7$};
\draw (60:1.18) node {$i_8$};
\draw (30:1.18) node {$i_9$};
\draw (0:1.18) node {$i_{10}$};
\draw (330:1.18) node {$i_{11}$};
\draw (300:1.18) node {$i_{12}$};
\end{tikzpicture}
\end{center}
and by identifying the vertices $i_j$ and $i_k$ if the identity $i_j=i_k$ belong to the set $I$. For instance, the identities $i_2=i_5=i_7$, $i_3=i_{11}$ and $i_6=i_{12}$ give the  circuit of Figure \ref{fig:circuit}.
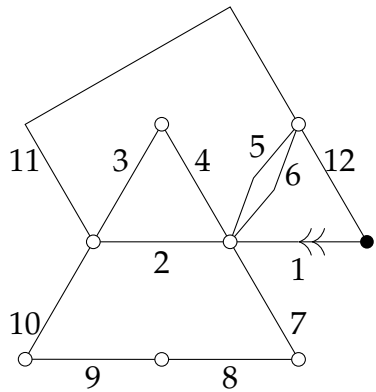
\begin{figure}[ht]
\begin{center}      
\begin{tikzpicture}[scale=1.8]
\draw [-{>[scale=2]>[scale=2]}] (0:1) -- (0:0.5);
\draw (0:0.5) -- (180:1) -- (120:1) -- (0,0) -- (70:0.5) -- (60:1) -- (50:0.5) -- (0,0) -- (300:1) -- (240:1) -- (210:1.73) -- (180:1) -- (150:1.73) -- (90:1.73) -- (60:1) -- (0:1);
\draw (0.5,-0.2) node {$1$};
\draw (-0.5,-0.15) node {$2$};
\draw (-0.8,0.6) node {$3$};
\draw (-0.2,0.6) node {$4$};
\draw (0.2,0.7) node {$5$};
\draw (0.46,0.5) node {$6$};
\draw (0.5,-0.6) node {$7$};
\draw (0,-1) node {$8$};
\draw (-1,-1) node {$9$};
\draw (-1.5,-0.6) node {$10$};
\draw (-1.5,0.6) node {$11$};
\draw (0.8,0.6) node {$12$};
\fill (0:1) circle (0.5mm);
\foreach \x in {(0,0),(180:1),(120:1),(60:1),(300:1),(240:1),(210:1.73)}
{ \fill [white] \x circle (0.5mm) ; \draw \x circle (0.5mm) ;}
\end{tikzpicture}
\caption{Circuit of length $s=12$ and with $k=8$ vertices, associated the set of identities $\{i_2=i_5=i_7,i_3=i_{11},i_6=i_{12}\}$.\label{fig:circuit}}
\end{center}
\end{figure}
\medskip

One can recover the identities from the corresponding circuit, and any circuit of length $s$ corresponds to a set of identities of indices, without identities $i_j=i_{j+1}$ since we do not allow loops. Moreover, the number of terms in the sum $\overline{M}_{s,N}$ corresponding to a circuit with $k$ vertices is $N(N-1)\cdots (N-k+1)$; and each term corresponding to a circuit $(H,T)$ is equal to $\frac{1}{N}\,E_{H,T,N}$. This ends the proof of the expansion of $\overline{M}_{s,N}$ over circuits.
\end{proof}
\medskip

The calculation of the quantities $E_{H,T,N}$ involves the operation of \emph{reduction of circuit}. Let $(H,T)$ be a circuit of length $s$. Its reduction is the labeled undirected graph which is allowed to be disconnected and to have loops, and which is obtained by performing the following operations:
\begin{itemize}
    \item forgetting the orientation of the edges of $H$;
    \item replacing any multiple edge by a single edge;
    \item putting a label $1$ on each of the (single) edges;
    \item cutting the graph at each of its cut vertices (also called articulation points), replacing a configuration
    \begin{center}
    \begin{tikzpicture}[scale=1]
    \draw (-2,-1) .. controls (-1,-1) and (1,1) .. (2,1);
    \draw (-2,1) .. controls (-1,1) and (1,-1) .. (2,-1);
    \fill (0,0) circle (1mm);
    \draw (2,-1) arc (270:450:1);
    \draw (-2,1) arc (90:270:1);
    \draw (-2,0) node {$L_1$};
    \draw (2,0) node {$L_2$};
    \end{tikzpicture}
    \end{center}
    with $L_1 \neq \emptyset$ and $L_2 \neq \emptyset$ by 
    \begin{center}
    \begin{tikzpicture}[scale=1]
    \draw (-2,-1) .. controls (-1,-1) .. (0,0);
    \draw (-2,1) arc (90:270:1);
    \draw (-2,0) node {$L_1$};
    \draw (-2,1) .. controls (-1,1) .. (0,0);
    \fill (0,0) circle (1mm);
    \begin{scope}[shift={(2,0)}]
    \draw (2,-1) .. controls (1,-1) .. (0,0);
    \draw (2,-1) arc (270:450:1);
    \draw (2,0) node {$L_2$};
    \draw (2,1) .. controls (1,1) .. (0,0);
    \fill (0,0) circle (1mm);
    \end{scope}
    \end{tikzpicture}
    \end{center}
    
    \item in the resulting connected components, removing recursively each vertex of degree $2$, replacing a configuration
    \begin{center}
    \begin{tikzpicture}[scale=1]
    \draw (-2,0) -- (2,0);
    \draw [dashed] (-2.5,0) -- (-2,0);
    \draw [dashed] (2.5,0) -- (2,0);
    \fill (0,0) circle (1mm);
    \fill (2,0) circle (1mm);
    \fill (-2,0) circle (1mm);
    \foreach \x in {(-1,0),(1,0)}
{\fill [white] \x circle (1.8mm); \draw \x circle (1.8mm);}
    \draw (-1,0) node {\footnotesize $a$};
    \draw (1,0) node {\footnotesize $b$};
    \end{tikzpicture}
    \end{center}
    by 
    \begin{center}
    \begin{tikzpicture}[scale=1]
    \draw (-2,0) -- (2,0);
    \draw [dashed] (-2.5,0) -- (-2,0);
    \draw [dashed] (2.5,0) -- (2,0);
    \fill (2,0) circle (1mm);
    \fill (-2,0) circle (1mm);
    \fill [white] (0,0) circle (4.4mm); \draw (0,0) circle (4.4mm);
    \draw (0,0) node {\footnotesize $a+b$};
    \draw (2.7,0) node {$.$};
    \end{tikzpicture}
    \end{center}

    \item finally, replacing the connected components \,
    \raisebox{-1.5mm}{\begin{tikzpicture}[scale=1]
    \draw (0,0) -- (2,0);
    \fill [white] (1,0) circle (2mm); \draw (1,0) circle (2mm);
    \draw (1,0) node {\footnotesize $1$};
    \fill (0,0) circle (1mm);
    \fill (2,0) circle (1mm);
    \end{tikzpicture}}\,
    by loops \,\raisebox{-4mm}{\begin{tikzpicture}[scale=1]
    \fill (0,0) circle (1mm);
    \draw (0.5,0) circle (0.5) ;
    \fill [white] (1,0) circle (1.8mm); \draw (1,0) circle (1.8mm); \draw (1,0) node {\footnotesize $2$};
    \end{tikzpicture}}\,.
\end{itemize}\medskip

\noindent The fourth operation in the algorithm of reduction splits the graph in its so-called \emph{biconnected components}: they are connected components which remain connected if one removes one vertex. Notice that the operation of reduction:
\begin{itemize}
    \item can send many distinct circuits to the same reduction;
    \item can create two kinds of connected components: \begin{itemize}
        \item labeled loops based at one single vertex and with a label greater than $2$;
        \item and connected loopless graphs on at least two vertices, all of them being at least of degree $3$.
    \end{itemize}
 \end{itemize}
 
\begin{example}
 The reduction of the circuit of Figure \ref{fig:circuit} appears in Figure \ref{fig:reducedcircuit}. Similarly, the reductions of the four circuits of length $4$ are drawn in Figure \ref{fig:reduction4}, with the middle one that has multiplicity $2$ (as well as two connected components).
\begin{figure}[ht]
\begin{center}      
\begin{tikzpicture}[scale=2]
\draw (0.25,0) -- (-1,0) -- (-0.8,0.25) -- (-0.2,0.25) -- (0,0) -- (60:1) -- (0,0) -- (-0.2,-0.25) -- (-0.8,-0.25) -- (-1,0) -- (150:1.73) -- (60:1) -- (45:0.97) -- (0.25,0);
\foreach \x in {(0,0),(180:1),(60:1)}
{ \fill \x circle (0.5mm) ;}
\foreach \x in {(-0.5,0.25),(-0.5,0),(-0.5,-0.25),(60:0.5),(-0.5,0.866),(0.5,0.4)}
{\fill [white] \x circle (0.9mm); \draw \x circle (0.9mm);}
\draw (-0.5,0.25) node {\footnotesize $2$};
\draw (-0.5,-0.25) node {\footnotesize $4$};
\draw (-0.5,0) node {\footnotesize $1$};
\draw (60:0.5) node {\footnotesize $1$};
\draw (-0.5,0.866) node {\footnotesize $1$};
\draw (0.5,0.4) node {\footnotesize $2$};
\end{tikzpicture}
\caption{Reduction of the circuit of Figure \ref{fig:circuit}.\label{fig:reducedcircuit}}
\end{center}
\end{figure}
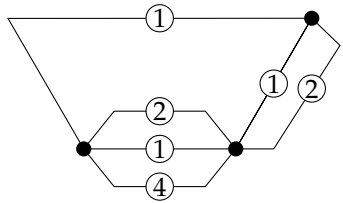
\begin{figure}[ht]
\begin{center}      
\begin{tikzpicture}[scale=1]
\draw (0,0.5) circle (0.5);
\draw (3,0.5) circle (0.5);
\draw (3,-1) circle (0.5);
\draw (6,0.5) circle (0.5);
\foreach \x in {(0,0),(3,0),(3,-0.5),(6,0)}
{ \fill \x circle (1mm) ;}
\foreach \x in {(0,1),(3,1),(3,-1.5),(6,1)}
{ \fill [white] \x circle (1.8mm) ; \draw \x circle (1.8mm);}
\draw (0,1) node {\footnotesize $4$};
\draw (3,1) node {\footnotesize $2$};
\draw (3,-1.5) node {\footnotesize $2$};
\draw (6,1) node {\footnotesize $2$};
\end{tikzpicture}
\caption{Reductions of the circuits of length $4$.\label{fig:reduction4}}
\end{center}
\end{figure}
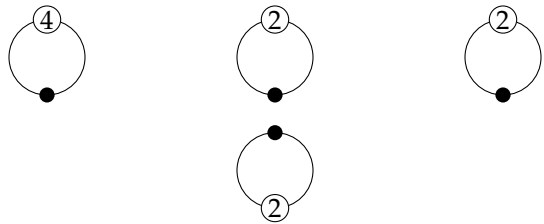 
\end{example}

If $R$ is the reduction of a circuit $(H,T)$ with $|H|=k$ vertices, then one has the identity:
\begin{equation}
     k-1 = k'-c + \sum_{e\text{ labeled edge of }R} (l_e-1), \label{eq:numbervertices}
\end{equation}
where $l_e$ is the label of an edge $e$ in $R$; $k'$ is the number of vertices of $R$; and $c$ is the number of connected components of $R$. Equation \eqref{eq:numbervertices} shows readily that $R \mapsto k-1$ is an additive map with respect to connected components of reduced circuits.

\begin{lemma}\label{lem:circuit2}
The expectation $E_{H,T,N}$ only depends on the reduction of the circuit $(H,T)$.
\end{lemma}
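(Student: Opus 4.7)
The plan is to verify that each of the five operations composing the reduction algorithm preserves the value of $E_{H,T,N}$. Three elementary properties of the kernel $h_N(x,y) = 1_{d(x,y) \leq L_N}$ will drive the argument: symmetry in its two arguments, idempotence $(h_N)^k = h_N$ for $k \geq 1$ (it is $\{0,1\}$-valued), and bi-invariance under the action of $G$ by left and right translations (since $d$ is a bi-invariant distance on a compact Lie group). Symmetry immediately allows us to forget the traversal $T$ and to work with the underlying multiset of undirected edges of $H$, while idempotence collapses any multi-edge into a single label-$1$ edge without changing the integrand. After these two steps, the expectation depends only on the simple undirected graph underlying $H$.

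The key idea for the remaining three operations is to extend the definition of $E$ from unlabelled to labelled graphs, by declaring that a label-$\ell$ edge between $u$ and $w$ contributes the factor $h_N^{*\ell}(v_u,v_w)$, where $h_N^{*\ell}$ denotes the $\ell$-fold convolution of $h_N$ with itself on $G$ (this is well defined because $h_N$ is bi-invariant). Under this convention, the recursive degree-$2$ vertex removal is a direct application of the associativity of convolution: integrating over $v_v$ when $v$ has degree two and carries adjacent labels $a$ and $b$ gives
$$\int_G h_N^{*a}(v_u,v_v)\,h_N^{*b}(v_v,v_w)\,dv_v \,=\, h_N^{*(a+b)}(v_u,v_w),$$
replacing the path $u - v - w$ by a single label-$(a+b)$ edge. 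The final replacement of a ``two vertices joined by a label-$1$ edge'' component by a ``single vertex with a label-$2$ loop'' is a direct computation using bi-invariance: both configurations contribute the scalar $\int_G h_N(e_G,g)\,dg$.

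The step requiring the most care is the cut-vertex factorisation. If $v$ is a cut vertex of a labelled graph $K$ splitting it into subgraphs $L_1,L_2$ meeting only at $v$, then the plan is to show that the partial integral over $L_i\setminus\{v\}$ with $v_v$ pinned at $g$,
$$F_i(g) \,=\, \int_{G^{|V(L_i)|-1}} \prod_{e \in E(L_i)} h_N^{*\ell(e)}(v_{s(e)},v_{t(e)})\, \prod_{w \in V(L_i)\setminus\{v\}}\!\!\! dv_w \,\Big|_{v_v = g},$$
is \emph{constant in $g$}. This is the key use of bi-invariance: the change of variables $v_w \mapsto g v_w$, combined with the left-invariance of each $h_N^{*\ell}$, reduces $F_i(g)$ to $F_i(e_G)$. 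Consequently
$$E(K) \,=\, \int_G F_1(g)\,F_2(g)\,dg \,=\, F_1(e_G)\,F_2(e_G) \,=\, E(L_1)\,E(L_2),$$
so the cut-vertex splitting preserves the integral. Composing the five operations, $E_{H,T,N}$ only depends on the reduced labelled graph of $(H,T)$. A final small verification is that the recursive degree-$2$ removal is well defined (its output does not depend on the order of removals), which follows from the associativity and the commutativity on class functions of the convolution on $G$.
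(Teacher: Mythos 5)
Your proof is correct and follows essentially the same five-step strategy as the paper: symmetry, idempotence, bi-invariance for the cut-vertex factorisation, convolution for degree-$2$ vertex suppression, and the final loop convention. The only (cosmetic) difference is in the cut-vertex step, where you show directly that the partial integral pinned at the cut vertex is constant by translating all free variables, whereas the paper averages one half of the integrand over a random group element $g$ to decouple the two halves; both reduce to the same bi-invariance of $h_N$ and are equivalent.
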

\begin{proof}
Consider the expectation $E_{H,T,N} = \esper[\prod_{(i,j) \in T} h_N(v_i,v_j)]$. \begin{itemize}
    \item Since $h_N$ is a symmetric kernel, it does not depend on the orientation of the edges in $H$; this allows the first step in the reduction of the circuit.

    \item Since $h_N(x,y) \in \{0,1\}$, $(h_N(x,y))^m = (h_N(x,y))$ for any $m \geq 1$, so one can replace multiple edges by simple edges in the graph.

    \item The factorisation of $E_{H,T,N}$ on the biconnected components of the graph is a consequence of the independence of the vertices $v_i$, and of the invariance of the function $h_N$ by action of the group $G$ on its two variables. Indeed, suppose that the labeled graph $L$ obtained after the three first steps of the reduction has two components $L_1$ and $L_2$ which only share one cut vertex $v_c$. Then, there are two disjoint sets of vertices $\{v_{i_1},\ldots,v_{i_r}\}$ and $\{v_{j_1},\ldots,v_{j_s}\}$ and two functions $f_1$ and $f_2$ such that
    $$E_{H,T,N} = \esper[f_1(v_{i_1},\ldots,v_{i_r},v_c)\,f_2(v_c,v_{j_1},\ldots,v_{j_s})].$$
    Moreover, the two functions $f_1$ and $f_2$ are products of functions $h_N(x,y)$, with $x$ and $y$ in the set of variables of $f_1$ or $f_2$. This implies that for any $g \in G$, $f_2(v_c,v_{j_1},\ldots,v_{j_s})=f_2(gv_c,gv_{j_1},\ldots,gv_{j_s})$. We take $g$ uniform under the Haar measure, and we set $v_c'=gv_c$ and $v_{j_k}'=gv_{j_k}$. Then,
    \begin{align*}
    E_{H,T,N} &= \int_G \esper[f_1(v_{i_1},\ldots,v_{i_r},v_c)\,f_2(gv_c,gv_{j_1},\ldots,gv_{j_s})] \DD{g} \\
    &= \esper[f_1(v_{i_1},\ldots,v_{i_r},v_c)]\,\esper[f_2(v_c',v_{j_1}',\ldots,v_{j_s}')]
    \end{align*}
    with two set of variables that are now independent.

    \item Take now a connected component after the factorisation in biconnected components. If one has in such a graph a sequence of $r$ edges 
\begin{center}
\begin{tikzpicture}[scale=1]
\foreach \x in {(0,0),(0.8,0),(1.6,0),(2.4,0),(3.2,0)}
\fill \x circle (0.7mm);
\draw (-0.5,0) node {$\cdots$};
\draw (3.7,0) node {$\cdots$};
\draw (0,0) -- (1.6,0);
\draw [dotted] (1.6,0) -- (2.4,0);
\draw (3.2,0) -- (2.4,0);
\end{tikzpicture}\,,
\end{center} 
then it corresponds to a product $h_N(v_{a},v_{b_1})\, h_N(v_{b_1},v_{b_2}) \cdots h_N(v_{b_{r-1}},v_c)$ in the expectation, with the independent random variables $v_{b_1},\ldots,v_{b_{r-1}}$ that do not appear anywhere else in the product in the expectation $E_{H,T,N}$; clearly one can encode this term by a labeled edge \,
\raisebox{-1.5mm}{\begin{tikzpicture}[scale=1]
\foreach \x in {(0,0),(1,0)}
\fill \x circle (0.7mm);
\draw (0,0) -- (1,0);
\fill [white] (0.5,0) circle (1.8mm);
\draw (0.5,0) circle (1.8mm);
\draw (0.5,0) node {\footnotesize $r$};
\end{tikzpicture}}\,. This is equivalent to the second last rule of reduction.

    \item The last step is a convention that will allow us to have only two kinds of connected components, namely, the labeled loops, and the labeled loopless graphs on at least two vertices which all have degree larger than $3$. It amounts to the obvious identity $\esper[h_N(v_i,v_j)] = \esper[h_N(v_i,v_j)\,h_N(v_j,v_i)]$. \qedhere
\end{itemize}  
\end{proof}

In the following, we denote $R(H,T)$ the reduction of a circuit $(H,T)$. Note that $R(H,T)$ depends only on $H$ (and even the underlying undirected graph). However, since we shall consider sums over circuits, it is more convenient to recall each time the pair $(H,T)$. The previous discussion leads to:
\begin{theorem}[Circuit expansion]\label{thm:circuitexpansion}
\begin{enumerate}
    \item Combinatorial expansion: for any $s \geq 0$, we have
$$\overline{M}_{s,N} = \sum_{(H,T)} (N-1)\cdots (N-|H|+1) \,E_{R(H,T),N},$$
where the sum runs over the finite set of circuits with $s$ edges, and where $E_{R(H,T),N} = E_{H,T,N}$ only depends on the circuit reduction of $(H,T)$.
\item Factorisation: if $R=R_1 \sqcup R_2 \sqcup \cdots \sqcup R_c$, then $E_{R,N} = \prod_{i=1}^c E_{R_i,N}$. 
\item Asymptotics: for any reduced circuit $R$ with parameter $k$ given by Equation \eqref{eq:numbervertices}, there exists a positive real number $e_R$ depending only on $\dim G$ such that
$$\lim_{N \to \infty} N^{k-1}\,E_{R,N} = e_R\,\left(\frac{\ell}{\vol(G)}\right)^{k-1}.$$
Therefore,
$$M_s = \sum_{(H,T)} e_{R(H,T)}\,\left(\frac{\ell}{\vol(G)}\right)^{k-1}.$$
\end{enumerate}
\end{theorem}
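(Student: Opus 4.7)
My plan is to treat the three parts in order, since the first two are essentially already inside the two preceding lemmas, and the real work lies in part (3).

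For part (1), the expansion is an immediate rewriting of Lemma \ref{lem:circuit1}: the formula
\[
\overline{M}_{s,N} \;=\; \sum_{(H,T)} (N-1)\cdots(N-|H|+1)\, E_{H,T,N}
\]
groups the indices $(i_1,\dots,i_s)$ according to their pattern of coincidences, and the combinatorial factor counts the number of ordered injections of the $k=|H|$ classes into $\lle 1,N\rre$ (the $N$ in the denominator of $\overline{M}_{s,N}=\frac{1}{N}\esper[\tr(A^s)]$ cancels one factor). Lemma \ref{lem:circuit2} then tells us $E_{H,T,N}=E_{R(H,T),N}$, so the formula can be rewritten in terms of reductions.

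For part (2), one simply iterates the cut-vertex argument of Lemma \ref{lem:circuit2}: if $R$ splits into connected components $R_1,\dots,R_c$, each $R_i$ shares at most one vertex with the rest; left-translating all variables of $R_i\setminus R_{i+1}\cup\cdots$ by an independent $g\in G$ chosen under the Haar measure, the bi-invariance of $h_N$ makes the corresponding blocks of variables independent, and the expectation factorises.

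Part (3) is the only substantial step. By (2) I may assume $R$ is connected with $k$ original vertices, $k'$ reduced vertices, and labelled edges $(l_e)_{e\in E(R)}$, with $k-1=(k'-1)+\sum_e(l_e-1)$. Writing explicitly
\[
E_{R,N} \;=\; \int_{G^{k}} \prod_{(i,j)\in E(H)} h_N(v_i,v_j)\DD{v_1}\cdots\DD{v_k},
\]
bi-invariance lets me translate so that $v_1=e_G$, absorbing one integration (the probability Haar measure contributes a factor $1$, not $\vol(G)$, because it is normalised). I then pass to exponential coordinates $v_i=\exp(L_N\,Y_i)$ for $i\geq 2$ with $Y_i\in\glie$. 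Since the injectivity radius of $\exp$ is strictly positive and $L_N\to 0$, for $N$ large the change of variables is a diffeomorphism from a fixed bounded region of $\glie^{k-1}$ onto a neighbourhood of $e_G^{k-1}$ containing the support of the integrand; its Jacobian with respect to Riemannian volume tends uniformly to $1$ on any compact, and the probability Haar measure contributes an extra $\vol(G)^{-(k-1)}$. Each kernel $h_N(v_i,v_j)=1_{d(v_i,v_j)\leq L_N}$ becomes, at leading order, $\tilde h(Y_i,Y_j)=1_{\|Y_i-Y_j\|_\glie\leq 1}$ (with $Y_1=0$). Consequently
\[
N^{k-1}\,E_{R,N} \;\longrightarrow\; \left(\frac{\ell}{\vol(G)}\right)^{k-1}\; e_R,\qquad e_R=\int_{\glie^{k-1}}\prod_{(i,j)\in E(H)} \tilde h(Y_i,Y_j)\DD{Y_2}\cdots\DD{Y_k},
\]
where the integrand depends only on the unoriented reduced graph $R$ and on $\dim G$ (the multiplicities $l_e\geq 2$ disappear because $(\tilde h)^{l_e}=\tilde h$, exactly as in the reduction). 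Summing over circuits and combining with the prefactor $(N-1)\cdots(N-k+1)\sim N^{k-1}$ gives the stated expression for $M_s=\lim \overline{M}_{s,N}$; the exchange of limit and summation is legitimate because part (1) of the expansion is a \emph{finite} sum of circuits of length $s$.

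The main technical obstacle I expect is the rigorous justification of the scaling limit for $E_{R,N}$, namely controlling the error between the Riemannian distance on $G$ and the Euclidean norm on $\glie$ at scale $L_N$, and ensuring uniform integrability of the rescaled kernels. This is however standard: since all $Y_i$ remain in a compact region of $\glie$ of diameter $O(k)$, Taylor expansion of the Baker--Campbell--Hausdorff formula bounds the discrepancy between $d(v_i,v_j)$ and $L_N\|Y_i-Y_j\|_\glie$ by $O(L_N^3)=o(L_N)$, and the dominated convergence theorem with the uniform bound $h_N\leq 1_{\|Y_i-Y_j\|_\glie\leq 2}$ yields the claimed limit and finiteness of $e_R$.
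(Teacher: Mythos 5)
Your proof is correct, and for part (3) it takes a genuinely different route from the paper. Where the paper proves the asymptotics of $N^{k-1}E_{R,N}$ as a corollary of the Benjamini--Schramm convergence established in Theorem~\ref{thm:poisson_BS}: it rewrites $(N-1)^{\downarrow k-1}E_{H,T,N}$ as the expected number of embeddings of the circuit into the random rooted graph $(\Gamma_N,r_N)$, passes to the limit term by term in the sum over $\gbullet(d)$ using the local convergence plus the same Poisson-tail domination as in Proposition~\ref{prop:superpoissonlimit}, and then identifies the limit $e^{(\ell)}_{(H,T)}$ with an integral against the $(k-1)$-th factorial moment measure of the Poisson process, which for intensity $\frac{\ell}{\vol(G)}\mathrm{Leb}$ is the product measure $\bigl(\frac{\ell}{\vol(G)}\mathrm{Leb}\bigr)^{\otimes(k-1)}$. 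You instead bypass the Benjamini--Schramm machinery entirely and compute $E_{R,N}$ directly by translating $v_1$ to $e_G$, rescaling by $L_N$ in exponential coordinates, and appealing to the bi-invariance of the metric to control the discrepancy $d(v_i,v_j)-L_N\|Y_i-Y_j\|_\glie$ (your $O(L_N^3)$ estimate is justified precisely because ad-invariance kills the cross term in the BCH expansion). This is a sound and more self-contained calculation that makes the explicit integral form of $e_R$ apparent at once, and it is really the same integral the paper obtains via factorial moments; the boundedness of the support of $\prod\tilde h$ follows from connectedness of $H$, so $e_R<\infty$. What the paper's route buys is a structural interpretation of $e_R$ as a subgraph-count functional of the Poisson Boolean model, which it reuses repeatedly elsewhere; what yours buys is independence from the delicate pointed-Lipschitz convergence arguments and a shorter path to the polynomial dependence on $\ell$. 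One small imprecision worth fixing: your integrand $\prod_{(i,j)\in E(H)}\tilde h(Y_i,Y_j)$ depends on the chosen representative circuit $(H,T)$, not only on $R$; it is the value of the integral, after integrating out the degree-$2$ chain variables, that depends only on $R$ and $\dim G$, which is what the statement requires.
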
 

\begin{proof}
The first part of the theorem comes from the combination of Lemmas \ref{lem:circuit1} and \ref{lem:circuit2}, and the second part corresponds to the fourth step of the algorithm of reduction of circuits. It remains to examine the asymptotics of $E_{R,N}$ as $N$ goes to infinity. We denote $N^{\downarrow k} = N(N-1)\cdots(N-k+1)$ a falling factorial. Given a reduced circuit $R=R(H,T)$ and a rooted graph $\gamma \in \gbullet$, we call embedding of the circuit $(H,T)$ into $\gamma$ an injective morphism of graphs $e : V(H) \to V(\gamma)$ which sends the starting and ending point of the traversal $T$ to the root of $\gamma$. We then have:
\begin{align*}
(N-1)^{\downarrow k-1}\,E_{H,T,N} &= \esper[\text{number of embeddings of $(H,T)$ in the random rooted graph $(\Gamma_N,r_N)$}] \\
&= \sum_{\gamma_d \in \gbullet(d)} \proba[\pi_d(\Gamma_N,r_N) = \gamma_d]\,\card \{\text{embeddings of $(H,T)$ into $\gamma_d$}\},
\end{align*}
where $d$ is a sufficiently large integer, namely, larger than the diameter of the circuit $(H,T)$. By Theorem \ref{thm:poisson_BS}, all the terms of this series converge, and the same kind of domination as in the proof of Proposition \ref{prop:superpoissonlimit} enables one to exchange the series and the limits. Hence,
\begin{align*}
\lim_{N \to \infty} N^{k-1}\,E_{R(H,T),N} &= \lim_{N \to \infty} (N-1)^{\downarrow k-1}\,E_{H,T,N}\\
&=\sum_{\gamma_d \in \gbullet(d)} \proba[\pi_d(\Gamma_\infty,r) = \gamma_d]\,\card \{\text{embeddings of $(H,T)$ into $\gamma_d$}\}\\
&= \esper[\text{number of embeddings of $(H,T)$ in the random rooted graph $(\Gamma_\infty,r)$}],
\end{align*}
where $(\Gamma_\infty,r)$ is the rooted Poisson Boolean model described in Theorem \ref{thm:poisson_BS}. Denote $e_{(H,T)}^{(\ell)}$ the limit that we have obtained; the last thing that remains to be shown is that $e_{(H,T)}^{(\ell)}$ depends polynomially on the intensity $\frac{\ell}{\vol(G)}$ of the Poisson point process underlying $(\Gamma_\infty,r)$. However, the expected number of embeddings can be rewritten as the integral of the $(k-1)$-th factorial moment measure $M^{(k-1)}$ of the Poisson point process $\mathcal{P}(\frac{\ell}{\vol G})$ against a certain Borel measurable subset in $(\R^{\dim G})^{k-1}$:
$$e_{(H,T)}^{(\ell)} = \int_{(\R^{\dim G})^{k-1}} \left(\prod_{(a\to b) \in T} 1_{d(x_a,x_b)\leq 1}\right) M^{k-1}(\!\DD{x_1}\,\cdots \DD{x_{k-1}})$$
where we convene that the vertices in $H$ are labelled by the integers in $\lle 0,k-1\rre$, and that $x_0=0$ in $\R^{\dim G}$. We refer to \cite[Chapter 5]{DVJ03} for details on the notion of factorial moment measure; it is well known that for the Poisson point process with intensity $\mu$, the $r$-th factorial moment measure is simply $\mu^{\otimes r}$. This proves the dependence stated in the theorem, since we have here
\begin{equation*}
  M^{(k-1)} = \left(\frac{\ell}{\vol(G)}\,\mathrm{Leb}\right)^{\otimes(k-1)}.\qedhere
 \end{equation*}
\end{proof}
\medskip

\subsection{Asymptotic contribution of a reduced circuit which is a loop}\label{subsec:onepoint}
The remainder of this section is devoted to the study of the connection between:
\begin{itemize}
     \item the coefficients $e_{(H,T)}^{(\ell)}$, which we sometimes also index by the corresponding reduced circuits $R=R(H,T)$ and denote $e_R^{(\ell)}$;
     \item the representation theory of the group $G$, which from now on will be assumed to be sscc.
 \end{itemize}
Although we know that $e_R^{(\ell)}=e_R^{(1)}\,\ell^{k-1}$, in the following it will be convenient to keep the coefficient $\ell$: it will enable one to keep track of the dimensions of various rescalings that we shall perform. The existence of the limits $e_R^{(\ell)}$ is strongly related to some interesting results or conjectures in asymptotic representation theory. To understand how the representation theory of $G$ drives the degeneration from the Gaussian to the Poissonian regime, we start by examining the case of a circuit $(H,T)$ which is a simple cycle of length $k\geq 2$, and thus has reduction 
\begin{center}
\begin{tikzpicture}[scale=1]
\draw (-2.15,0.5) node {$R=R(H,T)=$};
\draw (0.8,0.5) node {.};
\draw (0,0.5) circle (0.5);
\fill (0,0) circle (1mm);
\fill [white] (0,1) circle (1.8mm);
\draw (0,1) circle (1.8mm);
\draw (0,1) node {\footnotesize $k$};
\end{tikzpicture}
\end{center}
We have $E_{H,T,N} = \esper[h_N(v_1,v_2)\,h_N(v_2,v_3)\cdots h_N(v_k,v_1)]$, where the $v_i$'s are independent Haar distributed random variables on $G$. If $Z_{L_N}(x) = h_N(e_G,x)$, then by using the invariance of distances by the action of $G$, we can rewrite
\begin{align*}
E_{H,T,N} &= \esper\!\left[Z_{L_N}(v_1(v_2)^{-1})\,Z_{L_N}(v_2(v_3)^{-1})\cdots Z_{L_N}(v_{k-1}(v_k)^{-1})\, Z_{L_N}(v_k(v_1)^{-1})\right] \\
&= \esper\!\left[(Z_{L_N})^{*(k-1)}(v_1(v_k)^{-1}) \, Z_{L_N}(v_k(v_1)^{-1})\right] = \scal{(Z_{L_N})^{*(k-1)}}{Z_{L_N}},
\end{align*}
where the scalar product is taken in the convolution algebra $\leb^2(G,\!\DD{g})$ (and even in the subalgebra $\leb^2(G)^G$). In this Hilbert space, we have the decompositions  
\begin{align*}
Z_{L_N} &= \sum_{\lambda \in \hatG} \scal{\ch^\lambda}{Z_{L_N}}\,\ch^\lambda\qquad;\qquad
(Z_{L_N})^{*(k-1)} = \sum_{\lambda \in \hatG} \frac{(\scal{\ch^\lambda}{Z_{L_N}})^{k-1}}{(d_\lambda)^{k-2}}\,\ch^\lambda
\end{align*}
since $\ch^\lambda * \ch^\mu = \frac{\delta_{\lambda,\mu}}{d_\lambda}\,\ch^\lambda$ for any irreducible representations $\lambda,\mu \in \hatG$. Therefore,
$$E_{H,T,N} = \sum_{\lambda \in \hatG} \frac{(\scal{\ch^\lambda}{Z_{L_N}})^{k}}{(d_\lambda)^{k-2}} = \sum_{\lambda \in \hatG} (d_\lambda)^2\,(c_\lambda)^k$$
since $\scal{\ch^\lambda}{Z_{L_N}} = \int_{G} \overline{\ch^\lambda(g)}\,Z_{L_N}(g)\DD{g} = \overline{d_\lambda\,c_\lambda} = d_\lambda\,c_\lambda$. Set 
\begin{equation}
C_{\lambda,N}= d_\lambda\,c_\lambda = \frac{1}{\vol(\tlie/\tlie_{\Z})}\,\left(\frac{L_N}{\sqrt{2\pi}}\right)^{\!d} \sum_{w \in W} \eps(w)\,\tildeJ_{\R\Omega}(L_N\,(\lambda+\rho-w(\rho)))\label{eq:clambdan}
\end{equation}
where $d=\rank(G)$ denotes as in Section \ref{sec:gaussian} the rank of $G$. We put an index $N$ on $C_{\lambda,N}$ to insist on the dependence on $N$ of the Fourier coefficients of $Z_{L_N}$. We have thus shown:
$$E_{H,T,N} = \sum_{\lambda \in \hatG} \frac{(C_{\lambda,N})^k}{(d_\lambda)^{k-2}}.$$
As $N$ goes to infinity, this series will transform into a Riemann sum and converge towards an integral involving Bessel functions. Let us start by evaluating the asymptotics of $C_{\lambda,N}$ when $N$ grows and the parameter $x = L_N\,(\lambda+\rho)$ is fixed in the Weyl chamber $C$. We shall use the following properties of the function $\tildeJ_{\R\Omega}$:
\begin{itemize}
    \item it is a smooth function on $\R\Omega$ with maximum value $$\tildeJ_{\R\Omega}(0) = \frac{1}{2^{\frac{\rank(G)}{2}}\,\Gamma(1+\frac{\rank(G)}{2})};$$ 
    \item it is invariant by rotations;
    \item its asymptotics are (see \cite[Proposition 9.8.7]{Coh07})
    $$\tildeJ_{\R\Omega}(x) \simeq_{\|x\|\to \infty} \sqrt{\frac{2}{\pi}}\,\frac{1}{\|x\|^{\frac{\rank(G)+1}{2}}}\,\cos\left(\|x\| - \frac{(\rank(G)+1)\,\pi}{4}\right).$$
 \end{itemize}
 In particular, any power $k \geq 2$ of the function $\tildeJ_{\R\Omega}$ is integrable on $\R\Omega$. For $x \in C$, set
 $$\Delta_{L_N}(x) = \sum_{w \in W} \eps(w)\,\tildeJ_{\R\Omega}(x - L_N\,w(\rho)).$$
 On the other hand, for any (positive) root $\alpha$ and any smooth function $f$ on $\R\Omega$, we define the partial derivative 
 $$(\partial_\alpha f)(x) = \lim_{\eta \to 0}\left( \frac{f(x+\eta\alpha)-f(x)}{\eta}\right)=\lim_{\eta \to 0}\left( \frac{f(x+\frac{\eta\alpha}{2})-f(x-\frac{\eta\alpha}{2})}{\eta}\right).$$

\begin{lemma}\label{lem:partialderivative}
Set $\partial_{\Phi_+}=\prod_{\alpha \in \Phi_+}\partial_\alpha$, and denote $|\Phi_+| = \card\, \Phi_+$. For $x \in C+L_N\,|\Phi_+|\,\rho$, we have the estimate
$$\Delta_{L_N}(x) = (-L_N)^{|\Phi_+|}\,\left((\partial_{\Phi_+}\tildeJ_{\R\Omega})(x) + L_N\,K_{\R\Omega}(L_N,x)\right),$$
where $K_{\R\Omega}(L_N,x)$ is a function on the translated Weyl chamber $C+L_N\,|\Phi_+|\,\rho$ such that, uniformly for $L_N$ small enough,
$$|K_{\R\Omega}(L_N,x)| \leq K\,\min\left(1,\frac{1}{\|x\|^{\frac{\rank(G)+1}{2}}} \right).$$
\end{lemma}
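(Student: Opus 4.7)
The strategy is to recognize $\Delta_{L_N}(x)$ as a discretized version of the operator $(-L_N)^{|\Phi_+|}\partial_{\Phi_+}$ applied to $\tildeJ_{\R\Omega}$, and to control the error via Taylor's theorem with explicit integral remainder. The bridge between the alternating sum over $W$ and the product of partial derivatives is the Weyl denominator formula
\[
\sum_{w\in W}\eps(w)\,\E^{w(\rho)(X)} \;=\; \prod_{\alpha\in\Phi_+}\bigl(\E^{\alpha(X)/2}-\E^{-\alpha(X)/2}\bigr) \;=\; \prod_{\alpha\in\Phi_+}2\sinh(\alpha(X)/2),
\]
viewed as an identity of entire functions of $X\in\tlie_{\C}$. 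Setting $l=|\Phi_+|$ and expanding both sides as formal power series in $X$, the matching of homogeneous components of degree $k$ gives two algebraic facts that will drive the proof. For every integer $k<l$, the polynomial $\sum_{w\in W}\eps(w)\,(w(\rho)(X))^k$ vanishes identically, since $\sinh(y)=y+O(y^3)$ forces the right-hand side to start at order $l$; and at order $l$, one has $\sum_{w\in W}\eps(w)\,(w(\rho)(X))^l=l!\,\prod_{\alpha\in\Phi_+}\alpha(X)$. Replacing the formal variable $X$ by the gradient $\nabla$ (a move justified when applied to analytic functions via $f(\cdot+Y)=\E^{Y\cdot\nabla}f(\cdot)$) translates these identities into the operator relations $\sum_{w\in W}\eps(w)\,(w(\rho)\cdot\nabla)^k=0$ for $k<l$ and $\sum_{w\in W}\eps(w)\,(w(\rho)\cdot\nabla)^l=l!\,\partial_{\Phi_+}$.

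The next step applies Taylor's theorem with integral remainder to each summand $\tildeJ_{\R\Omega}(x-L_N w(\rho))$, truncated at degree $l$:
\[
\tildeJ_{\R\Omega}(x-L_N w(\rho)) = \sum_{k=0}^{l}\frac{(-L_N)^k}{k!}(w(\rho)\cdot\nabla)^k\tildeJ_{\R\Omega}(x) + \frac{(-L_N)^{l+1}}{l!}\int_0^1 (1-t)^l\,(w(\rho)\cdot\nabla)^{l+1}\tildeJ_{\R\Omega}(x-tL_N w(\rho))\,dt.
\]
Summing $\sum_{w}\eps(w)\,(\cdots)$ and using the operator relations above, the coefficients of $L_N^k$ for $k<l$ all vanish, and the coefficient of $L_N^l$ reduces to $(-L_N)^l(\partial_{\Phi_+}\tildeJ_{\R\Omega})(x)$. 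What remains is an explicit alternating integral of $(l+1)$-fold directional derivatives of $\tildeJ_{\R\Omega}$, evaluated near $x$ and carrying a factor of $L_N^{l+1}$; dividing through by $(-L_N)^l$ identifies this as $L_N\,K_{\R\Omega}(L_N,x)$.

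The last and most technical step is to establish the uniform bound on $K_{\R\Omega}(L_N,x)$. Two ingredients are needed. First, every partial derivative of $\tildeJ_{\R\Omega}$ of any fixed order is smooth on $\R\Omega$, uniformly bounded, and satisfies the decay $|(\partial_{i_1}\cdots\partial_{i_k})\tildeJ_{\R\Omega}(y)|=O(\min(1,\|y\|^{-(\rank(G)+1)/2}))$ as $\|y\|\to\infty$. This estimate follows from repeated use of the Bessel recurrence $J_\beta'(z)=J_{\beta-1}(z)-(\beta/z)J_\beta(z)$ combined with the asymptotic $J_\nu(z)\sim\sqrt{2/(\pi z)}\cos(z-(2\nu+1)\pi/4)$ already recalled in Section \ref{subsec:gaussianlimit}. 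Second, one uses the hypothesis $x\in C+L_N|\Phi_+|\rho$ to guarantee that for $L_N$ small, every point $x-tL_Nw(\rho)$ with $t\in[0,1]$ and $w\in W$ satisfies $\|x-tL_Nw(\rho)\|\geq\tfrac{1}{2}\|x\|$ whenever $\|x\|$ is not too small, and otherwise stays in a bounded neighborhood of the origin; the decay estimate then applies uniformly to the integrand in $K_{\R\Omega}$. Combining the two ingredients yields the claimed bound. The main obstacle is really the decay estimate on the derivatives of $\tildeJ_{\R\Omega}$; the algebraic collapse coming from the Weyl denominator formula and the Taylor expansion are then purely mechanical.
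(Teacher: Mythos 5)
Your argument is correct and gives a genuinely different proof from the paper's, though both ultimately rest on the Weyl denominator formula and on the decay of (derivatives of) $\tildeJ_{\R\Omega}$. Where the paper's proof uses the denominator formula in its \emph{factored} form, rewriting $\Delta_{L_N}$ as $\bigl(\prod_{\alpha\in\Phi_+}(\E^{L_N\alpha/2}-\E^{-L_N\alpha/2})\bigr)\tildeJ_{\R\Omega}$ and iterating a single-root difference operator $|\Phi_+|$ times while tracking membership in nested function classes $\mathscr{F}(C+k\rho)$, you use it in its \emph{expanded} form: matching Taylor coefficients of $\prod_\alpha 2\sinh(\alpha(X)/2)$ and $\sum_w\eps(w)\E^{w(\rho)(X)}$ gives the power-sum identities $\sum_w\eps(w)(w(\rho)\cdot\nabla)^k=0$ for $k<l$ and $\sum_w\eps(w)(w(\rho)\cdot\nabla)^l=l!\,\partial_{\Phi_+}$, and a single application of Taylor's theorem with integral remainder to each $\tildeJ_{\R\Omega}(x-L_Nw(\rho))$ then makes the leading terms cancel in one shot. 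This buys you a fully explicit formula for the error term, $K_{\R\Omega}(L_N,x)=-\frac{1}{l!}\sum_w\eps(w)\int_0^1(1-t)^l\,(w(\rho)\cdot\nabla)^{l+1}\tildeJ_{\R\Omega}(x-tL_Nw(\rho))\,dt$, from which the uniform bound is read off directly via the Bessel asymptotics, whereas the paper's iterative argument keeps the remainder implicit and must propagate bounds through the inductive chain. Your handling of the uniform bound is also sound: all the evaluation points $x-tL_Nw(\rho)$ lie within $L_N\|\rho\|$ of $x$, so for $L_N$ small they either satisfy $\|x-tL_Nw(\rho)\|\geq\frac12\|x\|$ (when $\|x\|\geq 2L_N\|\rho\|$) or stay in a fixed bounded neighborhood of the origin where derivatives of $\tildeJ_{\R\Omega}$ are uniformly bounded, and the decay rate $\|y\|^{-(\rank(G)+1)/2}$ is preserved under differentiation because of the recurrence $\tildeJ_\beta'(z)=-z\tildeJ_{\beta+1}(z)$. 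The only nitpick is that your proof actually establishes the bound for all $x\in\R\Omega$, so the restriction to $x\in C+L_N|\Phi_+|\rho$ is inherited from the lemma statement rather than forced by your argument; this is a minor stylistic difference, not an error.
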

\begin{proof}
Recall that in the group algebra of the space of weights $\R\Omega$, we have the identity
$$\prod_{\alpha \in \Phi_+} (\E^{\frac{\alpha}{2}}-\E^{-\frac{\alpha}{2}}) = \sum_{w \in W} \eps(w)\,\E^{w(\rho)},$$
see \cite[Proposition 22.7]{Bump13}. Therefore, if an element $\omega \in \R\Omega$ acts on smooth functions $f$ by 
the operator $(\E^\omega f)(x) = f(x-\omega)$, then we can write:
$$\Delta_{L_N}(x) = \left(\left(\prod_{\alpha \in \Phi_+} (\E^{\frac{L_N\alpha}{2}}-\E^{-\frac{L_N\alpha}{2}}) \right) \tildeJ_{\R\Omega}\right)(x).$$
For $k \geq 0$, let $\mathscr{F}(C+k\rho)$ be the set of smooth functions $f$ on the translated Weyl chamber $C+k\rho$, such that any partial derivative $((\prod_{i=1}^{r} \partial_{\delta_i})f)(x)$ is bounded by 
$$K(\delta_1,\ldots,\delta_r)\,\min\left(1,\frac{1}{\|x\|^{\frac{\rank(G)+1}{2}}} \right).$$
We claim that:
\begin{enumerate}
    \item The modified Bessel function $\tildeJ_{\R\Omega}$ belongs to the class $\mathscr{F}(C)$.
    \item If $f$ belongs to $\mathscr{F}(C+k\rho)$, then for any positive root $\alpha$, and any $x \in C+(k+1)\rho$
    $$\left(\left(\E^{\frac{L_N\alpha}{2}}-\E^{-\frac{L_N\alpha}{2}}) \right)f\right)(x) = -L_N\,((\partial_\alpha\,f)(x) + L_N\,g(L_N,x)),$$
    where $g(L_N,x)$ belongs to $\mathscr{F}(C+(k+1)\rho)$, and where the bounds on the partial derivatives $((\prod_{i=1}^{r} \partial_{\delta_i})g(L_N,\cdot))$ are uniform in $L_N$ (for $L_N$ small enough).
\end{enumerate}
The first claim follows from the asymptotic estimate of Bessel functions 
$$\frac{J_\beta(x)}{x^\beta}= \tildeJ_\beta(x) \simeq_{x \to +\infty} \sqrt{\frac{2}{\pi}}\,\frac{1}{x^{\beta+\frac{1}{2}}}\,\cos\left(x - (1+2\beta)\frac{\pi}{4}\right)$$
and from the recurrence relation $(\tildeJ_\beta)'(x) = -x\,\tildeJ_{\beta+1}(x)$. The second claim is obtained by a Taylor expansion of the function $f$ around $x$. To ensure that one can use it, one needs to translate $x$ a bit further inside the Weyl chamber, which is why the estimate holds only in $C+(k+1)\rho$ if $f \in \mathscr{F}(C+k\rho)$. By combining the two claims and taking $|\Phi_+|$ discrete derivatives of $\tildeJ_{\R\Omega}$, one gets the result of the lemma.
\end{proof}
\medskip

We set 
$$e_{\begin{tikzpicture}[scale=0.5]
\draw (0,0.5) circle (0.5);
\fill (0,0) circle (1mm);
\fill [white] (0,1) circle (2.5mm);
\draw (0,1) circle (2.5mm);
\draw (0,1) node {\tiny $k$};
\draw (1.1,0.5) node {$,N$};
\end{tikzpicture}}^{(\ell)}=N^{k-1}\,E_{H,T,N}=\sum_{\lambda \in \hatG} \frac{N^{k-1}\,(C_{\lambda,N})^k}{(d_\lambda)^{k-2}}$$
and for $x \in C$, $\delta(x) = \prod_{\alpha \in \Phi_+} \frac{\scal{x}{\alpha}}{\scal{\rho}{\alpha}}$. Weyl's dimension formula proves that if $x=L_N\,(\lambda+\rho)$, then $d_\lambda = (L_N)^{|\Phi_+|}\,\delta(x)$. Therefore, by using also the relation $d+2\,|\Phi_+| = \dim G$ which follows from the decomposition of the adjoint representation of $G$ in root subspaces, we obtain
\begin{align*}
&\frac{N^{k-1}\,(C_{\lambda,N})^k}{(d_\lambda)^{k-2}} \\
&= \frac{(-1)^{k\,|\Phi_+|}}{(2\pi)^{\frac{kd}{2}}\,(\vol(\tlie/\tlie_{\Z}))^k\,(\delta(x))^{k-2}} \,N^{k-1} \,(L_N)^{k\,d+(2k-2)\,|\Phi_+|} \left((\partial_{\Phi_+}\tildeJ_{\R\Omega})(x) + L_N\,K_{\R\Omega}(L_N,x)\right)^k \\
&= \frac{(-1)^{k\,|\Phi_+|}\,\ell^{k-1}}{(2\pi)^{\frac{kd}{2}}\,(\vol(\tlie/\tlie_{\Z}))^k\,(\delta(x))^{k-2}} \left((\partial_{\Phi_+}\tildeJ_{\R\Omega})(x) + L_N\,K_{\R\Omega}(L_N,x)\right)^k\,(L_N)^{d}
\end{align*}
for any $x = L_N\,(\lambda+\rho)$ falling into the translated Weyl chamber $C+L_N\,|\phi_+|\,\rho$. Therefore,
$$e_{\begin{tikzpicture}[scale=0.5]
\draw (0,0.5) circle (0.5);
\fill (0,0) circle (1mm);
\fill [white] (0,1) circle (2.5mm);
\draw (0,1) circle (2.5mm);
\draw (0,1) node {\tiny $k$};
\draw (1.1,0.5) node {$,N$};
\end{tikzpicture}}^{(\ell)}= \frac{(-1)^{k\,|\Phi_+|}\,\ell^{k-1}}{(2\pi)^{\frac{kd}{2}}\,(\vol(\tlie/\tlie_{\Z}))^k} \sum_{\substack{x \in C\\x=L_N(\lambda + \rho)}} \frac{((\partial_{\Phi_+}\tildeJ_{\R\Omega})(x))^k}{(\delta(x))^{k-2}}\,(L_N)^d + \text{remainder},
$$
with a remainder that is a $O(L_N)$, because it consists of:
\begin{itemize}
    \item the contribution of the weights $\lambda$ that are in the boundary $C \setminus (C+|\Phi_+|\rho)$ of the Weyl chamber;
    \item and terms proportional to $(L_N)^t\,K_{\R\Omega}(L_N,x)^t\,((\partial_{\Phi_+}\tildeJ_{\R\Omega})(x))^{k-t}$, with $t \geq 1$.
 \end{itemize} 
We leave the reader to check that these contributions can indeed be summed and yield a $O(L_N)$; this relies on estimates of Bessel functions similar to those previously given. Then, we are left with a standard Riemann sum over the lattice $L_N(C \cap \Z\Omega)$, whose points correspond to domains of volume $(L_N)^d\,\vol(\R\Omega/\Z\Omega)$.
We have a duality of lattices and $$\vol(\R\Omega/\Z\Omega)=\frac{1}{\vol(\tlie/\tlie_\Z)}.$$ 
We conclude:
 
\begin{theorem}\label{thm:asymptoticsonevertex}
For any $k \geq 2$, we have
\begin{align*}
e_{\begin{tikzpicture}[scale=0.5]
\draw (0,0.5) circle (0.5);
\fill (0,0) circle (1mm);
\fill [white] (0,1) circle (2.5mm);
\draw (0,1) circle (2.5mm);
\draw (0,1) node {\tiny $k$};
\end{tikzpicture}}^{(\ell)} & =\lim_{N \to \infty} e_{\begin{tikzpicture}[scale=0.5]
\draw (0,0.5) circle (0.5);
\fill (0,0) circle (1mm);
\fill [white] (0,1) circle (2.5mm);
\draw (0,1) circle (2.5mm);
\draw (0,1) node {\tiny $k$};
\draw (1.1,0.5) node {$,N$};
\end{tikzpicture}}^{(\ell)} = \left( \frac{\ell}{\vol(\tlie/\tlie_\Z)} \right)^{k-1} \int_C \left(\frac{(-1)^{|\Phi_+|}\,(\partial_{\Phi_+}\tildeJ_{\R\Omega})(x)}{(2\pi)^{d/2}}\right)^k \,\frac{\!\DD{x}}{(\delta(x))^{k-2}},
\end{align*}
where $\!\DD{x}$ is the Lebesgue measure on $\R\Omega$ associated to the scalar product of weights defined in Section \ref{subsec:weightlattice}; $d=\rank(G)$; and $\delta(x) = \prod_{\alpha \in \Phi_+} \frac{\scal{x}{\alpha}}{\scal{\rho}{\alpha}}$.
\end{theorem}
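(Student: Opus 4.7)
The derivation is essentially carried out in the discussion preceding the theorem; the plan is to organize it into a rigorous argument by controlling two small remainders and justifying the passage from a discrete sum to a Riemann integral. First, Plancherel's identity in $\leb^2(G)^G$ together with the convolution rule $\ch^\lambda * \ch^\mu = \frac{\delta_{\lambda,\mu}}{d_\lambda} \ch^\lambda$ yields
\[ e_{\bullet,N}^{(\ell)} \,=\, N^{k-1}\,\scal{(Z_{L_N})^{*(k-1)}}{Z_{L_N}} \,=\, \sum_{\lambda \in \hatG} \frac{N^{k-1}(C_{\lambda,N})^k}{(d_\lambda)^{k-2}}, \]
where $C_{\lambda,N}=d_\lambda c_\lambda$ is given by \eqref{eq:clambdan}. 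The aim is to read this as a Riemann sum in the variable $x=L_N(\lambda+\rho)$ for the integral in the statement. For $\lambda$ such that $x \in C + L_N|\Phi_+|\rho$ (the bulk of the Weyl chamber), Lemma \ref{lem:partialderivative} combined with the identity $\delta(x) = (L_N)^{|\Phi_+|}\,d_\lambda$ from Weyl's dimension formula, and with the relations $d + 2|\Phi_+| = \dim G$ and $N(L_N)^{\dim G} = \ell$, gives
\[ \frac{N^{k-1}(C_{\lambda,N})^k}{(d_\lambda)^{k-2}} \,=\, \frac{(-1)^{k|\Phi_+|}\,\ell^{k-1}\,(L_N)^d}{(2\pi)^{kd/2}\,(\vol(\tlie/\tlie_\Z))^k\,(\delta(x))^{k-2}}\,\Bigl((\partial_{\Phi_+}\tildeJ_{\R\Omega})(x) + L_N\,K_{\R\Omega}(L_N,x)\Bigr)^{\!k}. \]
Since the volume of a fundamental cell of $L_N \Z\Omega$ equals $(L_N)^d / \vol(\tlie/\tlie_\Z)$ (dual-lattice identity $\vol(\R\Omega/\Z\Omega)\vol(\tlie/\tlie_\Z)=1$), the bulk contribution is precisely a Riemann sum for the integral in the statement.

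Two remainders need to be bounded. The boundary strip $C \setminus (C + L_N|\Phi_+|\rho)$, where Lemma \ref{lem:partialderivative} does not apply, has thickness $O(L_N)$; the integrand is locally bounded there thanks to the cancellation of $\delta$ described below, so this strip contributes $O(L_N)$. The cross-terms of the binomial expansion of $((\partial_{\Phi_+}\tildeJ_{\R\Omega})(x) + L_N K_{\R\Omega}(L_N,x))^k$ involving at least one factor $L_N K_{\R\Omega}$ are also $O(L_N)$, via the uniform bound $|K_{\R\Omega}(L_N,x)| \leq K \min(1, \|x\|^{-(d+1)/2})$ from Lemma \ref{lem:partialderivative}, which makes the associated discrete sums uniformly summable.

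The main technical obstacle is to prove integrability of the limit integrand on $C$ and to justify convergence of this improper Riemann sum. At infinity, the asymptotic $\tildeJ_{\R\Omega}(x) = O(\|x\|^{-(d+1)/2})$ propagates through the recurrence $(\tildeJ_\beta)'(x) = -x\,\tildeJ_{\beta+1}(x)$ to $\partial_{\Phi_+}\tildeJ_{\R\Omega}(x) = O(\|x\|^{-(d+1)/2})$, so the integrand decays like $\|x\|^{-k(d+1)/2 - (k-2)|\Phi_+|}$ in a cone of dimension $d$; this is integrable since $k(\dim G + 1) > 2\dim G$ for $k \geq 2$. Near the walls of $C$, the $W$-invariance of $\tildeJ_{\R\Omega}$ forces $\partial_{\Phi_+}\tildeJ_{\R\Omega}$ to transform under the Weyl group by the sign character (each $\partial_\alpha$ is anti-invariant under $s_\alpha$, while for $\alpha$ simple the remaining operator $\prod_{\beta \in \Phi_+ \setminus \{\alpha\}} \partial_\beta$ is preserved because $s_\alpha$ permutes $\Phi_+ \setminus \{\alpha\}$). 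By the standard invariant-theoretic fact that a smooth $W$-anti-invariant function is divisible by $\prod_{\alpha \in \Phi_+}\scal{\alpha}{x}$, the ratio $\partial_{\Phi_+}\tildeJ_{\R\Omega}/\delta$ extends smoothly across $\partial C$, cancelling the apparent singularity $(\delta(x))^{-(k-2)}$. With integrability secured, a cutoff argument at a large radius $R \to \infty$ combined with the error bounds above yields the claimed limit.
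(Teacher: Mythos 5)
Your proposal follows essentially the same route as the paper: the Plancherel expansion $\sum_\lambda N^{k-1}(C_{\lambda,N})^k/(d_\lambda)^{k-2}$, the pointwise estimate from Lemma~\ref{lem:partialderivative} on the shifted chamber $C+L_N|\Phi_+|\rho$, the identification of cell volume $(L_N)^d/\vol(\tlie/\tlie_\Z)$, and the $O(L_N)$ control of the boundary strip and of the $K_{\R\Omega}$ cross-terms. Where you genuinely add something is in the last paragraph: the paper only says that the remainder ``can be summed'' and does not verify that the limiting integrand is integrable on $C$. Your two-part argument --- decay at infinity from $\partial_{\Phi_+}\tildeJ_{\R\Omega}(x)=O(\|x\|^{-(d+1)/2})$ giving an exponent $k(\dim G+1)/2>\dim G$, and smoothness of $\partial_{\Phi_+}\tildeJ_{\R\Omega}/\delta$ across $\partial C$ from the divisibility of the $W$-anti-invariant alternating derivative by the Weyl denominator --- is the justification that the paper leaves implicit, and it is needed for the Riemann-sum argument to make sense. (You also quietly use the correct form $\delta(x)=(L_N)^{|\Phi_+|}d_\lambda$ where the paper's phrasing inverts the relation.)
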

\noindent More precisely, the difference between $N^{k-1}\,E_{H,T,N}$ and its limit is a $O(L_N)$, with a constant in the $O(\cdot)$ that only depends on $G$ and $s$. In the following, since we shall always deal with the partial derivative $(-1)^{|\Phi_+|}\,\partial_{\Phi_+}=\partial_{\Phi_-} = \prod_{\alpha \in \Phi_-} \partial_\alpha$, it will be convenient to use the latter notation $\partial_{\Phi_-}$.  As an application of Theorem \ref{thm:asymptoticsonevertex}, we can compute the limits $M_{s} = \lim_{N \to \infty} \esper[\nu_N(x^s)]$ for any $s \in \{2,3,4,5\}$. Indeed, we can enumerate all the circuits of length $s \leq 5$, and all their reductions have connected components which are loops:
\begin{align*}
M_2 &= e^{(\ell)}_{\begin{tikzpicture}[scale=0.5]
\draw (0,0.5) circle (0.5);
\fill (0,0) circle (1mm);
\fill [white] (0,1) circle (2.7mm);
\draw (0,1) circle (2.7mm);
\draw (0,1) node {\tiny $2$};
\end{tikzpicture}} \!\!\quad;\quad M_3 = e^{(\ell)}_{\begin{tikzpicture}[scale=0.5]
\draw (0,0.5) circle (0.5);
\fill (0,0) circle (1mm);
\fill [white] (0,1) circle (2.7mm);
\draw (0,1) circle (2.7mm);
\draw (0,1) node {\tiny $3$};
\end{tikzpicture}}\!\!\quad;\quad
M_4 =e^{(\ell)}_{\begin{tikzpicture}[scale=0.5]
\draw (0,0.5) circle (0.5);
\fill (0,0) circle (1mm);
\fill [white] (0,1) circle (2.7mm);
\draw (0,1) circle (2.7mm);
\draw (0,1) node {\tiny $4$};
\end{tikzpicture}} + 2\,e^{(\ell)}_{\begin{tikzpicture}[scale=0.5]
\draw (0,0.5) circle (0.5);
\draw (0,-1) circle (0.5);
\fill (0,0) circle (1mm);
\fill (0,-0.5) circle (1mm);
\fill [white] (0,-1.5) circle (2.7mm);
\fill [white] (0,1) circle (2.7mm);
\draw (0,1) circle (2.7mm);
\draw (0,-1.5) circle (2.7mm);
\draw (0,1) node {\tiny $2$};
\draw (0,-1.5) node {\tiny $2$};
\end{tikzpicture}} + e^{(\ell)}_{\begin{tikzpicture}[scale=0.5]
\draw (0,0.5) circle (0.5);
\fill (0,0) circle (1mm);
\fill [white] (0,1) circle (2.7mm);
\draw (0,1) circle (2.7mm);
\draw (0,1) node {\tiny $2$};
\end{tikzpicture}}\!\!\quad;\quad
M_5=e^{(\ell)}_{\begin{tikzpicture}[scale=0.5]
\draw (0,0.5) circle (0.5);
\fill (0,0) circle (1mm);
\fill [white] (0,1) circle (2.7mm);
\draw (0,1) circle (2.7mm);
\draw (0,1) node {\tiny $5$};
\end{tikzpicture}} + 5\,e^{(\ell)}_{\begin{tikzpicture}[scale=0.5]
\draw (0,0.5) circle (0.5);
\draw (0,-1) circle (0.5);
\fill (0,0) circle (1mm);
\fill (0,-0.5) circle (1mm);
\fill [white] (0,-1.5) circle (2.7mm);
\fill [white] (0,1) circle (2.7mm);
\draw (0,1) circle (2.7mm);
\draw (0,-1.5) circle (2.7mm);
\draw (0,1) node {\tiny $3$};
\draw (0,-1.5) node {\tiny $2$};
\end{tikzpicture}} + 5\,e^{(\ell)}_{\begin{tikzpicture}[scale=0.5]
\draw (0,0.5) circle (0.5);
\fill (0,0) circle (1mm);
\fill [white] (0,1) circle (2.7mm);
\draw (0,1) circle (2.7mm);
\draw (0,1) node {\tiny $3$};
\end{tikzpicture}}.
\end{align*}
Therefore, given a sscc Lie group $G$, if we set for $k \geq 2$
$$I_k=\int_C \left(\frac{(\partial_{\Phi_-}\tildeJ_{\R\Omega})(x)}{(2\pi)^{d/2}}\right)^k \,\frac{\!\DD{x}}{(\delta(x))^{k-2}}$$ 
then, the five first asymptotic moments of the spectral measure of $\GEOM(N,L_N)$ with $L_N = (\ell/N)^{\frac{1}{\dim G}}$ are given by:
\begin{align*}
M_2 &= I_2\,\ell' ;\\
M_3 &= I_3\,( \ell' )^2 ;\\
M_4 &= I_4\,(\ell')^3 + 2\,(I_2)^2\,(\ell')^2 + I_2\,\ell' ;\\
M_5 &= I_5\,( \ell')^4 + 5\,I_3\,I_2\,( \ell')^3 + 5\,I_3\,( \ell')^2,
\end{align*}
where $\ell'=\frac{\ell}{\vol(\tlie/\tlie_\Z)}$. 
\medskip

\subsection{Asymptotic contribution of a connected reduced circuit with two vertices}\label{subsec:twopoints}
What is important in the previous paragraph is not the explicit formula that one obtains for the five first moments, but the method that leads to it: indeed, if one tries to extend it to higher moments, then one is led to new results in representation theory. These results and conjectures are related to the theory of crystals, and in order to understand this, one can try to compute the sixth moment of $\mu$ with the same method as above. The the enumeration of all the circuits of length $6$ yields
$$
M_6 =  e^{(\ell)}_{\begin{tikzpicture}[scale=0.5]
\draw (0,0.5) circle (0.5);
\fill (0,0) circle (1mm);
\fill [white] (0,1) circle (2.7mm);
\draw (0,1) circle (2.7mm);
\draw (0,1) node {\tiny $6$};
\end{tikzpicture}} 
+ 6\,e^{(\ell)}_{\begin{tikzpicture}[scale=0.5]
\draw (0,0.5) circle (0.5);
\draw (0,-1) circle (0.5);
\fill (0,0) circle (1mm);
\fill (0,-0.5) circle (1mm);
\fill [white] (0,-1.5) circle (2.7mm);
\fill [white] (0,1) circle (2.7mm);
\draw (0,1) circle (2.7mm);
\draw (0,-1.5) circle (2.7mm);
\draw (0,1) node {\tiny $4$};
\draw (0,-1.5) node {\tiny $2$};
\end{tikzpicture}} 
+ 3\,e^{(\ell)}_{\begin{tikzpicture}[scale=0.5]
\draw (0,0.5) circle (0.5);
\draw (0,-1) circle (0.5);
\fill (0,0) circle (1mm);
\fill (0,-0.5) circle (1mm);
\fill [white] (0,-1.5) circle (2.7mm);
\fill [white] (0,1) circle (2.7mm);
\draw (0,1) circle (2.7mm);
\draw (0,-1.5) circle (2.7mm);
\draw (0,1) node {\tiny $3$};
\draw (0,-1.5) node {\tiny $3$};
\end{tikzpicture}} 
+ 6\, e^{(\ell)}_{\begin{tikzpicture}[scale=0.5]
\draw (0,0.5) circle (0.5);
\fill (0,0) circle (1mm);
\fill [white] (0,1) circle (2.7mm);
\draw (0,1) circle (2.7mm);
\draw (0,1) node {\tiny $4$};
\end{tikzpicture}}
+ 6\,e^{(\ell)}_{\begin{tikzpicture}[scale=0.5]
\draw (-1.5,0.5) circle (0.5);
\draw (0,0.5) circle (0.5);
\draw (-0.75,-1) circle (0.5);
\fill (0,0) circle (1mm);
\fill (-1.5,0) circle (1mm);
\fill (-0.75,-0.5) circle (1mm);
\fill [white] (-0.75,-1.5) circle (2.7mm);
\fill [white] (0,1) circle (2.7mm);
\fill [white] (-1.5,1) circle (2.7mm);
\draw (0,1) circle (2.7mm);
\draw (-1.5,1) circle (2.7mm);
\draw (-0.75,-1.5) circle (2.7mm);
\draw (0,1) node {\tiny $2$};
\draw (-1.5,1) node {\tiny $2$};
\draw (-0.75,-1.5) node {\tiny $2$};
\end{tikzpicture}}
+ 9\,e^{(\ell)}_{\!\!\!\begin{tikzpicture}[scale=1]
\draw (0,0) -- (0,1) -- (0.35,0.75) -- (0.35,0.25) -- (0,0) -- (-0.35,0.25) -- (-0.35,0.75) -- (0,1);
\foreach \x in {(0,1),(0,0)}
{\fill \x circle (0.5mm) ;}
\foreach \x in {(0,0.5),(0.35,0.5),(-0.35,0.5)}
{ \fill [white] \x circle (1.35mm) ; \draw \x circle (1.35mm);}
\draw (0,0.5) node {\tiny $2$};
\draw (-0.35,0.5) node {\tiny $2$};
\draw (0.35,0.5) node {\tiny $1$};
\end{tikzpicture}}
+ 6\,e^{(\ell)}_{\begin{tikzpicture}[scale=0.5]
\draw (0,0.5) circle (0.5);
\draw (0,-1) circle (0.5);
\fill (0,0) circle (1mm);
\fill (0,-0.5) circle (1mm);
\fill [white] (0,-1.5) circle (2.7mm);
\fill [white] (0,1) circle (2.7mm);
\draw (0,1) circle (2.7mm);
\draw (0,-1.5) circle (2.7mm);
\draw (0,1) node {\tiny $2$};
\draw (0,-1.5) node {\tiny $2$};
\end{tikzpicture}}
+ 4\,e^{(\ell)}_{\begin{tikzpicture}[scale=0.5]
\draw (0,0.5) circle (0.5);
\fill (0,0) circle (1mm);
\fill [white] (0,1) circle (2.7mm);
\draw (0,1) circle (2.7mm);
\draw (0,1) node {\tiny $3$};
\end{tikzpicture}}
+ e^{(\ell)}_{\begin{tikzpicture}[scale=0.5]
\draw (0,0.5) circle (0.5);
\fill (0,0) circle (1mm);
\fill [white] (0,1) circle (2.7mm);
\draw (0,1) circle (2.7mm);
\draw (0,1) node {\tiny $2$};
\end{tikzpicture}},
$$
the terms being order by decreasing parameter $k$. Among these terms, there is one reduced circuit with two vertices instead of one, which is for instance obtained with the identities of indices $i_1=i_4$ and $i_2=i_5$. Indeed, these identities correspond to the circuit $(H,T)$
\begin{center}
\begin{tikzpicture}[scale=1.8]
\draw [-{>[scale=2]>[scale=2]}] (0,0) -- (-0.2,0.5);
\draw (-0.2,0.5) -- (0,1) -- (-0.866,0.5) -- (0,0) -- (0.2,0.5) -- (0,1) -- (0.866,0.5) -- (0,0);
\foreach \x in {(0,1),(-0.866,0.5),(0.866,0.5)}
{\fill [white] \x circle (0.5mm); \draw \x circle (0.5mm);}
\fill (0,0) circle (0.5mm);
\draw (-0.2,0.7) node {$1$};
\draw (-0.6,0.8) node {$2$};
\draw (-0.6,0.2) node {$3$};
\draw (0.2,0.7) node {$4$};
\draw (0.6,0.8) node {$5$};
\draw (0.6,0.2) node {$6$};
\end{tikzpicture}
\end{center}
and thus to the reduced circuit $R(H,T)$ of Figure \ref{fig:reducedsix}.
\begin{figure}[ht]
\begin{center}
\begin{tikzpicture}[scale=1.8]
\draw (0,0) -- (0,1) -- (0.25,0.75) -- (0.25,0.25) -- (0,0) -- (-0.25,0.25) -- (-0.25,0.75) -- (0,1);
\foreach \x in {(0,1),(0,0)}
{\fill \x circle (0.5mm) ;}
\foreach \x in {(0,0.5),(0.25,0.5),(-0.25,0.5)}
{ \fill [white] \x circle (0.9mm) ; \draw \x circle (0.9mm);}
\draw (0,0.5) node {\tiny $2$};
\draw (-0.25,0.5) node {\tiny $2$};
\draw (0.25,0.5) node {\tiny $1$};
\end{tikzpicture}
\end{center}
\caption{The reduced circuit corresponding to the identities $i_1=i_4$ and $i_2=i_5$.\label{fig:reducedsix}}
\end{figure}
\bigskip

The asymptotics of $E_{R,N}$ with $R$ as in Figure \ref{fig:reducedsix} are related to the asymptotics of tensor products $V^{\lambda} \otimes V^{\mu}$ when $x = L_N\,\lambda$ and $y = L_N\,\mu$ are fixed points in the interior of the Weyl chamber, and $L_N$ goes to $0$; thus, $\lambda$ and $\mu$ are very large dominant weights. Indeed, we have 
\begin{align*}
e^{(\ell)}_{R,N}=N^{3}\,E_{R,N} &= N^3 \,\esper[h_N(v_1,v_2)\,h_N(v_2,v_3)\,h_N(v_1,v_3)\,h_N(v_1,v_4)\,h_N(v_4,v_1)] \\
&= N^3\,\esper\!\left[(Z_{L_N}^{*2}(v_1(v_3)^{-1}))^2\,Z_{L_N}(v_3(v_1)^{-1})\right] = N^3\,\scal{((Z_{L_N})^{*2})^2}{Z_{L_N}}.
\end{align*}
The functions above decompose in $\leb^2(G)$ as:
$$
Z_{L_N} = \sum_{\lambda \in \hatG} C_{\lambda,N}\,\ch^\lambda \qquad;\qquad (Z_{L_N})^{*2} = \sum_{\lambda \in \hatG} \frac{(C_{\lambda,N})^2}{d_\lambda}\,\ch^\lambda.$$
Therefore, we have
$$N^{3}\,E_{R,N} = \sum_{\lambda,\mu,\nu} \frac{N^3\,(C_{\lambda,N})^2\,(C_{\mu,N})^2\,C_{\nu,N}}{d_\lambda\,d_\mu}\,\scal{\ch^\lambda\,\times\,\ch^\mu}{\ch^\nu}.$$ 
As before, the idea is to consider the sum above as a Riemann sum, and we will of course use Lemma \ref{lem:partialderivative} in order to approximate the coefficients $C_{\lambda,N}$, $C_{\mu,N}$ and $C_{\nu,N}$ by partial derivatives of Bessel functions. However, we also need to deal with $c^{\lambda,\mu}_\nu=\scal{\ch^\lambda\,\times\,\ch^\mu}{\ch^\nu}$, and the product of characters $\ch^\lambda \times \ch^\mu$ is the character of the tensor product of representations $V^{\lambda} \otimes V^{\mu}$. Therefore, we need to understand the asymptotics of the \emph{Littlewood--Richardson coefficients} $c^{\lambda,\mu}_\nu$ such that
$$V^\lambda \otimes V^\mu = \bigoplus_{\nu\in \hatG} c^{\lambda,\mu}_\nu\,V^\nu,$$
when $\lambda$ and $\mu$ are very large. More generally, if we are interested in the computations of the terms $e_R$ where $R$ is a general reduced circuit on two vertices, then we need to understand the asymptotic behavior of the Littlewood--Richardson coefficients of tensor products with more than $2$ irreducible representations. Given dominant weights $\lambda_1,\ldots,\lambda_{r-1\geq 2}$, we write
$$V^{\lambda_1} \otimes V^{\lambda_2} \otimes \cdots \otimes V^{\lambda_{r-1}} = \sum_{\lambda_r \in \hatG} c^{\lambda_1,\ldots,\lambda_{r-1}}_{\lambda_r} \,V^{\lambda_r}.$$
These generalised Littlewood--Richardson coefficients are connected to the usual one by the convolution rule:
$$c^{\lambda_1,\ldots,\lambda_{r-1}}_{\lambda_r} = \sum_{\nu_1,\ldots,\nu_{r-3} \in \hatG} c^{\lambda_1,\lambda_2}_{\nu_1} c^{\nu_1,\lambda_3}_{\nu_2} \cdots c^{\nu_{r-3},\lambda_{r-1}}_{\lambda_r}.$$

\begin{proposition}\label{prop:multilittlewood}
We denote $d$ the rank of the sscc Lie group $G$, and $l=|\Phi_+|$ its number of positive roots. We fix directions $x_1,\ldots,x_{r-1\geq 2}$ in the interior $C'$ of the Weyl chamber. There exists a compactly supported piecewise polynomial function $q_{x_1,\ldots,x_{r-1}}(z)$ on $C$ such that:
\begin{itemize}
     \item This function is non-negative and symmetric in $x_1,\ldots,x_{r-1}$.
     \item For any bounded continuous function $f$ on $C$,
     $$\lim_{\substack{t \to \infty \\ tx_{1},\ldots,tx_{r-1} \in \hatG}} \left(\frac{1}{t^{(r-2)l}} \sum_{\nu \in \hatG} c^{tx_1,\ldots,tx_{r-1}}_{\nu}\,f\left( \frac{\nu}{t} \right) \right) = \int_{C} f(z)\,q_{x_1,\ldots,x_{r-1}}(z)\DD{z}.$$
     \item The function $q_{x_1,\ldots,x_{r-1}}(z)$ is related to the functions $q_{x,y}(z)$ by the convolution rule:
     \begin{equation*}
           q_{x_1,\ldots,x_{r-1}}(z) = \int_{C^{r-3}} q_{x_1,x_2}(z_1)\,q_{x_3,z_1}(z_2)\,\cdots \,q_{x_{r-2},z_{r-4}}(z_{r-3})\,q_{x_{r-1},z_{r-3}}(z)\DD{z}_1\,\cdots\DD{z}_{r-3} .
      \end{equation*} 
     \item The function $(x_1,\ldots,x_{r-1},z)\mapsto q_{x_1,\ldots,x_{r-1}}(z)$ is locally a homogeneous polynomial function of total degree $(r-2)l - d$. The domains of polynomiality of this function are polyhedral cones in $C^r$.
     \item The total mass of the positive measure $q_{x_1,\ldots,x_{r-1}}(z)\DD{z}$ is smaller than 
     $$ \frac{1}{\max_{i \in \lle 1,r-1\rre} \delta(x_i)}\,\prod_{i=1}^{r-1} \delta(x_i).$$
 \end{itemize} 
\end{proposition}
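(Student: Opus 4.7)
The plan is to establish the result first for $r-1=2$ using the Berenstein--Zelevinsky--Littelmann description of Littlewood--Richardson coefficients as lattice-point counts in string polytopes, and then to extend it to arbitrary $r$ by induction using the convolution rule. Throughout, I would rely on the theory of crystals and string polytopes summarised in the appendix \S\ref{sec:crystal}: fixing a reduced decomposition of the longest element of the Weyl group $W$, the crystal basis of $V^\mu$ is in bijection with the integer points of a rational polytope $P_\mu \subset \R^l$ obtained by slicing the string cone $\mathscr{S}\!\mathscr{C}(G) \subset \R^l$ by affine inequalities depending linearly on $\mu$.

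For the base case, the Littlewood--Richardson coefficient $c^{\lambda,\mu}_\nu$ equals the number of integer points of a sub-polytope $P(\lambda,\mu,\nu)\subset P_\mu$ cut out by the weight condition $\mathrm{wt}(b)=\nu-\lambda$ (a codimension-$d$ affine slice) and the $\lambda$-dominance inequalities (affine in $\lambda$). Generically this polytope has dimension $l-d$, and its combinatorial type is constant on each chamber of a polyhedral fan in $C^3$. I would define $q_{x,y}(z)$ as an appropriately normalised $(l-d)$-dimensional Euclidean volume of $P(x,y,z)$, which is then automatically piecewise polynomial of homogeneity degree $l-d$ in $(x,y,z)$ with polyhedral chambers of polynomiality. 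A quantitative Ehrhart-type estimate for these slice polytopes will give $c^{tx,ty}_{tz}=t^{l-d}\,q_{x,y}(z)+O(t^{l-d-1})$ uniformly on compact subsets of the interior of each polyhedrality chamber. The asymptotic identity of the proposition then follows by recognising the left-hand side $t^{-l}\sum_{\nu\in\hatG}c^{tx,ty}_{\nu}\,f(\nu/t)$ as a Riemann sum of mesh $t^{-1}$ converging to the claimed integral, the only delicate point being the contribution of $\nu/t$ in thin neighbourhoods of the walls, which is negligible by a direct counting argument.

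Non-negativity is immediate since $q_{x,y}(z)$ is a volume, and the symmetry $q_{x,y}=q_{y,x}$ follows from $V^\lambda\otimes V^\mu\cong V^\mu\otimes V^\lambda$. The inductive step is then straightforward: given the base case and its uniform convergence on compacts, iterating the stated convolution identity for generalised Littlewood--Richardson coefficients $c^{\lambda_1,\ldots,\lambda_{r-1}}_{\lambda_r}=\sum c^{\lambda_1,\lambda_2}_{\nu_1}c^{\nu_1,\lambda_3}_{\nu_2}\cdots c^{\nu_{r-3},\lambda_{r-1}}_{\lambda_r}$ produces the claimed integral formula for $q_{x_1,\ldots,x_{r-1}}$. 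Convolution preserves piecewise polynomial structure and adds $l$ to the total degree at each step, giving the homogeneity degree $(r-2)l-d$, and the polyhedral walls of polynomiality in $C^r$ are also preserved. For the total mass bound, one observes that $\sum_\nu c^{\lambda_1,\ldots,\lambda_{r-1}}_\nu$ equals the dimension of the space of highest weight vectors in $V^{\lambda_1}\otimes\cdots\otimes V^{\lambda_{r-1}}$; projecting such a vector onto its component along the highest-weight line of $V^{\lambda_j}$ for any chosen $j$ defines an injection into $\bigotimes_{i\neq j}V^{\lambda_i}$, so $\sum_\nu c^{\lambda_1,\ldots,\lambda_{r-1}}_\nu\leq \prod_{i\neq j}d_{\lambda_i}$; taking $j$ to maximise $d_{\lambda_j}$ and passing to the limit after rescaling gives exactly the bound claimed in the proposition.

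The principal obstacle I anticipate is establishing the Ehrhart-type expansion uniformly in $(x,y,z)$ across the polyhedral chambers of polynomiality, particularly near the walls where the combinatorial type of $P(\lambda,\mu,\nu)$ changes. Classical Ehrhart theory handles a single rational polytope under dilation, but here both the defining inequalities and the shape of $P$ vary with the parameters; controlling the remainder $O(t^{l-d-1})$ uniformly up to the walls will require either an appeal to the Brion--Vergne quasi-polynomial machinery for parametric lattice-point counts, or a direct argument bounding the contribution of lattice points in thin slabs around each wall. A secondary subtlety is that the string polytope $P_\mu$ itself depends on the choice of a reduced word for the longest element of $W$, but distinct choices yield polytopes with the same integer-point counts, so the resulting function $q_{x_1,\ldots,x_{r-1}}$ is canonical.
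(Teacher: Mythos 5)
Your overall strategy — prove the base case $r=3$ from string/Berenstein--Zelevinsky polytopes, then bootstrap via the convolution rule — coincides with the paper's, but the base-case argument genuinely differs. You propose a pointwise Ehrhart-type estimate $c^{tx,ty}_{tz}=t^{l-d}\,q_{x,y}(z)+O(t^{l-d-1})$ for the lattice-point count of a codimension-$d$ slice of the relative string polytope, and you correctly flag the main obstacle: controlling that error uniformly as the slice moves with $(x,y,z)$ and the combinatorial type changes across walls. The paper sidesteps this difficulty entirely. It never estimates $c^{tx,ty}_{tz}$ pointwise; instead it works with the normalised counting measure $\rho_{\lambda,\mu}$ on the integer points of the \emph{full-dimensional} polytope $\mathscr{P}(\lambda,\mu)\subset\R^l$, shows that under $t$-rescaling this measure converges weakly to the uniform Lebesgue measure on $\mathscr{P}(x,y)$ — a robust Riemann-sum statement for full-dimensional polytopes needing no Ehrhart machinery at all — and then pushes that measure forward by the affine map $\Psi_{x+y}:\R^l\to\R\Omega$ to produce $p_{x,y}(z)\DD{z}$. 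Weak convergence is preserved under continuous maps, and the density of an affine image of Lebesgue measure on a polytope is automatically piecewise polynomial, so the piecewise-polynomial structure, the homogeneity, and the polyhedral-cone chambers of polynomiality all come for free (Theorem \ref{thm:asymptoticstensor} and Proposition \ref{prop:homogeneouspolynomial}). Your slice volume is precisely the density of this push-forward, so the two constructions agree; but the measure-theoretic route is strictly easier and avoids exactly the parametric Ehrhart estimate you anticipate as problematic, so you should adopt it if you want a clean proof. Finally, your mass bound via the injection of highest-weight vectors into $\bigotimes_{i\neq j}V^{\lambda_i}$ is a valid and tidy alternative to what the paper's setup suggests, namely iterating the convolution rule together with the two-variable inequality $\int_C q_{x,y}(z)\DD{z}\leq\min(\delta(x),\delta(y))$ coming from $\sum_\nu c^{\lambda,\mu}_\nu\leq\dim_\C V^\mu$ and the symmetry in $\lambda,\mu$; either argument yields the bound $\prod_{i\neq j^*}\delta(x_i)$ with $j^*$ maximising $\delta(x_j)$.
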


This proposition is proved in the second appendix of this paper; the proof relies deeply on the theory of crystal bases and string polytopes. Let us give an intuitive explanation of it which does not go too much into the algebraic details. Given three dominant weights $\lambda$, $\mu$ and $\nu$, one can construct a polytope $\mathscr{P}(\lambda,\mu)$ (the \emph{relative Berenstein--Zelevinsky polytope}) in a vector space of dimension $l=|\Phi_+|$ which is determined by hyperplanes whose equations depend on $\lambda$ and $\mu$ in an affine way. Then, the Littlewood--Richardson coefficient $c^{\lambda,\mu}_\nu$ is the number of integer points which lie in the intersection of this polytope and of a vector subspace determined by $d = \rank(G)$ equations, these equations depending on $\lambda$ and $\nu$ again in an affine way (see Theorem \ref{thm:berensteinzelevinsky}). When we consider a sum
$$\sum_{\nu \in \hatG} c^{tx,ty}_\nu\,f\left(\frac{\nu}{t}\right)$$
with $x,y \in C'$, $tx,ty \in \hatG$ and $t$ going to infinity, the polytope $\mathscr{P}(tx,ty)$ scales linearly with the parameter $t$, and the counting measure of the integer points of this polytope becomes after scaling the uniform Lebesgue measure on $\mathscr{P}(x,y)$, which is of dimension $l$. The Riemann sum over the dominant weights $\nu$ becomes after scaling an integral against the affine projection of this uniform measure on a polytope of smaller dimension $d$. Since an affine projection of a uniform measure on a polytope is a piecewise polynomial function also supported by a polytope, this essentially proves Proposition \ref{prop:multilittlewood} in the case $r=3$, by using also the linearity of the various polytopes in the parameters $\lambda,\mu,\nu$. The general case $r \geq 3$ follows by using the convolution rule for multiple Littlewood--Richardson coefficients. In the sequel of this section, we use Proposition \ref{prop:multilittlewood} without insisting on its algebraic origin. The knowledge of these algebraic beginnings will only be useful in order to understand fully our conjecture on graph functionals, and why it involves the enumeration of integer points in polytopes; again, we refer to Section \ref{sec:crystal} for more details.\medskip

We now come back to the asymptotics of $E_{R,N}$ when $R$ is a connected reduced circuit with two vertices. An arbitrary connected reduced circuit on $k'=2$ vertices writes as
\vspace{2mm}
\begin{center}
\begin{tikzpicture}[scale=2.5]
\draw (0,0) -- (0,1) -- (0.25,0.75) -- (0.25,0.25) -- (0,0) -- (0.5,0.25) -- (0.5,0.75) -- (0,1) -- (1,0.75) -- (1,0.25) -- (0,0);
\foreach \x in {(0,1),(0,0)}
{\fill \x circle (0.3mm) ;}
\foreach \x in {(0,0.5),(0.25,0.5),(0.5,0.5),(1,0.5)}
{ \fill [white] \x circle (0.9mm) ; \draw \x circle (0.9mm);}
\draw (0,0.5) node {\tiny $a_1$};
\draw (0.25,0.5) node {\tiny $a_2$};
\draw (0.5,0.5) node {\tiny $a_3$};
\draw (0.75,0.5) node {\tiny $\cdots$};
\draw (1,0.5) node {\tiny $a_r$};
\draw (-0.4,0.5) node {$R=$};
\draw (1.2,0.5) node {$.$};
\end{tikzpicture}
\end{center}
with $a_1\geq a_2 \geq \cdots \geq a_r \geq 1$, at most one index $a_r=1$, and $r$ larger than $3$. In this setting, $k = (a_1+\cdots+a_r)-(r-2)$. The contribution corresponding to such a reduced circuit is:
\begin{align*}
E_{R,N} &= \scal{Z_{L_N}^{*a_1} Z_{L_N}^{*a_2} \cdots Z_{L_N}^{*a_{r-1}}}{Z_{L_N}^{*a_r}} \\
&= \sum_{\lambda_1,\ldots,\lambda_r} \frac{(C_{\lambda_1,N})^{a_1}(C_{\lambda_2,N})^{a_2}\cdots (C_{\lambda_r,N})^{a_r}}{(d_{\lambda_1})^{a_{1}-1}(d_{\lambda_2})^{a_{2}-1}\cdots (d_{\lambda_r})^{a_{r}-1}}\,\scal{\ch^{\lambda_1}\times \ch^{\lambda_2} \times \cdots \times \ch^{\lambda_{r-1}}}{\ch^{\lambda_r}} 
\end{align*}
the sum running over $r$-tuples of dominant weights. With $t_N = (L_N)^{-1}$, by combining Proposition \ref{prop:multilittlewood} and the estimates of the coefficients $C_{\lambda,N}$ given by Lemma \ref{lem:partialderivative}, we obtain:
\begin{align*}
&(2\pi)^{\frac{d}{2}\,(a_1+\cdots +a_r)}\,E_{R,N}\\
 &\simeq  \frac{(L_N)^{(\dim G)(a_1+\cdots+a_r)-lr}}{(\vol(\tlie/\tlie_\Z))^{a_1+\cdots+a_r}}\sum_{x_1,\ldots,x_r} 
\prod_{i=1}^r \frac{(\partial_{\Phi_-}\tildeJ_{\R\Omega}(x_i))^{a_i}}{(\delta(x_i))^{a_i-1}}\,c^{t_Nx_1,\ldots,t_Nx_{r-1}}_{t_Nx_r}\\
&\simeq \frac{1}{(\vol(\tlie/\tlie_\Z))^{a_1+\cdots+a_r}} \left(\frac{\ell}{N}\right)^{\!k-1}\sum_{x_1,\ldots,x_{r-1}} (L_N)^{d(r-1)}\int_C \left(\prod_{i=1}^r \frac{(\partial_{\Phi_-}\tildeJ_{\R\Omega}(x_i))^{a_i}}{(\delta(x_i))^{a_i-1}}\right) q_{x_1,\ldots,x_{r-1}}(x_r)\DD{x_r} \\
&\simeq \left(\frac{\ell}{N\,\vol(\tlie/\tlie_\Z)}\right)^{\!k-1} \int_{C^r} \left(\prod_{i=1}^r \frac{(\partial_{\Phi_-}\tildeJ_{\R\Omega}(x_i))^{a_i}}{(\delta(x_i))^{a_i-1}}\right) q_{x_1,\ldots,x_{r-1}}(x_r) \DD{x_1}\cdots \DD{x_r}, 
\end{align*}
the sums running over elements $x_1,\ldots,x_r$ which are in the Weyl chamber $C$, and which are multiple by $L_N$ of some dominant weights. The convergence of the integral on $C^r$ follows from the following argument. By using the bounds on the partial derivatives of $\tildeJ_{\R \Omega}$, and the scaling properties of the functions $\delta(x_i)$ and $q_{x_1,\ldots,x_{r-1}}(x_r)$, one sees that it suffices to prove the convergence at infinity of the integral 
$\int^\infty t^{(r-2)l - d - (a_1+\cdots+a_r)\frac{d+1}{2}-(a_1+\cdots+a_r-r)l}\,t^{r(d+1)-1}\DD{t}.$
Indeed, $q_{x_1,\ldots,x_{r-1}}(x_r)$ is homogeneous with total degree $(r-2)l-d$; we have the upper bound
$$\left|\prod_{i=1}^r \left(\partial_{\Phi_-}\tildeJ_{\R\Omega}(x_i)\right)^{a_i}\right| \leq K\, \prod_{i=1}^r \left( \frac{1}{1+\|x_i\|} \right)^{\!\frac{a_i(d+1)}{2}};$$
and $\prod_{i=1}^r (\delta(x_i))^{a_i-1}$ is homogeneous with total degree $(a_1+\cdots+a_r-r)l$. Therefore, we have to prove that
$$(r-2)l - d - (a_1+\cdots+a_r)\,\frac{d+1}{2}-(a_1+\cdots+a_r-r)l + r(d+1) <0.$$
However, the worst case is when the $a_i$'s are minimal, that is $a_1=a_2=\cdots=a_{r-1}=2$ and $a_r=1$. The left-hand side of the inequality above is then equal to
$$(r-2)l - d - (2r-1)\,\frac{d+1}{2}-(r-1)l + r(d+1) =  - \frac{d-1}{2} - l ,$$
which is clearly negative. On the other hand, the validity of the approximation of the Riemann sum by an integral follows from the smoothness of the functions considered, and from the fact that the functions $q_{x_1,\ldots,x_{r-1}}(x_r)$ are compactly supported. We have therefore proved:

\begin{theorem}\label{thm:asymptoticstwovertices}
Let $r \geq 3$, $a_1\geq a_2\geq \cdots \geq a_{r-1}\geq 2$ and $a_r \in \lle 1,a_{r-1}\rre$. We set
$$e^{(l)}_{\!\!\begin{tikzpicture}[scale=1]
\draw (0,0) -- (0,1) -- (0.4,0.75) -- (0.4,0.25) -- (0,0) -- (1.2,0.25) -- (1.2,0.75) -- (0,1) ;
\foreach \x in {(0,1),(0,0)}
{\fill \x circle (0.3mm) ;}
\foreach \x in {(0,0.5),(0.4,0.5),(1.2,0.5)}
{ \fill [white] \x circle (1.7mm) ; \draw \x circle (1.7mm);}
\draw (0,0.5) node {\tiny $a_1$};
\draw (0.4,0.5) node {\tiny $a_2$};
\draw (0.8,0.5) node {\tiny $\cdots$};
\draw (1.2,0.5) node {\tiny $a_r$};
\draw (1.7,0.5) node {$,N$};
\end{tikzpicture}} = N^{k-1}\,E_{H,T,N},$$
where $(H,T)$ is a circuit whose reduction is a connected component on two vertices with parameters $(a_1,\ldots,a_r)$. Here, $k=(a_1+\cdots+a_r)-(r-2)$. Then,
\begin{align*}
e^{(l)}_{\!\!\begin{tikzpicture}[scale=1]
\draw (0,0) -- (0,1) -- (0.4,0.75) -- (0.4,0.25) -- (0,0) -- (1.2,0.25) -- (1.2,0.75) -- (0,1) ;
\foreach \x in {(0,1),(0,0)}
{\fill \x circle (0.3mm) ;}
\foreach \x in {(0,0.5),(0.4,0.5),(1.2,0.5)}
{ \fill [white] \x circle (1.7mm) ; \draw \x circle (1.7mm);}
\draw (0,0.5) node {\tiny $a_1$};
\draw (0.4,0.5) node {\tiny $a_2$};
\draw (0.8,0.5) node {\tiny $\cdots$};
\draw (1.2,0.5) node {\tiny $a_r$};
\end{tikzpicture}} 
&= \lim_{N \to \infty} e^{(l)}_{\!\!\begin{tikzpicture}[scale=1]
\draw (0,0) -- (0,1) -- (0.4,0.75) -- (0.4,0.25) -- (0,0) -- (1.2,0.25) -- (1.2,0.75) -- (0,1) ;
\foreach \x in {(0,1),(0,0)}
{\fill \x circle (0.3mm) ;}
\foreach \x in {(0,0.5),(0.4,0.5),(1.2,0.5)}
{ \fill [white] \x circle (1.7mm) ; \draw \x circle (1.7mm);}
\draw (0,0.5) node {\tiny $a_1$};
\draw (0.4,0.5) node {\tiny $a_2$};
\draw (0.8,0.5) node {\tiny $\cdots$};
\draw (1.2,0.5) node {\tiny $a_r$};
\draw (1.7,0.5) node {$,N$};
\end{tikzpicture}} \\
& = \left(\frac{\ell}{\vol(\tlie/\tlie_\Z)}\right)^{\!k-1} \int_{C^r} \prod_{i=1}^r\left( \frac{(\partial_{\Phi_-}\tildeJ_{\R\Omega})(x_i)}{(2\pi)^{d/2}} \right)^{\!a_i} \frac{q_{x_1,\ldots,x_{r-1}}(x_r)}{\prod_{i=1}^r (\delta(x_i))^{a_i-1}} \DD{x_1}\cdots \DD{x_r},
\end{align*}
the function $q_{x_1,\ldots,x_{r-1}}(x_r)$ being related to the asymptotics of (multi-)Littlewood--Richardson coefficients by Proposition \ref{prop:multilittlewood}.
\end{theorem}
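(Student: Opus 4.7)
The plan is to follow the strategy already sketched in the paragraphs preceding the statement, and to transform it into a rigorous asymptotic expansion. First I would rewrite the combinatorial expectation as a Hilbert scalar product in $\leb^2(G,\!\DD{g})$. For a reduced circuit $R$ on two vertices with edge labels $(a_1,\ldots,a_r)$ (one of the two vertices being the root, the other corresponding to an identification of indices), I would group the $a_i$ occurrences of $Z_{L_N}$ along each edge, use the group invariance of $Z_{L_N}$ and the variable change $g = v_i (v_j)^{-1}$ to obtain
\begin{equation*}
E_{R,N} \;=\; \scal{\,Z_{L_N}^{*a_1} \,Z_{L_N}^{*a_2}\cdots Z_{L_N}^{*a_{r-1}}\,}{\,Z_{L_N}^{*a_r}\,}_{\leb^2(G)}.
\end{equation*}
Expanding each factor on the orthonormal basis of non-normalised characters $(\mathrm{ch}^\lambda)_{\lambda \in \hatG}$ and using $\mathrm{ch}^\lambda * \mathrm{ch}^\mu = \frac{\delta_{\lambda,\mu}}{d_\lambda}\,\mathrm{ch}^\lambda$ together with $\scal{\mathrm{ch}^\lambda}{Z_{L_N}} = C_{\lambda,N}$, this yields
\begin{equation*}
E_{R,N}\;=\; \sum_{\lambda_1,\ldots,\lambda_r \in \hatG} \frac{(C_{\lambda_1,N})^{a_1}\cdots (C_{\lambda_r,N})^{a_r}}{(d_{\lambda_1})^{a_1-1}\cdots (d_{\lambda_r})^{a_r-1}}\; c^{\lambda_1,\ldots,\lambda_{r-1}}_{\lambda_r}.
\end{equation*}

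Next, I would perform the scaling $x_i = L_N(\lambda_i + \rho)$ and apply Lemma \ref{lem:partialderivative} to replace each $C_{\lambda_i,N}$ by its asymptotic expression $(L_N)^{|\Phi_+|}(2\pi)^{-d/2}(\vol(\tlie/\tlie_\Z))^{-1}(\partial_{\Phi_-}\tildeJ_{\R\Omega})(x_i)$ up to a $O(L_N)$ correction, while Weyl's dimension formula gives $d_{\lambda_i} = (L_N)^{|\Phi_+|}\delta(x_i)$. Collecting powers of $L_N$ using $d + 2\,|\Phi_+| = \dim G$ produces a prefactor $(L_N)^{(\dim G)(a_1+\cdots+a_r) - \ell r}$ (where $\ell=|\Phi_+|$), which combined with $N^{k-1} = (L_N)^{-(\dim G)(k-1)}$ and $k = (a_1+\cdots+a_r)-(r-2)$ leaves a clean $(L_N)^{d(r-1)}$ factor in front of the sum over $x_1,\ldots,x_{r-1}$. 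I would then apply Proposition \ref{prop:multilittlewood}, with parameter $t = t_N = 1/L_N$ and test function built from the partial derivatives of $\tildeJ_{\R\Omega}$ and the powers of $\delta$: this converts the sum over $\lambda_r$ (with LR coefficient weight) into an integral against the piecewise polynomial density $q_{x_1,\ldots,x_{r-1}}(x_r)$, with the remaining $(L_N)^{d(r-1)}$ sum over $x_1,\ldots,x_{r-1}$ becoming the Riemann sum that converges to Lebesgue integration on $C^{r-1}$.

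The main obstacle, and the only delicate point, is the uniform control needed to justify both the Riemann sum convergence and the integrability of the limiting integral. I would split $C^r$ into a compact region and its complement. On compacts, the estimate of Lemma \ref{lem:partialderivative} is uniform, the function $q_{x_1,\ldots,x_{r-1}}(x_r)$ is continuous piecewise polynomial with compactly supported dependence on $x_r$, and the standard approximation of a smooth integral by a Riemann sum of mesh $L_N$ yields an error $O(L_N)$. On the complement, I need integrability at infinity in the $x_i$ variables; using the decay bound $|(\partial_{\Phi_-}\tildeJ_{\R\Omega})(x)| \leq K(1+\|x\|)^{-(d+1)/2}$, the polynomial growth $\delta(x_i)\asymp \|x_i\|^{\ell}$ as $x_i$ moves into the interior of $C$, and the homogeneity of $q_{x_1,\ldots,x_{r-1}}(x_r)$ of total degree $(r-2)\ell-d$, a radial integration in spherical coordinates shows that the exponent in $t$ equals $-\tfrac{d-1}{2}-\ell$ in the worst case $(a_1,\ldots,a_{r-1},a_r)=(2,\ldots,2,1)$, and is strictly more negative in every other case. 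This ensures absolute convergence of the limiting integral, and combined with a tail bound one deduces that contributions from $\|x_i\|$ large are uniformly small in $N$, completing the approximation of the Riemann sum by the integral. A secondary technical point is the boundary of the Weyl chamber, where one might fear blow-up from the factors $(\delta(x_i))^{-(a_i-1)}$; however, $(\partial_{\Phi_-}\tildeJ_{\R\Omega})(x)$ vanishes to order $\ell$ at the walls (since $\tildeJ_{\R\Omega}$ is smooth and $W$-invariant), which cancels the denominator and leaves bounded integrands near the walls.
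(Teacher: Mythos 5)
Your proposal follows the same route as the paper: rewrite $E_{R,N}$ as the Hilbert scalar product $\scal{Z_{L_N}^{*a_1}\cdots Z_{L_N}^{*a_{r-1}}}{Z_{L_N}^{*a_r}}$, expand on the characters to obtain the sum weighted by $c^{\lambda_1,\ldots,\lambda_{r-1}}_{\lambda_r}$, substitute the estimate of Lemma \ref{lem:partialderivative} and the limit of Proposition \ref{prop:multilittlewood}, collect powers of $L_N$ via $d+2|\Phi_+|=\dim G$, and bound the tail of the resulting integral radially, finding the worst-case exponent $-\tfrac{d-1}{2}-l$ for $(a_1,\ldots,a_{r-1},a_r)=(2,\ldots,2,1)$.

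The one place where you go beyond the paper's sketch is the boundary of the Weyl chamber. The paper's argument only addresses integrability at infinity and does not comment on the apparent singularity of $(\delta(x_i))^{-(a_i-1)}$ near the walls. Your observation --- that $\partial_{\Phi_-}\tildeJ_{\R\Omega}$ is a smooth $W$-antiinvariant function, hence divisible by the Weyl discriminant $\delta$, so that
$(\partial_{\Phi_-}\tildeJ)^{a_i}/\delta^{a_i-1} = \delta\cdot\bigl(\partial_{\Phi_-}\tildeJ/\delta\bigr)^{a_i}$
remains bounded near each wall --- is correct and does fill a small gap left implicit in the paper. Only a wording quibble: rather than ``vanishes to order $\ell$ at the walls,'' it is cleaner to say that $\partial_{\Phi_-}\tildeJ_{\R\Omega}$ vanishes to first order on each of the $\ell$ wall hyperplanes and is therefore divisible by the degree-$\ell$ product $\delta$. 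The remark $\delta(x_i)\asymp\|x_i\|^{l}$ ``as $x_i$ moves into the interior of $C$'' should likewise be understood as the homogeneity of $\delta$ along rays away from the walls; combined with the wall cancellation this gives the radial estimate you invoke.
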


\noindent As an application, if we set
\begin{align*}
I_{(2,2,1)} &= \int_{C^3} \frac{\left(\partial_{\Phi_-}\tildeJ_{\R\Omega}(x)\right)^{\!2}\left(\partial_{\Phi_-}\tildeJ_{\R\Omega}(y)\right)^{\!2}\left(\partial_{\Phi_-}\tildeJ_{\R\Omega}(z)\right)}{(2\pi)^{5d/2}\,\delta(x)\,\delta(y)} \,q_{x,y}(z) \DD{x}\DD{y} \DD{z}
\end{align*}
then this quantity is related to the term $e_R^{(\ell)}$ with $R$ as in Figure \ref{fig:reducedsix}, and by using the circuit expansion of $M_6$ previous computed, we obtain
\begin{align*}
M_6&=I_6\,(\ell')^5 + (6\,I_4I_2 + 3\, (I_3)^2)\,(\ell')^4 + (6\,I_4+ 6\,(I_2)^3 + 9\,I_{(2,2,1)})\,(\ell')^3 \\
&\quad+ (6\,(I_2)^2 + 4\,I_3)\,(\ell')^2 + I_2\,\ell',
\end{align*}
where $\ell'=\frac{\ell}{\vol(\tlie/\tlie_\Z)}$. Again, the important point is not this exact formula, but the fact that its computation sheds light on the asymptotic properties of large representations of the group $G$. This idea culminates in the computation of the higher moments $M_{s\geq 8}$, as we shall now explain --- for $s=7$, one can check that the circuit expansion is 
\begin{align*}
M_7 &= e^{(\ell)}_{\begin{tikzpicture}[scale=0.5]
\draw (0,0.5) circle (0.5);
\fill (0,0) circle (1mm);
\fill [white] (0,1) circle (2.7mm);
\draw (0,1) circle (2.7mm);
\draw (0,1) node {\tiny $7$};
\end{tikzpicture}} 
+ 7\,e^{(\ell)}_{\begin{tikzpicture}[scale=0.5]
\draw (0,0.5) circle (0.5);
\draw (0,-1) circle (0.5);
\fill (0,0) circle (1mm);
\fill (0,-0.5) circle (1mm);
\fill [white] (0,-1.5) circle (2.7mm);
\fill [white] (0,1) circle (2.7mm);
\draw (0,1) circle (2.7mm);
\draw (0,-1.5) circle (2.7mm);
\draw (0,1) node {\tiny $5$};
\draw (0,-1.5) node {\tiny $2$};
\end{tikzpicture}}
+ 7\,e^{(\ell)}_{\begin{tikzpicture}[scale=0.5]
\draw (0,0.5) circle (0.5);
\draw (0,-1) circle (0.5);
\fill (0,0) circle (1mm);
\fill (0,-0.5) circle (1mm);
\fill [white] (0,-1.5) circle (2.7mm);
\fill [white] (0,1) circle (2.7mm);
\draw (0,1) circle (2.7mm);
\draw (0,-1.5) circle (2.7mm);
\draw (0,1) node {\tiny $4$};
\draw (0,-1.5) node {\tiny $3$};
\end{tikzpicture}}  
+ 7\, e^{(\ell)}_{\begin{tikzpicture}[scale=0.5]
\draw (0,0.5) circle (0.5);
\fill (0,0) circle (1mm);
\fill [white] (0,1) circle (2.7mm);
\draw (0,1) circle (2.7mm);
\draw (0,1) node {\tiny $5$};
\end{tikzpicture}}
+ 21\,e^{(\ell)}_{\begin{tikzpicture}[scale=0.5]
\draw (-1.5,0.5) circle (0.5);
\draw (0,0.5) circle (0.5);
\draw (-0.75,-1) circle (0.5);
\fill (0,0) circle (1mm);
\fill (-1.5,0) circle (1mm);
\fill (-0.75,-0.5) circle (1mm);
\fill [white] (-0.75,-1.5) circle (2.7mm);
\fill [white] (0,1) circle (2.7mm);
\fill [white] (-1.5,1) circle (2.7mm);
\draw (0,1) circle (2.7mm);
\draw (-1.5,1) circle (2.7mm);
\draw (-0.75,-1.5) circle (2.7mm);
\draw (0,1) node {\tiny $3$};
\draw (-1.5,1) node {\tiny $2$};
\draw (-0.75,-1.5) node {\tiny $2$};
\end{tikzpicture}}
+ 21\,e^{(\ell)}_{\!\!\!\begin{tikzpicture}[scale=1]
\draw (0,0) -- (0,1) -- (0.35,0.75) -- (0.35,0.25) -- (0,0) -- (-0.35,0.25) -- (-0.35,0.75) -- (0,1);
\foreach \x in {(0,1),(0,0)}
{\fill \x circle (0.5mm) ;}
\foreach \x in {(0,0.5),(0.35,0.5),(-0.35,0.5)}
{ \fill [white] \x circle (1.35mm) ; \draw \x circle (1.35mm);}
\draw (0,0.5) node {\tiny $2$};
\draw (-0.35,0.5) node {\tiny $3$};
\draw (0.35,0.5) node {\tiny $1$};
\end{tikzpicture}}
+ 7\,e^{(\ell)}_{\!\!\!\begin{tikzpicture}[scale=1]
\draw (0,0) -- (0,1) -- (0.35,0.75) -- (0.35,0.25) -- (0,0) -- (-0.35,0.25) -- (-0.35,0.75) -- (0,1) -- (0.7,0.75) -- (0.7,0.25) -- (0,0);
\foreach \x in {(0,1),(0,0)}
{\fill \x circle (0.5mm) ;}
\foreach \x in {(0,0.5),(0.35,0.5),(-0.35,0.5),(0.7,0.5)}
{ \fill [white] \x circle (1.35mm) ; \draw \x circle (1.35mm);}
\draw (0,0.5) node {\tiny $2$};
\draw (-0.35,0.5) node {\tiny $2$};
\draw (0.35,0.5) node {\tiny $2$};
\draw (0.7,0.5) node {\tiny $1$};
\end{tikzpicture}}\\
&\quad+ 28\,e^{(\ell)}_{\!\!\!\begin{tikzpicture}[scale=1]
\draw (0,0) -- (0,1) -- (0.35,0.75) -- (0.35,0.25) -- (0,0) -- (-0.35,0.25) -- (-0.35,0.75) -- (0,1);
\foreach \x in {(0,1),(0,0)}
{\fill \x circle (0.5mm) ;}
\foreach \x in {(0,0.5),(0.35,0.5),(-0.35,0.5)}
{ \fill [white] \x circle (1.35mm) ; \draw \x circle (1.35mm);}
\draw (0,0.5) node {\tiny $2$};
\draw (-0.35,0.5) node {\tiny $2$};
\draw (0.35,0.5) node {\tiny $1$};
\end{tikzpicture}}
+ 35\,e^{(\ell)}_{\begin{tikzpicture}[scale=0.5]
\draw (0,0.5) circle (0.5);
\draw (0,-1) circle (0.5);
\fill (0,0) circle (1mm);
\fill (0,-0.5) circle (1mm);
\fill [white] (0,-1.5) circle (2.7mm);
\fill [white] (0,1) circle (2.7mm);
\draw (0,1) circle (2.7mm);
\draw (0,-1.5) circle (2.7mm);
\draw (0,1) node {\tiny $3$};
\draw (0,-1.5) node {\tiny $2$};
\end{tikzpicture}} 
+ 21\, e^{(\ell)}_{\begin{tikzpicture}[scale=0.5]
\draw (0,0.5) circle (0.5);
\fill (0,0) circle (1mm);
\fill [white] (0,1) circle (2.7mm);
\draw (0,1) circle (2.7mm);
\draw (0,1) node {\tiny $3$};
\end{tikzpicture}}
\end{align*}
and it only involves reduced circuits on one or two vertices.
\medskip

\subsection{The conjecture on graph functionals}\label{subsec:graphfunctionals}
Starting with $s=8$, the circuit expansion from Theorem \ref{thm:circuitexpansion} yields connected reduced circuits on $3$ vertices, the smallest case being when $s=8$ and we have for instance the identities of indices $i_1=i_6$, $i_3=i_7$ and $i_4=i_8$. The corresponding reduced circuit is drawn in Figure \ref{fig:smallproblem}.

\begin{figure}[ht]
\begin{center}
\begin{tikzpicture}[scale=1.8]
\draw (0.5,1) -- (0.5,0.3) -- (0,0) -- (-0.5,1) -- (0.5,1) -- (0,0) -- (-0.5,0.3) --  (-0.5,1);
\foreach \x in {(-0.5,1),(0,0),(0.5,1)}
{\fill \x circle (0.5mm) ;}
\foreach \x in {(0.5,0.5),(-0.5,0.5),(0,1),(-0.3,0.6),(0.3,0.6)}
{ \fill [white] \x circle (0.9mm) ; \draw \x circle (0.9mm);}
\draw (0,1) node {\tiny $1$};
\draw (0.5,0.5) node {\tiny $1$};
\draw (-0.5,0.5) node {\tiny $1$};
\draw (-0.3,0.6) node {\tiny $2$};
\draw (0.3,0.6) node {\tiny $2$};
\end{tikzpicture}
\end{center}
\caption{The smallest connected reduced circuit on $3$ vertices, corresponding to the identities $i_1=i_6$, $i_3=i_7$ and $i_4=i_8$ in a circuit of length $8$.\label{fig:smallproblem}}
\end{figure}
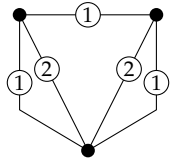

Let us explain how one should deal with the contribution $E_{R,N}$ of such a circuit. We label the $k'\geq 2$ vertices of a connected reduced circuit which is not a loop in an arbitrary order from $1$ to $k'$, and we convene to orientate each labeled edge \raisebox{-1.5mm}{\begin{tikzpicture}[scale=1]
    \draw (0,0) -- (2,0);
    \fill [white] (1,0) circle (2mm); \draw (1,0) circle (2mm);
    \draw (1,0) node {\footnotesize $l_e$};
    \draw (-0.3,0) node {\footnotesize $a$};
    \draw (2.3,0) node {\footnotesize $b$};
    \fill (0,0) circle (1mm);
    \fill (2,0) circle (1mm);
    \end{tikzpicture}}
~in the direction $a \to b$ if the index $a$ is smaller than $b$. Then,
\begin{align*}
E_{R,N} &= \int_{G^{k'}}\left( \prod_{a \to_{l_e} b \text{ labeled edge of }R} (Z_{L_N})^{*l_e}(g_a(g_b)^{-1})\right)\DD{g_1}\,\cdots \DD{g_{k'}}\\
&=\sum_{\substack{(\lambda_e)_{e \in E(R)} \text{ family}\\\text{of dominant weights}}} \left(\prod_{e \in E(R)} \frac{(C_{\lambda_e,N})^{l_e}}{(d_{\lambda_e})^{l_e-1}}\right) \int_{G^{k'}}\left( \prod_{(a \to_{l_e} b) \in E(R)} \ch^{\lambda_e}(g_a(g_b)^{-1})\right)\DD{g_1}\,\cdots \DD{g_{k'}},
\end{align*}
where $E(R)$ is the set of labeled edges of $R$. Note that the integral on the second line only depends on the unlabeled oriented graph $S$ underlying the reduced connected circuit $R$. In the following we always use the letter $S$ or the notation $S(H,T)$ for this unlabeled oriented graph, and we call \emph{graph functional} the integral
$$\mathrm{GF}_S((\lambda_e)_{e \in E(S)}) = \int_{G^{k'}}\left( \prod_{(a \to b) \in E(S)} \ch^{\lambda_e}(g_a(g_b)^{-1})\right)\DD{g_1}\,\cdots \DD{g_{k'}};$$
this is a function on the $k'$-tuples of dominant weights indexed by the edges of $S$. This definition actually makes sense for any finite graph $S$ with ordered vertices and possibly with loops; moreover, the definition immediately implies that $\mathrm{GF}_S((\lambda_e)_{e \in E(S)})$ factorises over the biconnected components of the graph $S$. We recover classical quantities when the graph $S$ has $k'=1$ or $k'=2$ vertices:
\begin{itemize}
    \item If the graph $S$ has $k'=1$ vertex, then it is a collection of loops, and we have
    $$\mathrm{GF}_{\begin{tikzpicture}[scale=0.5]
\draw (0,0.5) circle (0.5);
\fill (0,0) circle (1mm);
\end{tikzpicture}}(\lambda)= \int_G \ch^{\lambda}(e_G)\DD{g} = \dim V^\lambda.$$
    \item If the graph $S$ has $k'=2$ vertices and $r \geq 3$ edges between them, then its graph functional is a multiple Littlewood--Richardson coefficients:
    $$\mathrm{GF}_S(\lambda_1,\ldots,\lambda_r) = \int_G \ch^{\lambda_1}(g)\,\ch^{\lambda_2}(g)\cdots \ch^{\lambda_r}(g)\DD{g} = c^{\lambda_1,\ldots,\lambda_{r-1}}_{\lambda_r^*},$$
    where $\lambda^*$ denotes the highest weight of the irreducible representation which is the conjugate of $V^\lambda$ (\emph{i.e.}, $(V^\lambda)^* = V^{\lambda^*}$ and $\rho^{\lambda^*}(g) = (\rho^\lambda(g^{-1}))^t$).
\end{itemize}
Hence, the graph functionals can be considered as generalisations of the Littlewood--Richardson coefficients. For any circuit $(H,T)$, we have
\begin{equation}
    E_{H,T,N} = \sum_{(\lambda_e)_{e \in E(S(H,T))}} \left(\prod_{e \in E(S(H,T))} \frac{(C_{\lambda_e,N})^{l_e}}{(d_{\lambda_e})^{l_e-1}}\right) \mathrm{GF}_{S(H,T)}((\lambda_e)_{e \in E(S(H,T))}).\label{eq:graphfunctionalexpansion}
\end{equation}
Now, let us do a little bit of dimension analysis in order to explain how the mere existence of the limits $e_R^{(\ell)}$ suggests our Conjecture \ref{conj:graphfunctional}. To simplify a bit the discussion, we suppose in the sequel that $R(H,T)$ and $S(H,T)$ are connected. The idea is to interpret Equation \eqref{eq:graphfunctionalexpansion} as a Riemann sum, which should be asymptotically of order $O(N^{-(k-1)})$, $k$ being as usual the number of distinct indices in the circuit $(H,T)$. If $x_e = \lambda_e L_N$, then we should be able to approximate 
 $$\prod_{e \in E(R)}\frac{(C_{\lambda_e,N})^{l_e}}{(d_{\lambda_e})^{l_e-1}} \simeq_{N \to \infty} \left(\frac{\ell}{N}\right)^{k-k'}\,(L_N)^{(d+l)r}\left(\frac{1}{(2\pi)^{d/2}\,\vol(\tlie/\tlie_\Z)}\right)^{\sum_{e \in E(R)}l_e} \prod_{e \in E(R)} \frac{(\partial_{\Phi_-}\tildeJ_{\R\Omega}(x_e))^{l_e}}{(\delta(x_e))^{l_e-1}} $$
 where $r = |E(R)|$ and $k'=|V(R)|$. Assuming that this approximation is valid, we then have:
 \begin{align*}
 &\frac{N^{k-1}\,E_{H,T,N}}{(\ell')^{k-1}\,(\vol(\tlie/\tlie_\Z))^{k'-1}}\,(L_N)^{(2l+d)(k'-1)} \\
 &\simeq_{N \to \infty} \sum_{(x_e)_{e \in E(R)}} \left(\prod_{e \in E(R)}\frac{((\partial_{\Phi_-}\tildeJ_{\R\Omega})(x_e))^{l_e}}{(2\pi)^{\frac{l_e}{2}}(\delta(x_e))^{l_e-1}} \right) (L_N)^{lr}\,\,\mathrm{GF}_{S}\!\left(\left(\frac{x_e}{L_N}\right)_{\!e \in E(S)}\right)\,((L_N)^d\,\vol(\R\Omega/\Z\Omega))^r
 \end{align*}
 with $\ell' = \frac{\ell}{\vol(\tlie/\tlie_\Z)}$, and where the sum runs over elements of the lattice $(L_N\,(C\cap\Z\Omega))^r$. If we want the convergence of this Riemann sum as $N$ goes to infinity, then taking into account our Proposition \ref{prop:multilittlewood} and our Theorems \ref{thm:asymptoticsonevertex} and \ref{thm:asymptoticstwovertices}, it is natural to make the following assumption:
\vspace{2mm}

\begin{quote}
\emph{If $r$ is the number of edges of $S$ and $k,k'$ are as in Equation \eqref{eq:numbervertices}, then there exists a function $q_S : C^r \to \R_+$ which is locally polynomial, with domains of polynomiality that are polyhedral cones and with total degree $lr-(2l+d)(k'-1)$ in these cones, such that we have the asymptotics
$$\mathrm{GF}_S((tx_e)_{e \in E(S)}) \simeq_{t \to \infty} t^{lr-(2l+d)(k'-1)}\,q_S((x_e)_{e \in E(S)})$$
if the $tx_e$'s are dominant weights in the interior of the Weyl chamber.}
\end{quote}
\vspace{2mm}

\noindent Actually, this estimate cannot be true for any family of dominant weights $(tx_e)_{e \in E(S)}$, because a graph functional $\mathrm{GF}_S((\lambda_e)_{e \in E(S)})$ usually vanishes outside a full rank sublattice of $(\Z\Omega)^r$. For instance, with the Littlewood--Richardson coefficients, $c^{\lambda,\mu}_\nu=0$ if $\lambda+\mu-\nu$ does not belong to the root lattice, which is smaller than the weight lattice. Thus, the correct estimate should rather be
$$\mathrm{GF}_S((tx_e)_{e \in E(S)}) \simeq_{t \to \infty} \begin{cases}
    t^{lr-(2l+d)(k'-1)}\,q_S((x_e)_{e \in E(S)}) &\text{if }(tx_e)_{e \in E(S)} \in A_S,\\
    0 &\text{otherwise},
\end{cases}$$
where $A_S \subset (\Z\Omega)^r$ is a sublattice with maximal rank $rd$. Assuming that this is true, we would then obtain the analogue of Theorems \ref{thm:asymptoticsonevertex} and \ref{thm:asymptoticstwovertices} for any connected reduced circuit $R(H,T)$, with
\begin{align*}
e_R^{(\ell)} &= \lim_{N \to \infty} N^{k-1}\,E_{H,T,N} \\
&= (\ell')^{k-1}\,\frac{(\vol(\tlie/\tlie_\Z))^{k'-1}}{[(\Z\Omega)^r:A_S]}  \int_{C^r}\left(\prod_{e \in E(R)}\frac{((\partial_{\Phi_-}\tildeJ_{\R\Omega})(x_e))^{l_e}}{(2\pi)^{\frac{l_e}{2}}(\delta(x_e))^{l_e-1}} \right)q_S(x_1,\ldots,x_r) \DD{x_1}\,\cdots \DD{x_r}.
\end{align*}
Finally, our assumption on the asymptotic behavior of the graph functionals would follow immediately from the fact that these functionals $\mathrm{GF}_S$ count the integer points in certain polytopes $\mathscr{P}((\lambda_e)_{e\in E(S)})$ whose equations are determined by affine functions of the dominant weights; as explained just after the statement of Proposition \ref{prop:multilittlewood} and in more details in Section \ref{sec:crystal}, this is the case when $S$ consists of two vertices. We now have fully explained our Conjecture \ref{conj:graphfunctional}, except for the belonging of the polytopes to the so-called string cone of the group $G^r$; this is also expained in the appendix at the end of the paper. We hope to be able to prove the conjecture by interpreting the graph functionals in the theory of crystal bases, and by describing them in terms of string parametrisations. We close this section by two remarks.

\begin{remark}\label{remark:vanishing}
The formula $lr-(2l+d)(k'-1)$ does not give a non-negative number for any connected graph $S$; for instance, if $G=\SU(2)$ and $S$ is the connected oriented graph associated to the reduced circuit from Figure \ref{fig:smallproblem}, then $l=d=1$, $r=5$ and $k'=3$, therefore $lr-(2l+d)(k'-1)=5-6=-1$. In this situation, the corresponding polytope should be empty, and our conjecture should imply some vanishing results, which can be stated informally as follows: if one takes a graph functional of irreducible representations with too many Haar distributed random variables $g_1,\ldots,g_{k'}$ in comparison to the number $r$ of characters appearing, then this integral vanishes. This is not very surprising since $\int_G \ch^\lambda(g)\DD{g}=0$ for any non trivial representation, but our conjecture would make this much more precise.
\end{remark}

\begin{remark}\label{remark:lastone}
In the general case, it is certainly hopeless to have a beautiful closed formula for the functions $q_S$ introduced above. It should however be noticed that the analogue of Conjecture \ref{conj:graphfunctional} is trivially true when $G$ is a torus $\Tor^d$ (this is not a semisimple Lie group, but the whole theory adapts \emph{mutatis mutandis}). In this case, the weights of irreducible representations are elements of $\Z^d$, and one can check that the graph functionals are indicator functions of sublattices of $(\Z^d)^r$. This means that $M_s$ can always be written as a sum over reduced circuits of certain weights which are integrals of combinations of Bessel functions, but this time without a complicated locally polynomial function as the measure of integration. This seems a promising approach for the problem of computing a precise upper bound on $M_s$; note however that we need a good control of these weights if we want to improve substantially the arguments from Proposition \ref{prop:superpoissonlimit}.
\end{remark}
\bigskip

\section{Appendix: Geometry of the classical sscc Lie groups}\label{sec:appendix}
In this appendix, we describe for each classical case: the maximal torus $T$; the Weyl group $W$; the corresponding weight lattice $\Z\Omega$ and root system $\Phi$; the set of dominant weights $\hatG$; the dimensions of the corresponding irreducible representations. We also compute the volumes of the classical sscc Lie groups with respect to the Riemannian structure given by Equation \eqref{eq:normalisation} and the opposite of the Killing form. Most of these results can be found in the classical text books \cite{Hel78,FH91}, and we stick to the conventions of a previous paper \cite{Mel14}. Since $\Spin(n)$ is not easily described in terms of matrices, in the following we shall use numerous times the two-fold covering map $\pi : \Spin(n) \to \SO(n)$. 
\medskip

\subsection{Maximal tori and their characters}
The maximal tori are chosen as follows:
\begin{align*}
\text{type }\mathrm{A}_n:\quad G &= \SU(n+1)\\
T &= \{\diag(\E^{\I\theta_1},\E^{\I\theta_2},\ldots,\E^{\I\theta_{n+1}})\,\,|\,\,\theta_i \in [0,2\pi],\,\,\theta_1+\theta_2+\cdots+\theta_{n+1} \in 2\pi \Z\}; \\
\text{type }\mathrm{B}_n:\quad G &= \Spin(2n+1)\\
T &= \pi^{-1}\left(\{\diag(R_{\theta_1},R_{\theta_2},\ldots,R_{\theta_n},1)\,\,|\,\,\theta_i \in [0,2\pi]\}\right); \\
\text{type }\mathrm{C}_n:\quad G &= \SP(n)\\
T &= \{\diag(\E^{\I\theta_1},\E^{\I\theta_2},\ldots,\E^{\I\theta_{n}})\,\,|\,\,\theta_i \in [0,2\pi]\}; \\
\text{type }\mathrm{D}_n:\quad G &= \Spin(2n)\\
T &= \pi^{-1}\left(\{\diag(R_{\theta_1},R_{\theta_2},\ldots,R_{\theta_n})\,\,|\,\,\theta_i \in [0,2\pi]\}\right),
\end{align*}
where $R_{\theta} = \left( \begin{smallmatrix}
\cos \theta & -\sin \theta\\
\sin \theta & \cos \theta
\end{smallmatrix} \right)$. In each case, we denote $e_i$ the morphism $T \to \C^\times$ which sends an element of the torus of parameters $(\theta_i)_{i}$ to $\E^{\I \theta_i}$ (in type $\mathrm{B}_n$ and $\mathrm{D}_n$, $e_i$ factors through $\pi$). Notice that in type $\mathrm{A}_n$, by definition of the torus, $e_1+\cdots+e_{n+1} = 0$. In type $\mathrm{B}_n$, one can also define a morphism
$$\omega_n : T \to \C^\times$$
which is the highest weight of the so-called spin representation of $\Spin(2n+1)$, and which has the property that $2\omega_n = e_1+e_2+\cdots+e_n$; thus, formally, $\omega_n=\frac{1}{2}(e_1+\cdots+e_n)$. Notice that $\omega_n$ does not factor through the covering map $\pi$. Similarly, in type $\mathrm{D}_n$, one can define two morphisms
\begin{align*}
\omega_{n-1} : T &\to \C^\times; \\
\omega_{n} : T &\to \C^\times,
\end{align*}
which are the highest weights of the even and odd half-spin representations of $\Spin(2n)$, and which are formally equal to $\frac{1}{2}(e_1+\cdots+e_{n-1}+e_n)$ and $\frac{1}{2}(e_1+\cdots+e_{n-1}-e_n)$. We refer to \cite[Chapter 20]{FH91} and to \cite[\S1.8]{Jost11} for the constructions with spins.
\medskip

\subsection{Weyl groups}
The Weyl groups $W = \mathrm{Norm}(T)/T$ corresponding to the previous choices of maximal tori are:
\begin{align*}
\text{type }\mathrm{A}_n:\quad W &= \sym(n+1),\text{ acting by permutation of the angles};\\
 \text{type }\mathrm{B}_n: \quad W &= (\Z/2\Z) \wr \sym(n), \text{ acting by permutation and inversion of the angles};\\
 \text{type }\mathrm{C}_n: \quad W &= (\Z/2\Z) \wr \sym(n), \text{ acting by permutation and inversion of the angles};\\
 \text{type }\mathrm{D}_n: \quad W &= ((\Z/2\Z) \wr \sym(n))^{\mathrm{even}}, \text{ acting by permutation and inversion of the angles}.
\end{align*}
In the last case, the signed permutations, which can be represented as pairs $((\eps_1,\ldots,\eps_n),\sigma)$ with $\eps_i = \pm 1$ and $\sigma \in \sym(n)$, are allowed only when $\eps_1\eps_2\cdots \eps_n = +1$.
\medskip

\subsection{Weight lattices and root systems}
We denote $\R\Omega = \Span_{\R}(e_1,\ldots,e_n)$ in type $\mathrm{B}_n$, $\mathrm{C}_n$ and $\mathrm{D}_n$, and $\R\Omega= \Span_{\R}(e_1,\ldots,e_{n+1})/\R(e_1+\cdots+e_{n+1})$ in type $\mathrm{A}_n$. The vector space $\R\Omega$ has dimension $n$ in type $\mathrm{A}_n$, $\mathrm{B}_n$, $\mathrm{C}_n$ and $\mathrm{D}_n$. The weight lattice $\Z\Omega$ is the $\Z$-lattice of maximal rank $n$ spanned by the fundamental weights $\omega_1,\ldots,\omega_n$:
\begin{align*}
\text{type }\mathrm{A}_n:\quad &\omega_i=(e_1+\cdots+e_i) - \frac{i}{n+1}\,(e_{1}+\cdots+e_{n+1});\\
\text{type }\mathrm{B}_n:\quad &\omega_{i\leq n-1}=e_1+\cdots+e_i ;\quad \omega_n =\frac{1}{2}(e_1+\cdots+e_n);\\
\text{type }\mathrm{C}_n:\quad &\omega_i=e_1+\cdots+e_i;\\
\text{type }\mathrm{D}_n:\quad &\omega_{i\leq n-2}=e_1+\cdots+e_i;\quad \omega_{n-1,n} =\frac{1}{2}(e_1+\cdots+e_{n-1}\pm e_n).
\end{align*}
The dominant weights in $\hatG$ are the positive integer combinations of these fundamental weights. They have positive scalar products with the positive roots:
\begin{align*}
\text{type }\mathrm{A}_n:\quad &\Phi_+ = \{e_i-e_j,\,\,1\leq i<j\leq n+1\};\\
\text{type }\mathrm{B}_n:\quad &\Phi_+ = \{e_i,\,\,1\leq i \leq n\} \sqcup \{e_i-e_j,\,\,1\leq i<j\leq n\}\sqcup \{e_i+e_j,\,\,1\leq i<j\leq n\};\\
\text{type }\mathrm{C}_n:\quad &\Phi_+ = \{2e_i,\,\,1\leq i \leq n\} \sqcup \{e_i-e_j,\,\,1\leq i<j\leq n\}\sqcup \{e_i+e_j,\,\,1\leq i<j\leq n\};\\
\text{type }\mathrm{D}_n:\quad &\Phi_+ = \{e_i-e_j,\,\,1\leq i<j\leq n\}\sqcup \{e_i+e_j,\,\,1\leq i<j\leq n\}.
\end{align*}
We have drawn in Figure \ref{fig:rank2} the weight lattices, the root systems and the Weyl chambers in rank $2$.
\begin{figure}[ht]
\begin{center}      
\begin{tikzpicture}[scale=1]
 \foreach \x in {(0,0),(2,0),(-1,0),(-2,0),(0,1.733),(1,1.733),(2,1.733),(-1,1.733),(-2,1.733),(0,-1.733),(1,-1.733),(2,-1.733),(-1,-1.733),(-2,-1.733),(2.5,0.866),(2.5,-0.866),(-2.5,0.866),(-2.5,-0.866),(1.5,0.866),(-0.5,0.866),(-1.5,0.866),(0.5,-0.866),(1.5,-0.866),(-0.5,-0.866),(-1.5,-0.866)}
\fill \x circle (1.5pt);
\draw [pattern = north west lines, pattern color=black!50!white] (60:3.3) -- (0,0) -- (3.3,0);
\foreach \x in {(0,-1.733),(-1.5,-0.866),(-1.5,0.866)}
\draw [-{>[scale=2]},thick,blue!30!green] (0,0) -- \x;
\foreach \x in {(0,1.733),(1.5,-0.866),(1.5,0.866)}
\draw [-{>[scale=2]},thick,blue!70!green] (0,0) -- \x;
\foreach \x in {(1,0),(0.5,0.866)}
{\draw [red!50!black] \x circle (3pt);
\fill [red!50!black] \x circle (1.5pt);}
\draw (-3,-1) node {$\mathrm{A}_2$};
\begin{scope}[shift={(9,0)}]
\foreach \x in {-3,-2,-1,0,1,2,3}
 {\foreach \y in {-2,-1,0,1,2}
 \fill (\x,\y) circle (1.5pt);}
\foreach \x in {-2.5,-1.5,-0.5,0.5,1.5,2.5}
 {\foreach \y in {-1.5,-0.5,0.5,1.5}
 \fill (\x,\y) circle (1.5pt);}
\fill [pattern = north west lines, pattern color=black!50!white] (3.3,2.33) -- (45:3.3) -- (0,0) -- (3.3,0);
\draw (45:3.3) -- (0,0) -- (3.3,0);
\foreach \x in {(0,-1),(-1,0),(-1,-1),(-1,1)}
\draw [-{>[scale=2]},thick,blue!30!green] (0,0) -- \x;
\foreach \x in {(0,1),(1,0),(1,1),(1,-1)}
\draw [-{>[scale=2]},thick,blue!70!green] (0,0) -- \x;
\foreach \x in {(1,0),(0.5,0.5)}
{\draw [red!50!black] \x circle (3pt);
\fill [red!50!black] \x circle (1.5pt);}
\draw (-3.5,-1) node {$\mathrm{B}_2$};
\end{scope}
\begin{scope}[shift={(0,-6)}]
\foreach \x in {-3,-2,-1,0,1,2,3}
 {\foreach \y in {-2,-1,0,1,2}
 \fill (\x,\y) circle (1.5pt);}
\fill [pattern = north west lines, pattern color=black!50!white] (3.3,2.33) -- (45:3.3) -- (0,0) -- (3.3,0);
\draw (45:3.3) -- (0,0) -- (3.3,0);
\foreach \x in {(0,-2),(-2,0),(-1,-1),(-1,1)}
\draw [-{>[scale=2]},thick,blue!30!green] (0,0) -- \x;
\foreach \x in {(0,2),(2,0),(1,1),(1,-1)}
\draw [-{>[scale=2]},thick,blue!70!green] (0,0) -- \x;
\foreach \x in {(1,0),(1,1)}
{\draw [red!50!black] \x circle (3pt);
\fill [red!50!black] \x circle (1.5pt);}
\draw (-3.5,-1) node {$\mathrm{C}_2$};
\end{scope}
\begin{scope}[shift={(9,-6)}]
\foreach \x in {-3,-2,-1,0,1,2,3}
 {\foreach \y in {-2,-1,0,1,2}
 \fill (\x,\y) circle (1.5pt);}
\foreach \x in {-2.5,-1.5,-0.5,0.5,1.5,2.5}
 {\foreach \y in {-1.5,-0.5,0.5,1.5}
 \fill (\x,\y) circle (1.5pt);}
 \fill [pattern = north west lines, pattern color=black!50!white] (3.3,2.33) -- (45:3.3) -- (0,0) -- (-45:3.3) -- (3.3,-2.33);
 \draw (45:3.3) -- (0,0) -- (-45:3.3) ;
\foreach \x in {(-1,-1),(-1,1)}
\draw [-{>[scale=2]},thick,blue!30!green] (0,0) -- \x;
\foreach \x in {(1,1),(1,-1)}
\draw [-{>[scale=2]},thick,blue!70!green] (0,0) -- \x;
\foreach \x in {(0.5,-0.5),(0.5,0.5)}
{\draw [red!50!black] \x circle (3pt);
\fill [red!50!black] \x circle (1.5pt);}
\draw (-3.5,-1) node {$\mathrm{D}_2$};
\end{scope}
\end{tikzpicture}
\caption{The weight lattices in type $\mathrm{A}_2$, $\mathrm{B}_2$, $\mathrm{C}_2$ and $\mathrm{D}_2$.\label{fig:rank2}}
\end{center}
\end{figure}

\subsection{Scalar product on the weight lattice}
Let us compute for each case the scalar product $\scal{\cdot}{\cdot}$ on $\R\Omega$ which is dual to the Killing form. 
As explained in Section \ref{subsec:normalisation}, the Killing form is given by:
\begin{align*}
 \text{type }\mathrm{A}_n:\quad B(X,Y) &= (2n+2)\,\tr(XY),\,\,\,X,Y \in \mathfrak{su}(n+1);\\
 \text{type }\mathrm{B}_n:\quad B(X,Y) &= (2n-1)\,\tr(XY),\,\,\,X,Y \in \mathfrak{so}(2n+1);\\
 \text{type }\mathrm{C}_n:\quad B(X,Y) &= (4n+4)\,\mathrm{Re}(\tr(XY)),\,\,\,X,Y \in \mathfrak{sp}(n);\\
 \text{type }\mathrm{D}_n:\quad B(X,Y) &= (2n-2)\,\tr(XY),\,\,\,X,Y \in \mathfrak{so}(2n).
\end{align*}
For each (simple) root $\alpha$, there is a unique vector $T_\alpha \in \tlie_{\C}$ such that $\alpha(t) = B(t,T_\alpha)$. The scalar product on $\R\Omega$ is then given by $\scal{\alpha}{\beta}=B(T_\alpha,T_\beta)$. One thus obtains the following scalar products:
\begin{itemize}
     \item Each time, the vectors $e_i$ form an orthogonal basis, with the following square norms:
     \begin{align*}
 \text{type }\mathrm{A}_n:\quad \|e_i\|^2 &= \frac{1}{2n+2};\\
 \text{type }\mathrm{B}_n:\quad \|e_i\|^2 &= \frac{1}{4n-2};\\
 \text{type }\mathrm{C}_n:\quad \|e_i\|^2 &= \frac{1}{4n+4};\\
 \text{type }\mathrm{D}_n:\quad \|e_i\|^2 &= \frac{1}{4n-4}.
\end{align*}
    \item In type $\mathrm{A}_n$, the weight space $\R\Omega$ is embedded in $\Span_{\R}(e_1,\ldots,e_{n+1})$ as the hyperplane $\R\Omega = \Span_{\R}(\alpha_1,\ldots,\alpha_n)$ with $\alpha_i=e_i-e_{i+1}$ (the $\alpha_i$'s are the simple roots).
    \item In type $\mathrm{B}_n$, $\mathrm{C}_n$ and $\mathrm{D}_n$, the weight space $\R\Omega$ is $\Span_{\R}(e_1,\ldots,e_n)$. 
\end{itemize} 
Note that in many representation theoretic formulas, one does not need to know exactly the normalisation of the vectors $e_i$, because one deals with quotients of scalar products (for instance, in Weyl's dimension formula). However, the knowledge of the normalisation is required for instance in Theorem \ref{thm:gaussianlimit}, and in several other theorems stated in this paper. 
\medskip

\subsection{Dominant weights and dimensions of the irreducible representations}
It is convenient to describe a dominant weight $\lambda \in \hatG$ by means of its coordinates $\lambda_1,\lambda_2,\ldots,\lambda_n$ in the basis $(e_1,\ldots,e_n)$ or $(e_1,\ldots,e_{n+1})$. Thus, we have the following descriptions of the sets $\hatG$ (see \cite[\S2.3]{Mel14}):
\begin{align*}
\text{type }\mathrm{A}_n:\quad\hatG &= \{\text{integer partitions }(\lambda_1\geq \lambda_2 \geq \cdots \geq \lambda_n \geq 0)\,\,| \,\,\forall i,\,\,\lambda_i \in \N\};\\
\text{type }\mathrm{B}_n:\quad\hatG &= \left\{\text{half-integer partitions }(\lambda_1\geq \lambda_2 \geq \cdots \geq \lambda_n \geq 0)\,\,\big| \,\,\substack{\!\!\forall i,\,\,\lambda_i \in \N\\ \text{or }\forall i,\,\,\lambda_i-\frac{1}{2} \in \N}\right\};\\
\text{type }\mathrm{C}_n:\quad\hatG &= \{\text{integer partitions }(\lambda_1\geq \lambda_2 \geq \cdots \geq \lambda_n \geq 0)\,\,| \,\,\forall i, \,\,\lambda_i \in \N\};\\
\text{type }\mathrm{D}_n:\quad\hatG &= \left\{\text{signed half-integer partitions }(\lambda_1\geq \lambda_2 \geq \cdots \geq \eps_n\lambda_n \geq 0)\,\,\big| \,\,\substack{\!\!\forall i,\,\,|\lambda_i| \in \N\\ \text{or }\forall i,\,\,|\lambda_i|-\frac{1}{2} \in \N}\right\},
\end{align*}
where in the last case the sign $\eps_n$ of $\lambda_n$ is allowed to be $\pm 1$ (unless $\lambda_n=0$). 
\begin{itemize}
    \item In type $\mathrm{A}_n$, the integer partition $\lambda$ corresponds to the highest weight
    $$\lambda_1 e_1 + \cdots + \lambda_n e_n - \frac{|\lambda|}{n+1}(e_1+\cdots+e_{n+1}), $$
    where $|\lambda| = \sum_{i=1}^{n}\lambda_i$. It is then convenient to denote $\lambda_{n+1}=0$.
    \item In type $\mathrm{B}_n$, $\mathrm{C}_n$ and $\mathrm{D}_n$, the integer or half-integer partition $\lambda$ corresponds to the highest weight $\lambda_1 e_1 + \cdots + \lambda_n e_n$.
\end{itemize}
With these conventions, the Weyl formula yields the following explicit values for the dimensions $d_\lambda$:
\begin{align*}
\text{type }\mathrm{A}_n:\quad d_\lambda &=\prod_{1\leq i < j \leq n+1} \frac{\lambda_i-\lambda_j+j-i}{j-i};\\
\text{type }\mathrm{B}_n:\quad d_\lambda &= \prod_{1\leq i < j \leq n} \frac{\lambda_i-\lambda_j+j-i}{j-i}\,\prod_{1\leq i \leq j\leq n} \frac{\lambda_i+\lambda_j + 2n+1-i-j}{2n+1-i-j};\\
\text{type }\mathrm{C}_n:\quad d_\lambda &= \prod_{1\leq i < j \leq n} \frac{\lambda_i-\lambda_j+j-i}{j-i}\,\prod_{1\leq i \leq j\leq n} \frac{\lambda_i+\lambda_j + 2n+2-i-j}{2n+2-i-j};\\
\text{type }\mathrm{D}_n:\quad d_\lambda &= \prod_{1\leq i < j \leq n} \frac{\lambda_i-\lambda_j+j-i}{j-i}\, \frac{\lambda_i+\lambda_j + 2n-i-j}{2n-i-j}.
\end{align*}

\begin{remark}\label{remark:fromspintoso}
In type $\mathrm{B}_n$ and $\mathrm{D}_n$, a sublattice of $\hatG$ corresponds to the irreducible representations that factor through the covering map $\Spin(d) \to \SO(d)$, hence are irreducible representations of the special orthogonal group. Thus, integer partitions of length $n$ label the irreducible representations of $\SO(2n+1)$, whereas signed integer partitions of length $n$ label the irreducible representations of $\SO(2n)$. This connection between representations of $\Spin(d)$ and representations of $\SO(d)$ is detailed in \cite[Chapter 31]{Bump13}. For studying random geometric graphs, everything that can be said for a random geometric graph of fixed level $L$ drawn on $\Spin(d)$ stays true for $\SO(d)$, but with the eigenvalues indexed by the dominant weights in the integral sublattice $\widehat{\SO(d)}$ of $\widehat{\mathrm{\Spin(d)}}$.
\end{remark}
\medskip

\subsection{Volume of a sscc Lie group}\label{subsec:volumes}
Let us finally explain how to use the weight lattices and the root systems in order to compute the volume of a sscc Lie group $G$ with respect to the volume form associated to opposite Killing form. This volume is involved in the renormalisation parameters of the Poissonian regime of random geometric graphs, and it is given by
$$\vol(G) = (2\sqrt{2}\,\pi)^{\dim G}\,\prod_{\alpha \in \Phi_+} \sinc\left(2\pi\,\scal{\rho}{\alpha}\right),$$
see \cite[Formula 4.32.1]{KP84}. Alternatively, if $(m_i)_{i \in \lle 1,d\rre}$ is the collection of the Chevalley exponents of the Weyl group of $G$ (see for instance \cite{Col58}), and if $2\pi \mathfrak{t}_{\Z}$ is the kernel of $\exp : \mathfrak{t} \to T$ ($T$ being the fixed maximal torus), then 
$$    \vol(G) = \vol(\mathfrak{t}/\mathfrak{t}_{\Z})\,\left(\prod_{\alpha \in \Phi_+} B(\alpha^\vee,\alpha^\vee)\right) \prod_{i=1}^d \frac{2\,\pi^{m_i+1}}{m_i!},
$$
the last product being the volume of the cartesian product of unit spheres $\prod_{i=1}^d \mathbb{S}^{2m_i+1}$; see \cite{Mac80,Hash97}. In this formula, $\alpha^\vee$ is the coroot of the root $\alpha$, that is the vector in $\tlie_{\C}$ such that $2\frac{B(t,\alpha^\vee)}{B(\alpha^\vee,\alpha^\vee)} = \alpha(t)$. In other words, $T_\alpha = \frac{2\,\alpha^\vee}{B(\alpha^\vee,\alpha^\vee)}$. As a consequence,
\begin{align*}
\vol(G) &= \vol(\mathfrak{t}/\mathfrak{t}_{\Z})\,4^{|\Phi_+|}\,\left(\prod_{\alpha \in \Phi_+} \frac{1}{\|\alpha\|^2}\right) \prod_{i=1}^d \frac{2\,\pi^{m_i+1}}{m_i!} \\
&= \vol(\mathfrak{t}/\mathfrak{t}_{\Z})\,2^{\dim G}\,\left(\prod_{\alpha \in \Phi_+} \frac{1}{\|\alpha\|^2}\right) \prod_{i=1}^d \frac{\pi^{m_i+1}}{m_i!}. 
\end{align*}
The terms of this formula are:
\vspace*{3mm}
\begin{center}
\begin{tabular}{|c|c|c|c|c|}
\hline type & $\vol(\tlie/\tlie_\Z)$ & $\dim G$ & $\prod_{\alpha \in \Phi_+} \frac{1}{\|\alpha\|^2}$ & $m_1,\ldots,m_n$ \\
\hline $\mathrm{A}_n$ & $2^{\frac{n}{2}}\,(n+1)^{\frac{n+1}{2}}$ & $n(n+2)$ & $(n+1)^{\frac{n(n+1)}{2}} $& $1,2,3,\ldots,n$\\
$\mathrm{B}_n$ & $2^{\frac{n}{2}+1}\,(2n-1)^{\frac{n}{2}}$ & $n(2n+1)$ & $2^n\, (2n-1)^{n^2}$ & $1,3,5,\ldots,2n-1$\\
$\mathrm{C}_n$ & $2^{n}\,(n+1)^{\frac{n}{2}}$  & $n(2n+1)$ & $2^{n(n-1)}\, (n+1)^{n^2}$ & $1,3,5,\ldots,2n-1$\\
$\mathrm{D}_n$ & $2^{n+1}\,(n-1)^{\frac{n}{2}}$ & $n(2n-1)$ & $2^{n(n-1)}\,(n-1)^{n(n-1)} $& $1,3,5,\ldots,2n-3,n-1$ \\
\hline
\end{tabular}
\end{center}\medskip

\noindent The term $\vol(\tlie/\tlie_\Z)$ appears in many asymptotic formulas in Section \ref{sec:ART}. As a consequence of the computations above,
\begin{align*}
\vol(\SU(n)) &= \frac{\left(2^{2n+3}\,\pi^{n+2}\right)^{\frac{n-1}{2}}\,n^{\frac{n^2}{2}}}{1!\,2!\,\cdots \,(n-1)!} ;\\
\vol(\SP(n)) &= \frac{\left(2^{3n+1}\,\pi^{n+1}\,(n+1)^{\frac{2n+1}{2}}\right)^n}{1!\,3!\,\cdots\,(2n-1)!};\\
\vol(\Spin(2n)) &= 2\,\frac{\left(2^{3n-1}\,\pi^{n}\,(n-1)^{\frac{2n-1}{2}}\right)^n}{1!\,3!\,\cdots\,(2n-3)!\,(n-1)!};\\
\vol(\Spin(2n+1)) &= 2\,\frac{\left(2^{2n+\frac{5}{2}}\, \pi^{n+1} \, (2n-1)^{\frac{2n+1}{2}}\right)^n}{1!\,3!\,\cdots\,(2n-1)!}.
\end{align*}
Of course, for $n \geq 3$, $\vol(\SO(n)) = \frac{1}{2}\, \vol(\Spin(n))$.
\bigskip

\section{Appendix: Crystals of representations and string polytopes}\label{sec:crystal}
This second appendix proves Proposition \ref{prop:multilittlewood} and gives a survey of the theory of crystals of representations. We have tried to explain it in the most pedagogical way that we were able to, and in particular we start with the path model, although it is not really required in our study. Until the end, $G$ is a fixed sscc Lie group, $\glie$ is its Lie algebra, and $d$ is the rank of $G$. The set of simple roots of $G$ is denoted $(\alpha_i)_{i \in \lle 1,d\rre}$. This is a linear basis of $\R\Omega$, and we denote $(\alpha_i^\vee)_{i \in \lle 1,d\rre}$ the basis of simple coroots in $(\R\Omega)^*$, defined by the relations
$$\forall i,j\in \lle 1, d \rre,\,\,\,\alpha_i(\alpha_j^\vee) = 2\,\frac{\scal{\alpha_i}{\alpha_j}}{\scal{\alpha_j}{\alpha_j}}.$$
The dual basis $(\omega_i)_{i \in \lle 1,d\rre}$ of the basis of coroots $(\alpha_i^\vee)_{i \in \lle 1,d\rre}$ consists in the fundamental weights, such that $\Z\Omega = \mathrm{Span}_{\Z}(\omega_1,\ldots,\omega_d)$. Fix a dominant weight $\lambda \in \hatG$, and for $\omega \in \Z\Omega$, denote $V^\lambda(\omega)$ the weight subspace of $V^\lambda$ associated to the weight $\omega$:
$$V^\lambda(\omega) = \{v \in V^\lambda\,|\,\forall t \in T,\,\,t \cdot v = \omega(t)\,v\}.$$
An element of the weight subspace $V^{\lambda}(\omega)$ is called a \emph{weight vector} of $V^\lambda$, and the irreducible representation $V^\lambda$ is the direct sum of its weight subspaces:
$$V^\lambda = \bigoplus_{\omega\,|\,V^\lambda(\omega)\neq 0}V^\lambda(\omega).$$
The set of weights with positive multiplicity in $V^\lambda$ will be denoted $\Omega(\lambda)$; it is a finite subset of $\lambda + R$, where $R$ is the root lattice of $G$, that is the sublattice of $\Z\Omega$ spanned by the (simple) roots. \medskip

\subsection{Crystals and the path model}\label{subsec:pathmodel}
The theory of crystal bases and the path model allow one to compute the multiplicities $K_{\lambda,\omega}=\dim_{\C}(V^\lambda(\omega))$ for $\omega \in \Omega(\lambda)$ (they are also called the \emph{Kostka numbers}). Let $U_q(\glie_{\C})$ be the quantum group of the complexification $\glie_{\C}$ of the Lie algebra $\glie$; it is a deformation with a complex parameter $q$ of the universal enveloping algebra $U(\glie_{\C})$, see \cite{Jim85,Jim86}. There is a corresponding deformation $V^\lambda_q$ of the irreducible module $V^\lambda$, and a notion of weight vectors in $V^\lambda_q$, such that if 
$$V^\lambda_q = \bigoplus_{\omega\,|\,V^\lambda_q(\omega)\neq 0}V^\lambda_q(\omega),$$ then the weights and the multiplicities are the same for $V^\lambda$ and for $V^\lambda_q$:
$$\forall \omega \in \Z\Omega,\,\,\,\dim_{\C} \left(V^\lambda_q(\omega)\right) = \dim_{\C} \left(V^\lambda(\omega)\right).$$
This is the Lusztig--Rosso correspondence, see the original papers \cite{Lus88,Ros88,Ros90}, and \cite[Chapter 5]{Mel17} for a detailed exposition of the case $\glie=\mathfrak{gl}(n)$. The correspondence holds for any $q$ which is not $0$ or a root of unity. Now, a \emph{crystal basis} of the irreducible representation $V^\lambda_q$ is a linear basis $\cryst(\lambda)$ of $V^\lambda_q$ that consists of weight vectors, and such that if $(e_i,f_i,q^{h_i})_{i \in \lle 1,d\rre}$ are the Chevalley generators of $U_q(\glie_{\C})$, then for any vector $v$ of the crystal basis, $e_i \cdot v$ is either $0$ or another vector $v'$ of the crystal basis; and similarly for $f_i \cdot v$. Notice that if $v \in \cryst(\lambda)$ has weight $\omega$ and $v'=e_i \cdot v$ (respectively, $v'=f_i \cdot v$) does not vanish, then $v'$ has weight $\omega + \alpha_i$ (respectively, $\omega-\alpha_i$). The \emph{crystal} of $V^\lambda_q$ is given by a crystal basis $\cryst(\lambda)$, and by the weighted labeled oriented graph:
\begin{itemize}
    \item with vertices $v \in \cryst(\lambda)$,
    \item with labeled oriented edges $v \to_{f_i} v'$ if $v'=f_i \cdot v$,
    \item with a weight map $\mathrm{wt}(\cdot)$ which associates to $v \in \cryst(\lambda)$ the corresponding weight in $\Z\Omega$.
 \end{itemize}
It has been shown independently by Lusztig and Kashiwara that crystal bases of irreducible representations of semisimple Lie algebras always exist, and that their combinatorial structure does not depend on $q$; see \cite{Lus90,Kas90}. In the sequel, we shall only work with the combinatorial object (weighted labeled oriented graph). Indeed, if one knows the crystal of an irreducible representation $V^\lambda$, then one recovers immediately the highest weight of the representation, and all the multiplicities of the weights: for any $\omega \in \Omega(\lambda)$,
$$\dim_\C \left(V^\lambda(\omega)\right) = \card\{v \in \cryst(\lambda)\,|\,v\text{ has weight }\omega\}.$$
As a consequence, we can now forget the underlying quantum groups $U_q(\glie_\C)$. 
\bigskip

There is a concrete description of the crystal $\cryst(\lambda)$ due to Littelmann, see \cite{Litt95,Litt98}, and \cite{BBO05,BBO09} for applications of this path model in probability. We call \emph{path} on the weight space $\R\Omega$ a piecewise linear map $\pi : [0,1] \to \R\Omega$ which starts at $0$, and such that $\pi(1)$ belongs to the weight lattice $\Z\Omega$. We identify two paths if they differ by a continuous reparametrisation. The set of paths is a semigroup for the operation of concatenation:
$$(\pi_1 * \pi_2)(t) = \begin{cases}
    \pi_1(2t) &\text{if }t \in [0,\frac{1}{2}],\\
    \pi_1(1) + \pi_2(2t-1) &\text{if }t \in [\frac{1}{2},1].
\end{cases}$$
Fix a simple root $\alpha$. Given a path $\pi$, we set $g_{\alpha}(\pi,t)=\pi(t)(\alpha^\vee)=2\,\frac{\scal{\pi(t)}{\alpha}}{\scal{\alpha}{\alpha}}$; this is a piecewise linear function on $[0,1]$. Suppose $$m_\alpha = \min_{t \in [0,1]} g_\alpha(\pi,t)\leq -1.$$ 

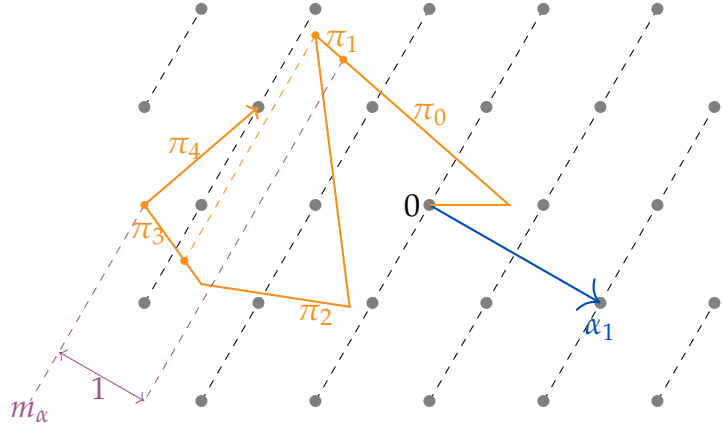
\begin{figure}[ht]
\begin{center}      
\begin{tikzpicture}[scale=1.5]
\draw [dashed] (-2.5,0.866) -- (-2,1.733);
\draw [dashed] (-2.5,-0.866) -- (-1,1.733);
\draw [dashed] (-2,-1.733) -- (0,1.733);
\draw [dashed] (-1,-1.733) -- (1,1.733);
\draw [dashed] (0,-1.733) -- (2,1.733);
\draw [dashed] (1,-1.733) -- (2.5,0.866);
\draw [dashed] (2,-1.733) -- (2.5,-0.866);
\foreach \x in {(0,0),(1,0),(2,0),(-1,0),(-2,0),(0,1.733),(1,1.733),(2,1.733),(-1,1.733),(-2,1.733),(0,-1.733),(1,-1.733),(2,-1.733),(-1,-1.733),(-2,-1.733),(2.5,0.866),(2.5,-0.866),(-2.5,0.866),(-2.5,-0.866),(1.5,0.866),(-0.5,0.866),(-1.5,0.866),(0.5,0.866),(0.5,-0.866),(1.5,-0.866),(-0.5,-0.866),(-1.5,-0.866)}
\fill [gray] \x circle (1.5pt);
\draw (-0.15,0) node {$0$};
\draw [-{>[scale=2]},thick,blue!70!green] (0,0) -- (1.5,-0.866);
\draw [blue!70!green] (1.5,-1.1) node {$\alpha_1$};
\draw [BurntOrange,thick,->] (0,0) -- (0.7,0) -- (-1,1.5) -- (-0.7,-0.9) -- (-2,-0.7) -- (-2.5,0) -- (-1.5,0.866);
\draw [thin,dashed,BurntOrange] (-1,1.5) -- (-2.15,-0.495);
\draw [thin,dashed,DarkOrchid] (-2.5,0) -- (-3.5,-1.733);
\draw [thin,dashed,DarkOrchid] (-0.755,1.285) -- (-2.5,-1.733);
\draw [<->,thin,DarkOrchid] (-2.5,-1.733) -- (-3.25,-1.3); 
\foreach \x in {(-0.755,1.285),(-1,1.5),(-2.5,0),(-2.15,-0.495)}
\fill [BurntOrange] \x circle (1pt);
\draw [BurntOrange] (0,0.8) node {$\pi_0$};
\draw [BurntOrange] (-0.77,1.45) node {$\pi_1$};
\draw [BurntOrange] (-1,-0.97) node {$\pi_2$};
\draw [BurntOrange] (-2.47,-0.25) node {$\pi_3$};
\draw [BurntOrange] (-2.15,0.5) node {$\pi_4$};
\draw [DarkOrchid] (-3.5,-1.83) node {$m_\alpha$};
\draw [DarkOrchid] (-2.9,-1.62) node {$1$};
\end{tikzpicture}
\caption{The decomposition of a path in the weight lattice of $\SU(3)$.\label{fig:decompositionpath}}
\end{center}
\end{figure}    

We cut the path $\pi$ in parts $\pi_0,\pi_1,\ldots,\pi_\ell,\pi_{\ell+1}$ such that $\pi=\pi_0*\pi_1 * \cdots * \pi_\ell*\pi_{\ell+1}$ and:
\begin{enumerate}
\setcounter{enumi}{-1}
\item $\pi_0*\pi_1*\cdots * \pi_\ell$ ends at a point with a value of $g_{\alpha}$ minimal, and it is the smallest part of the whole path $\pi$ with this property; and then $\pi_0$ is the largest part of the path $\pi_0*\pi_1*\cdots * \pi_\ell$ that ends with a value of $g_\alpha$ equal to $m_{\alpha}+1$.
\item either $g_{\alpha}(\pi)$ is strictly decreasing on the interval $[t_{i-1},t_i]$ corresponding to the part $\pi_i$, and $g_{\alpha}(\pi,s)\geq g_{\alpha}(\pi,t_{i-1})$ for $s \leq t_{i-1}$; in other words, $g_{\alpha}(\pi)$ is minimal on the segment $[t_{i-1},t_i]$.
\item or, $g_{\alpha}(\pi,t_{i-1})=g_{\alpha}(\pi,t_i)$ and $g_{\alpha}(\pi,s)\geq g_{\alpha}(\pi,t_{i-1})$ for $s \in [t_{i-1},t_i]$.
\end{enumerate}
This decomposition is better understood in a picture, see Figure \ref{fig:decompositionpath} for an example on the weight lattice of type $\mathrm{A}_2$. 
Denote $s_\alpha$ the reflection with respect to the root $\alpha$, that is the map $x \mapsto x - 2\,\frac{\scal{x}{\alpha}}{\scal{\alpha}{\alpha}}\,\alpha$. For $j \in \lle 1,\ell \rre$, we define
$$\pi_j' = \begin{cases}
    s_\alpha(\pi_j) &\text{if }\pi_j\text{ is of type }(1),\\
    \pi_j &\text{if }\pi_j\text{ is of type }(2).
\end{cases} $$
We then set 
$$e_\alpha(\pi) = \begin{cases}
    \emptyset &\text{if }m_\alpha>-1,\\
    \pi_0*(\pi_1'*\pi_2'*\cdots * \pi_\ell')*\pi_{\ell+1} &\text{if }m_\alpha \leq -1,
\end{cases}$$
and $f_\alpha(\pi) = c \circ e_\alpha \circ c(\pi)$, where $c$ is the involution on paths defined by $(c(\pi))(t) = \pi(1-t)-\pi(1)$. In this definition, $\emptyset$ is a "ghost" path. An example of action of a root operator $e_\alpha$ is in Figure \ref{fig:actionrootoperator}. 

\begin{figure}[ht]
\begin{center}      
\begin{tikzpicture}[scale=1.5]
\draw [dashed] (-2.5,0.866) -- (-2,1.733);
\draw [dashed] (-2.5,-0.866) -- (-1,1.733);
\draw [dashed] (-2,-1.733) -- (0,1.733);
\draw [dashed] (-1,-1.733) -- (1,1.733);
\draw [dashed] (0,-1.733) -- (2,1.733);
\draw [dashed] (1,-1.733) -- (2.5,0.866);
\draw [dashed] (2,-1.733) -- (2.5,-0.866);
\foreach \x in {(0,0),(1,0),(2,0),(-1,0),(-2,0),(0,1.733),(1,1.733),(2,1.733),(-1,1.733),(-2,1.733),(0,-1.733),(1,-1.733),(2,-1.733),(-1,-1.733),(-2,-1.733),(2.5,0.866),(2.5,-0.866),(-2.5,0.866),(-2.5,-0.866),(1.5,0.866),(-0.5,0.866),(-1.5,0.866),(0.5,0.866),(0.5,-0.866),(1.5,-0.866),(-0.5,-0.866),(-1.5,-0.866)}
\fill [gray] \x circle (1.5pt);
\draw (-0.15,0) node {$0$};
\draw [-{>[scale=2]},thick,blue!70!green] (0,0) -- (1.5,-0.866);
\draw [blue!70!green] (1.5,-1.1) node {$\alpha_1$};
\draw [Red,thick,->] (0,0) -- (0.7,0) -- (-0.755,1.285) -- (-0.45,1.182) -- (-0.15,-1.218) -- (-1.45,-1.018) -- (-1.6,-0.813) -- (-1,-0.866) -- (0,0);
\foreach \x in {(-0.45,1.182),(-0.755,1.285),(-1.6,-0.813),(-1,-0.866)}
\fill [Red] \x circle (1pt);
\draw [Red] (0,0.8) node {$\pi_0$};
\draw [Red] (-0.57,1.35) node {$\pi_1'$};
\draw [Red] (-0.45,-1.3) node {$\pi_2'$};
\draw [Red] (-1.25,-0.72) node {$\pi_3'$};
\draw [Red] (-0.65,-0.366) node {$\pi_4$};
\end{tikzpicture}
\caption{The action of $e_1=e_{\alpha_1}$ on the path of Figure \ref{fig:decompositionpath}.\label{fig:actionrootoperator}}
\end{center}
\end{figure}
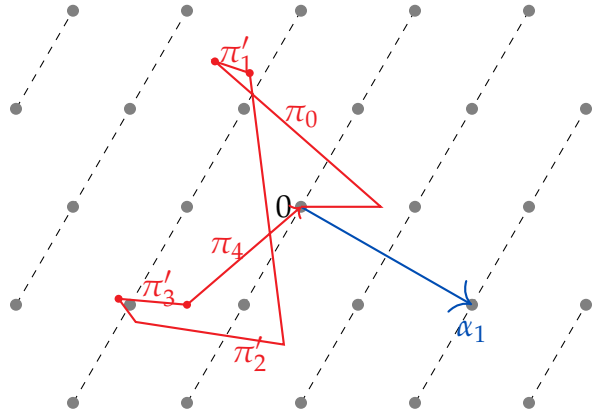
\bigskip

Notice that if $\pi$ is a path such that $e_\alpha(\pi)\neq \emptyset$, then $f_\alpha (e_\alpha(\pi))=\pi$. Similarly, if $\pi$ is a path such that $f_\alpha(\pi)\neq \emptyset$, then $e_\alpha (f_\alpha(\pi))=\pi$. On the other hand, if $\pi$ is a path and $e_\alpha(\pi) \neq 0$ (respectively, $f_\alpha(\pi)\neq 0)$, then $(e_\alpha(\pi))(1) = \pi(1)+\alpha$ (respectively, $(f_\alpha(\pi))(1) = \pi(1)-\alpha$). Therefore, paths in the weight space and root operators are a natural model for crystals. The following theorem ensures that this model is indeed adequate:
\begin{theorem}[Littelmann]
Let $\lambda$ be a dominant weight in $\hatG$, and $\pi_\lambda$ be the segment that connects $0$ to $\lambda$, considered as a path. We introduce the weighted labeled oriented graph:
\begin{itemize}
\item with vertices the paths $\pi=f_{\alpha_{i_1}}f_{\alpha_{i_2}}\cdots f_{\alpha_{i_r}}(\pi_\lambda)$ that are not the ghost path $\emptyset$, and that are obtained from $\pi_\lambda$ by applying operators $f_{\alpha_i}$;
\item with an oriented labeled edge $\pi \to_{f_i} \pi'$ if $\pi' = f_{\alpha_i}(\pi)$;
\item with the weight map $\mathrm{wt} : \pi \mapsto \pi(1)$.
\end{itemize}
The crystal $\cryst(\pi_\lambda)$ that one obtains is finite, and it is isomorphic to the crystal $\cryst(\lambda)$ of the irreducible representation $V^\lambda$. In particular,
$$\dim_\C\left( V^\lambda(\omega)\right) = \card\{\text{paths in the crystal }\cryst(\pi_\lambda)\text{ with endpoint }\omega\},$$
and the character $\mathrm{ch}^\lambda$ is given by the formula $\ch^\lambda = \sum_{\pi \in \cryst(\pi_\lambda)} \E^{\mathrm{wt}(\pi)}$.
\end{theorem}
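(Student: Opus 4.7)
The plan is to verify that the combinatorial object $\cryst(\pi_\lambda)$ built from the root operators $(e_{\alpha_i},f_{\alpha_i})$ satisfies the axioms of a normal Kashiwara crystal with highest weight $\lambda$, and then to invoke the uniqueness of such crystals to identify it with $\cryst(\lambda)$. A first step would be to check the local crystal axioms directly from the construction of the operators: if $e_\alpha(\pi) \neq \emptyset$, then $f_\alpha(e_\alpha(\pi))=\pi$ and $\mathrm{wt}(e_\alpha(\pi)) = \mathrm{wt}(\pi) + \alpha$ (and symmetrically for $f_\alpha$); these identities follow from the symmetry of the decomposition $\pi = \pi_0*\pi_1*\cdots*\pi_{\ell+1}$ under reflection $s_\alpha$ on the pieces of type $(1)$. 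Next, one needs to check the compatibility between the ``raising/lowering counts'' $\varepsilon_\alpha(\pi) = \max\{n:e_\alpha^n(\pi)\neq\emptyset\}$ and $\varphi_\alpha(\pi) = \max\{n:f_\alpha^n(\pi)\neq\emptyset\}$ and the weights, namely $\varphi_\alpha(\pi)-\varepsilon_\alpha(\pi)=\mathrm{wt}(\pi)(\alpha^\vee)$, which falls out of tracking the minimum and maximum of $g_\alpha(\pi,\cdot)$ on $[0,1]$.

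The structural core of the proof is the \emph{tensor product rule} for concatenation: for any two paths $\pi_1,\pi_2$, the subgraph of the path crystal generated from $\pi_1 * \pi_2$ by the operators $f_{\alpha_i}$ is isomorphic, as a weighted labeled graph, to the tensor product crystal $\cryst(\pi_1)\otimes \cryst(\pi_2)$ with Kashiwara's convention. I would establish this by analyzing the piecewise linear function $g_\alpha(\pi_1*\pi_2,\cdot)$, partitioning its values into ``$+$'' and ``$-$'' signatures according to whether the local minimum is attained in the first or second factor, then matching these signatures with the bracketing rule that governs root operators on tensor products. Once this is proved, it follows by induction on the length of a dominant weight expressed in fundamental weights that $\cryst(\pi_\lambda)$ embeds into a tensor product of crystals of fundamental representations, hence in particular is finite and normal.

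Identifying the highest weight element is straightforward: for $\lambda \in \hatG$ dominant, $g_\alpha(\pi_\lambda,t) = t\,\lambda(\alpha^\vee) \geq 0$ for every simple root $\alpha$, so $m_\alpha \geq 0 > -1$ and $e_\alpha(\pi_\lambda)=\emptyset$. Hence $\pi_\lambda$ has weight $\lambda$ and is annihilated by all $e_{\alpha_i}$, and $\varphi_{\alpha_i}(\pi_\lambda) = \lambda(\alpha_i^\vee)$. By the uniqueness of connected normal crystals with a prescribed highest weight (Stembridge's axiomatic characterization in the simply-laced case, or Joseph's uniqueness theorem in general), one concludes $\cryst(\pi_\lambda)\cong\cryst(\lambda)$. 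The character identity $\ch^\lambda = \sum_{\pi\in\cryst(\pi_\lambda)}\E^{\mathrm{wt}(\pi)}$ and the multiplicity statement $K_{\lambda,\omega}=\card\{\pi\in\cryst(\pi_\lambda):\pi(1)=\omega\}$ then follow at once from the definition of the weight map.

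The principal obstacle is the tensor product rule: matching root operators on concatenations with the tensor product bracketing requires a delicate case analysis of how the critical intervals of $g_\alpha$ are distributed between $\pi_1$ and $\pi_2$, and an inductive argument on the number of linear pieces. An alternative would be to avoid this analysis by comparing the path characters with the Weyl character formula directly, using a sign-reversing involution on non-dominant paths in the style of Littelmann's ``folding'' arguments; but that route shifts the difficulty to proving that $\cryst(\pi_\lambda)$ has the correct character without yet knowing it is a crystal, and ultimately one still needs the tensor product rule to conclude that the path-operator graph structure (and not only its character) matches that of $\cryst(\lambda)$.
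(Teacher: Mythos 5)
The paper does not prove this theorem; it is stated as a citation of Littelmann's work \cite{Litt95,Litt98}, so there is no internal proof to compare your argument against. Your sketch is, however, a reasonable reconstruction of the proof strategy that appears in the literature: verifying the elementary crystal axioms for the root operators, establishing the concatenation/tensor-product compatibility, identifying the highest-weight element $\pi_\lambda$, and invoking a uniqueness theorem for normal highest-weight crystals. You correctly locate the technical heart of the matter in the tensor product rule, which is indeed the most delicate piece of Littelmann's argument.

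That said, there is a gap in the way you invoke uniqueness. Knowing that the path graph is a \emph{seminormal} crystal (i.e.\ that $\varepsilon_\alpha$, $\varphi_\alpha$ count actual string lengths and satisfy $\varphi_\alpha - \varepsilon_\alpha = \mathrm{wt}(\pi)(\alpha^\vee)$) and that it has a unique highest-weight vertex of weight $\lambda$ is \emph{not} sufficient to conclude that it is isomorphic to $\cryst(\lambda)$: there exist seminormal connected highest-weight crystals that are not crystals of any $U_q(\glie_{\C})$-module. To apply Stembridge's characterization you must verify his rank-$2$ local axioms (the interaction of $e_i$ and $e_j$ for $i\ne j$), which is additional work not present in your outline; and Stembridge's theorem only covers the simply-laced case directly, so for types $\mathrm{B}$, $\mathrm{C}$, $\mathrm{F}_4$, $\mathrm{G}_2$ you would need either the non-simply-laced extensions or Joseph's closed-family machinery, which itself requires producing the tensor product structure on the whole family $\{\cryst(\pi_\lambda)\}_{\lambda}$, not just a single $\lambda$. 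The cleaner route, and closer to Littelmann's original proof, is the one you mention only as an alternative: establish the tensor product rule, reduce by concatenation to $\mathfrak{sl}_2$ paths (where the crystal structure can be verified directly), and then identify the character of $\cryst(\pi_\lambda)$ with $\ch^\lambda$ via a sign-reversing involution on paths leaving the Weyl chamber, which in turn identifies the whole graph structure once one knows connectedness of the highest-weight component. As you note, either route passes through the tensor product rule; but the reduction to rank one plus a character comparison avoids the need for a general uniqueness theorem, and that is why it is the argument usually given.
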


\noindent Actually, one can take instead of $\pi_\lambda$ any path from $0$ to $\lambda$ that stays in the Weyl chamber; all these paths generate the same crystal $\cryst(\lambda)$.

\begin{example}
Consider the adjoint representation of $\SU(3)$ on $\mathfrak{su}(3)$, which has dimension $8$. The dominant weight is $\lambda = \rho = \omega_1+\omega_2$, and the crystal $\cryst(\lambda)$ is drawn in Figure \ref{fig:crystalsu3}.
\begin{figure}[ht]
\begin{center}  
\begin{tikzpicture}[scale=1.5]
\foreach \x in {(0,0),(1,0),(0.5,0.866),(2,0),(-1,0),(-2,0),(0,1.733),(1,1.733),(2,1.733),(-1,1.733),(-2,1.733),(0,-1.733),(1,-1.733),(2,-1.733),(-1,-1.733),(-2,-1.733),(2.5,0.866),(2.5,-0.866),(-2.5,0.866),(-2.5,-0.866),(1.5,0.866),(-0.5,0.866),(-1.5,0.866),(0.5,-0.866),(1.5,-0.866),(-0.5,-0.866),(-1.5,-0.866)}
\fill [gray] \x circle (1.5pt);
\draw [pattern = north west lines, pattern color=black!50!white] (60:3.3) -- (0,0) -- (3.3,0);
\draw (2.5,1.8) node {$C$};
\draw [NavyBlue] (1.6,1) node {$\lambda$};
\draw [NavyBlue, line width=2pt,->] (0,0) -- (1.5,0.866);
\draw [NavyBlue!50!DarkOrchid, line width=1.5pt,->] (0,0) -- (0,1.733);
\draw [DarkOrchid, line width=1.5pt,->] (0.05,0) -- (0.05,-0.866) -- (0.1,-0.866) -- (0.1,0);
\draw [NavyBlue!50!ForestGreen, line width=1.5pt,->] (0,0) -- (1.5,-0.866);
\draw [ForestGreen, line width=1.5pt,->] (0,0.05) -- (-0.75,0.483) -- (-0.75,0.533) -- (0,0.1);
\draw [ForestGreen!50!Red, line width=1.5pt,->] (0,0) -- (-1.5,0.866);
\draw [DarkOrchid!50!Red, line width=1.5pt,->] (0,0) -- (0,-1.733);
\draw [Red, line width=1.5pt,->] (0,0) -- (-1.5,-0.866);
\begin{scope}[shift={(5.5,0)},scale=0.6]
\node (a) at (1.5,0.866) [circle,fill,NavyBlue] {};
\node (b) at (1.5,-0.866) [circle,fill,NavyBlue!50!ForestGreen] {};
\node (c) at (0,1.733) [circle,fill,NavyBlue!50!DarkOrchid] {};
\node (d) at (-0.1,0.2) [circle,fill,ForestGreen] {};
\node (e) at (0.1,-0.2) [circle,fill,DarkOrchid] {};
\node (f) at (-1.5,0.866) [circle,fill,ForestGreen!50!Red] {};
\node (g) at (0,-1.733) [circle,fill,DarkOrchid!50!Red] {};
\node (h) at (-1.5,-0.866) [circle,fill,Red] {};
\draw [->,thick] (a) to node[auto,swap] {$f_1$} (c);
\draw [->,thick] (a) to node[auto] {$f_2$} (b);
\draw [->,thick] (b) to node[auto,swap] {$f_1$} (d);
\draw [->,thick] (d) to node[auto] {$f_1$} (f);
\draw [->,thick] (f) to node[auto,swap] {$f_2$} (h);
\draw [->,thick] (c) to node[auto] {$f_2$} (e);
\draw [->,thick] (e) to node[auto,swap] {$f_2$} (g);
\draw [->,thick] (g) to node[auto] {$f_1$} (h);
\end{scope}
\end{tikzpicture}

\caption{The crystal of the adjoint representation of $\SU(3)$, viewed as a set of paths.\label{fig:crystalsu3}}
\end{center}
\end{figure}
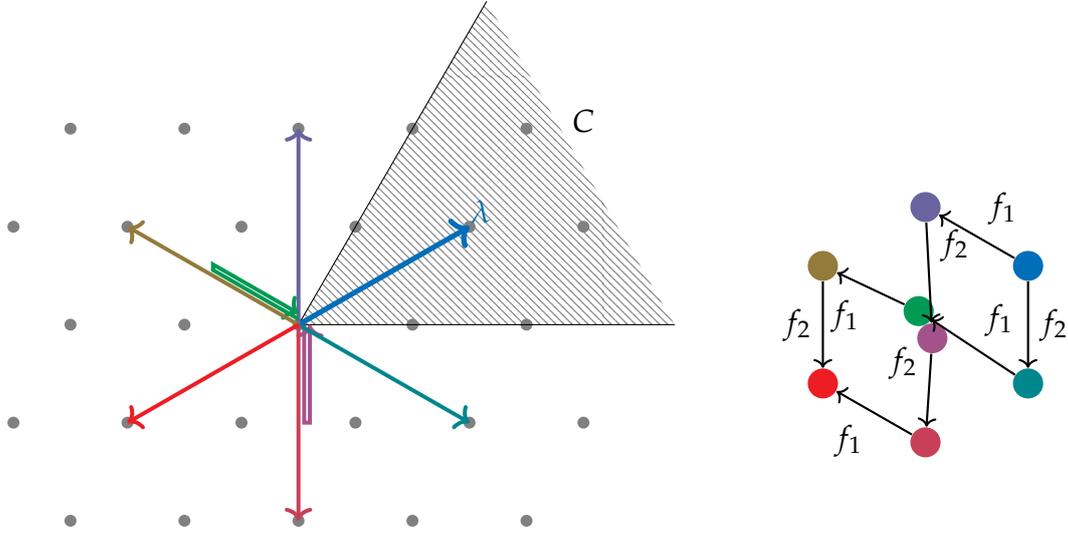
In particular, each of the six roots of $\SU(3)$ has multiplicity $1$ in the adjoint representation, whereas the weight $0$ has multiplicity $2 = \rank(\SU(3))$.
\end{example}

Let us now explain the use of the path model in order to compute tensor products. If $V^\lambda \otimes V^\mu = \sum_{\nu \in \hatG} c^{\lambda,\mu}_\nu \,V^\nu$, then the concatenation product of crystals $\cryst(\pi_\lambda)*\cryst(\pi_\mu)$ is a set of paths such that the action of the root operators on these paths generate a crystal whose connected components are isomorphic to the elements of the multiset $\{(\cryst(\pi_\nu))^{m_\nu},\nu\in\hatG\} $. Therefore, $c^{\lambda,\mu}_\nu$ is equal to the number of paths $\pi \in \cryst(\pi_\mu)$ such that $\pi_\lambda * \pi$ always stays in the Weyl chamber $C$, and $\pi_\lambda*\pi$ ends at the dominant weight $\nu$; see \cite[Proposition 2 and Corollary 1]{Litt98}. In a moment, we shall reinterpret this rule in the string polytope of $V^\mu$, see Theorem \ref{thm:berensteinzelevinsky}.

\begin{remark}
This link between the tensor product of representations and the concatenation product of crystals proves that the set of dominant weights $\nu$ such that $c^{\lambda,\mu}_{\nu} >0$ is included in $R+\lambda+\mu$, where $R$ is the root lattice. Indeed, the crystal $\cryst(\pi_\mu)$ consists of paths with weights in $\mu + R$, and if $c^{\lambda,\mu}_{\nu} >0$, then there is a path in this crystal that connects $0$ to $\nu-\lambda$.
\end{remark}

 
\subsection{The cone and the polytopes of string parametrisations}\label{subsec:conepolytope} In this paragraph, we fix a dominant weight $\lambda$, and a decomposition of the longest element $w_0$ of the Weyl group as a product of reflections $s_{\alpha_i}$ along the walls of the Weyl chamber $C$:
$$w_0 = s_{\alpha_{i_1}}s_{\alpha_{i_2}}\cdots s_{\alpha_{i_l}}.$$
Notice that $l = |\Phi_+|$ is equal to the number of positive roots of $G$. If $v$ is an element of the crystal $\cryst(\lambda)$, we call \emph{string parametrisation} of $v$ the vector of integers $(n_1,n_2,\ldots,n_l) \in \N^r$ such that:
\begin{itemize}
    \item $n_1$ is the maximal integer such that $e_{\alpha_{i_1}}^{n_1}(v) \neq 0$;
    \item if $n_1,\ldots,n_{s-1}$ are known, then $n_s$ is the maximal integer such that $e_{\alpha_{i_s}}^{n_s}\cdots e_{\alpha_{i_2}}^{n_2} e_{\alpha_{i_1}}^{n_1}(v) \neq 0$.
 \end{itemize} 

\begin{example}
For $\SU(3)$, we fix the decomposition $s_1s_2s_1$ of the longest element $w_0$ of the Weyl group $W=\sym(3)$. Then, the string parametrisation of the crystal graph of the adjoint representation appears in Figure \ref{fig:stringparametrisation}.

\begin{figure}[ht]
\begin{center}      
\begin{tikzpicture}[scale=1.8]
\node (a) at (1.5,0.866)  {(0,0,0)};
\node (b) at (1.5,-0.866)  {(0,1,0)};
\node (c) at (0,1.733)  {(1,0,0)};
\node (d) at (-0.25,0.25)  {(1,1,0)};
\node (e) at (0.15,-0.3)  {(0,1,1)};
\node (f) at (-1.5,0.866)  {(2,1,0)};
\node (g) at (0,-1.733)  {(0,2,1)};
\node (h) at (-1.5,-0.866)  {(1,2,1)};
\draw [->,thick] (a) to node[auto,swap] {$f_1$} (c);
\draw [->,thick] (a) to node[auto] {$f_2$} (b);
\draw [->,thick] (b) to node[auto,swap] {$f_1$} (d);
\draw [->,thick] (d) to node[auto] {$f_1$} (f);
\draw [->,thick] (f) to node[auto,swap] {$f_2$} (h);
\draw [->,thick] (c) to node[auto] {$f_2$} (e);
\draw [->,thick] (e) to node[auto,swap] {$f_2$} (g);
\draw [->,thick] (g) to node[auto] {$f_1$} (h);
\end{tikzpicture}
\caption{String parametrisation of the elements of the crystal of the adjoint representation of $\SU(3)$.\label{fig:stringparametrisation}}
\end{center}
\end{figure}
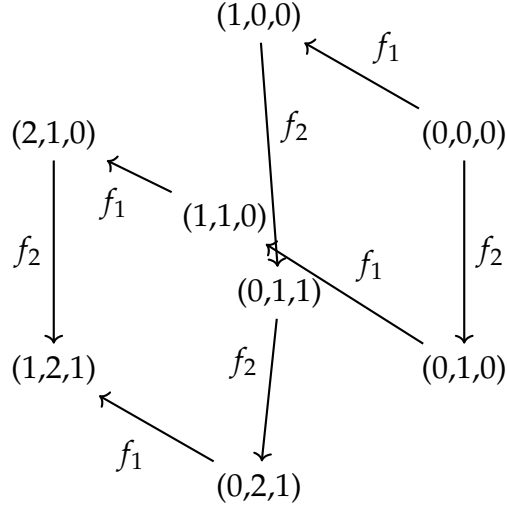
\end{example}

\noindent The string parametrisation is a natural way to label the vertices of the crystal graph: given a vertex $v$, one looks for the maximal size $n_1$ of a path in the direction $\alpha_{i_1}$ starting from $v$; then, for the maximal size $n_2$ of a path in the direction $\alpha_{i_2}$ starting from $e_{\alpha_{i_1}}^{n_1}(v)$; and so on.
 Given a vertex $v \in \cryst(\lambda)$ with string parametrisation $(n_1,\ldots,n_l)$, one has  $v = f_{\alpha_{i_1}}^{n_1}f_{\alpha_{i_2}}^{n_2}\cdots f_{\alpha_{i_l}}^{n_l}(v_\lambda)$, where $v_\lambda$ is the unique element of the crystal with weight $\lambda$. As a consequence, in this setting,
$$\mathrm{wt}(v) = \lambda - \sum_{j=1}^l n_j\alpha_{i_j}.$$
We denote $\mathscr{S}(\lambda)$ the set of all string parametrisations of elements of the crystal $\cryst(\lambda)$, and $\mathscr{S}(G) = \bigcup_{\lambda \in \hatG}\mathscr{S}(\lambda)$. We also denote $\mathscr{S}\!\mathscr{C}(G)$ the \emph{string cone} of $G$, which is the real cone (set of non-negative linear combinations) spanned by the elements of $\mathscr{S}(G)$. Finally, for $\lambda \in \hatG$, let $\mathscr{P}(\lambda)$ be the \emph{string polytope} of $\lambda$, which is the set of elements $(u_1,\ldots,u_l)$ in the string cone and such that:
\begin{align*}
u_l &\leq \lambda(\alpha_{i_l}^\vee);\\
u_{l-1} &\leq (\lambda-u_l\alpha_{i_l})(\alpha_{i_{l-1}}^\vee);\\
u_{l-2} &\leq (\lambda-u_l\alpha_{i_l}-u_{l-1}\alpha_{i_{l-1}})(\alpha_{i_{l-2}}^\vee);\\
&\vdots \qquad\qquad \vdots \\
u_{1} &\leq (\lambda-u_l\alpha_{i_l}-\cdots -u_{2}\alpha_{i_{2}})(\alpha_{i_{1}}^\vee).
\end{align*}

\begin{proposition}[Littelmann]
The string cone $\mathscr{S}\!\mathscr{C}(G)$ is a rational convex cone delimited by a finite number of hyperplanes in $\R^l$. The string parametrisations in $\mathscr{S}(G)$ are the integer points of the string cone $\mathscr{S}\!\mathscr{C}(G)$, and the string parametrisations in $\mathscr{S}(\lambda)$ are the integer points of the string polytope $\mathscr{P}(\lambda)$.
\end{proposition}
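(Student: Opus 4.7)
The plan is to deduce both assertions directly from the path model recalled in Section~\ref{subsec:pathmodel}, combined with the basic properties of the Kashiwara-type operators $e_\alpha$ and $f_\alpha$. I would work throughout with the fixed reduced expression $w_0 = s_{\alpha_{i_1}} \cdots s_{\alpha_{i_l}}$, and introduce the auxiliary function $\varepsilon_\alpha(v) = \max\{n \geq 0 : e_\alpha^n(v) \neq 0\}$, which is well-defined for any $v \in \cryst(\lambda)$ because $\cryst(\lambda)$ is finite. With this notation, a string parametrisation $(n_1,\ldots,n_l)$ is just the sequence of $\varepsilon$'s read along the orbit $v, e_{\alpha_{i_1}}^{n_1}(v), e_{\alpha_{i_2}}^{n_2}e_{\alpha_{i_1}}^{n_1}(v), \ldots$.

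First I would dispose of the easy bookkeeping: the string map $\mathrm{str} : v \mapsto (n_1,\ldots,n_l)$ is injective on $\cryst(\lambda)$ and satisfies $v = f_{\alpha_{i_1}}^{n_1} \cdots f_{\alpha_{i_l}}^{n_l}(v_\lambda)$, where $v_\lambda$ is the unique highest-weight vertex. Injectivity follows since $f_\alpha$ is a one-sided inverse of $e_\alpha$ wherever the latter does not vanish, and the identification with $v_\lambda$ after the full string of operators uses the reduced-expression property of $w_0$: the chain of Kashiwara operators ``walks up'' the crystal along a Weyl-chamber path associated to the reduced word and terminates at the highest weight. This step is essentially Littelmann's independence-of-reduced-word argument and is purely combinatorial once the path model is available.

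The next, and deepest, step is to linearise the constraints. One needs a piecewise-linear formula, proved by Kashiwara (and reinterpreted geometrically by Berenstein--Zelevinsky), expressing
\[
\varepsilon_{\alpha_{i_s}}\!\left(e_{\alpha_{i_{s-1}}}^{n_{s-1}}\cdots e_{\alpha_{i_1}}^{n_1}(v)\right)
\]
as a maximum of finitely many affine combinations of $n_1,\ldots,n_{s-1}$ whose coefficients depend only on the root system and the reduced word, not on $\lambda$. Converting the conditions $n_s \leq \varepsilon_{\alpha_{i_s}}(\cdots)$ into such a maximum produces a finite list of rational linear inequalities in $(n_1,\ldots,n_l)$; these cut out a rational polyhedral cone in $\R^l$ that is independent of $\lambda$. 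Taking the union of all $\mathscr{S}(\lambda)$ as $\lambda$ ranges over $\hatG$ then fills exactly the integer lattice points of this cone, which gives $\mathscr{S}\!\mathscr{C}(G)$ and the first assertion. The polyhedral finiteness is the hard part: one must either use Lusztig's transition maps between parametrisations associated to different reduced words and tropicalise them, or, equivalently, exploit the geometric model of the canonical basis.

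Once the cone is in hand, the string polytope $\mathscr{P}(\lambda)$ is obtained by adding the upper bounds imposed by the starting vertex $v_\lambda$. Writing $\omega_s = \lambda - \sum_{j > s} n_j \alpha_{i_j}$ for the weight of $e_{\alpha_{i_{s+1}}}^{n_{s+1}} \cdots e_{\alpha_{i_l}}^{n_l}(v)$ (one reads the orbit backwards from $v_\lambda$ via the $f$'s), the maximality of $n_s$ for $e_{\alpha_{i_s}}^{n_s}$ forces $n_s \leq \omega_s(\alpha_{i_s}^\vee)$, which is exactly the triangular system in the definition of $\mathscr{P}(\lambda)$. A direct verification, going from $s=l$ down to $s=1$ and using the $\varepsilon$-linearisation of the previous paragraph, shows that these are also sufficient conditions. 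The main obstacle, as indicated, is the piecewise-linear/polyhedral statement for $\varepsilon_{\alpha_{i_s}}$ along a reduced word; everything else is a consequence of the combinatorial structure of crystals provided by the path model.
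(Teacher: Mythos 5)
The paper does not prove this proposition: it is stated as a known result of Littelmann and the text immediately afterwards refers the reader to \cite{LittCone,BZ01} (and to the trail characterisation recalled after Theorem~\ref{thm:berensteinzelevinsky}) for the explicit description of the cone. There is therefore no ``paper's own proof'' to compare your sketch against; the fair question is whether your sketch would, if carried out, constitute a proof.

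At the level of ideas your outline is the right one, and it matches the strategy in the cited references. Two remarks are in order, though. First, the step you yourself flag as ``the deepest'' --- the piecewise-linear expression of $\varepsilon_{\alpha_{i_s}}(e_{\alpha_{i_{s-1}}}^{n_{s-1}}\cdots e_{\alpha_{i_1}}^{n_1}(v))$ as a tropical maximum of affine forms in $(n_1,\dots,n_{s-1})$, independent of $\lambda$ --- is not argued but deferred to Kashiwara, Lusztig, Berenstein--Zelevinsky. Since that is precisely the content of the first assertion, your proposal is a structured appeal to the same literature the paper cites, not an independent proof. Second, for the polytope assertion you only establish crisply the inclusion $\mathscr{S}(\lambda)\subseteq\mathscr{P}(\lambda)\cap\Z^l$: the bound $n_s\leq\omega_s(\alpha_{i_s}^\vee)$ follows because the vertex $w'=e_{\alpha_{i_s}}^{n_s}\cdots e_{\alpha_{i_1}}^{n_1}(v)$ sits at the top of its $\alpha_{i_s}$-string, so $\varphi_{\alpha_{i_s}}(w')=\mathrm{wt}(w')(\alpha_{i_s}^\vee)$. (There is a small indexing slip here: the vertex of weight $\omega_s=\lambda-\sum_{j>s}n_j\alpha_{i_j}$ is $e_{\alpha_{i_s}}^{n_s}\cdots e_{\alpha_{i_1}}^{n_1}(v)$, equivalently $f_{\alpha_{i_{s+1}}}^{n_{s+1}}\cdots f_{\alpha_{i_l}}^{n_l}(v_\lambda)$, not the $e$-composition you wrote.) The reverse inclusion, that every integer point of the cone satisfying the triangular inequalities is realised by some $v\in\cryst(\lambda)$, you only assert (``a direct verification \ldots shows these are also sufficient''); this is not immediate, because to check that $f_{\alpha_{i_1}}^{u_1}\cdots f_{\alpha_{i_l}}^{u_l}(v_\lambda)\neq\emptyset$ with the prescribed string one must control $\varphi_{\alpha_{i_{s}}}$ at intermediate vertices that are \emph{not} at the top of their $\alpha_{i_s}$-strings, which again needs the piecewise-linear machinery (or the embedding into the crystal $\cryst(\infty)$). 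So the sketch correctly locates where the difficulty lies, but leaves the same gap the paper leaves.
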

\noindent An explicit description of the string cone is given in \cite{LittCone,BZ01}; see also the remark after Theorem \ref{thm:berensteinzelevinsky}. On the other hand, the string polytope $\mathscr{P}(\lambda)$ has maximal dimension $l$ as long as $\lambda$ does not belong to the walls of the Weyl chamber.

\begin{example}
For $G=\SU(3)$, one can show that the string cone is the set of triples $(x,y,z) \in (\R_+)^3$ such that $y \geq z$; see \cite[Corollary 2]{LittCone}. The string polytope of the adjoint representation with highest weight $\lambda = \omega_1+\omega_2$ is then the subset of the string cone:
$$\mathscr{P}(\omega_1+\omega_2) = \{(x,y,z) \in (\R_+)^3\,|\,z \leq 1,\,\,z \leq y \leq 1 + z,\,\,x \leq 1-2z+y \}.$$
This polytope is drawn in Figure \ref{fig:stringpolytope}, and one can check that it contains eight integer points.
\begin{figure}[ht]
\includegraphics[scale=0.7]{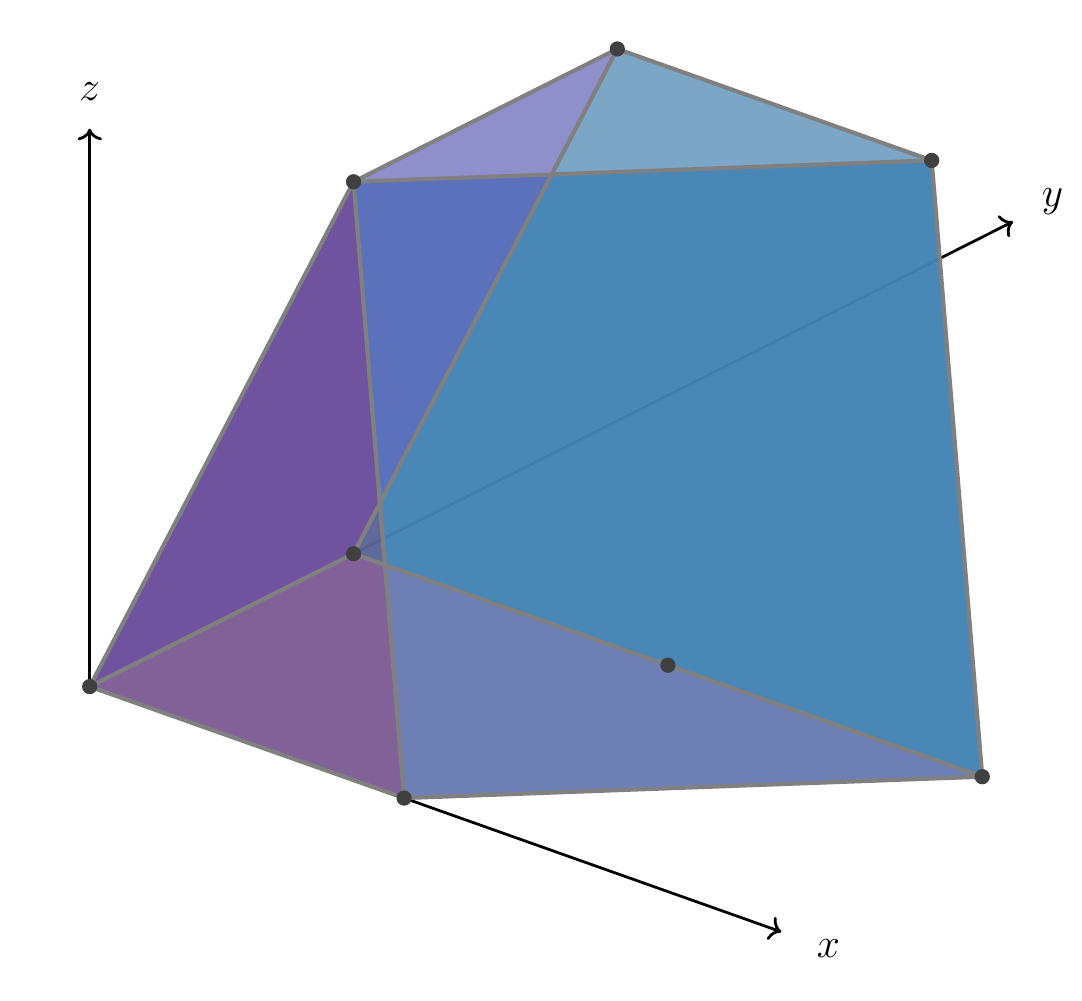}
\caption{String polytope of the adjoint representation of $\SU(3)$.\label{fig:stringpolytope}}
\end{figure}
\end{example}

We extend the weight $\mathrm{wt}(\cdot) : \cryst(\lambda) \to \R\Omega$ to a map $\Psi_\lambda$ on the whole space of string parametrisations $\R^l$, by using the same definition for real points as for integer points:
$$\Psi_\lambda(u_1,\ldots,u_r) = \lambda - \sum_{j=1}^{l} u_j\alpha_{i_j}.$$
Later we shall also consider maps $\Psi_x$ with $x$ arbitrary in $C$; the definition is the same as above, with $x=\lambda$. For any dominant weight $\lambda$, the map $\Psi_\lambda$ is affine, and $(\Psi_\lambda)_{|\cryst(\lambda)} = \mathrm{wt}$. The image of the string polytope $\mathscr{P}(\lambda)$ by $\Psi_\lambda$ is a polytope in $\R\Omega$, and one can show that it is the convex hull of the points in $W(\lambda)$; see for instance \cite[Definition 1.3]{AB04}. Moreover, the image by $\Psi_\lambda$ of the Lebesgue measure on $\mathscr{P}(\lambda)$ 
$$1_{(u_1,u_2,\ldots,u_l) \in \mathscr{P}(\lambda)}\DD{u_1}\DD{u_2}\,\cdots \DD{u_l}$$
is compactly supported by $\mathrm{Conv}(W(\lambda))$, and piecewise polynomial (this is a general property of affine images of Lebesgue measures on polytopes); see \cite[\S5.3]{BBO09}. We can then state a result of asymptotic polynomiality of the Kostka numbers $K_{\lambda,\omega} = \dim_\C (V^{\lambda}(\omega))$ (instead of the Littlewood--Richarson coefficients):

\begin{proposition}\label{prop:asymptoticsmultiplicities}
Fix a direction $x$ in the Weyl chamber $C \subset \R\Omega$, and a continuous bounded function $f$ on $\R\Omega$. We assume that $x$ does not belong to the walls of the Weyl chamber. Then, there exists a probability measure $m_{x}(y)\DD{y}$ on $\R\Omega$ that is supported by $\mathrm{Conv}(\{w(x)\,|\,w \in W\})$, that has a piecewise polynomial density $m_x$, and such that
\begin{equation}
    \lim_{\substack{t \to \infty \\ tx \in \hatG}} \left(\frac{\prod_{\alpha \in \Phi_+} \scal{\rho}{\alpha}}{t^{l}\,\prod_{\alpha \in \Phi_+} \scal{x}{\alpha}}\sum_{\omega \in \Z\Omega} K_{tx,\omega}\,\,f\!\left(\frac{\omega}{t}\right) \right) = \int_{\mathrm{Conv}(W(x))} f(y)\,m_{x}(y)\DD{y}.\label{eq:asymptoticmultiplicity}
\end{equation}
The local degree of $y \mapsto m_x(y)$ is bounded by $l-d = |\Phi_+| - \rank(G)$, and one has the scaling property
$m_{\gamma x}(\gamma y) = \frac{m_x(y)}{\gamma^{d}}.$
\end{proposition}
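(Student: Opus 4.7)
The plan is to use the string polytope description of crystal bases to recast the weighted sum of Kostka numbers as a Riemann sum over a scaled polytope, and then identify the limit as an affine push-forward of Lebesgue measure.

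First, I will invoke the theorem of Littelmann recalled in Section \ref{subsec:conepolytope}: for any dominant weight $\lambda$, the integer points of the string polytope $\mathscr{P}(\lambda)\subset\R^l$ are in bijection with the vertices of $\cryst(\lambda)$ via string parametrisations, and on this polytope the weight map is realised by the affine map $\Psi_\lambda(u)=\lambda-\sum_{j=1}^l u_j\alpha_{i_j}$. Consequently
\[
\sum_{\omega\in\Z\Omega} K_{tx,\omega}\,f\!\left(\frac{\omega}{t}\right) = \sum_{u\in\mathscr{P}(tx)\cap\Z^l} f\!\left(\frac{\Psi_{tx}(u)}{t}\right).
\]
The defining inequalities of $\mathscr{P}(\lambda)$ are linear in $(\lambda,u)$ jointly, so $\mathscr{P}(tx)=t\,\mathscr{P}(x)$. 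Likewise $\Psi_{tx}(tu)=t\,\Psi_x(u)$. Setting $v=u/t$, the above sum rewrites as a Riemann sum over the lattice $(t^{-1}\Z)^l\cap \mathscr{P}(x)$, which converges when $t\to\infty$:
\[
\frac{1}{t^l}\sum_{v\in (t^{-1}\Z)^l\cap \mathscr{P}(x)} f(\Psi_x(v)) \;\longrightarrow\; \int_{\mathscr{P}(x)} f(\Psi_x(v))\DD{v}.
\]
The hypothesis that $x$ lies in the open Weyl chamber guarantees that $\mathscr{P}(x)$ has maximal dimension $l$, so the Riemann convergence holds with the usual rate; the boundedness of $f$ together with the compactness of $\mathscr{P}(x)$ takes care of the boundary terms, and no tail estimate is needed since $K_{tx,\omega}$ is supported in the compact set $t\,\mathrm{Conv}(W(x))$.

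I then push the uniform measure forward: the image measure $(\Psi_x)_\star(1_{\mathscr{P}(x)}\DD{v})$ is supported on the Newton polytope $\Psi_x(\mathscr{P}(x))=\mathrm{Conv}(W(x))$ (a well-known identification; see \cite[Def.~1.3]{AB04}), and by the standard fact on affine projections of Lebesgue measure on polytopes, its density is a piecewise polynomial function $\widetilde{m}_x(y)$ on $\mathrm{Conv}(W(x))$. Since $\Psi_x$ is an affine surjection $\R^l\twoheadrightarrow\R\Omega$ with kernel of dimension $l-d$, the local degree of $\widetilde{m}_x$ is at most $l-d$ in each region of polynomiality, and the regions are themselves polyhedral (images of chambers cut out by the fiber direction). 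To obtain the stated probability density $m_x$, I normalise: the total mass $\int \widetilde m_x = \mathrm{vol}\,\mathscr{P}(x)$ can be computed directly by matching the $f\equiv 1$ specialisation with Weyl's dimension formula
\[
\dim V^{tx}\;\sim\;t^l\,\prod_{\alpha\in\Phi_+}\frac{\scal{x}{\alpha}}{\scal{\rho}{\alpha}},
\]
which forces $\mathrm{vol}\,\mathscr{P}(x)=\prod_{\alpha\in\Phi_+}\scal{x}{\alpha}/\scal{\rho}{\alpha}$, giving exactly the normalising factor in \eqref{eq:asymptoticmultiplicity}. The scaling property is immediate from $\mathscr{P}(\gamma x)=\gamma\mathscr{P}(x)$ and $\Psi_{\gamma x}(\gamma v)=\gamma\Psi_x(v)$: the change of variables $y'=\gamma y$ in the push-forward produces the factor $\gamma^{-d}$ (the dimension of the target $\R\Omega$), yielding $m_{\gamma x}(\gamma y)=\gamma^{-d}m_x(y)$.

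The only delicate step is the regularity of the density $\widetilde{m}_x$. It is a classical result on linear projections of indicator functions of rational polytopes that the push-forward has a piecewise polynomial density whose polynomial pieces have degree equal to the codimension of the generic fiber; here this is $l-d$. I would appeal to this general fact (see e.g.\ \cite[Thm.~1]{BBO09} in a closely related setting), so that no ad hoc argument is needed. The remaining obstacle — which I would treat as routine — is a uniform control of the Riemann sum when $f$ is only continuous bounded; this follows from approximating $f$ uniformly on the compact $\mathrm{Conv}(W(x))$ by smooth functions and from the Lipschitz regularity of $\Psi_x$.
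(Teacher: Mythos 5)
Your proof is correct and follows essentially the same route as the paper: both identify the weighted sum of Kostka numbers with a sum over integer points of the string polytope $\mathscr{P}(tx)=t\,\mathscr{P}(x)$, pass to the limit via a Riemann-sum (equivalently, weak) argument, and obtain $m_x$ as the affine push-forward by $\Psi_x$ of uniform measure on $\mathscr{P}(x)$, with the degree bound and scaling falling out of the affinity of $\Psi_x$ and the homogeneity of the polytope. The only cosmetic difference is the order of normalisation: the paper works from the start with the spectral probability measure $\mu_\lambda$ (normalised by $\dim V^\lambda$), whereas you normalise at the end by matching $f\equiv 1$ with Weyl's dimension formula to identify $\mathrm{vol}\,\mathscr{P}(x)=\delta(x)$.
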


\noindent The probability measure $m_x(y) \DD{y}$ is a version of the \emph{Duistermaat--Heckman measure}, see in particular \cite[\S5.3]{BBO09}.

\begin{proof}
Set $\lambda = tx$. The left-hand side $L(f,x,t)$ of Equation \eqref{eq:asymptoticmultiplicity} approximates $\int_{\R\Omega} f(\frac{\omega}{t})\,\mu_{\lambda}(\omega)$, where $\mu_{\lambda}$ is the spectral probability measure of the representation $\lambda$, supported on weights and defined by
$$\mu_{\lambda} = \frac{1}{\dim_{\C} (V^\lambda)} \sum_{\omega \in \Omega(\lambda)} (\dim_{\C} (V^\lambda(\omega)))\,\delta_{\omega}.$$
Indeed, the only difference is that we approximated $\dim_{\C}(V^\lambda)$ by $t^{|\Phi_+|}\prod_{\alpha \in \Phi_+} \frac{\scal{x}{\alpha}}{\scal{\rho}{\alpha}}$, and this is valid in the limit $t \to \infty$. Now, by the previous discussion, $\mu_\lambda$ is the image of the probability measure 
$$\nu_\lambda = \frac{1}{\dim_{\C} (V^\lambda)} \sum_{(n_1,\ldots,n_l) \text{ integer points in }\mathscr{P}(\lambda)} \delta_{(n_1,\ldots,n_l)}$$
on $\R^l$ by the affine map $\Psi_\lambda$. Therefore,
$$L(f,x,t) \simeq \int_{\R^l} f\!\left(\frac{\Psi_\lambda(u)}{t}\right)\,\nu_{tx}(\!\DD{u})=\int_{\R^l} f\!\left(\Psi_x\!\left(\frac{u}{t}\right)\right)\,\nu_{tx}(\!\DD{u}).$$
As $t$ goes to infinity, the discrete measure $\nu_{x,t}(u)=\nu_{tx}(tu)$ converges in law to the uniform probability mesure $\upsilon_x$ on the polytope $\mathscr{P}(x)$, which is defined as the set of points of the string cone $\mathscr{S}(G)$ which satisfy the inequalities
\begin{align*}
u_l &\leq x(\alpha_{i_l}^\vee);\\
u_{l-1} &\leq (x-u_l\alpha_{i_l})(\alpha_{i_{l-1}}^\vee);\\
&\vdots \qquad\qquad \vdots \\
u_{1} &\leq (x-u_l\alpha_{i_l}-\cdots -u_{2}\alpha_{i_{2}})(\alpha_{i_{1}}^\vee).
\end{align*}
Therefore, 
$\lim_{t \to \infty} L(f,x,t) = \int_{\R^l} f(\Psi_x(u)) \,\upsilon_x(\DD{u})$. Finally, the image measure $m_x = (\Psi_x)_*(\upsilon_x)$ is given by a compactly supported piecewise polynomial function, of local degree smaller than $l-d$; and the obvious identities $\Psi_{\gamma x}( \gamma  u) = \gamma \,\Psi_x(u)$ and $\gamma^{l}\,\upsilon_{\gamma x}(\gamma u)\DD{u} = \upsilon_x(u)\DD{u}$ imply the scaling property.
\end{proof}


\subsection{From the string polytope to the Littlewood--Richardson coefficients}\label{subsec:littlewoodrichardson}
The theory which enables one to understand the asymptotics of Kostka numbers can be adapted to the same problem  with the Littlewood--Richardson coefficients. From the discussion at the end of Section \ref{subsec:pathmodel}, and the description of $c^{\lambda,\mu}_{\nu}$ as a number of paths in $\cryst(\pi_\mu)$ satisfying certain conditions, one can expect that there is a notion of string polytope of $V^\mu$ \emph{relatively to another dominant weight} $\lambda$ that allows to calculate these coefficients. These relative string polytopes have been constructed by Berenstein and Zelevinsky, see \cite{BZ88,BZ01}.
 A \emph{trail} from a weight $\phi$ to another weight $\pi$ of an irreducible representation $V^\mu$ of $\glie_{\C}$ is a sequence of weights $\phi=\phi_0,\phi_1,\ldots,\phi_l = \pi$ of $V^\mu$ such that:
\begin{enumerate}
    \item $\phi_{j-1}-\phi_{j} = k_j\,\alpha_{i_j}$ for any $j \in \lle 1,l\rre$, with the $k_j$'s non-negative integers;
    \item  there exist in the crystal $\cryst(\mu)$ vertices $v_\phi$ and $v_\pi$ with respective weights $\phi$ and $\pi$, and a sequence of edges 
$$v_{\phi} \to_{k_1\, f_{\alpha_{i_1}}} v_{\phi_{1}} \to_{k_{2}\,f_{\alpha_{i_{2}}}} \cdots \to_{k_{l-1}\,f_{\alpha_{i_{l-1}}}} v_{\phi_{l-1}} \to_{k_l\, f_{\alpha_{i_l}}} v_\pi,$$
where $\to_{k_j\,f_{\alpha_{i_j}}}$ stands for $k_j$ edges of label $f_{\alpha_{i_j}}$.
\end{enumerate}
 In other words, the trails are the images by the weight map of directed paths on the crystal graph. For instance, in the crystal of the adjoint representation of $\SU(3)$, there is a trail from $\omega_1+\omega_2$ to $\omega_1-2\omega_2$, since one can find the sequence of edges
$$(0,0,0) \to_{f_{\alpha_1}} (1,0,0) \to_{f_{\alpha_2}} (0,1,1) \to_{f_{\alpha_2}} (0,2,1) $$
in the crystal graph. We refer to \cite[Theorem 2.3]{BZ01} for a proof of the following result, in which we shall consider irreducible representations of the dual Langlands Lie algebra ${}^{\mathrm{L}}\glie_\C$, which is the Lie algebra obtained from $\glie_{\C}$ by exchanging roots and coroots, respectively weights and coweights.

\begin{theorem}[Berenstein--Zelevinsky]\label{thm:berensteinzelevinsky}
Let $\mathscr{S}(\lambda,\mu)$ be the subset of the set of string parametrisations $\mathscr{S}(\mu)$ that consists of strings $(n_1,n_2,\ldots,n_l)$ such that, for any $i \in \lle 1,d\rre$ and any trail $(\phi_0^\vee,\phi_1^\vee,\ldots,\phi_l^\vee)$ from $s_i(\omega_i^\vee)$ to $w_0(\omega_i^\vee)$ in the fundamental representation $V^{\omega_i^\vee}$ of ${}^{\mathrm{L}}\glie_\C$,
$$\sum_{j=1}^l n_j\,\alpha_{i_j}\!\left(\frac{\phi_{j-1}^\vee + \phi_j^\vee}{2}\right) \geq -\lambda(\alpha_i^\vee)=-l_i.$$
Then, $c^{\lambda,\mu}_{\nu}$ is the number of elements with weight $\nu-\lambda$ in $\mathscr{S}(\lambda,\mu)$. Therefore, it is the number of integer points in a slice of the Berenstein--Zelevinsky relative string polytope $\mathscr{P}(\lambda,\mu)$, which is the intersection of $\mathscr{P}(\mu)$ with the half-spaces determined by the inequalities above.
\end{theorem}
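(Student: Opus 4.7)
The plan is to combine Littelmann's tensor product rule with a careful combinatorial translation of the dominance-of-concatenation condition into inequalities on the string parameters. Specifically, from Section \ref{subsec:pathmodel} we know that
\[
c^{\lambda,\mu}_\nu \;=\; \#\bigl\{\pi\in\cryst(\pi_\mu)\ :\ \mathrm{wt}(\pi)=\nu-\lambda\ \text{and}\ \pi_\lambda *\pi\ \text{stays in}\ C\bigr\},
\]
so the task reduces to encoding the three conditions (membership in $\cryst(\mu)$, weight equal to $\nu-\lambda$, and dominance of the concatenation) as linear inequalities in the string parameters.

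First, I would fix the reduced expression $w_0=s_{\alpha_{i_1}}\cdots s_{\alpha_{i_l}}$ and bijectively identify the crystal $\cryst(\mu)$ with the integer points of $\mathscr{P}(\mu)$ via the map $v\mapsto(n_1,\dots,n_l)$ from Section \ref{subsec:conepolytope}. Under this identification, $\mathrm{wt}(v)=\mu-\sum_j n_j\alpha_{i_j}$, so the condition $\mathrm{wt}(v)=\nu-\lambda$ becomes a finite system of affine equations (one per fundamental weight coordinate of $\nu-\lambda$). Likewise, the defining inequalities of $\mathscr{P}(\mu)$ become linear constraints on $(n_1,\dots,n_l)$. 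The remaining task is the dominance of $\pi_\lambda*\pi$.

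The second, and hardest, step is to rewrite the continuous condition ``$\pi_\lambda*\pi(t)$ lies in $C$ for all $t\in[0,1]$'' as a finite list of inequalities of the form
\[
\sum_{j=1}^l n_j\,\alpha_{i_j}\!\left(\tfrac{\phi^\vee_{j-1}+\phi^\vee_j}{2}\right)\ \geq\ -\lambda(\alpha_i^\vee),
\]
indexed by $i\in\lle 1,d\rre$ and by trails from $s_i(\omega_i^\vee)$ to $w_0(\omega_i^\vee)$ in the fundamental representation $V^{\omega_i^\vee}$ of the Langlands dual ${}^L\glie_\C$. The key tactic here is to use Littelmann's theorem allowing $\pi_\mu$ to be replaced by \emph{any} dominant path from $0$ to $\mu$, and to choose the polygonal path obtained by applying the sequence of lowering operators $f_{\alpha_{i_1}}^{n_1}\cdots f_{\alpha_{i_l}}^{n_l}$ to the segment $\pi_\mu$. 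On such a piecewise linear path the function $t\mapsto \alpha_i^\vee(\pi_\lambda*\pi(t))$ is piecewise linear with finitely many vertices, so dominance amounts to its minimum being $\geq -\lambda(\alpha_i^\vee)$. Expressing this minimum as a linear form in $(n_j)$ requires computing how the operators $f_{\alpha_{i_j}}$ modify the successive evaluations $\alpha_i^\vee(\cdots)$. The combinatorial machinery that accomplishes this is precisely the notion of a trail in the dual representation $V^{\omega_i^\vee}$ of ${}^L\glie_\C$: each break point $t_j$ contributes the coefficient $\alpha_{i_j}\!\bigl(\frac{\phi^\vee_{j-1}+\phi^\vee_j}{2}\bigr)$, where $(\phi^\vee_{j-1},\phi^\vee_j)$ records the dual weight pair produced by the $j$-th operator read backwards. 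Taking the \emph{minimum} over $t$ corresponds to taking the minimum over all trails, whence the inequality has to hold for every trail simultaneously.

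Putting the three layers together (polytope $\mathscr{P}(\mu)$, weight equation, trail inequalities) gives exactly the polytope $\mathscr{P}(\lambda,\mu)$ intersected with the affine slice $\{\mathrm{wt}=\nu-\lambda\}$, and the Littelmann path-counting formula then identifies $c^{\lambda,\mu}_\nu$ with the number of integer points of this slice. The main obstacle is the middle step: it requires a nontrivial Langlands-dual reading of the crystal graph of $V^\mu$, for which one needs a precise statement of how the elementary operations $e_{\alpha_i},f_{\alpha_i}$ on $\cryst(\mu)$ dualise to edges in $\cryst({}^L\omega_i^\vee)$, together with the observation (due to Berenstein--Zelevinsky) that the extremal values of coroot evaluations along $\pi_\lambda*\pi$ are controlled precisely by the trails from $s_i(\omega_i^\vee)$ to $w_0(\omega_i^\vee)$; this is where I expect most of the technical work to lie, and where one essentially has to invoke the geometric lifting and $i$-trail machinery developed in \cite{BZ01}.
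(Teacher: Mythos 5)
The paper does not prove this theorem: it states it and refers the reader to \cite[Theorem 2.3]{BZ01} for the proof. Your sketch therefore does strictly more than the paper attempts, so a comparison is only partially possible.

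Your high-level plan is sound and correctly identifies the natural reduction: (i) invoke Littelmann's tensor-product rule, recorded at the end of Section \ref{subsec:pathmodel} of the paper, to write $c^{\lambda,\mu}_\nu$ as a count of paths $\pi\in\cryst(\pi_\mu)$ with $\mathrm{wt}(\pi)=\nu-\lambda$ and $\pi_\lambda*\pi$ dominant; (ii) transport the count to string parameters $(n_1,\dots,n_l)$ so that the weight condition and the cut-out inequalities of $\mathscr{P}(\mu)$ become affine in $(n_j)$; (iii) encode the dominance condition as trail inequalities. You also correctly observe that dominance of $\pi_\lambda*\pi$ is equivalent to $\min_s\alpha_i^\vee(\pi(s))\geq-\lambda(\alpha_i^\vee)$ for each $i$, which is exactly the shape of the Berenstein--Zelevinsky inequalities. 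However, step (iii) is not a translation exercise: expressing the piecewise-linear function $(n_1,\dots,n_l)\mapsto\min_s\alpha_i^\vee(\pi(s))$ as a minimum of linear forms indexed by $i$-trails in the Langlands-dual fundamental representation $V^{\omega_i^\vee}$ of ${}^L\glie_\C$ is precisely the content of the $i$-trail / geometric-lifting machinery of \cite{BZ01}, and you yourself flag this. So your proposal is not a self-contained proof but a correct road-map showing how the theorem decomposes into Littelmann's rule plus the genuine BZ input; since the paper simply cites BZ, the two treatments ultimately rest on the same external result. One small point worth making precise if you ever try to fill in step (iii): what Littelmann's substitution theorem lets you replace is the \emph{generating} dominant path (the segment $\pi_\mu$ can be any dominant path from $0$ to $\mu$); it does not let you choose the path $\pi\in\cryst(\pi_\mu)$ arbitrarily, so the passage from ``minimum along the path'' to ``minimum over trails'' must be justified directly from the action of the root operators on the string parameters rather than by a convenient choice of representative.
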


\begin{remark}
In \cite[Theorem 3.10]{BZ01}, a similar trail characterisation of the string cone $\mathscr{S}\!\mathscr{C}(G)$ is provided: it consists in all the sequences $(x_1,x_2,\ldots,x_l) \in (\R_+)^l$ such that, for any $i \in \lle 1,d\rre$ and any trail $(\phi_0^\vee,\phi_1^\vee,\ldots,\phi_l^\vee)$ from $\omega_i^\vee$ to $w_0s_i(\omega_i^\vee)$ in the fundamental representation $V^{\omega_i^\vee}$ of ${}^{\mathrm{L}}\glie_\C$,
$$\sum_{j=1}^l x_j\,\alpha_{i_j}\!\left(\frac{\phi_{j-1}^\vee + \phi_j^\vee}{2}\right) \geq 0.$$
\end{remark}

\begin{example}
In the Weyl chamber of $\SU(3)$, fix the two directions $x=\omega_1+\omega_2$ and $y = 2\omega_1+\omega_2$. We have drawn in Figure \ref{fig:multiplicitiestensorproduct} the Littlewood--Richardson coefficients for $V^\lambda \otimes V^\mu$, with $\lambda=10x$ and $\mu=10y$.
Consider for instance the weight $\nu = 10(2\omega_1+\omega_2)$; the multiplicity of $V^\nu$ in $V^\lambda \otimes V^\mu$ is equal to $11$. On the other hand, the string polytope $\mathscr{P}(\mu)$ is the set of triplets $(x,y,z) \in (\R_+)^3$ with
\begin{align*}
z \leq 20,\,\,z \leq y \leq 10 + z,\,\,x\leq 20-2z+y.
\end{align*}
For the relative string polytope $\mathscr{P}(\lambda,\mu)$, we need to add the inequalities:
$$x\leq 10,\,\,y\leq 10+x,\,\,z\leq 10.$$
Consequently, to compute the multiplicity $c^{\lambda,\mu}_{\nu}$, we need to find all the integer triplets satisfying the previous inequalities, and with $$(x+z)\alpha_1 + y\alpha_2 = \lambda+\mu-\nu = \lambda = 10(\alpha_1+\alpha_2).$$ It is easily seen that the $11$ solutions are the triplets $(k,10,10-k)$ with $k \in \lle 0,10\rre$. 
\end{example}

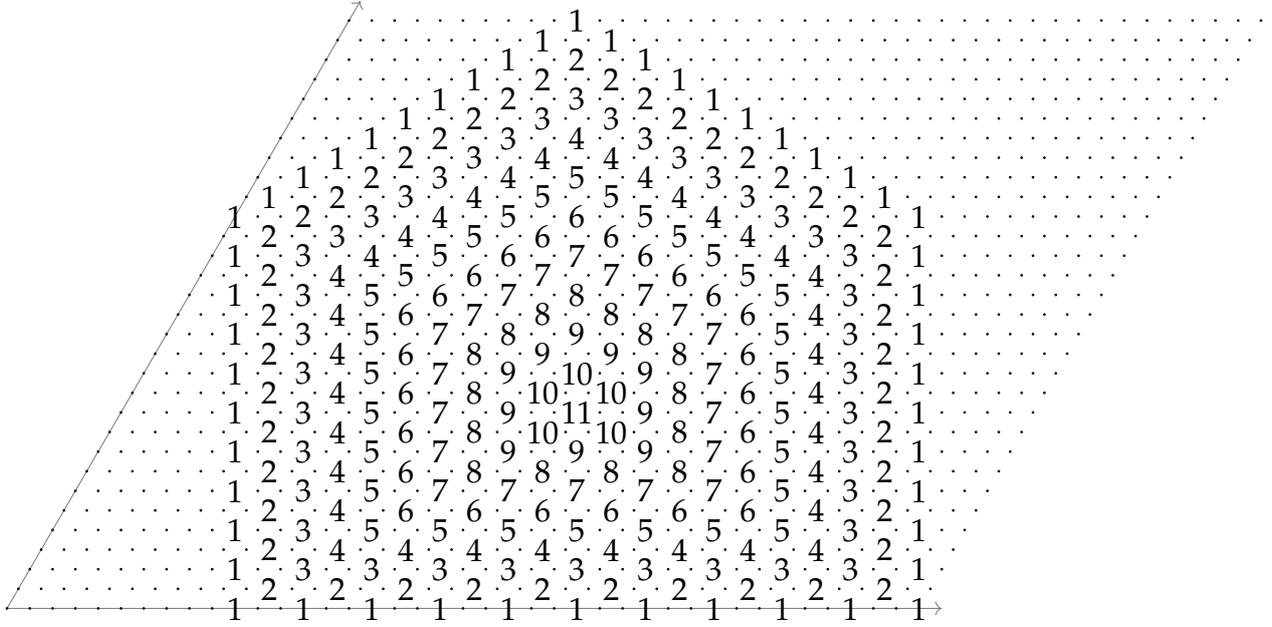
\begin{figure}[ht]
\begin{center}      
\begin{tikzpicture}[scale=0.3]
\draw [<->, gray] (15.5,26.8468) -- (0,0) -- (41,0);
\foreach \x in {    
(0.00000, 0.00000), (0.50000, 0.86602), (1.0000, 1.7320), (1.5000,
2.5981), (2.0000, 3.4641), (2.5000, 4.3301), (3.0000, 5.1962), (3.5000,
6.0622), (4.0000, 6.9282), (4.5000, 7.7942), (5.0000, 8.6602), (5.5000,
9.5263), (6.0000, 10.392), (6.5000, 11.258), (7.0000, 12.124), (7.5000,
12.990), (8.0000, 13.856), (8.5000, 14.722), (9.0000, 15.588), (9.5000,
16.454), (10.500, 18.187), (11.000, 19.053), (11.500, 19.919), (12.000,
20.785), (12.500, 21.651), (13.000, 22.517), (13.500, 23.383), (14.000,
24.249), (14.500, 25.115), (15.000, 25.981), (1.0000, 0.00000), (1.5000,
0.86602), (2.0000, 1.7320), (2.5000, 2.5981), (3.0000, 3.4641), (3.5000,
4.3301), (4.0000, 5.1962), (4.5000, 6.0622), (5.0000, 6.9282), (5.5000,
7.7942), (6.0000, 8.6602), (6.5000, 9.5263), (7.0000, 10.392), (7.5000,
11.258), (8.0000, 12.124), (8.5000, 12.990), (9.0000, 13.856), (9.5000,
14.722), (10.500, 16.454), (11.000, 17.320), (12.000, 19.053), (12.500,
19.919), (13.000, 20.785), (13.500, 21.651), (14.000, 22.517), (14.500,
23.383), (15.000, 24.249), (15.500, 25.115), (16.000, 25.981), (2.0000,
0.00000), (2.5000, 0.86602), (3.0000, 1.7320), (3.5000, 2.5981),
(4.0000, 3.4641), (4.5000, 4.3301), (5.0000, 5.1962), (5.5000, 6.0622),
(6.0000, 6.9282), (6.5000, 7.7942), (7.0000, 8.6602), (7.5000, 9.5263),
(8.0000, 10.392), (8.5000, 11.258), (9.0000, 12.124), (9.5000, 12.990),
(10.500, 14.722), (11.000, 15.588), (12.000, 17.320), (12.500, 18.187),
(13.500, 19.919), (14.000, 20.785), (14.500, 21.651), (15.000, 22.517),
(15.500, 23.383), (16.000, 24.249), (16.500, 25.115), (17.000, 25.981),
(3.0000, 0.00000), (3.5000, 0.86602), (4.0000, 1.7320), (4.5000,
2.5981), (5.0000, 3.4641), (5.5000, 4.3301), (6.0000, 5.1962), (6.5000,
6.0622), (7.0000, 6.9282), (7.5000, 7.7942), (8.0000, 8.6602), (8.5000,
9.5263), (9.0000, 10.392), (9.5000, 11.258), (10.500, 12.990), (11.000,
13.856), (12.000, 15.588), (12.500, 16.454), (13.500, 18.187), (14.000,
19.053), (15.000, 20.785), (15.500, 21.651), (16.000, 22.517), (16.500,
23.383), (17.000, 24.249), (17.500, 25.115), (18.000, 25.981), (4.0000,
0.00000), (4.5000, 0.86602), (5.0000, 1.7320), (5.5000, 2.5981),
(6.0000, 3.4641), (6.5000, 4.3301), (7.0000, 5.1962), (7.5000, 6.0622),
(8.0000, 6.9282), (8.5000, 7.7942), (9.0000, 8.6602), (9.5000, 9.5263),
(10.500, 11.258), (11.000, 12.124), (12.000, 13.856), (12.500, 14.722),
(13.500, 16.454), (14.000, 17.320), (15.000, 19.053), (15.500, 19.919),
(16.500, 21.651), (17.000, 22.517), (17.500, 23.383), (18.000, 24.249),
(18.500, 25.115), (19.000, 25.981), (5.0000, 0.00000), (5.5000,
0.86602), (6.0000, 1.7320), (6.5000, 2.5981), (7.0000, 3.4641), (7.5000,
4.3301), (8.0000, 5.1962), (8.5000, 6.0622), (9.0000, 6.9282), (9.5000,
7.7942), (10.500, 9.5263), (11.000, 10.392), (12.000, 12.124), (12.500,
12.990), (13.500, 14.722), (14.000, 15.588), (15.000, 17.320), (15.500,
18.187), (16.500, 19.919), (17.000, 20.785), (18.000, 22.517), (18.500,
23.383), (19.000, 24.249), (19.500, 25.115), (20.000, 25.981), (6.0000,
0.00000), (6.5000, 0.86602), (7.0000, 1.7320), (7.5000, 2.5981),
(8.0000, 3.4641), (8.5000, 4.3301), (9.0000, 5.1962), (9.5000, 6.0622),
(10.500, 7.7942), (11.000, 8.6602), (12.000, 10.392), (12.500, 11.258),
(13.500, 12.990), (14.000, 13.856), (15.000, 15.588), (15.500, 16.454),
(16.500, 18.187), (17.000, 19.053), (18.000, 20.785), (18.500, 21.651),
(19.500, 23.383), (20.000, 24.249), (20.500, 25.115), (21.000, 25.981),
(7.0000, 0.00000), (7.5000, 0.86602), (8.0000, 1.7320), (8.5000,
2.5981), (9.0000, 3.4641), (9.5000, 4.3301), (10.500, 6.0622), (11.000,
6.9282), (12.000, 8.6602), (12.500, 9.5263), (13.500, 11.258), (14.000,
12.124), (15.000, 13.856), (15.500, 14.722), (16.500, 16.454), (17.000,
17.320), (18.000, 19.053), (18.500, 19.919), (19.500, 21.651), (20.000,
22.517), (21.000, 24.249), (21.500, 25.115), (22.000, 25.981), (8.0000,
0.00000), (8.5000, 0.86602), (9.0000, 1.7320), (9.5000, 2.5981),
(10.500, 4.3301), (11.000, 5.1962), (12.000, 6.9282), (12.500, 7.7942),
(13.500, 9.5263), (14.000, 10.392), (15.000, 12.124), (15.500, 12.990),
(16.500, 14.722), (17.000, 15.588), (18.000, 17.320), (18.500, 18.187),
(19.500, 19.919), (20.000, 20.785), (21.000, 22.517), (21.500, 23.383),
(22.500, 25.115), (23.000, 25.981), (9.0000, 0.00000), (9.5000,
0.86602), (10.500, 2.5981), (11.000, 3.4641), (12.000, 5.1962), (12.500,
6.0622), (13.500, 7.7942), (14.000, 8.6602), (15.000, 10.392), (15.500,
11.258), (16.500, 12.990), (17.000, 13.856), (18.000, 15.588), (18.500,
16.454), (19.500, 18.187), (20.000, 19.053), (21.000, 20.785), (21.500,
21.651), (22.500, 23.383), (23.000, 24.249), (24.000, 25.981), (10.500,
0.86602), (11.000, 1.7320), (12.000, 3.4641), (12.500, 4.3301), (13.500,
6.0622), (14.000, 6.9282), (15.000, 8.6602), (15.500, 9.5263), (16.500,
11.258), (17.000, 12.124), (18.000, 13.856), (18.500, 14.722), (19.500,
16.454), (20.000, 17.320), (21.000, 19.053), (21.500, 19.919), (22.500,
21.651), (23.000, 22.517), (24.000, 24.249), (24.500, 25.115), (11.000,
0.00000), (12.000, 1.7320), (12.500, 2.5981), (13.500, 4.3301), (14.000,
5.1962), (15.000, 6.9282), (15.500, 7.7942), (16.500, 9.5263), (17.000,
10.392), (18.000, 12.124), (18.500, 12.990), (19.500, 14.722), (20.000,
15.588), (21.000, 17.320), (21.500, 18.187), (22.500, 19.919), (23.000,
20.785), (24.000, 22.517), (24.500, 23.383), (25.500, 25.115), (26.000,
25.981), (12.000, 0.00000), (12.500, 0.86602), (13.500, 2.5981),
(14.000, 3.4641), (15.000, 5.1962), (15.500, 6.0622), (16.500, 7.7942),
(17.000, 8.6602), (18.000, 10.392), (18.500, 11.258), (19.500, 12.990),
(20.000, 13.856), (21.000, 15.588), (21.500, 16.454), (22.500, 18.187),
(23.000, 19.053), (24.000, 20.785), (24.500, 21.651), (25.500, 23.383),
(26.000, 24.249), (27.000, 25.981), (13.500, 0.86602), (14.000, 1.7320),
(15.000, 3.4641), (15.500, 4.3301), (16.500, 6.0622), (17.000, 6.9282),
(18.000, 8.6602), (18.500, 9.5263), (19.500, 11.258), (20.000, 12.124),
(21.000, 13.856), (21.500, 14.722), (22.500, 16.454), (23.000, 17.320),
(24.000, 19.053), (24.500, 19.919), (25.500, 21.651), (26.000, 22.517),
(27.000, 24.249), (27.500, 25.115), (28.000, 25.981), (14.000, 0.00000),
(15.000, 1.7320), (15.500, 2.5981), (16.500, 4.3301), (17.000, 5.1962),
(18.000, 6.9282), (18.500, 7.7942), (19.500, 9.5263), (20.000, 10.392),
(21.000, 12.124), (21.500, 12.990), (22.500, 14.722), (23.000, 15.588),
(24.000, 17.320), (24.500, 18.187), (25.500, 19.919), (26.000, 20.785),
(27.000, 22.517), (27.500, 23.383), (28.500, 25.115), (29.000, 25.981),
(15.000, 0.00000), (15.500, 0.86602), (16.500, 2.5981), (17.000,
3.4641), (18.000, 5.1962), (18.500, 6.0622), (19.500, 7.7942), (20.000,
8.6602), (21.000, 10.392), (21.500, 11.258), (22.500, 12.990), (23.000,
13.856), (24.000, 15.588), (24.500, 16.454), (25.500, 18.187), (26.000,
19.053), (27.000, 20.785), (27.500, 21.651), (28.500, 23.383), (29.000,
24.249), (29.500, 25.115), (30.000, 25.981), (16.500, 0.86602), (17.000,
1.7320), (18.000, 3.4641), (18.500, 4.3301), (19.500, 6.0622), (20.000,
6.9282), (21.000, 8.6602), (21.500, 9.5263), (22.500, 11.258), (23.000,
12.124), (24.000, 13.856), (24.500, 14.722), (25.500, 16.454), (26.000,
17.320), (27.000, 19.053), (27.500, 19.919), (28.500, 21.651), (29.000,
22.517), (30.000, 24.249), (30.500, 25.115), (31.000, 25.981), (17.000,
0.00000), (18.000, 1.7320), (18.500, 2.5981), (19.500, 4.3301), (20.000,
5.1962), (21.000, 6.9282), (21.500, 7.7942), (22.500, 9.5263), (23.000,
10.392), (24.000, 12.124), (24.500, 12.990), (25.500, 14.722), (26.000,
15.588), (27.000, 17.320), (27.500, 18.187), (28.500, 19.919), (29.000,
20.785), (30.000, 22.517), (30.500, 23.383), (31.000, 24.249), (31.500,
25.115), (32.000, 25.981), (18.000, 0.00000), (18.500, 0.86602),
(19.500, 2.5981), (20.000, 3.4641), (21.000, 5.1962), (21.500, 6.0622),
(22.500, 7.7942), (23.000, 8.6602), (24.000, 10.392), (24.500, 11.258),
(25.500, 12.990), (26.000, 13.856), (27.000, 15.588), (27.500, 16.454),
(28.500, 18.187), (29.000, 19.053), (30.000, 20.785), (30.500, 21.651),
(31.500, 23.383), (32.000, 24.249), (32.500, 25.115), (33.000, 25.981),
(19.500, 0.86602), (20.000, 1.7320), (21.000, 3.4641), (21.500, 4.3301),
(22.500, 6.0622), (23.000, 6.9282), (24.000, 8.6602), (24.500, 9.5263),
(25.500, 11.258), (26.000, 12.124), (27.000, 13.856), (27.500, 14.722),
(28.500, 16.454), (29.000, 17.320), (30.000, 19.053), (30.500, 19.919),
(31.500, 21.651), (32.000, 22.517), (32.500, 23.383), (33.000, 24.249),
(33.500, 25.115), (34.000, 25.981), (20.000, 0.00000), (21.000, 1.7320),
(21.500, 2.5981), (22.500, 4.3301), (23.000, 5.1962), (24.000, 6.9282),
(24.500, 7.7942), (25.500, 9.5263), (26.000, 10.392), (27.000, 12.124),
(27.500, 12.990), (28.500, 14.722), (29.000, 15.588), (30.000, 17.320),
(30.500, 18.187), (31.500, 19.919), (32.000, 20.785), (33.000, 22.517),
(33.500, 23.383), (34.000, 24.249), (34.500, 25.115), (35.000, 25.981),
(21.000, 0.00000), (21.500, 0.86602), (22.500, 2.5981), (23.000,
3.4641), (24.000, 5.1962), (24.500, 6.0622), (25.500, 7.7942), (26.000,
8.6602), (27.000, 10.392), (27.500, 11.258), (28.500, 12.990), (29.000,
13.856), (30.000, 15.588), (30.500, 16.454), (31.500, 18.187), (32.000,
19.053), (33.000, 20.785), (33.500, 21.651), (34.000, 22.517), (34.500,
23.383), (35.000, 24.249), (35.500, 25.115), (36.000, 25.981), (22.500,
0.86602), (23.000, 1.7320), (24.000, 3.4641), (24.500, 4.3301), (25.500,
6.0622), (26.000, 6.9282), (27.000, 8.6602), (27.500, 9.5263), (28.500,
11.258), (29.000, 12.124), (30.000, 13.856), (30.500, 14.722), (31.500,
16.454), (32.000, 17.320), (33.000, 19.053), (33.500, 19.919), (34.500,
21.651), (35.000, 22.517), (35.500, 23.383), (36.000, 24.249), (36.500,
25.115), (37.000, 25.981), (23.000, 0.00000), (24.000, 1.7320), (24.500,
2.5981), (25.500, 4.3301), (26.000, 5.1962), (27.000, 6.9282), (27.500,
7.7942), (28.500, 9.5263), (29.000, 10.392), (30.000, 12.124), (30.500,
12.990), (31.500, 14.722), (32.000, 15.588), (33.000, 17.320), (33.500,
18.187), (34.500, 19.919), (35.000, 20.785), (35.500, 21.651), (36.000,
22.517), (36.500, 23.383), (37.000, 24.249), (37.500, 25.115), (38.000,
25.981), (24.000, 0.00000), (24.500, 0.86602), (25.500, 2.5981),
(26.000, 3.4641), (27.000, 5.1962), (27.500, 6.0622), (28.500, 7.7942),
(29.000, 8.6602), (30.000, 10.392), (30.500, 11.258), (31.500, 12.990),
(32.000, 13.856), (33.000, 15.588), (33.500, 16.454), (34.500, 18.187),
(35.000, 19.053), (36.000, 20.785), (36.500, 21.651), (37.000, 22.517),
(37.500, 23.383), (38.000, 24.249), (38.500, 25.115), (39.000, 25.981),
(25.500, 0.86602), (26.000, 1.7320), (27.000, 3.4641), (27.500, 4.3301),
(28.500, 6.0622), (29.000, 6.9282), (30.000, 8.6602), (30.500, 9.5263),
(31.500, 11.258), (32.000, 12.124), (33.000, 13.856), (33.500, 14.722),
(34.500, 16.454), (35.000, 17.320), (36.000, 19.053), (36.500, 19.919),
(37.000, 20.785), (37.500, 21.651), (38.000, 22.517), (38.500, 23.383),
(39.000, 24.249), (39.500, 25.115), (40.000, 25.981), (26.000, 0.00000),
(27.000, 1.7320), (27.500, 2.5981), (28.500, 4.3301), (29.000, 5.1962),
(30.000, 6.9282), (30.500, 7.7942), (31.500, 9.5263), (32.000, 10.392),
(33.000, 12.124), (33.500, 12.990), (34.500, 14.722), (35.000, 15.588),
(36.000, 17.320), (36.500, 18.187), (37.500, 19.919), (38.000, 20.785),
(38.500, 21.651), (39.000, 22.517), (39.500, 23.383), (40.000, 24.249),
(40.500, 25.115), (41.000, 25.981), (27.000, 0.00000), (27.500,
0.86602), (28.500, 2.5981), (29.000, 3.4641), (30.000, 5.1962), (30.500,
6.0622), (31.500, 7.7942), (32.000, 8.6602), (33.000, 10.392), (33.500,
11.258), (34.500, 12.990), (35.000, 13.856), (36.000, 15.588), (36.500,
16.454), (37.500, 18.187), (38.000, 19.053), (38.500, 19.919), (39.000,
20.785), (39.500, 21.651), (40.000, 22.517), (40.500, 23.383), (41.000,
24.249), (41.500, 25.115), (42.000, 25.981), (28.500, 0.86602), (29.000,
1.7320), (30.000, 3.4641), (30.500, 4.3301), (31.500, 6.0622), (32.000,
6.9282), (33.000, 8.6602), (33.500, 9.5263), (34.500, 11.258), (35.000,
12.124), (36.000, 13.856), (36.500, 14.722), (37.500, 16.454), (38.000,
17.320), (39.000, 19.053), (39.500, 19.919), (40.000, 20.785), (40.500,
21.651), (41.000, 22.517), (41.500, 23.383), (42.000, 24.249), (42.500,
25.115), (43.000, 25.981), (29.000, 0.00000), (30.000, 1.7320), (30.500,
2.5981), (31.500, 4.3301), (32.000, 5.1962), (33.000, 6.9282), (33.500,
7.7942), (34.500, 9.5263), (35.000, 10.392), (36.000, 12.124), (36.500,
12.990), (37.500, 14.722), (38.000, 15.588), (39.000, 17.320), (39.500,
18.187), (40.000, 19.053), (40.500, 19.919), (41.000, 20.785), (41.500,
21.651), (42.000, 22.517), (42.500, 23.383), (43.000, 24.249), (43.500,
25.115), (44.000, 25.981), (30.000, 0.00000), (30.500, 0.86602),
(31.500, 2.5981), (32.000, 3.4641), (33.000, 5.1962), (33.500, 6.0622),
(34.500, 7.7942), (35.000, 8.6602), (36.000, 10.392), (36.500, 11.258),
(37.500, 12.990), (38.000, 13.856), (39.000, 15.588), (39.500, 16.454),
(40.500, 18.187), (41.000, 19.053), (41.500, 19.919), (42.000, 20.785),
(42.500, 21.651), (43.000, 22.517), (43.500, 23.383), (44.000, 24.249),
(44.500, 25.115), (45.000, 25.981), (31.500, 0.86602), (32.000, 1.7320),
(33.000, 3.4641), (33.500, 4.3301), (34.500, 6.0622), (35.000, 6.9282),
(36.000, 8.6602), (36.500, 9.5263), (37.500, 11.258), (38.000, 12.124),
(39.000, 13.856), (39.500, 14.722), (40.500, 16.454), (41.000, 17.320),
(41.500, 18.187), (42.000, 19.053), (42.500, 19.919), (43.000, 20.785),
(43.500, 21.651), (44.000, 22.517), (44.500, 23.383), (45.000, 24.249),
(45.500, 25.115), (46.000, 25.981), (32.000, 0.00000), (33.000, 1.7320),
(33.500, 2.5981), (34.500, 4.3301), (35.000, 5.1962), (36.000, 6.9282),
(36.500, 7.7942), (37.500, 9.5263), (38.000, 10.392), (39.000, 12.124),
(39.500, 12.990), (40.500, 14.722), (41.000, 15.588), (41.500, 16.454),
(42.000, 17.320), (42.500, 18.187), (43.000, 19.053), (43.500, 19.919),
(44.000, 20.785), (44.500, 21.651), (45.000, 22.517), (45.500, 23.383),
(46.000, 24.249), (46.500, 25.115), (47.000, 25.981), (33.000, 0.00000),
(33.500, 0.86602), (34.500, 2.5981), (35.000, 3.4641), (36.000, 5.1962),
(36.500, 6.0622), (37.500, 7.7942), (38.000, 8.6602), (39.000, 10.392),
(39.500, 11.258), (40.500, 12.990), (41.000, 13.856), (41.500, 14.722),
(42.000, 15.588), (42.500, 16.454), (43.000, 17.320), (43.500, 18.187),
(44.000, 19.053), (44.500, 19.919), (45.000, 20.785), (45.500, 21.651),
(46.000, 22.517), (46.500, 23.383), (47.000, 24.249), (47.500, 25.115),
(48.000, 25.981), (34.500, 0.86602), (35.000, 1.7320), (36.000, 3.4641),
(36.500, 4.3301), (37.500, 6.0622), (38.000, 6.9282), (39.000, 8.6602),
(39.500, 9.5263), (40.500, 11.258), (41.000, 12.124), (41.500, 12.990),
(42.000, 13.856), (42.500, 14.722), (43.000, 15.588), (43.500, 16.454),
(44.000, 17.320), (44.500, 18.187), (45.000, 19.053), (45.500, 19.919),
(46.000, 20.785), (46.500, 21.651), (47.000, 22.517), (47.500, 23.383),
(48.000, 24.249), (48.500, 25.115), (49.000, 25.981), (35.000, 0.00000),
(36.000, 1.7320), (36.500, 2.5981), (37.500, 4.3301), (38.000, 5.1962),
(39.000, 6.9282), (39.500, 7.7942), (40.500, 9.5263), (41.000, 10.392),
(41.500, 11.258), (42.000, 12.124), (42.500, 12.990), (43.000, 13.856),
(43.500, 14.722), (44.000, 15.588), (44.500, 16.454), (45.000, 17.320),
(45.500, 18.187), (46.000, 19.053), (46.500, 19.919), (47.000, 20.785),
(47.500, 21.651), (48.000, 22.517), (48.500, 23.383), (49.000, 24.249),
(49.500, 25.115), (50.000, 25.981), (36.000, 0.00000), (36.500,
0.86602), (37.500, 2.5981), (38.000, 3.4641), (39.000, 5.1962), (39.500,
6.0622), (40.500, 7.7942), (41.000, 8.6602), (41.500, 9.5263), (42.000,
10.392), (42.500, 11.258), (43.000, 12.124), (43.500, 12.990), (44.000,
13.856), (44.500, 14.722), (45.000, 15.588), (45.500, 16.454), (46.000,
17.320), (46.500, 18.187), (47.000, 19.053), (47.500, 19.919), (48.000,
20.785), (48.500, 21.651), (49.000, 22.517), (49.500, 23.383), (50.000,
24.249), (50.500, 25.115), (51.000, 25.981), (37.500, 0.86602), (38.000,
1.7320), (39.000, 3.4641), (39.500, 4.3301), (40.500, 6.0622), (41.000,
6.9282), (41.500, 7.7942), (42.000, 8.6602), (42.500, 9.5263), (43.000,
10.392), (43.500, 11.258), (44.000, 12.124), (44.500, 12.990), (45.000,
13.856), (45.500, 14.722), (46.000, 15.588), (46.500, 16.454), (47.000,
17.320), (47.500, 18.187), (48.000, 19.053), (48.500, 19.919), (49.000,
20.785), (49.500, 21.651), (50.000, 22.517), (50.500, 23.383), (51.000,
24.249), (51.500, 25.115), (52.000, 25.981), (38.000, 0.00000), (39.000,
1.7320), (39.500, 2.5981), (40.500, 4.3301), (41.000, 5.1962), (41.500,
6.0622), (42.000, 6.9282), (42.500, 7.7942), (43.000, 8.6602), (43.500,
9.5263), (44.000, 10.392), (44.500, 11.258), (45.000, 12.124), (45.500,
12.990), (46.000, 13.856), (46.500, 14.722), (47.000, 15.588), (47.500,
16.454), (48.000, 17.320), (48.500, 18.187), (49.000, 19.053), (49.500,
19.919), (50.000, 20.785), (50.500, 21.651), (51.000, 22.517), (51.500,
23.383), (52.000, 24.249), (52.500, 25.115), (53.000, 25.981), (39.000,
0.00000), (39.500, 0.86602), (40.500, 2.5981), (41.000, 3.4641),
(41.500, 4.3301), (42.000, 5.1962), (42.500, 6.0622), (43.000, 6.9282),
(43.500, 7.7942), (44.000, 8.6602), (44.500, 9.5263), (45.000, 10.392),
(45.500, 11.258), (46.000, 12.124), (46.500, 12.990), (47.000, 13.856),
(47.500, 14.722), (48.000, 15.588), (48.500, 16.454), (49.000, 17.320),
(49.500, 18.187), (50.000, 19.053), (50.500, 19.919), (51.000, 20.785),
(51.500, 21.651), (52.000, 22.517), (52.500, 23.383), (53.000, 24.249),
(53.500, 25.115), (54.000, 25.981), (40.500, 0.86602), (41.000, 1.7320),
(41.500, 2.5981), (42.000, 3.4641), (42.500, 4.3301), (43.000, 5.1962),
(43.500, 6.0622), (44.000, 6.9282), (44.500, 7.7942), (45.000, 8.6602),
(45.500, 9.5263), (46.000, 10.392), (46.500, 11.258), (47.000, 12.124),
(47.500, 12.990), (48.000, 13.856), (48.500, 14.722), (49.000, 15.588),
(49.500, 16.454), (50.000, 17.320), (50.500, 18.187), (51.000, 19.053),
(51.500, 19.919), (52.000, 20.785), (52.500, 21.651), (53.000, 22.517),
(53.500, 23.383), (54.000, 24.249), (54.500, 25.115), (55.000, 25.981)}
\draw \x circle (1pt);
\draw (19.000, 5.1962) node {$7$}; \draw (17.500, 16.454) node {$4$};
\draw (32.500, 12.990) node {$6$}; \draw (17.500, 4.3301) node {$6$};
\draw (28.000, 17.320) node {$5$}; \draw (14.500, 4.3301) node {$4$};
\draw (10.000, 6.9282) node {$1$}; \draw (34.000, 10.392) node {$5$};
\draw (28.000, 13.856) node {$7$}; \draw (20.500, 19.919) node {$3$};
\draw (31.000, 10.392) node {$7$}; \draw (40.000, 15.588) node {$1$};
\draw (13.000, 15.588) node {$3$}; \draw (13.000, 3.4641) node {$3$};
\draw (31.000, 17.320) node {$4$}; \draw (25.000, 8.6602) node {$11$};
\draw (29.500, 12.990) node {$7$}; \draw (26.500, 25.115) node {$1$};
\draw (32.500, 0.86602) node {$2$}; \draw (35.500, 16.454) node {$3$};
\draw (19.000, 15.588) node {$5$}; \draw (17.500, 0.86602) node {$2$};
\draw (25.000, 20.785) node {$4$}; \draw (16.000, 6.9282) node {$5$};
\draw (17.500, 21.651) node {$1$}; \draw (20.500, 6.0622) node {$8$};
\draw (32.500, 4.3301) node {$6$}; \draw (22.000, 17.320) node {$5$};
\draw (32.500, 14.722) node {$5$}; \draw (11.500, 14.722) node {$2$};
\draw (14.500, 0.86602) node {$2$}; \draw (31.000, 22.517) node {$1$};
\draw (29.500, 14.722) node {$6$}; \draw (31.000, 15.588) node {$5$};
\draw (14.500, 9.5263) node {$4$}; \draw (38.500, 14.722) node {$2$};
\draw (17.500, 14.722) node {$5$}; \draw (25.000, 0.00000) node {$1$};
\draw (13.000, 5.1962) node {$3$}; \draw (16.000, 13.856) node {$5$};
\draw (34.000, 5.1962) node {$5$}; \draw (10.000, 1.7320) node {$1$};
\draw (32.500, 19.919) node {$2$}; \draw (40.000, 17.320) node {$1$};
\draw (11.500, 12.990) node {$2$}; \draw (34.000, 12.124) node {$5$};
\draw (22.000, 8.6602) node {$9$}; \draw (22.000, 1.7320) node {$3$};
\draw (23.500, 0.86602) node {$2$}; \draw (37.000, 1.7320) node {$3$};
\draw (11.500, 18.187) node {$1$}; \draw (38.500, 9.5263) node {$2$};
\draw (37.000, 17.320) node {$2$}; \draw (26.500, 6.0622) node {$8$};
\draw (20.500, 0.86602) node {$2$}; \draw (29.500, 2.5981) node {$4$};
\draw (19.000, 20.785) node {$2$}; \draw (13.000, 19.053) node {$1$};
\draw (13.000, 0.00000) node {$1$}; \draw (16.000, 0.00000) node {$1$};
\draw (25.000, 3.4641) node {$5$}; \draw (34.000, 3.4641) node {$5$};
\draw (25.000, 17.320) node {$6$}; \draw (35.500, 7.7942) node {$4$};
\draw (25.000, 5.1962) node {$7$}; \draw (10.000, 8.6602) node {$1$};
\draw (26.500, 12.990) node {$8$}; \draw (17.500, 18.187) node {$3$};
\draw (19.000, 0.00000) node {$1$}; \draw (38.500, 2.5981) node {$2$};
\draw (16.000, 15.588) node {$4$}; \draw (37.000, 0.00000) node {$1$};
\draw (28.000, 20.785) node {$3$}; \draw (20.500, 16.454) node {$5$};
\draw (20.500, 4.3301) node {$6$}; \draw (26.500, 16.454) node {$6$};
\draw (38.500, 18.187) node {$1$}; \draw (23.500, 21.651) node {$3$};
\draw (23.500, 19.919) node {$4$}; \draw (29.500, 0.86602) node {$2$};
\draw (38.500, 16.454) node {$2$}; \draw (25.000, 25.981) node {$1$};
\draw (11.500, 2.5981) node {$2$}; \draw (40.000, 0.00000) node {$1$};
\draw (26.500, 14.722) node {$7$}; \draw (10.000, 15.588) node {$1$};
\draw (17.500, 12.990) node {$6$}; \draw (35.500, 6.0622) node {$4$};
\draw (14.500, 6.0622) node {$4$}; \draw (32.500, 18.187) node {$3$};
\draw (40.000, 8.6602) node {$1$}; \draw (37.000, 13.856) node {$3$};
\draw (13.000, 17.320) node {$2$}; \draw (22.000, 12.124) node {$8$};
\draw (16.000, 17.320) node {$3$}; \draw (17.500, 6.0622) node {$6$};
\draw (22.000, 3.4641) node {$5$}; \draw (11.500, 7.7942) node {$2$};
\draw (28.000, 15.588) node {$6$}; \draw (20.500, 2.5981) node {$4$};
\draw (19.000, 10.392) node {$7$}; \draw (31.000, 5.1962) node {$7$};
\draw (11.500, 6.0622) node {$2$}; \draw (23.500, 14.722) node {$7$};
\draw (22.000, 20.785) node {$3$}; \draw (13.000, 6.9282) node {$3$};
\draw (38.500, 4.3301) node {$2$}; \draw (23.500, 12.990) node {$8$};
\draw (16.000, 3.4641) node {$5$}; \draw (37.000, 6.9282) node {$3$};
\draw (14.500, 14.722) node {$4$}; \draw (31.000, 8.6602) node {$7$};
\draw (14.500, 19.919) node {$1$}; \draw (22.000, 6.9282) node {$9$};
\draw (31.000, 12.124) node {$7$}; \draw (23.500, 9.5263) node {$10$};
\draw (25.000, 6.9282) node {$9$}; \draw (28.000, 1.7320) node {$3$};
\draw (35.500, 4.3301) node {$4$}; \draw (14.500, 11.258) node {$4$};
\draw (32.500, 6.0622) node {$6$}; \draw (35.500, 11.258) node {$4$};
\draw (34.000, 1.7320) node {$3$}; \draw (25.000, 22.517) node {$3$};
\draw (29.500, 16.454) node {$5$}; \draw (31.000, 19.053) node {$3$};
\draw (40.000, 13.856) node {$1$}; \draw (20.500, 21.651) node {$2$};
\draw (40.000, 1.7320) node {$1$}; \draw (25.000, 10.392) node {$10$};
\draw (14.500, 12.990) node {$4$}; \draw (26.500, 7.7942) node {$10$};
\draw (10.000, 12.124) node {$1$}; \draw (25.000, 19.053) node {$5$};
\draw (29.500, 7.7942) node {$8$}; \draw (28.000, 3.4641) node {$5$};
\draw (19.000, 6.9282) node {$7$}; \draw (34.000, 0.00000) node {$1$};
\draw (29.500, 6.0622) node {$8$}; \draw (16.000, 5.1962) node {$5$};
\draw (11.500, 4.3301) node {$2$}; \draw (20.500, 9.5263) node {$8$};
\draw (13.000, 8.6602) node {$3$}; \draw (40.000, 3.4641) node {$1$};
\draw (28.000, 8.6602) node {$9$}; \draw (13.000, 1.7320) node {$3$};
\draw (26.500, 2.5981) node {$4$}; \draw (25.000, 13.856) node {$8$};
\draw (13.000, 10.392) node {$3$}; \draw (26.500, 4.3301) node {$6$};
\draw (37.000, 8.6602) node {$3$}; \draw (29.500, 19.919) node {$3$};
\draw (23.500, 18.187) node {$5$}; \draw (14.500, 2.5981) node {$4$};
\draw (34.000, 13.856) node {$5$}; \draw (38.500, 7.7942) node {$2$};
\draw (35.500, 0.86602) node {$2$}; \draw (37.000, 19.053) node {$1$};
\draw (22.000, 10.392) node {$9$}; \draw (26.500, 21.651) node {$3$};
\draw (16.000, 8.6602) node {$5$}; \draw (38.500, 12.990) node {$2$};
\draw (14.500, 7.7942) node {$4$}; \draw (40.000, 6.9282) node {$1$};
\draw (16.000, 10.392) node {$5$}; \draw (23.500, 6.0622) node {$8$};
\draw (34.000, 17.320) node {$3$}; \draw (28.000, 10.392) node {$9$};
\draw (32.500, 16.454) node {$4$}; \draw (40.000, 10.392) node {$1$};
\draw (37.000, 3.4641) node {$3$}; \draw (28.000, 19.053) node {$4$};
\draw (34.000, 15.588) node {$4$}; \draw (31.000, 20.785) node {$2$};
\draw (25.000, 15.588) node {$7$}; \draw (34.000, 8.6602) node {$5$};
\draw (20.500, 14.722) node {$6$}; \draw (23.500, 4.3301) node {$6$};
\draw (28.000, 22.517) node {$2$}; \draw (19.000, 8.6602) node {$7$};
\draw (25.000, 12.124) node {$9$}; \draw (26.500, 18.187) node {$5$};
\draw (32.500, 21.651) node {$1$}; \draw (17.500, 19.919) node {$2$};
\draw (26.500, 23.383) node {$2$}; \draw (28.000, 5.1962) node {$7$};
\draw (37.000, 10.392) node {$3$}; \draw (26.500, 19.919) node {$4$};
\draw (35.500, 18.187) node {$2$}; \draw (11.500, 9.5263) node {$2$};
\draw (11.500, 11.258) node {$2$}; \draw (40.000, 12.124) node {$1$};
\draw (16.000, 20.785) node {$1$}; \draw (17.500, 7.7942) node {$6$};
\draw (16.000, 12.124) node {$5$}; \draw (22.000, 5.1962) node {$7$};
\draw (29.500, 18.187) node {$4$}; \draw (22.000, 13.856) node {$7$};
\draw (29.500, 4.3301) node {$6$}; \draw (31.000, 1.7320) node {$3$};
\draw (32.500, 2.5981) node {$4$}; \draw (29.500, 11.258) node {$8$};
\draw (10.000, 0.00000) node {$1$}; \draw (20.500, 7.7942) node {$8$};
\draw (10.000, 5.1962) node {$1$}; \draw (29.500, 21.651) node {$2$};
\draw (19.000, 1.7320) node {$3$}; \draw (17.500, 9.5263) node {$6$};
\draw (31.000, 0.00000) node {$1$}; \draw (34.000, 20.785) node {$1$};
\draw (11.500, 16.454) node {$2$}; \draw (22.000, 19.053) node {$4$};
\draw (32.500, 7.7942) node {$6$}; \draw (38.500, 0.86602) node {$2$};
\draw (23.500, 25.115) node {$1$}; \draw (10.000, 13.856) node {$1$};
\draw (35.500, 9.5263) node {$4$}; \draw (23.500, 11.258) node {$9$};
\draw (22.000, 0.00000) node {$1$}; \draw (40.000, 5.1962) node {$1$};
\draw (32.500, 9.5263) node {$6$}; \draw (31.000, 6.9282) node {$7$};
\draw (28.000, 6.9282) node {$9$}; \draw (29.500, 9.5263) node {$8$};
\draw (14.500, 18.187) node {$2$}; \draw (35.500, 2.5981) node {$4$};
\draw (19.000, 19.053) node {$3$}; \draw (20.500, 23.383) node {$1$};
\draw (23.500, 7.7942) node {$10$}; \draw (19.000, 22.517) node {$1$};
\draw (11.500, 0.86602) node {$2$}; \draw (25.000, 24.249) node {$2$};
\draw (13.000, 12.124) node {$3$}; \draw (34.000, 6.9282) node {$5$};
\draw (38.500, 11.258) node {$2$}; \draw (19.000, 12.124) node {$7$};
\draw (17.500, 11.258) node {$6$}; \draw (28.000, 24.249) node {$1$};
\draw (35.500, 14.722) node {$4$}; \draw (20.500, 12.990) node {$7$};
\draw (16.000, 19.053) node {$2$}; \draw (22.000, 24.249) node {$1$};
\draw (28.000, 0.00000) node {$1$}; \draw (20.500, 18.187) node {$4$};
\draw (38.500, 6.0622) node {$2$}; \draw (23.500, 2.5981) node {$4$};
\draw (31.000, 3.4641) node {$5$}; \draw (14.500, 16.454) node {$3$};
\draw (10.000, 17.320) node {$1$}; \draw (28.000, 12.124) node {$8$};
\draw (20.500, 11.258) node {$8$}; \draw (37.000, 15.588) node {$3$};
\draw (26.500, 0.86602) node {$2$}; \draw (26.500, 9.5263) node {$10$};
\draw (19.000, 3.4641) node {$5$}; \draw (37.000, 12.124) node {$3$};
\draw (29.500, 23.383) node {$1$}; \draw (22.000, 22.517) node {$2$};
\draw (19.000, 13.856) node {$6$}; \draw (35.500, 12.990) node {$4$};
\draw (17.500, 2.5981) node {$4$}; \draw (35.500, 19.919) node {$1$};
\draw (31.000, 13.856) node {$6$}; \draw (23.500, 23.383) node {$2$};
\draw (23.500, 16.454) node {$6$}; \draw (10.000, 10.392) node {$1$};
\draw (26.500, 11.258) node {$9$}; \draw (10.000, 3.4641) node {$1$};
\draw (13.000, 13.856) node {$3$}; \draw (34.000, 19.053) node {$2$};
\draw (32.500, 11.258) node {$6$}; \draw (25.000, 1.7320) node {$3$};
\draw (37.000, 5.1962) node {$3$}; \draw (19.000, 17.320) node {$4$};
\draw (22.000, 15.588) node {$6$}; \draw (16.000, 1.7320) node {$3$};
\end{tikzpicture}
\caption{The multiplicities $c^{\lambda,\mu}_{\nu}$ with $\lambda=10(\omega_1+\omega_2)$ and $\mu=10(2\omega_1+\omega_2)$.\label{fig:multiplicitiestensorproduct}}
\end{center}
\end{figure}

On Figure \ref{fig:multiplicitiestensorproduct}, one sees that the Littlewood--Richardson coefficients for a tensor product of two large irreducible representations are almost given by a piecewise polynomial function (and even piecewise affine for this example). This is the analogue of Proposition \ref{prop:asymptoticsmultiplicities} for Littlewood--Richardson coefficients.

\begin{theorem}\label{thm:asymptoticstensor}
Fix two directions $x$ and $y$ in the Weyl chamber $C$. We assume that $x$ and $y$ do not belong to the walls of $C$. There exists a finite positive measure $p_{x,y}(z)\DD{z}$ on $C$:
\begin{itemize}
    \item compactly supported by a polytope $\mathscr{P}(x,y)$ whose boundary is determined by affine functions of $x,y,z$;
    \item with a mass smaller than $1$, and a density given by a piecewise polynomial function in $z$ of local degree bounded by $l-d = |\Phi_+| - \rank(G)$;
    \item such that, for any function $f$ that is continuous and bounded on $C$,
    $$\lim_{\substack{t \to \infty\\tx,ty \in \hatG}} \left(\frac{\prod_{\alpha \in \Phi_+} \scal{\rho}{\alpha}}{t^{l}\,\prod_{\alpha \in \Phi_+} \scal{y}{\alpha}} \sum_{\nu \in \hatG} c^{tx,ty}_\nu\,f\!\left(\frac{\nu}{t}\right) \right)= \int_{\mathscr{P}(x,y)} f(z)\,p_{x,y}(z)\DD{z}.$$
 \end{itemize} 
Moreover, one has the scaling property $p_{\gamma x,\gamma y}(\gamma z) = \frac{p_{x,y}(z)}{\gamma^d}.$
\end{theorem}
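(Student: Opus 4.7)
The plan is to combine Berenstein--Zelevinsky's description of $c^{\lambda,\mu}_\nu$ as the number of integer points in a slice of a relative string polytope (Theorem \ref{thm:berensteinzelevinsky}) with a standard Riemann--sum argument on rational polytopes whose defining inequalities depend affinely on the parameters $(\lambda,\mu)$.

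First, I would rewrite the target sum in terms of integer points in the polytope $\mathscr{P}(\lambda,\mu)$. With $\lambda=tx$ and $\mu=ty$, a string parametrisation $(n_1,\ldots,n_l)$ encodes a path in $\cryst(\pi_\mu)$ whose endpoint, once concatenated with $\pi_\lambda$, is $\nu(n)=\lambda+\mu-\sum_j n_j\alpha_{i_j}$. Theorem \ref{thm:berensteinzelevinsky} then yields
\[
 \sum_{\nu\in\hatG} c^{tx,ty}_\nu\,f\!\left(\frac{\nu}{t}\right) = \sum_{n \in \mathscr{P}(tx,ty)\cap\Z^l} f\!\left(\Psi_{x,y}\!\left(\tfrac{n}{t}\right)\right),\qquad \Psi_{x,y}(u)=x+y-\sum_{j=1}^l u_j\alpha_{i_j}.
\]
The crucial observation is that the inequalities cutting out $\mathscr{P}(\lambda,\mu)$ inside the string cone $\mathscr{S}\!\mathscr{C}(G)$ are affine in $(\lambda,\mu)$: the string--cone inequalities are homogeneous, the standard string--polytope inequalities depend linearly on $\mu$ through terms $\mu(\alpha_{i_j}^\vee)$, and the extra Berenstein--Zelevinsky trail inequalities add only bounds of the form $-\lambda(\alpha_i^\vee)$ which are linear in $\lambda$. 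Consequently $\mathscr{P}(tx,ty)=t\,\mathscr{P}(x,y)$, where $\mathscr{P}(x,y)$ is the \emph{real} polytope obtained by keeping the same inequalities with real parameters.

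Next, I would pass from the Riemann sum to a Lebesgue integral. Since $\mathscr{P}(x,y)$ is rational, a standard equidistribution argument for integer points in dilations of rational polytopes gives
\[
 \frac{1}{t^l}\sum_{n \in t\,\mathscr{P}(x,y)\cap\Z^l} g\!\left(\frac{n}{t}\right) \xrightarrow[t\to\infty,\,tx,ty\in\hatG]{} \int_{\mathscr{P}(x,y)} g(u)\DD{u}
\]
for every continuous bounded $g$, provided $\mathscr{P}(x,y)$ has non--empty interior; this is guaranteed by the assumption that $x,y$ lie in the interior of $C$, since the polytope then attains its generic dimension $l$. Applying this with $g=f\circ\Psi_{x,y}$ and pushing the Lebesgue measure on $\mathscr{P}(x,y)$ forward by the affine surjection $\Psi_{x,y}:\R^l\to\R\Omega$ defines the desired measure
\[
 p_{x,y}(z)\DD{z} \;=\; (\Psi_{x,y})_\star\bigl(\mathbf{1}_{\mathscr{P}(x,y)}\DD{u}\bigr),
\]
compactly supported on $\Psi_{x,y}(\mathscr{P}(x,y))\subset C$, whose defining inequalities are by construction affine in $(x,y,z)$. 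That $p_{x,y}$ is piecewise polynomial follows from Fubini along the fibres of $\Psi_{x,y}$: an affine push--forward of the Lebesgue measure on a polytope has piecewise polynomial density, with domains of polynomiality given by the image of the cell decomposition of $\mathscr{P}(x,y)$ induced by the projection. The generic fibre has dimension $l-d$, so the local degree of $p_{x,y}$ is at most $l-d$, as claimed.

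It remains to identify the normalising prefactor, bound the total mass, and verify the scaling. Weyl's dimension formula yields $d_{ty}\sim t^l\,\prod_{\alpha\in\Phi_+}\langle y,\alpha\rangle/\prod_{\alpha\in\Phi_+}\langle\rho,\alpha\rangle$, so the prefactor in the theorem is asymptotically $1/d_{ty}$; combined with the $t^l$ picked up in the Riemann--sum limit, this produces the stated normalisation. For the mass bound, the trivial inequality $\sum_\nu c^{\lambda,\mu}_\nu\leq\card\cryst(\pi_\mu)=d_\mu$ (each element of $\cryst(\pi_\mu)$ contributes to at most one $\nu$) forces $\int p_{x,y}\leq 1$. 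Finally, the scaling property $p_{\gamma x,\gamma y}(\gamma z)=\gamma^{-d}\,p_{x,y}(z)$ is immediate from $\mathscr{P}(\gamma x,\gamma y)=\gamma\,\mathscr{P}(x,y)$, $\Psi_{\gamma x,\gamma y}(\gamma u)=\gamma\,\Psi_{x,y}(u)$, and the behaviour of Lebesgue measure under dilations in $\R^l$ and in $\R\Omega$.

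The main technical obstacle is the first step: carefully verifying from Theorem \ref{thm:berensteinzelevinsky} that all $(\lambda,\mu)$--dependence in the defining inequalities of $\mathscr{P}(\lambda,\mu)$ is truly affine and isolated on the right--hand sides, so that the scaling $\mathscr{P}(tx,ty)=t\,\mathscr{P}(x,y)$ holds exactly (not just up to lower--order error). A secondary, more routine obstacle is the uniformity in $t$ needed in the Riemann--sum approximation when $f$ is only bounded continuous rather than compactly supported; this is handled using the a priori compact containment of $\Psi_{x,y}(\mathscr{P}(x,y))$ in the bounded polytope $\mathrm{Conv}(W(x+y))$, which makes the $\nu/t$ appearing in the sum stay in a fixed compact set.
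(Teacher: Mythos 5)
Your proposal follows essentially the same route as the paper's proof: Berenstein--Zelevinsky's relative string polytope, a Riemann-sum/equidistribution argument for integer points in a dilated rational polytope, and push-forward of Lebesgue measure by the affine weight map $\Psi_{x+y}$. The observations about affine dependence of the defining inequalities on $(\lambda,\mu)$, the exact scaling $\mathscr{P}(tx,ty)=t\,\mathscr{P}(x,y)$, and the degree bound $l-d$ coming from the generic fibre dimension are all consistent with the paper.

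One internal inconsistency to fix: you define $p_{x,y}(z)\DD{z} = (\Psi_{x,y})_\star\bigl(\mathbf{1}_{\mathscr{P}(x,y)}\DD{u}\bigr)$, which has total mass $\vol(\mathscr{P}(x,y))$, and this is \emph{not} $\leq 1$ in general; the correct definition must carry the normalising factor $\frac{\prod_{\alpha\in\Phi_+}\scal{\rho}{\alpha}}{\prod_{\alpha\in\Phi_+}\scal{y}{\alpha}} = \vol(\mathscr{P}(y))^{-1}$, i.e.
$$p_{x,y}(z)\DD{z} = \frac{1}{\vol(\mathscr{P}(y))}\,(\Psi_{x,y})_\star\bigl(\mathbf{1}_{\mathscr{P}(x,y)}\DD{u}\bigr),$$
which is exactly what the paper does via the normalised measure $\varrho_{x,y}$. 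You acknowledge this factor verbally when identifying the prefactor ("combined with the $t^l$ picked up in the Riemann-sum limit, this produces the stated normalisation"), but you never incorporate it into the formula, so as written both the displayed limit identity and the bound $\int p_{x,y}\leq 1$ would be off by $\vol(\mathscr{P}(y))$. Once that constant is reinstated, the mass bound argument via $\sum_\nu c^{\lambda,\mu}_\nu\leq d_\mu$ and the scaling computation both go through as you describe.
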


\begin{proof}
We set $\lambda = tx$ and $\mu = ty$, and we introduce the discrete measure
$$\rho_{\lambda,\mu} = \frac{1}{\dim_{\C}(V^\mu)} \sum_{(n_1,\ldots,n_l)\text{ integer points in }\mathscr{P}(\lambda,\mu)} \delta_{(n_1,\ldots,n_l)};$$
it has mass smaller than $1$. 
We have
\begin{align*}
\sum_{\nu \in \hatG} c^{\lambda,\mu}_{\nu}\,f\!\left(\frac{\nu}{t}\right) &= (\dim_\C(V^\mu)) \int_{\R^l} f\!\left(\frac{\psi_{\lambda+\mu}(u)}{t}\right)\,\rho_{\lambda,\mu}(\!\DD{u}) \\
&\simeq  \frac{t^l\,\prod_{\alpha \in \Phi_+} \scal{y}{\alpha}}{\prod_{\alpha \in \Phi_+} \scal{\rho}{\alpha}} \int_{\R^l} f\!\left(\psi_{x+y}\!\left(\frac{u}{t}\right)\right)\,\rho_{tx,ty}(\!\DD{u}).
\end{align*}
As $t$ goes to infinity, the discrete measure $\rho_{tx,ty}(tu)$ converges in law to the measure 
$$\varrho_{x,y}(\!\DD{u})=\frac{1_{u \in \mathscr{P}(x,y)}\DD{u}}{\mathrm{vol}(\mathscr{P}(y))}$$ 
on the relative polytope $\mathscr{P}(x,y)$, which is the subset of $\mathscr{P}(y)$ that consists in parameters $u$ such that
$$\sum_{j=1}^l u_j\,\alpha_{i_j}\!\left(\frac{\phi_{j-1}^\vee + \phi_j^\vee}{2}\right)\geq -x(\alpha_i^\vee)$$
for any trail $(\phi_0^\vee,\phi_1^\vee,\ldots,\phi_l^\vee)$ as in Theorem \ref{thm:berensteinzelevinsky}. Therefore,
$$\lim_{\substack{t \to \infty\\tx,ty \in \hatG}} \left(\frac{\prod_{\alpha \in \Phi_+} \scal{\rho}{\alpha}}{t^{l}\,\prod_{\alpha \in \Phi_+} \scal{y}{\alpha}} \sum_{\nu \in \hatG} c^{tx,ty}_\nu\,f\!\left(\frac{\nu}{t}\right) \right)= \int_{\R^l} f(\psi_{x+y}(u))\,\rho_{x,y}(\!\DD{u}).$$
The piecewise polynomial measure $p_{x,y}$ of the statement of the theorem is the image measure $(\psi_{x+y})_*\varrho_{x,y}$, and it is indeed piecewise polynomial since $\varrho_{x,y}$ is proportional to the Lebesgue measure on a polytope, and $\psi_{x+y}$ is a affine map. The scaling property is proven in the same way as in Proposition \ref{prop:asymptoticsmultiplicities}.
\end{proof}

\begin{remark}
The hypothesis that $x$ and $y$ do not belong to the walls of $C $ are used in order to ensure that the relative polytope $\mathscr{P}(x,y)$ has maximal dimension $l$, and that the dimension $\dim_{\C} (V^{ty})$ is of order $O(t^l)$. Otherwise, one might need to consider a different renormalisation of the Littlewood--Richardson coefficients, but the asymptotic polynomiality stays true.
\end{remark}

In the sequel, it will be convenient to have a more symmetric version of Theorem \ref{thm:asymptoticstensor}. We set $q_{x,y}(z) = \delta(y)\,p_{x,y}(z)$, where as before $\delta(y) = \prod_{\alpha \in \Phi_+} \frac{\scal{y}{\alpha}}{\scal{\rho}{\alpha}}$. The function $q_{x,y}(z)$ is:
\begin{itemize}
    \item a compactly supported piecewise polynomial function in $z$, of total integral smaller than $\min(\delta(x),\delta(y))$,
    \item symmetric in $x$ and $y$, 
    \item such that for any bounded continuous function $f$ on $C$,
$$\lim_{\substack{t \to \infty \\ tx,ty \in \hatG}} \left(\frac{1}{t^l}\,\sum_{\nu \in \hatG} c^{tx,ty}_\nu\,f\!\left(\frac{\nu}{t}\right) \right)= \int_{C} f(z)\, q_{x,y}(z)\DD{z}.$$
 \end{itemize}  
The symmetry in $x$ and $y$ comes from the symmetry of the Littlewood--Richardson coefficients $c^{\lambda,\mu}_{\nu}$ in $\lambda$ and $\mu$. On the other hand, since $y \mapsto \delta(y)$ is an homogeneous polynomial in  the coordinates of $y$ with degree $l$, the function $q$ satisfies the scaling property $q_{\gamma x,\gamma y}(\gamma z)=\gamma^{l-d}\,q_{x,y}(z)$. Actually, a bit more is true:

\begin{proposition}\label{prop:homogeneouspolynomial}
Let $C'$ denote the interior of the Weyl chamber. The function of three variables $(x,y,z) \in (C')^3 \mapsto q_{x,y}(z) \in \R_+$ is: \begin{itemize}
    \item piecewise polynomial and locally homogeneous of total degree $l-d$ in $(x,y,z)$, 

    \item  with domains of polynomiality that are polyhedral cones in $(C')^3$ (subsets that are stable by $(x,y,z) \mapsto (\gamma x,\gamma y,\gamma z)$ and that are bounded by a finite number of affine hyperplanes).
\end{itemize}
\end{proposition}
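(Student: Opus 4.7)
\medskip

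The plan is to identify $q_{x,y}(z)$ with the fiber volume of a fixed polyhedral cone in a larger space, and then invoke the standard parametric polytope theory.

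First, I would revisit the construction from the proof of Theorem~\ref{thm:asymptoticstensor}: we have $q_{x,y}(z)\DD{z} = \delta(y)\,(\psi_{x+y})_*\varrho_{x,y}$, where $\psi_{x+y}$ is the affine map $u\mapsto (x+y)-\sum_j u_j\alpha_{i_j}$ and $\varrho_{x,y}$ is a normalized indicator of the relative Berenstein--Zelevinsky polytope $\mathscr{P}(x,y)$. Both $\delta(y)$ and $\vol(\mathscr{P}(y))$ are homogeneous of degree $l$ in $y$, so up to a constant (depending only on $y$) the function $q_{x,y}(z)$ equals
\[
V(x,y,z) := \vol_{l-d}\bigl(\mathscr{P}(x,y)\cap \psi_{x+y}^{-1}(z)\bigr).
\]
Crucially, by Theorem~\ref{thm:berensteinzelevinsky} and the trail description of $\mathscr{S}\!\mathscr{C}(G)$, all defining inequalities of $\mathscr{P}(x,y)$ are \emph{homogeneous linear} in the variables $(x,y,u)$: the string-cone inequalities involve $u$ alone, the inequalities $u_j\leq (y-\cdots)(\alpha_{i_j}^\vee)$ are linear in $(y,u)$, and the trail inequalities are linear in $(x,u)$. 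The $d$ equations $\psi_{x+y}(u)=z$ are also homogeneous linear in $(x,y,z,u)$.

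Second, set
\[
\widetilde{\mathscr{P}} \;:=\; \bigl\{(x,y,z,u)\in(C')^3\times\R^l : u\in \mathscr{P}(x,y),\; \psi_{x+y}(u)=z\bigr\}\;\subset\;\R^{3d+l}.
\]
By the previous paragraph $\widetilde{\mathscr{P}}$ is a polyhedral cone. I would then invoke the classical parametric-polytope theorem, which asserts that the fiber volume of a rational polyhedral cone under a linear projection is piecewise polynomial on a polyhedral subdivision of the projection's image. Concretely this is proved by iterated integration: triangulating $\widetilde{\mathscr{P}}$ into simplicial cones whose vertices are rational linear functions of $(x,y,z)$, one integrates the indicator of each simplicial fiber over the $(l-d)$ remaining coordinates to get a polynomial in $(x,y,z)$ of total degree $l-d$; this polynomial is valid on the chamber where the combinatorial type of the fiber is constant. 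These chambers are the connected components of the complement in $(C')^3$ of the arrangement of hyperplanes obtained by projecting the faces of $\widetilde{\mathscr{P}}$ of codimension $d+1$ onto the $(x,y,z)$-factor. Since all defining equations of $\widetilde{\mathscr{P}}$ are homogeneous linear, every such projected hyperplane passes through the origin, so each chamber is a polyhedral cone.

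Third, homogeneity of degree $l-d$ follows because $\widetilde{\mathscr{P}}$ is a cone: the fiber over $(\gamma x,\gamma y,\gamma z)$ equals $\gamma$ times the fiber over $(x,y,z)$, whose $(l-d)$-volume therefore scales by $\gamma^{l-d}$. On each polyhedral chamber (itself cone-stable), the polynomial representing $q_{x,y}(z)$ is forced by this scaling to be homogeneous of total degree $l-d$ in $(x,y,z)$.

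The main obstacle is pinning down a clean reference for the continuous parametric polytope theorem in exactly the form needed here; most citations in the literature (Brion--Vergne, Sturmfels, Beck--Robins) phrase it either for lattice-point counting (giving quasi-polynomials) or for polytopes depending on fewer parameters than arise naturally in our situation. Hence I would give a direct proof by iterated integration, taking care to check that degeneracy loci where the chamber structure changes remain cut out by homogeneous linear equations in $(x,y,z)$ throughout the inductive step.
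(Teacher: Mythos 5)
Your proposal is correct and takes essentially the same approach as the paper: both hinge on the Berenstein--Zelevinsky description (Theorem~\ref{thm:berensteinzelevinsky}) to observe that every defining inequality of $\mathscr{P}(x,y)$ and every equation of $\psi_{x+y}$ is linear-homogeneous in $(x,y,z,u)$, and then realize $q_{x,y}(z)$ as a piecewise-polynomial fiber volume of degree $l-d$ under an affine projection, with homogeneity coming from the scaling property. Your explicit lift to the polyhedral cone in $\R^{3d+l}$ is a mild but useful refinement: it makes the conical structure of the polynomiality chambers transparent (as projections of faces of a cone through the origin), whereas the paper's sketch refers to projecting a compact polytope that itself varies with $(x,y)$ and leaves this compatibility implicit.
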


\begin{proof}
From Theorem \ref{thm:berensteinzelevinsky}, we know that the equations that determine $\mathscr{P}(x,y)$ are affine maps of $x$ and $y$. Therefore, the (local) coefficients of the polynomial function $z \mapsto q_{x,y}(z)$ are polynomials in $x,y$. In other words, for any sufficiently small open subset $U \subset (C')^3$, the restriction of the map $(x,y,z) \mapsto q_{x,y}(z)$ to $U$ is given by a polynomial in (the coordinates of) $x,y,z$. The scaling property of this map forces then the polynomials to be homogeneous of degree $l-d$. Finally, the form of the domains of polynomiality comes from the following fact. If one projects by an affine map $\pi$ a compact polytope $P$ in dimension $l$ to a space of dimension $d \leq l$, then the non-empty intersections of images of the faces of the polytope $P$ partition the image $\pi(P)$ into a finite number of polytopes. On each of these non-empty intersections, the image of the uniform measure on $P$ is polynomial, hence the second part of the proposition. 
\end{proof}

Proposition \ref{prop:multilittlewood} is the immediate generalisation of Proposition \ref{prop:homogeneouspolynomial}, and it is proved by applying it recursively and by using the convolution rule for multiple Littlewood--Richardson coefficients, which turns into a convolution rule for the functions $q_{x_1,\ldots,x_{r-1}}(z)$.
\medskip

\begin{example}
Consider the trivial example where $G=\SU(2)$. In this case, a tensor product $V^{k \omega} \otimes V^{l \omega}$ is given by the Clebsch--Gordan rules:
$$V^{k\omega} \otimes V^{l\omega} = V^{(k+l)\omega} \oplus V^{(k+l-2)\omega} \oplus V^{(k+l-4)\omega} \oplus \cdots \oplus V^{|k-l|\omega} = \bigoplus_{\substack{m \equiv k+l \bmod 2 \\ |k-l|\leq m \leq k+l}} V^{m\omega}.$$
The limit when $k=tx$, $l=ty$ and $t \to +\infty$ of this rule is obviously given by the locally constant function
$$q_{x\omega,y\omega}(z\omega) \DD{(z\omega)}= \frac{1}{2}\,1_{|x-y|\leq z \leq x+y}\DD{z}.$$
This agrees with the previous discussion, since $l=d=1$ and thus $l-d=0$. The domain where $q_{x,y}(z) \neq 0$ is drawn in Figure \ref{fig:polyhedron} hereafter, and it is indeed a polyhedral cone.
\begin{figure}[ht]
\begin{center}      
\includegraphics[scale=0.7]{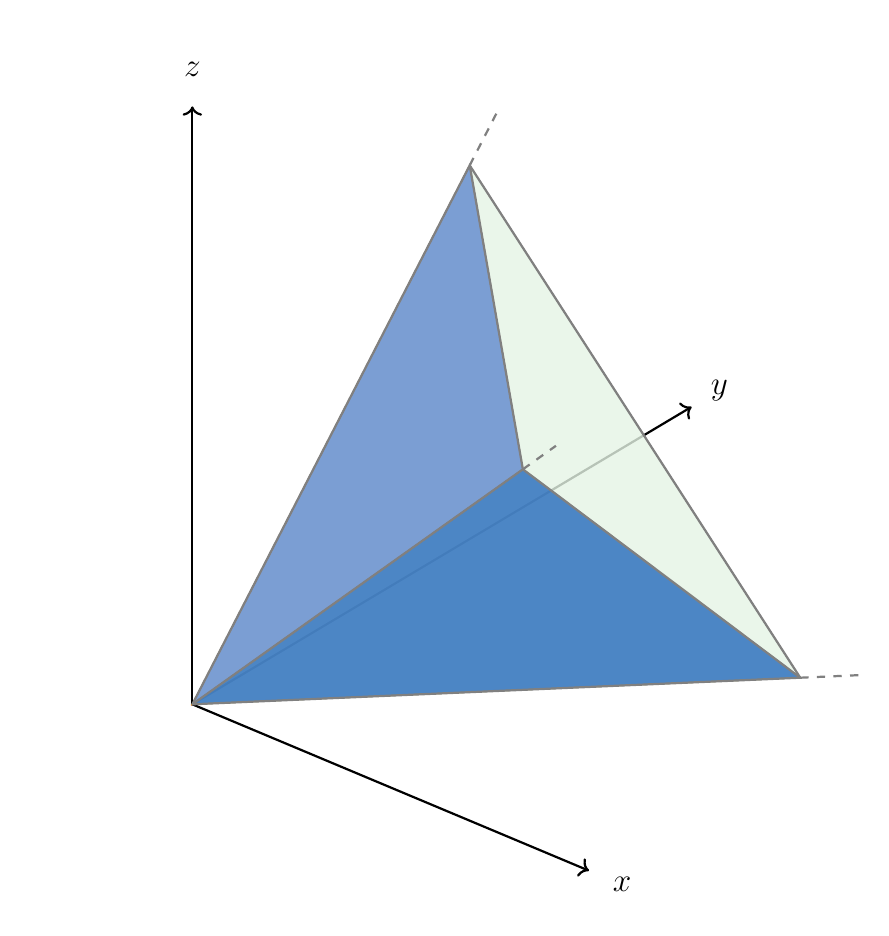}
\caption{The domains of polynomiality of the functions $(x,y,z) \mapsto q_{x,y}(z)$ are polyhedral cones in $(C')^3$.\label{fig:polyhedron}}
\end{center}
\end{figure}
\end{example}


\begin{example}
Let us detail the case $G=\SU(3)$. If $y=y_1\omega_1+y_2\omega_2$, then the polytope $\mathscr{P}(y)$ is the set of real triplets $(u_1,u_2,u_3)$ with
$$ 0\leq u_1 \leq y_1-2u_3+u_2\qquad;\qquad 0 \leq u_3 \leq y_1 \qquad;\qquad u_3 \leq u_2 \leq u_3+y_2.$$
If $x=x_1\omega_1+x_2\omega_2$, then the relative polytope $\mathscr{P}(x,y)$ is the subset of $\mathscr{P}(y)$ that consists in triplets such that
$$ u_1 \leq x_1\qquad;\qquad u_2 \leq u_1+x_2 \qquad;\qquad u_3 \leq x_2.$$
The measure $q_{x,y}(z)\DD{z}$ is the image of the non-normalised Lebesgue measure $1_{u \in \mathscr{P}(x,y)}\DD{u}$ by the affine map $u \mapsto \psi_{x+y}(u) = x+y - (u_1+u_3)\alpha_1 - u_2\alpha_2$. Set $v=u_1+u_3$, $w=u_2$. We have:
\begin{align*}
\int_{C} f(z)\,q_{x,y}(z)\DD{z} &= \int_{(\R_+)^3} 1_{u \in \mathscr{P}(x,y)}\,f(\psi_{x+y}(u)) \DD{u_1}\DD{u_2}\DD{u_3} \\
&= \int_{(\R_+)^3} 1_{m\leq u_1\leq M}\,f(x+y-v\alpha_1-w\alpha_2) \DD{u_1}\DD{v} \DD{w}
\end{align*}
where 
\begin{align*}
m&= \max(0,v-y_1,v-w,v-x_2,w-x_2,2v-w-y_1);\\
M&= \min(x_1,v,v-w+y_2).
\end{align*}
Integrating the variable $u_1$ and making the change of variables $z = z_1\omega_1+z_2\omega_2 = \psi_{x+y}(u)$ yields:
$$\int_{C} f(z)\,q_{x,y}(z)\DD{z} = \frac{1}{3} \int_{(\R_+)^2} f(z) \,(M(x,y,z)-m(x,y,z))_+\DD{z_1}\DD{z_2}$$
with
\begin{align*}
m(x,y,z)&= \max\left(0,x_1-z_1,\frac{2(x_1-z_1)+x_2+y_2-z_2-y_1}{3},\frac{x_1+y_1+z_2-x_2-y_2-z_1}{3},\right.\\
&\left.\qquad\qquad \frac{2(x_1+y_1-z_1-x_2)+y_2-z_2}{3},\frac{x_1+y_1-z_1-x_2+2(y_2-z_2)}{3}\right);\\
M(x,y,z) &= \min\left(x_1,\frac{2(x_1+y_1-z_1)+x_2+y_2-z_2}{3},\frac{2y_2+x_1+y_1+z_2-x_2-z_1}{3}\right).
\end{align*}
On each subset of $(C')^3$ where $m(x,y,z)$ and $M(x,y,z)$ are affine maps, the non-negative part $(M(x,y,z)-m(x,y,z))_+$ is either $0$, or an homogeneous polynomial function in $x,y,z$ of degree $l-d=3-2=1$. This agrees with the general statement of Proposition \ref{prop:homogeneouspolynomial}. One thing that is absolutely not obvious with this expression of $q_{x,y}(z)$ is the symmetry in $x$ and $y$. As far as we know, there is no way to write a symmetric relative string polytope $\mathscr{P}(\lambda,\mu)$, and one gets back the symmetry only after projection of this polytope to the space of weights.
\end{example}
\bigskip

\bibliographystyle{alpha}
\bibliography{randomgeom.bib}

\end{document}